\documentclass[12pt]{article}
\usepackage{amsmath,amsthm,amsfonts,amssymb,epsfig,verbatim,xcolor,constants,subcaption}
\usepackage[left=1in,top=1in,right=1in]{geometry}
\usepackage[normalem]{ulem}

\newconstantfamily{smc}{symbol=\mathsf{c}}
\newconstantfamily{lgc}{symbol=\mathsf{C}}
\newconstantfamily{smac}{symbol=\mathsf{k}}
\newconstantfamily{lgac}{symbol=\mathsf{K}}


\newtheorem{thm}{Theorem}[section]
\newtheorem{lem}[thm]{Lemma}
\newtheorem{prop}[thm]{Proposition}
\newtheorem{cor}[thm]{Corollary}

\newtheorem{rem}{Remark}

\numberwithin{equation}{section}




\begin{document}

\title{Asymptotics for first passage percolation on logarithmic subgraphs of $\mathbb{Z}^2$}
\author{Michael Damron \thanks{The research of M. D. was supported by NSF grant DMS-2054559.} \\ \small{Georgia Tech}  \and Wai-Kit Lam\thanks{The research of W.-K. L. is supported by the National Science and Technology Council in Taiwan grant number 113-2115-M-002-009-MY3.} \\ \small{National Taiwan University} }

	\maketitle 
	\begin{abstract}
For $a>0$ and $b \geq 0$, let $\mathbb{G}_{a,b}$ be the subgraph of $\mathbb{Z}^2$ induced by the vertices between the first coordinate axis and the graph of the function $f = f_{a,b}(u) = a \log (1+u) + b \log(1+\log(1+u))$, $u \geq 0$. It is known that for $a>0$, the critical value for Bernoulli percolation on $\mathbb{G}_f = \mathbb{G}_{a,b}$ is strictly between $1/2$ and $1$, and that if $b>2a$ then the percolation phase transition is discontinuous. We study first-passage percolation (FPP) on $\mathbb{G}_{a,b}$ with i.i.d.~edge-weights $(\tau_e)$ satisfying $p = \mathbb{P}(\tau_e=0) \in [1/2,1)$ and the ``gap condition'' $\mathbb{P}(\tau_e \leq \delta) = p$ for some $\delta>0$. We find the rate of growth of the expected passage time in $\mathbb{G}_f$ from the origin to the line $x=n$, and show that, while when $p=1/2$ it is of order $n/(a \log n)$, when $p>1/2$ it can be of order (a) $n^{c_1}/(\log n)^{c_2}$, (b) $(\log n)^{c_3}$, (c) $\log \log n$, or (d) constant, depending on the relationship between $a,b,$ and $p$. For more general functions $f$, we prove a central limit theorem for the passage time and show that its variance grows at the same rate as the mean. As a consequence of our methods, we improve the percolation transition result by showing that the phase transition on $\mathbb{G}_{a,b}$ is discontinuous if and only if $b > a$, and improve ``sponge crossing dimensions'' asymptotics from the '80s on subcritical percolation crossing probabilities for tall thin rectangles.
	\end{abstract}

\section{Introduction}

\subsection{Background and the model}\label{sec: percolation}

\subsubsection{FPP on wedge graphs}
In this paper, we consider first-passage percolation (FPP) on wedge-like subgraphs of the square lattice $(\mathbb{Z}^2, \mathbb{E}^2)$, defined using a function 
\begin{equation}\label{eq: f_conditions}
f: [0,\infty) \to [0,\infty) \text{ satisfying }f(0)=0.
\end{equation}
The relevant graph is $\mathbb{G}_f = (\mathbb{V}_f, \mathbb{E}_f)$, where $\mathbb{V}_f$ is the vertex set
\[
\mathbb{V}_f = \{(x_1,x_2) \in \mathbb{Z}^2 : x_1 \geq 0 \text{ and } 0 \leq x_2 \leq f(x_1)\}
\]
and the edge set consists of all nearest-neighbor edges between vertices of $\mathbb{V}_f$:
\[
\mathbb{E}_f = \{\{x,y\} : x,y \in \mathbb{V}_f, |x-y|=1\}.
\]
This $\mathbb{G}_f$ is the subgraph of the square lattice $\mathbb{Z}^2$ with its nearest neighbor edges $\mathbb{E}^2$ induced by the vertices below the graph of $f$.

Let $F$ be a distribution function with $F(0^-)=0$ and let $(\tau_e)_{e \in \mathbb{E}^2}$ be an i.i.d.~family with common distribution function $F$, defined on some probability space $(\Omega, \Sigma, \mathbb{P})$, and assigned to the edges of the full lattice $\mathbb{Z}^2$. Any path $\gamma$ in $\mathbb{G}_f$ with edges $e_0, \dots, e_{k-1}$ in order is assigned the passage time $T_f(\gamma) = \sum_{j=0}^{k-1} \tau_{e_j}$, and the passage time between vertices $x,y \in \mathbb{V}_f$ is defined as
\[
T_f(x,y) = \inf_{\gamma : x \to y} T_f(\gamma),
\]
where the infimum is over all paths $\gamma$ in $\mathbb{G}_f$ that start at $x$ and end at $y$. 

Our main object of study is the growth rate of $T_f(x,y)$ as $|x-y| \to \infty$, and how this rate depends on $f$. The usual setting for these questions is full lattice $(\mathbb{Z}^2,\mathbb{E}^2)$ and in that context, the order of the passage time is well-understood. Writing $T = T(x,y)$ for the corresponding passage time in the full lattice $(\mathbb{Z}^2, \mathbb{E}^2)$ and $\mathbf{e}_1 = (1,0)$, one typically defines the \underline{time constant} $\mu$, as
\[
\mu = \lim_{n \to \infty} \frac{T(0,n\mathbf{e}_1)}{n},
\]
which exists almost surely under a mild moment condition on $\tau_e$ \cite[Thm.~2.1]{50_years}, and exists in probability under no moment condition \cite[Thm.~2.23]{50_years}. Kesten showed \cite[Thm.~2.5]{50_years} that the behavior of $T$ is different in three different phases, depending on the relationship between $F(0)$ and $1/2$, the critical probability for bond percolation on $\mathbb{Z}^2$. If $F(0)<1/2$, all components induced by zero-weight edges are subcritical, thus finite, and $T$ grows linearly, so $\mu>0$. If $F(0) >1/2$, there is an infinite component induced by zero-weight edges and $T(0,n\mathbf{e}_1)$ is bounded, giving $\mu=0$. If $F(0) = 1/2$, zero-weight edges form finite but critical clusters, so they exist on all scales, and $\mu=0$. This last, critical, phase was studied in more detail by Zhang \cite{Z99}, who showed that there exist $F$ with $F(0)=1/2$ for which $T(0,n\mathbf{e}_1)$ diverges, and $F$ with $F(0)=1/2$ for which it is stochastically bounded. A necessary and sufficient condition for boundedness was given by Damron-Lam-Wang in \cite[Cor.~1.3]{DLW17}.

Ahlberg \cite[Prop.~3]{A15} was the first to consider FPP on the graphs $\mathbb{G}_f$. In contrast to the situation on the full lattice, he showed that this critical phase expands to an entire interval when $f(u) = a \log(1+u)$:
\begin{thm}[Ahlberg]\label{thm: Ahlberg}
For every $a>0$, the critical probability $p_c(f)$ for bond percolation on $\mathbb{G}_f$ is strictly between $1/2$ and $1$. However, if $F(0)\geq 1/2$ and $\mathbb{E}\tau_e<\infty$, then
\[
\lim_{n \to \infty} \frac{T_f(0,n\mathbf{e}_1)}{n} = 0 \text{ almost surely}.
\]
\end{thm}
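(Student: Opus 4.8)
The plan is to treat the two assertions of Theorem~\ref{thm: Ahlberg} separately, as they are of different character: ``$1/2 < p_c(f) < 1$'' is a statement about Bernoulli percolation on $\mathbb{G}_f$, while ``$T_f(0,n\mathbf{e}_1)/n \to 0$'' is an FPP statement I would deduce from sharp estimates on critical box-crossing times together with a geometric decomposition of the wedge. Throughout, since $f$ is nondecreasing, for each $k$ the graph $\mathbb{G}_f$ contains the genuine rectangle $R_k = [2^k, 2^{k+1}] \times [0, \lfloor a\log(1+2^{k+1})\rfloor]$, of width $w_k = 2^k$ and height $h_k \asymp a k$; these rectangles are the natural building blocks at every scale.

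\textbf{The bound $p_c(f) < 1$.} This is the routine direction, via a Peierls/renormalization argument on the blocks $R_k$. Call $R_k$ \emph{good} if it has an open left--right crossing that, near $x_1 = 2^{k+1}$, joins an open left--right crossing of $R_{k+1}$. Failure forces a top--bottom dual path of length $\ge h_k$ emanating from one of at most $w_k$ vertices, so the standard self-avoiding-path count gives $\mathbb{P}_p(R_k\ \text{not good}) \le C(p)\, w_k\,(3(1-p))^{h_k}$, which is summable in $k$ once $1-p < \tfrac13 e^{-1/a}$. Borel--Cantelli then makes all but finitely many $R_k$ good almost surely, and chaining good blocks produces an infinite open cluster; hence $p_c(f) \le 1 - \tfrac13 e^{-1/a} < 1$.

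\textbf{The bound $p_c(f) > 1/2$ (the crux).} Fix $p>1/2$. If the origin lies in an infinite cluster of $\mathbb{G}_f$ then, since that cluster reaches $\{x_1 \ge 2^{k+1}\}$ for every $k$, there is an open left--right crossing of $R_k$ for every $k$; so it suffices to prove $\sum_k \mathbb{P}_p(R_k\ \text{is left--right crossed}) < \infty$. Tiling $R_k$ into $\asymp w_k/h_k$ disjoint squares of side $h_k$, an open crossing of $R_k$ produces an edge-disjoint occurrence of left--right crossings of all these squares, so by the van den Berg--Kesten inequality
\[
\mathbb{P}_p(R_k\ \text{crossed}) \;\le\; \big(1-\psi_{h_k}(p)\big)^{\lfloor w_k/h_k\rfloor} \;\le\; \exp\!\big(-\psi_{h_k}(p)\,\lfloor w_k/h_k\rfloor\big),
\]
where $\psi_N(p)$ is the probability that an $N\times N$ box has \emph{no} open left--right crossing, equivalently that it is crossed top--bottom by closed edges --- a crossing event for the \emph{subcritical} dual model at density $1-p<1/2$. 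The crude bound $\psi_N(p)\ge(1-p)^N$ already makes $\psi_{h_k}(p)\,w_k/h_k \ge c_p\,2^{k(1-a\log(1/(1-p)))}/h_k$ diverge, hence the series converge, when $a$ is small enough; for general $a>0$ I would instead invoke the classical near-critical ``sponge-crossing'' input --- the subcritical correlation length $\xi(q)$ diverges as $q\uparrow1/2$, and $\psi_N(p)\gtrsim e^{-CN/\xi(1-p)}$ --- so that for $p$ close enough to $1/2$ that $\xi(1-p)>Ca$, the exponent $\psi_{h_k}(p)\,w_k/h_k \gtrsim 2^{k(1-Ca/\xi(1-p))}/h_k$ again diverges super-polynomially. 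Borel--Cantelli gives that a.s.\ only finitely many $R_k$ are crossed; the same holds with the origin replaced by any vertex, so $\mathbb{G}_f$ does not percolate at such $p$, whence $p_c(f) > 1/2$. The main obstacle is exactly this estimate: logarithmic $f$ is borderline, so one cannot avoid quantitative control of how subcritical crossing probabilities of tall thin rectangles behave as the parameter tends to criticality --- precisely the sponge-crossing phenomenon highlighted in the abstract.

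\textbf{The bound $T_f(0,n\mathbf{e}_1)/n \to 0$.} Now $F(0)\ge1/2$, so zero-weight edges are critical or supercritical. Since $f$ is nondecreasing, $\mathbb{G}_f\cap\{x_1\le n\}$ contains, for $k=0,1,\dots$, the genuine rectangle $B_k=[2^{-k-1}n,2^{-k}n]\times[0,\lfloor a\log(1+2^{-k-1}n)\rfloor]$, of width $\asymp 2^{-k}n$ and height $O(\log n)$. Constructing a path from $0$ to $n\mathbf{e}_1$ by crossing $B_0,B_1,\dots$ left--right, inserting $O(\log n)$-length vertical links on the shared lines $x_1=2^{-k-1}n$ and a bounded piece near the origin, and bounding the crossing time of each $B_k$ by that of the $\asymp(\text{width})/(\text{height})$ squares of side equal to its height that tile it, one gets
\[
\mathbb{E}\,T_f(0,n\mathbf{e}_1) \;\le\; C\sum_{k\ge0}\frac{2^{-k}n}{\log n}\,\psi(C\log n)\;+\;O\big((\log n)^2\big),
\]
where $\psi(L):=\mathbb{E}[\,\text{passage time to cross an }L\times L\text{ box in }\mathbb{Z}^2\,]$. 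It is classical --- it underlies Kesten's theorem that $\mu=0$ at $p_c=1/2$, already quoted above --- that $\psi(L)=o(L)$ whenever $F(0)\ge1/2$ (with $\psi(L)=O(L^2 e^{-cL})$ in the supercritical case). The geometric sum is dominated by its first term, so $\mathbb{E}\,T_f(0,n\mathbf{e}_1)\le C'n\,\psi(C\log n)/\log n=o(n)$. To upgrade to an almost sure statement I would, along $n=2^j$, split $\tau_e=(\tau_e\wedge K)+(\tau_e-K)^+$: the bounded-weight passage time concentrates around its mean on a scale $\ll2^j$ (a bounded-differences/martingale estimate), while the sum of the excesses over the $O(2^j\log 2^j)$ edges in the relevant box has mean $O(2^j\log 2^j)\,\mathbb{E}(\tau_e-K)^+$, made $o(2^j)$ by taking $K=K_j\to\infty$ slowly (using $\mathbb{E}\tau_e<\infty$); Borel--Cantelli along $2^j$ together with the near-subadditivity $T_f(0,n\mathbf{e}_1)\le T_f(0,2^j\mathbf{e}_1)+T_f(2^j\mathbf{e}_1,n\mathbf{e}_1)$ then gives $T_f(0,n\mathbf{e}_1)=o(n)$ a.s.\ for all $n$. (When $F(0)\ge p_c(f)$ this is immediate, since then $T_f(0,n\mathbf{e}_1)$ is a.s.\ bounded.)
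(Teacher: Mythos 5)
First, note that Theorem~\ref{thm: Ahlberg} is imported from Ahlberg \cite{A15} and Grimmett \cite{G83}; the paper states it without proof, so there is no in-paper argument to compare against. Judged on its own terms, your treatment of the two percolation bounds is essentially the classical route those references take and that Sec.~\ref{sec: subcritical} later sharpens: a Peierls/renormalization bound for $p_c(f)<1$, and for $p_c(f)>1/2$ the tiling of $[2^k,2^{k+1}]\times[0,\lfloor a\log(1+2^{k+1})\rfloor]$ into order $2^k/(ak)$ squares of side $h_k\asymp ak$, each uncrossed with probability $\psi_{h_k}(p)\gtrsim e^{-h_k/\xi(1-p)}$, so that $\mathbb{P}_p(R_k\text{ crossed})\le(1-\psi_{h_k}(p))^{\lfloor w_k/h_k\rfloor}$ is summable once $a<\xi(1-p)$. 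This is sound (you do not even need BK if you take the tiling squares pairwise disjoint, making the square-crossing events independent), and tracking constants recovers the correct threshold $\xi(1-p_c(f))=a$.

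The FPP part has two genuine gaps. First, the step ``bounding the crossing time of each $B_k$ by that of the squares of side equal to its height that tile it'' is not valid as written: left--right crossings of adjacent tiles meet the shared vertical line at possibly different heights, so their concatenation is not a path, and the crossing time of $B_k$ is not bounded by the sum of the tiles' crossing times. The standard repair is overlapping rectangles plus top--bottom crossings of the overlap squares (forced to intersect the horizontal crossings by planarity); each connector also has expected cost $o(\log n)$ (indeed $O(1)$ at criticality by RSW and BK), so the final bound $\mathbb{E}T_f(0,n\mathbf{e}_1)=O(n/\log n)$ survives, but the displayed inequality needs these terms. The paper's own machinery avoids path construction altogether via the dual separating-set identity \eqref{eq: separating_claim}--\eqref{eq: new_Y_n}, which is how Thm.~\ref{thm: main_mean} obtains the matching order $n/(a\log n)$. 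Second, the almost-sure upgrade: you are right that monotonicity plus $\mathbb{E}T=o(n)$ is insufficient, but your truncation controls $\sum_e(\tau_e-K_n)^+$ over all $O(n\log n)$ edges of the box, which requires $\mathbb{E}(\tau_e-K_n)^+=o(1/\log n)$ for a level $K_n$ small enough for the Azuma step (roughly $K_n\ll\sqrt{n/\log n}$). This does not follow from $\mathbb{E}\tau_e<\infty$ alone: if $\mathbb{P}(\tau_e>t)\asymp 1/(t\log^2 t)$ then $\mathbb{E}(\tau_e-K)^+\asymp 1/\log K$, and for any polynomial $K_n$ the excess term is of order $n$, not $o(n)$. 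One must instead bound the excess only along the selected path --- e.g.\ using that the Bernoulli-optimal path is independent of the conditional weights, as in the coupling at the start of Sec.~\ref{sec: mean_proof}, so that only the $o(n)$ many closed edges on it contribute --- or use a different concentration route.
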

\noindent
The theorem in \cite{A15} is stated with a different moment condition and with $n\mathbf{e}_1$ replaced by $n \mathbf{e}_1+\mathbf{e}_2$, but his methods apply to the above modification. The fact that $1/2 < p_c(f) < 1$ was originally found by Grimmett \cite{G83} (we explain this in detail in the next section), so the main content of the theorem is that for $F(0) \in (1/2,p_c(f))$, even though all zero-weight components are subcritical, the growth for $T_f$ is at most sublinear, unlike the linear growth of the subcritical phase on $\mathbb{Z}^2$. An obvious question follows from Ahlberg's result: what is the true growth rate of $T_f$ in these cases? The main contribution of this paper is to answer this question in the case of distribution functions $F$ with a ``gap'' near zero; that is, those $F$ satisfying \eqref{eq: gap_var_condition_1}. In Thm.~\ref{thm: main_mean}, we consider the more general 
\begin{equation}\label{eq: our_f}
f(u) = a \log(1+u) + b \log(1+ \log(1+u)) \text{ for } a > 0 \text{ and } b \geq 0,
\end{equation}
and provide asymptotics for the expected passage time for all  values of $F(0)$, $a$, and $b$, with constant prefactors that depend precisely on $a$ and $b$. We find that the growth depends on the relationship between $a,b,$ and the correlation length $\xi$ for subcritical percolation on $\mathbb{Z}^2$. In Thm.~\ref{thm: main_variance}, we prove asymptotics for the variance of the passage time which are of the same order as those for the mean, and give a central limit theorem, both for values of $a,b$ such that the expected value of $T_f$ diverges. This second result is a specific case of Thm.~\ref{thm: updated_KZ_variance} and Thm.~\ref{thm: updated_KZ_CLT}, valid for a larger class of functions $f$ satisfying assumptions A1-A3. A major tool in our proofs is Prop.~\ref{prop: rectangle_crossing_probability}, which gives asymptotics for rectangle crossing probabilities in subcritical Bernoulli percolation. These estimates, which use work of Campanino-Chayes-Chayes in \cite{CCC91}, also imply an improvement to ``sponge crossing'' dimension results of Grimmett from \cite{G81} from '81 in Cor.~\ref{cor: sponge}. Such connectivity results can be used directly for our study of FPP because of duality due to our assumed gap condition for $F$. We leave the case of general $F$ without a gap to future study.

\subsubsection{Percolation on wedge graphs}\label{sec: percolation_history}
The more general function $f$ in \eqref{eq: our_f} was introduced in an example of a discontinuous percolation phase transition on $\mathbb{G}_f$ by Chayes-Chayes \cite{CC86}. To explain this, and to describe the correlation length that we will use throughout, we now give background on percolation on $\mathbb{G}_f$. Wedge graphs were first considered by Grimmett \cite{G83}, where he explicitly solved for the critical probability of Bernoulli percolation on $\mathbb{G}_f$ when $f$ grows logarithmically. For a fixed $p \in [0,1]$, we declare each nearest neighbor edge of $\mathbb{Z}^2$ to be \underline{open} with probability $p$ and \underline{closed} with probability $1-p$, independently from edge to edge. Writing $\mathbb{P}_p$ for the probability measure corresponding to parameter $p$, the function
\[
\theta_f(p) = \mathbb{P}_p\left( 0 \text{ is in an infinite self-avoiding path of open edges in } \mathbb{G}_f\right)
\]
equals zero when $p$ is small and increases to 1 as $p \uparrow 1$. There is a critical value
\[
p_c(f) = \sup\{ p \in [0,1] : \theta_f(p) = 0\}
\]
satisfying $1/2 \leq p_c(f) \leq 1$ for any $f$, the lower bound following from the fact that the critical probability for Bernoulli percolation in $\mathbb{Z}^2$ is $1/2$ \cite{kesten_pc}.

The results of \cite{G83} give a formula for $p_c(f)$ in the case that $f(u) = a \log(1+u)$ for $a \in (0,\infty)$ and involve the \underline{correlation length} $\xi$, defined as
\begin{equation}\label{eq: correlation_length_def}
\xi(p) = \left( \lim_{n \to \infty} -\frac{1}{n} \log \mathbb{P}_p(0 \leftrightarrow n\mathbf{e}_1)\right)^{-1}.
\end{equation}
Here, $\{0 \leftrightarrow n\mathbf{e}_1\}$ is the event that there is a path of open edges in the full lattice $\mathbb{Z}^2$ connecting $0$ and $n\mathbf{e}_1 = (n,0)$, and the definition of $\xi(p)$ we give here is different, but equivalent to that in \cite{G83}. It is known that this limit exists and that $0 < \xi(p) < \infty$ for $p \in (0,1/2)$. Using $f(u) = a \log (1+u)$, \cite{G83} proves that
\[
\xi(1-p_c(f)) = a.
\]
Using properties of $\xi$, one can therefore show that $p_c(f)$ is a continuous and strictly decreasing function of $a$ satisfying $p_c(f) \uparrow 1$ as $a \downarrow 0$ and $p_c(f) \downarrow 1/2$ as $a \uparrow \infty$. A generalization of this result appears in \cite[Thm.~11.55]{grimmettbook} (see also the earlier \cite[Thm.~5.2]{CC86}) for functions $f$ satisfying $f(u)/\log u \to a$ as $u \to \infty$. In \cite{CC86}, Chayes-Chayes show that it is possible to choose $f$ in such a way that $\theta_f$ undergoes a discontinuous transition. Namely, as noted in \cite[p.~306]{grimmettbook}, if $f$ is given by \eqref{eq: our_f}, then for $b > 2a$, we have $\theta_f(p_c(f)) > 0$. (We improve this to $\theta_f(p_c(f)) > 0$ when $b >a$ and $\theta_f(p_c(f)) = 0$ when $b \leq a$; see Cor.~\ref{cor: phase_transition} below.) This result is in contrast to the (expected) continuous phase transition for percolation on $\mathbb{Z}^d$, currently proved only for $d=2$ \cite[Thm.~1]{kesten_pc} and $d \geq 11$ \cite[Cor.~1.3]{FH17}, but believed to be true for all $d \geq 2$.

In \cite{CC86}, Chayes-Chayes further analyze the phase diagram for percolation on (graphs very similar to) $\mathbb{G}_f$ for various $f$. They define the limit
\[
m_f(p) = \lim_{n \to \infty} -\frac{1}{n} \log \mathbb{P}_p(0 \leftrightarrow n\mathbf{e}_1 \text{ on } \mathbb{G}_f)
\]
and corresponding limit $m(p)$ for connection on the full lattice $\mathbb{Z}^2$. These $m_f(p)$ and $m(p)$ are reciprocals of correlation lengths like \eqref{eq: correlation_length_def}. Their argument in \cite[Thm.~3.1]{CC86} shows that $m_f(p) = m(p)$ for all $p \in [0,1]$ so long as $f(u) \to \infty$ as $u \to \infty$. Therefore their ``high temperature critical point''
\[
\pi_c(f) = \sup\{p \in [0,1] : m_f(p) > 0\}
\]
is equal to the corresponding critical point $\pi_c = 1/2$ for the full lattice. On the other side, however, the usual critical points $p_c(f)$ and $p_c$, which they call ``low temperature critical points,'' are different when $f$ grows logarithmically. In total, for logarithmic $f$, we have $\pi_c(f) = \pi_c = p_c = 1/2 < p_c(f)$ and there are three phases: a high temperature phase $(p < 1/2)$ where $\mathbb{P}_p(0 \leftrightarrow n\mathbf{e}_1 \text{ on } \mathbb{G}_f)$ decays to zero exponentially, an intermediate phase $(1/2 < p < p_c(f))$ where it decays to zero subexponentially, and a low temperature phase $(p_c(f) < p)$, where it is bounded away from zero.

There is a similar situation in FPP, and the corresponding quantities are the time constant and the probability that the passage time is zero. Namely, one can define the limit
\[
\mu_f = \lim_{n \to \infty} \frac{1}{n} \mathbb{E}T_f(0,n\mathbf{e}_1)
\]
and prove (see the discussion before \cite[Prop.~3]{A15} and also both \cite[Prop.~8]{A15} and \cite[Eq.~(3.4)]{CC86}) that $\mu_f = \mu$ so long as $f(u) \to \infty$ as $u \to \infty$. Because $\mu> 0$ if and only if $F(0) < 1/2$, the corresponding high temperature critical point, where $\mu$ transitions from positive to zero, is $1/2$ both for FPP on $\mathbb{Z}^2$ and for FPP on $\mathbb{G}_f$, assuming $f$ diverges. Regarding low temperature, $\liminf_{n \to \infty} \mathbb{P}(T_f(0,n\mathbf{e}_1) = 0)>0$ exactly when there is positive probability for an infinite component of zero-weight edges in $\mathbb{G}_f$, so like for percolation, the critical point is $p_c(f)$. Therefore if $f$ grows logarithmically, the low and high temperature critical points are different for FPP on $\mathbb{G}_f$. There are consequently also three phases: a high temperature phase ($F(0) < 1/2$) where $\mathbb{E} T_f(0,n\mathbf{e}_1)$ grows linearly, an intermediate phase ($1/2 < F(0) < p_c(f)$) where it grows sublinearly but diverges, and a low temperature phase $(p_c(f)<F(0))$ where it is bounded.

\subsection{Main results}

\subsubsection{Results for logarithmic wedges}

Our main results focus on the specific function $f$ satisfying \eqref{eq: f_conditions} given in \eqref{eq: our_f}. In this case, we write $\mathbb{G}_{a,b} = (\mathbb{V}_{a,b},\mathbb{E}_{a,b})$ for the graph $\mathbb{G}_f$ and $T_{a,b}$ for the passage time $T_f$. The weights $(\tau_e)$ will be assumed to have distribution function $F$ satisfying the ``gap'' condition
\begin{equation}\label{eq: gap_var_condition_1}
F(0^-) = 0 \text{ and } F(0) = F(\delta) = p \text{ for some } \delta>0 \text{ and } p \in \bigg[ \frac{1}{2},1\bigg),
\end{equation}
and the moment condition
\begin{equation}\label{eq: finite_mean_tau}
\int x~\text{d}F(x) < \infty.
\end{equation}

Our first result gives asymptotics for the expected passage time $\mathbb{E} T_{a,b}(0,P(n))$, where
\[
P(n) = \{(x_1,x_2) \in \mathbb{Z}^2 : x_1 = n\}.
\]
To state it, we will use the notation $f_1(n) \lesssim f_2(n)$ to mean that $\limsup_{n \to \infty} f_1(n)/f_2(n) \leq 1$.  If $x \in \mathbb{V}_{a,b}$ and $S \subset \mathbb{Z}^2$ with $S \cap \mathbb{V}_{a,b} \neq \emptyset$, we write $T_{a,b}(x,S)$ for $\inf_{y \in S \cap \mathbb{V}_{a,b}} T_{a,b}(x,y)$. Similarly for $S' \subset \mathbb{Z}^2$ with $S' \cap \mathbb{V}_{a,b} \neq \emptyset$, we write $T_{a,b}(S,S') = \inf_{x \in S \cap \mathbb{V}_{a,b}} T_{a,b}(x,S')$.

\begin{thm}\label{thm: main_mean}
Suppose $F$ satisfies \eqref{eq: gap_var_condition_1} and \eqref{eq: finite_mean_tau}. There exist $\Cl[smc]{c: ab_lower_bound}, \Cl[lgc]{c: ab_upper_bound} > 0$ such that for all $a > 0$ and $b \geq 0$, the following hold.
\begin{enumerate}
\item If $p = 1/2$, then
\[
\Cr{c: ab_lower_bound} \frac{n}{a \log n} \lesssim \mathbb{E}T_{a,b}(0,
P(n)) \lesssim \Cr{c: ab_upper_bound} \frac{n}{a \log n}.
\]
\item If $p > 1/2$, then
\begin{enumerate}
\item if $a < \xi(1-p)$, then
\[
\frac{\Cr{c: ab_lower_bound} }{\xi(1-p)-a}  \cdot \frac{n^{1- \frac{a}{\xi(1-p)}}}{\left( \log n \right)^{\frac{b}{\xi(1-p)}}} \lesssim \mathbb{E} T_{a,b}(0,P(n)) \lesssim \frac{\Cr{c: ab_upper_bound}}{\xi(1-p)-a}  \cdot \frac{n^{1- \frac{a}{\xi(1-p)}}}{\left( \log n \right)^{\frac{b}{\xi(1-p)}}},
\]
\item if $b < a = \xi(1-p)$, then
\[
\frac{\Cr{c: ab_lower_bound}}{\xi(1-p)-b} \left( \log n \right)^{1- \frac{b}{\xi(1-p)}} \lesssim \mathbb{E} T_{a,b}(0,P(n)) \lesssim \frac{\Cr{c: ab_upper_bound}}{\xi(1-p)-b} \left( \log n \right)^{1- \frac{b}{\xi(1-p)}},
\]
\item and if $b = a = \xi(1-p)$, then
\[
\Cr{c: ab_lower_bound} \log \log n \lesssim \mathbb{E} T_{a,b}(0,P(n)) \lesssim \Cr{c: ab_upper_bound} \log \log n.
\]
\end{enumerate}
\item Otherwise, $\sup_n \mathbb{E}T_{a,b}(0,P(n)) < \infty$.
\end{enumerate}
\end{thm}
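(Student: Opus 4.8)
\noindent\textbf{Proof strategy for Theorem~\ref{thm: main_mean}.}
The gap condition turns the problem into one about Bernoulli percolation. By \eqref{eq: gap_var_condition_1} each $\tau_e$ is almost surely either $0$ or larger than $\delta$, so the zero-weight edges form a Bernoulli$(p)$ bond configuration on $\mathbb{Z}^2$; call the remaining edges \emph{closed}. A path from $0$ to $P(n)$ in $\mathbb{G}_{a,b}$ pays at least $\delta$ for each closed edge it uses, and by planar duality (shortest paths correspond to minimum cuts in the planar dual) the minimum number of closed edges on such a path equals $N_n$, the maximum number of edge-disjoint sets of closed edges forming a top-to-bottom dual crossing of the part of $\mathbb{G}_{a,b}$ in columns $0,\dots,n$. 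Hence $\mathbb{E}T_{a,b}(0,P(n))\ge\delta\,\mathbb{E}N_n$. For the matching upper bound I would build a path by exploring the zero-weight clusters from left to right: moving inside a cluster is free, and whenever the current cluster fails to reach the right edge of the current block (defined below) one crosses a single closed edge into the neighbouring cluster. By \eqref{eq: finite_mean_tau} the conditional mean of a closed edge is a finite constant $\bar\tau$, so $\mathbb{E}T_{a,b}(0,P(n))\le\bar\tau\,\mathbb{E}[\#\{\text{clusters traversed}\}]$, and the number of clusters traversed is at most a constant times the number of closed top-to-bottom dual crossings in columns $0,\dots,n$.

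\emph{Block decomposition and crossing probabilities.} Partition the columns $0,\dots,n$ into consecutive blocks $B_j=\{\ell_j,\dots,\ell_{j+1}\}$ on which $f$ is essentially constant, $\approx h_j:=f(\ell_j)$, and apply Prop.~\ref{prop: rectangle_crossing_probability} to the rectangle $B_j\times[0,h_j]$. When $p>1/2$ the closed edges are subcritical with correlation length $\xi(1-p)$, and the proposition gives, up to constants uniform in $a,b,j$,
\[
\mathbb{P}\big(B_j\text{ has a closed top-to-bottom crossing}\big)\ \asymp\ \min\big\{1,\ (\ell_{j+1}-\ell_j)\,e^{-h_j/\xi(1-p)}\big\}.
\]
Choosing block widths $\ell_{j+1}-\ell_j\asymp e^{h_j/\xi(1-p)}$ makes every block ``bad'' (no left--right open crossing) with probability bounded away from $0$ and $1$, uniformly; with this choice the expected number of closed top-to-bottom crossings inside a single block is $O(1)$, which is what makes the exploration in the upper bound efficient. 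When $p=1/2$ the closed edges are critical and the Russo--Seymour--Welsh estimates give the same dichotomy with $\ell_{j+1}-\ell_j\asymp h_j$. In all cases $\mathbb{E}N_n$ and $\mathbb{E}[\#\{\text{closed top-to-bottom crossings}\}]$ are comparable to the number of blocks covering columns $0,\dots,n$, i.e.\ to $\sum_j e^{-h_j/\xi(1-p)}$ when $p>1/2$ and to $\sum_j 1/h_j$ when $p=1/2$.

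\emph{Summation.} With $f(x)=a\log(1+x)+b\log(1+\log(1+x))$ one has $\sum_j e^{-h_j/\xi(1-p)}\asymp\int_1^n x^{-a/\xi(1-p)}(\log x)^{-b/\xi(1-p)}\,dx$ and $\sum_j 1/h_j\asymp\int_1^n\tfrac{dx}{a\log x}$. Writing $s=a/\xi(1-p)$ and $t=b/\xi(1-p)$: if $s<1$ then $\int_1^n x^{-s}(\log x)^{-t}\,dx\sim\frac{1}{1-s}\frac{n^{1-s}}{(\log n)^t}=\frac{\xi(1-p)}{\xi(1-p)-a}\frac{n^{1-a/\xi(1-p)}}{(\log n)^{b/\xi(1-p)}}$ (case~2(a)); if $s=1>t$ then $\int_1^n x^{-1}(\log x)^{-t}\,dx\sim\frac{1}{1-t}(\log n)^{1-t}=\frac{\xi(1-p)}{\xi(1-p)-b}(\log n)^{1-b/\xi(1-p)}$ (case~2(b)); if $s=t=1$ it is $\sim\log\log n$ (case~2(c)); if $s>1$, or $s=1$ and $t>1$, it converges (case~3); and $\int_1^n\tfrac{dx}{a\log x}\sim\frac{n}{a\log n}$ (case~1). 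The prefactors $1/(\xi(1-p)-a)$, $1/(\xi(1-p)-b)$ and $1/a$ in the statement are exactly the $a,b$-dependent factors that survive, the bounded quantities $\xi(1-p)$, $\delta$, and $\bar\tau$ being absorbed into the universal constants $\mathsf{c},\mathsf{C}$.

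\emph{Main obstacle.} The duality reduction and the integral asymptotics are routine; the difficulty lies in two places. First, the upper bound: a block that admits a left--right open crossing may still fail to be crossed by the particular cluster carried in from the left, so one must control the \emph{number of distinct clusters} traversed inside each block --- this is precisely where the $O(1)$ bound on the expected number of closed crossings of a block at width $\asymp e^{h/\xi(1-p)}$ is used --- and ensure the exploration path does not backtrack or re-cross barriers. Second, the input Prop.~\ref{prop: rectangle_crossing_probability} must deliver the \emph{sharp} exponential rate $1/\xi(1-p)$, not merely a bound of the form $e^{-ch}$, with constants uniform over the (diverging) aspect ratios $e^{h/\xi(1-p)}/h$ that arise, and must be compatible with the RSW estimates governing the $p=1/2$ endpoint; this is the role of the Campanino--Chayes--Chayes analysis underlying that proposition.
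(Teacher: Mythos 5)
Your proposal is correct and follows essentially the same route as the paper: the gap condition reduces the problem to Bernoulli FPP, planar duality identifies $T_{a,b}^B(0,P(n))$ with the maximal number of edge-disjoint closed dual crossings from the upper boundary $H_{a,b}^\ast(n)$ to the bottom $L_{a,b}^\ast(n)$, the level sets $\ell_j = \lceil f^{-1}(j)\rceil$ decompose the wedge into rectangles of height $j$ and width $\ell_{j+1}-\ell_j \asymp e^{j/a}(j/a)^{-b/a}$ whose crossing counts are controlled by Prop.~\ref{prop: point_to_plane_probability} and Cor.~\ref{cor: X_n_bounds_subcritical} (and by RSW plus the BK inequality when $p=1/2$), and the same Riemann-sum/integral asymptotics produce the stated prefactors. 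The one place your sketch needs care is the lower bound: a closed dual top-down crossing of a width-$e^{h/\xi(1-p)}$ block is a separating set only if the block's top side lies along the ``highest path'' bounding $\mathbb{G}_{a,b}(n)$, so your blocks must be taken as sub-blocks of the level-$j$ strips (or, as the paper does, one keeps the full level-$j$ rectangle $R_{a,b}^\ast(n;j)$ with top side $H_{a,b}^\ast(n;j)$ and invokes the expectation lower bound of Cor.~\ref{cor: X_n_bounds_subcritical}, which captures the possibly many crossings per level when $a<\xi(1-p)$ rather than a single ``bad block'' indicator).
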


We observe that $\mathbb{E}T_{a,b}(0,P(n)) \geq \mathbb{E}\tau_e$ because each path from $0$ to $P(n)$ in $\mathbb{G}_{a,b}$ contains the edge $\{0,\mathbf{e}_1\}$ (due to \eqref{eq: f_conditions}). Furthermore $\mathbb{E}T_{a,b}(0,P(n)) \leq n \mathbb{E}\tau_e$ by considering the horizontal path starting at 0. Therefore the condition \eqref{eq: finite_mean_tau} is necessary for the statement of Thm.~\ref{thm: main_mean}.

Although the estimates in Thm.~\ref{thm: main_mean} have constant factors that depend precisely on $a$ and $b$, it would still be of interest to show even stronger asymptotics. For example, if $p>1/2$ and $b=a=\xi(1-p)$, one could try to show that $\mathbb{E}T_{a,b}(0,P(n)) / \log \log n$ converges as $n \to \infty$ to a $p$-dependent constant (and similar statements for the other cases). This problem seems approachable for site-FPP on the triangular lattice in the case $p=1/2$, as Yao \cite[Thm.~1.1]{Y18} has already shown a law of large numbers for the passage time in critical FPP on the full lattice, and Jiang-Yao \cite[Cor.~1]{JY19} have shown corresponding results for sectors. An improvement in the case $p>1/2$ would seem to require stronger asymptotics for the expected number of left-right open crossings of tall thin rectangles in subcritical percolation than those we give in Cor.~\ref{cor: X_n_bounds_subcritical}. Our bounds use the point-to-plane connection probability from Prop.~\ref{prop: point_to_plane_probability}, which is estimated from below and  above by the functions $\mathbb{H}_n(p)$ and $\mathbb{G}_n(p)$ respectively. Although the authors of \cite{CCC91} show strong asymptotics for both of these functions in \cite[Thm.~4.4(I)]{CCC91} and \cite[Thm.~6.2(I)]{CCC91}, their $p$-dependent constant prefactors are different. 

\begin{rem}
The function $f$ from \eqref{eq: our_f} nonnegative and increasing for $u \geq 0$ when $a > 0$ and $b \geq -a$. Although Thm.~\ref{thm: main_mean} is stated for $a > 0$ and $b \geq 0$, the conclusions still hold when $b \geq -a$. For $b < -a$, $f$ has a global minimum at $u=\exp(b/a - 1)-1$ that is negative, but $f$ is nonnegative and increasing for $u$ at least equal to some $u_{a,b}$. In that case, one can still consider the passage time between $(m,0)$ and $P(n)$ for fixed $m \geq u_{a,b}$ and prove a version of Thm.~\ref{thm: main_mean}. The degree of $(m,0)$ in $\mathbb{G}_f$ may not be one and therefore the optimal moment condition can be weaker.
\end{rem}

\begin{rem}
One can consider FPP on the graph $\mathbb{G}_f$ when $f$ is defined using more iterated logarithms, for example $f(u) = a\log(1+u) + b \log (1+ \log (1+u)) + c \log (1+\log(1 + \log(1+u)))$. The methods of this paper, along with more complicated computations, can be used to give asymptotics for $\mathbb{E}T_f(0,P(n))$ for such $f$ as well.
\end{rem}

\begin{rem}\label{rem: subcritical}
Thm.~\ref{thm: main_mean} deals with the case $p \in [1/2,1)$. To deal with smaller $p$, observe that $|T_{a,b}(0,P(n)) - T_{a,b}(0,n\mathbf{e}_1)|$ is no larger than the sum of $\tau_e$ for $e$ with both endpoints in $\mathbb{V}_{a,b} \cap P(n)$. This implies that $\mathbb{E}T_{a,b}(0,P(n))/n$ and $\mathbb{E}T_{a,b}(0,n\mathbf{e}_1)/n$ have the same limit. Because the latter has limit $\mu>0$ when $p<1/2$ (from Sec.~\ref{sec: percolation_history}), we conclude that in this case, $\mathbb{E}T_{a,b}(0,P(n))$ grows linearly. 
\end{rem}

The second result gives asymptotics for the variance of $T_{a,b}(0,P(n))$ and a CLT in the case that the mean diverges. We will assume the moment condition
\begin{equation}\label{eq: gap_var_condition_2}
\int x^\eta~\text{d}F(x) < \infty \text{ for some } \eta > 4.
\end{equation}

\begin{thm}\label{thm: main_variance}
Suppose $F$ satisfies \eqref{eq: gap_var_condition_1} and \eqref{eq: gap_var_condition_2}. For all $a>0$ and $b \geq 0$ such that $\mathbb{E}T_{a,b}(0,P(n)) \to \infty$ as $n \to \infty$, we have
\[
\frac{T_{a,b}(0,P(n))- \mathbb{E}T_{a,b}(0,P(n))}{\sqrt{\mathrm{Var}~T_{a,b}(0,P(n))}} \Rightarrow N(0,1),
\]
where $N(0,1)$ denotes the standard normal distribution. Furthermore, there exist $\Cl[smc]{c: var_lower_main}, \Cl[lgc]{c: var_upper_main}$ such that for all such $a,b$,
\begin{equation}\label{eq: variance_asymptotic_main}
\Cr{c: var_lower_main} \mathbb{E}T_{a,b}(0,P(n)) \lesssim \mathrm{Var}~T_{a,b}(0,P(n)) \lesssim \Cr{c: var_upper_main} \mathbb{E}T_{a,b}(0,P(n)).
\end{equation}
\end{thm}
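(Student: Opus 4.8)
The plan is to obtain both the central limit theorem and the variance bounds \eqref{eq: variance_asymptotic_main} from a single representation of $T_{a,b}(0,P(n))$ as a sum of independent contributions coming from the successive "blocks" of the wedge $\mathbb{G}_{a,b}$, in the spirit of the Kesten--Zhang martingale/CLT machinery (and the more general Thm.~\ref{thm: updated_KZ_variance} and Thm.~\ref{thm: updated_KZ_CLT} that this is said to specialize). First I would cut the half-line $\{0,1,\dots,n\}$ of first coordinates into consecutive intervals $I_k=[\ell_{k-1},\ell_k)$ chosen on the geometric/doubling scale dictated by $f$ (so that $f$ is roughly constant, of size $\sim a\log \ell_k + b\log\log \ell_k$, on each $I_k$), and let $\Delta_k$ be the increment of the passage time contributed by the vertical strip $\{\ell_{k-1}\le x_1 < \ell_k\}$ — formally $\Delta_k = T_{a,b}(0,P(\ell_k)) - T_{a,b}(0,P(\ell_{k-1}))$. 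Using the gap condition \eqref{eq: gap_var_condition_1} and planar duality, crossing a strip of height $h_k\approx f(\ell_k)$ and width $|I_k|$ costs $\Theta(1)$ in expectation precisely when a subcritical dual crossing is forced, and the relevant crossing probability is governed by Prop.~\ref{prop: rectangle_crossing_probability}/Prop.~\ref{prop: point_to_plane_probability}; the number of strips that actually contribute nontrivially, summed against these probabilities, reproduces exactly the growth rates of Thm.~\ref{thm: main_mean}. The key input is that each $\Delta_k$ is (up to a controlled boundary correction) a function of the weights in its own strip, hence the $\Delta_k$ are "almost independent," and both $\sum_k \mathbb{E}\Delta_k$ and $\sum_k \operatorname{Var}\Delta_k$ are of the same order, namely the order of $\mathbb{E}T_{a,b}(0,P(n))$.

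For the variance bounds \eqref{eq: variance_asymptotic_main}: the upper bound follows from a martingale / Efron--Stein type estimate — writing $T_{a,b}(0,P(n))$ as a martingale with respect to the filtration generated by the edge weights revealed strip-by-strip (or edge-by-edge), each increment is bounded by the weight of a single pivotal edge times the indicator that the edge is used by some geodesic, and summing the conditional second moments gives $\operatorname{Var} T_{a,b}(0,P(n)) \le \Cr{c: var_upper_main}\sum_k \mathbb{E}\Delta_k \asymp \mathbb{E}T_{a,b}(0,P(n))$, using the moment condition \eqref{eq: gap_var_condition_2} only to control the tail of $\tau_e$ beyond the gap. The matching lower bound is where the gap condition earns its keep: on a positive fraction of the contributing strips, conditionally on the configuration outside the strip, the increment $\Delta_k$ takes at least two distinct values separated by at least $\delta$ with probabilities bounded below (a dual crossing of the strip is present or absent with comparable probability), so $\operatorname{Var}(\Delta_k \mid \mathcal{F}_{\neq k}) \ge c\delta^2$ on an event of probability $\ge c$; summing these conditional variances and using near-independence of the strips gives $\operatorname{Var} T_{a,b}(0,P(n)) \ge \Cr{c: var_lower_main}\sum_k \mathbb{E}\Delta_k \asymp \mathbb{E}T_{a,b}(0,P(n))$.

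For the CLT: with the decomposition $T_{a,b}(0,P(n)) = \sum_k \Delta_k + (\text{error})$ and the number of genuinely contributing strips tending to infinity (this is exactly the hypothesis $\mathbb{E}T_{a,b}(0,P(n))\to\infty$, which rules out cases (2) where only finitely many strips matter and case (3)), I would run a Lindeberg-type CLT for the sum of the nearly-independent, uniformly-small-in-relative-size increments $\Delta_k$, after correcting for the weak dependence by a blocking argument (group strips into alternating "big" and "small" super-blocks, discard the small ones, and show the big ones are asymptotically independent because they are separated by wide corridors whose weights control the interaction). The Lindeberg condition reduces to: $\max_k \operatorname{Var}\Delta_k = o(\operatorname{Var} T_{a,b}(0,P(n)))$ and a uniform-integrability bound on $\Delta_k^2$, both of which come from \eqref{eq: gap_var_condition_2} together with the fact that the per-strip contributions are individually $O(1)$-ish while their number diverges.

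The main obstacle I anticipate is not the CLT skeleton but establishing that the contribution of each strip really is well-approximated by an independent random variable depending only on that strip's weights — i.e., controlling how far a geodesic can wander out of its strip and quantifying the dependence of $\Delta_k$ on distant weights — and, in tandem, making the strip-crossing cost estimates precise enough (via Prop.~\ref{prop: rectangle_crossing_probability} and duality under \eqref{eq: gap_var_condition_1}) that $\sum_k \mathbb{E}\Delta_k$ and $\sum_k \operatorname{Var}\Delta_k$ match the growth rates of Thm.~\ref{thm: main_mean} with the correct orders. This is exactly the technical heart that the general results Thm.~\ref{thm: updated_KZ_variance} and Thm.~\ref{thm: updated_KZ_CLT} are designed to package, so in practice I would verify that $f$ from \eqref{eq: our_f} satisfies the assumptions A1--A3 of those theorems and that the divergence hypothesis here corresponds to their hypothesis that the mean diverges, and then invoke them.
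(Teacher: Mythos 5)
Your overall strategy --- reduce to Thm.~\ref{thm: updated_KZ_variance} and Thm.~\ref{thm: updated_KZ_CLT} by verifying A1--A3 for the $f$ of \eqref{eq: our_f} --- is exactly the paper's route. But the one concrete construction you offer for carrying this out is wrong, and the verification you defer to ``in practice'' is the entire content of the paper's proof, so this is a genuine gap rather than a stylistic difference. Specifically, the blocks $I_k=[\ell_{k-1},\ell_k)$ do \emph{not} in general satisfy A1--A3, and your claim that crossing such a strip ``costs $\Theta(1)$ in expectation'' fails in all but one sub-case. When $p>1/2$ and $a<\xi(1-p)$, the expected number of closed dual top-down crossings of the strip between $\ell_j$ and $\ell_{j+1}$ is of order $(\ell_{j+1}-\ell_j)e^{-(j+1)/\xi(1-p)}\sim e^{j(1/a-1/\xi(1-p))}/j^{b/a}$, which diverges with $j$, so A2 fails on this scale; when $0<b\le a=\xi(1-p)$ the same quantity tends to zero, so A3 fails (the increment per strip is $o(1)$, not bounded below in probability). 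Only for $b=0$, $a=\xi(1-p)$ (and, after rescaling height, for $p=1/2$) is your scale the right one.

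The actual work is to choose the block boundaries $(r_i)$ case by case so that each block carries probability-bounded-away-from-$0$-and-$1$ crossing events: for $p=1/2$ one subdivides $[\ell_j,\ell_{j+1})$ into pieces of width comparable to the height $j$ and uses RSW; for $a<\xi(1-p)$ one subdivides into pieces of width $\lceil e^{(j+1)/\xi(1-p)}\rceil$ and uses Prop.~\ref{prop: rectangle_crossing_probability}; and for $0<b\le a=\xi(1-p)$ one must go the other way and \emph{merge} roughly $j^{b/\xi(1-p)}$ consecutive intervals $[\ell_j,\ell_{j+1})$ into a single block, proving A3 via a Poisson limit theorem for the (independent, individually rare) crossing events of the constituent rectangles. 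None of this is visible in your sketch, and your Lindeberg/blocking discussion of the CLT, as well as the claim that the divergence hypothesis ``corresponds to'' A1--A3, presuppose that a valid sequence $(r_i)$ has already been produced. (Your Efron--Stein suggestion for the variance upper bound is a plausible alternative in principle, but the matching lower bound again rests on A3 for correctly sized blocks, so it does not circumvent the construction.)
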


\begin{rem}
Under the assumptions of Thm.~\ref{thm: main_variance}, if $a > 0$ and $b \geq 0$ are such that $\sup_n \mathbb{E}T_{a,b}(0,P(n)) < \infty$  (as in case 3 of Thm.~\ref{thm: main_mean}), then also $\sup_n \mathrm{Var}~T_{a,b}(0,P(n)) < \infty$. This statement follows from the argument in the proof of Lem.~\ref{lem: plane_to_plane_estimate}, and we briefly outline it here. Writing $T_{a,b}^B$ for the coupled Bernoulli passage time defined at the beginning of Sec.~\ref{sec: mean_proof}, for such $a,b$, there must be $K = K_{a,b}$ such that $\mathbb{E}T_{a,b}^B(0,P(n)) \leq K$ for all $n$. By representing $T_{a,b}^B(0,P(n))$ in terms of disjoint closed separating sets as in \eqref{eq: more_separating_equivalence} and applying the BK inequality, we obtain $C,c>0$ such that for all $n$ and all integers $z$,
\[
\mathbb{P}(T_{a,b}^B(0,P(n)) \geq z) \leq C \exp\left( - c \frac{z}{K}\right).
\]
(Compare to \eqref{eq: z_bound}.) This implies that all moments of $T_{a,b}^B(0,P(n))$ are bounded. We apply this in the inequality 
\begin{align*}
\mathbb{P}(T_{a,b}(0,P(n)) \geq y) &\leq \mathbb{P}(T_{a,b}^B(0,P(n)) \geq z) \\
&+ \mathbb{P}(T_{a,b}(0,P(n)) < z, T_{a,b}(0,P(n)) \geq y)
\end{align*}
for integer $z$ and real $y \geq 0$ (see \eqref{eq: plane_to_plane_term_1} and \eqref{eq: plane_to_plane_term_2}) and follow the argument in the proof of Lem.~\ref{lem: plane_to_plane_estimate} step-by-step from \eqref{eq: z_bound} onward. This reasoning results in the inequality
\[
\mathbb{P}(T_{a,b}(0,P(n)) \geq y) \leq C \exp\left( - c \frac{y}{K}\right) + \frac{C}{y^{\frac{\eta}{2}}}
\]
for some $c,C>0$, all $y \geq 0$, and all $n$, which corresponds to that in the statement of Lem.~\ref{lem: plane_to_plane_estimate}. Because $\eta$ is assumed to be $>4$, this suffices to show that $T_{a,b}(0,P(n))$ has uniformly bounded second moment.
\end{rem}

\begin{rem}\label{rem: moment_condition}
The moment condition \eqref{eq: gap_var_condition_2} in Thm.~\ref{thm: main_variance} can be improved to $\mathbb{E}t_e^2<\infty$, which is optimal because $\mathbb{E}T_{a,b}(0,P(n))^2 \geq \mathbb{E}t_e^2$, as the origin has degree one in $\mathbb{G}_{a,b}$. The way to do this is to improve the exponent in the upper bound $C/y^{\eta/2}$ in Lem.~\ref{lem: plane_to_plane_estimate} by a factor of three to $C/y^{s/2}$ for some $C = C(s)$ and any $s \in [2,3\eta)$. We state this improved inequality and outline the modifications needed to prove it in \eqref{eq: plane_to_plane_improved}. These modifications require $f(m) \geq 3$ between two vertical lines (that is, $r_i' \leq m \leq r_j'$). Because our $f$ in \eqref{eq: our_f} is only eventually $\geq 3$, \eqref{eq: plane_to_plane_improved} leads to a reduction of the condition $\eta>4$ in \eqref{eq: gap_var_condition_2} to $\eta>4/3$ for parts of the proof of Thm.~\ref{thm: main_variance} that take place sufficiently far from the origin. We must use the assumption $\mathbb{E}t_e^2<\infty$ near the origin, resulting in the overall moment condition $\mathbb{E}t_e^2<\infty$. 
\end{rem}

The last result is a corollary that characterizes continuity of the percolation phase transition on $\mathbb{G}_{a,b}$, and improves on the results of Chayes-Chayes listed in \cite[p.~306]{grimmettbook}.
\begin{cor}\label{cor: phase_transition}
On $\mathbb{G}_{a,b}$ for $a>0$ and $b \geq 0$, the percolation phase transition is discontinuous if and only if $b > a$.
\end{cor}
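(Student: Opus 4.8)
The plan is to work at the critical parameter $p_0 := p_c(\mathbb{G}_{a,b})$ and to reduce continuity of $\theta_f$ at $p_0$ to a one–dimensional Borel--Cantelli problem about dual blocking paths, with the sharp quantitative input supplied by Prop.~\ref{prop: rectangle_crossing_probability} and Prop.~\ref{prop: point_to_plane_probability}. Since $f(u)/\log(1+u)\to a$, the generalization of Grimmett's formula in \cite[Thm.~11.55]{grimmettbook} (cf.~\cite{G83}) gives $\xi(1-p_0)=a$; set $q_0=1-p_0<1/2$, so percolation at $q_0$ is subcritical with correlation length exactly $a$. Next, $\theta_f$ is nondecreasing and is a decreasing limit of finite–volume connection probabilities (polynomials in $p$), hence upper semicontinuous and therefore right–continuous; since $\theta_f\equiv 0$ on $[0,p_0)$, the transition is discontinuous if and only if $\theta_f(p_0)>0$. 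Finally, $\mathbb{G}_f$ is a half–infinite strip whose only boundary arcs are the $x_1$–axis and the graph of $f$, so by planar duality the open cluster of the origin in $\mathbb{G}_f$ is infinite if and only if there is no \emph{blocking path}: a dual–open path (dual edges open with probability $q_0$) joining the $x_1$–axis to the graph of $f$ inside $\mathbb{G}_f$.

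The crux input is that, for integers $W\ge H\ge 1$ with $We^{-H/a}\le 1$, the $q_0$–probability that an axis–aligned $W\times H$ box has a dual–open top–to–bottom crossing is $\asymp_{p_0} We^{-H/a}$ (this is Prop.~\ref{prop: rectangle_crossing_probability}, equivalently the ``sponge crossing'' estimate behind Cor.~\ref{cor: sponge} and Cor.~\ref{cor: X_n_bounds_subcritical}), together with $\mathbb{P}_{q_0}(0\leftrightarrow P(d))\le \mathbb{G}_d(q_0)$ and $\mathbb{G}_d(q_0)\asymp e^{-d/\xi(q_0)}=e^{-d/a}$ from Prop.~\ref{prop: point_to_plane_probability}. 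The exponent bookkeeping then reduces to the elementary identity $e^{-f(j)/a}=\big[(1+j)(1+\log(1+j))^{b/a}\big]^{-1}\asymp j^{-1}(\log j)^{-b/a}$, so that $\sum_j e^{-f(j)/a}<\infty\iff b>a$, while $\sum_n 2^{n}e^{-f(2^{n})/a}\asymp\sum_n n^{-b/a}=\infty\iff b\le a$. These two dichotomies drive the two cases.

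If $b>a$ I would prove $\theta_f(p_0)>0$. Fix a large $N$ and bound $\mathbb{P}_{p_0}$ of the event that the column $\{x_1=N\}$ fails to reach infinity inside $\mathbb{G}_f\cap\{x_1\ge N\}$ by a union bound over the horizontal coordinate $j\ge N$ at which a would–be blocking path meets the graph of $f$: each term is at most the probability that a vertex at height $\approx f(j)$ connects, via dual–open edges, down to $\{x_2\le 0\}$, which is $\le \mathbb{G}_{\lfloor f(j)\rfloor-C}(q_0)\asymp e^{-f(j)/a}\asymp j^{-1}(\log j)^{-b/a}$; hence the total is $\lesssim\sum_{j\ge N}j^{-1}(\log j)^{-b/a}\to 0$ as $N\to\infty$. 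Choosing $N$ so this is $<1/2$ and intersecting, via FKG, with the positive–probability increasing event that every edge of $\mathbb{G}_f$ incident to $\{0\le x_1\le N\}$ is open (which forces $0$ to be connected within $\mathbb{G}_f$ to the entire column $\{x_1=N\}$), we get $\theta_f(p_0)\ge \mathbb{P}(\text{those edges open})\cdot\tfrac12>0$.

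If $b\le a$ I would prove $\theta_f(p_0)=0$ by a second Borel--Cantelli argument. Let $A_n$ be the event that some blocking path lies entirely inside the slab $\{2^{n}\le x_1\le 2^{n+1}\}$; after shrinking the slabs slightly the $A_n$ depend on disjoint edge sets, hence are independent, and a single $A_n$ already makes the origin's cluster finite, so it suffices that $\sum_n\mathbb{P}(A_n)=\infty$. The wedge over this slab contains the $W\times H$ rectangle with $W=2^{n}$ and $H=\lfloor f(2^{n})\rfloor$ (so $W\ge H$ and $We^{-H/a}\le 1$), whose dual–open top–to–bottom crossing has probability $\gtrsim 2^{n}e^{-f(2^{n})/a}\asymp n^{-b/a}$ by the estimate above; a routine FKG gluing with a dual crossing, in its short direction, of the bounded–height staircase strip between $x_2=f(2^{n})$ and the graph (a very wide subcritical strip, hence crossed with probability bounded below) upgrades this to a genuine blocking path, giving $\mathbb{P}(A_n)\gtrsim n^{-b/a}$. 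Since $b\le a$ makes $\sum_n n^{-b/a}$ diverge, a.s.\ infinitely many $A_n$ occur and $\theta_f(p_0)=0$. The main obstacle throughout is precisely the sharpness of the crossing estimate of the second paragraph — the comparison of the top–to–bottom crossing probability of a wide, short subcritical box with $We^{-H/a}$, with no parasitic polynomial factor — since this is what pins the threshold at the sharp value $a$ rather than the cruder $2a$ of \cite{CC86}; once that is granted (as it is, by Prop.~\ref{prop: rectangle_crossing_probability} and Prop.~\ref{prop: point_to_plane_probability}), the duality reduction, the union bound, and the FKG gluings are routine.
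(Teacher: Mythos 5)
Your route is genuinely different from the paper's: the paper deduces the corollary from Thm.~\ref{thm: main_mean} and Thm.~\ref{thm: main_variance} (at $p=p_c(f)$ one has $a=\xi(1-p)$, so item 3 gives a bounded passage time when $b>a$, while for $b\le a$ the mean diverges and the variance theorem upgrades this to $T_{a,b}(0,P(n))\to\infty$ a.s., ruling out an infinite zero-cluster). You instead give a direct percolation proof by duality and Borel--Cantelli. Both arguments rest on the same quantitative engine (Prop.~\ref{prop: point_to_plane_probability} and Prop.~\ref{prop: rectangle_crossing_probability}), but your route has the genuine advantage of treating the boundary case $b=a$ without any second-moment input, whereas the paper needs Thm.~\ref{thm: main_variance} there. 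Your $b>a$ half is correct: the union bound over blocking dual paths indexed by the column where they meet the upper boundary gives $\sum_{j\ge N}e^{-f(j)/a}\asymp\sum_{j\ge N}j^{-1}(\log j)^{-b/a}\to 0$, and the FKG step connecting $0$ to the column $\{x_1=N\}$ is standard.

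There is, however, a concrete gap in the $b\le a$ half, in the step you call a ``routine FKG gluing.'' The top--bottom dual crossing of the $W\times H$ rectangle ($W=2^n$, $H=\lfloor f(2^n)\rfloor$) terminates on the line $x_2=H$ at a \emph{random} horizontal position, and the vertical dual crossing of the bounded-height staircase strip above it occurs at an unrelated horizontal position. The two regions overlap only in a line, so there is no room for a transverse (horizontal) dual crossing to join the two paths, and FKG applied to the two crossing events does not produce a single connected blocking path from the graph of $f$ down to the axis. This is exactly the step that needs care, and as written it fails.

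The fix is the one the paper uses in part III(a) of the proof of Thm.~\ref{thm: main_mean}: replace your dyadic rectangles by the rectangles $R_{a,b}^\ast(n;j)$ with horizontal extent $[\ell_j,\ell_{j+1}]$ and height $j$, whose \emph{top side lies on the upper boundary $H_{a,b}^\ast(n;j)$ of the wedge}. Every top--down dual closed crossing of such a rectangle is then automatically a blocking path, no gluing required. Since $\ell_{j+1}-\ell_j\asymp e^{j/a}(j/a)^{-b/a}$, Prop.~\ref{prop: rectangle_crossing_probability} (via Cor.~\ref{cor: X_n_bounds_subcritical}) gives crossing probability $\gtrsim (\ell_{j+1}-\ell_j)e^{-(j+1)/a}\asymp j^{-b/a}$, the rectangles for distinct $j$ are disjoint so the events are independent, and $\sum_j j^{-b/a}=\infty$ precisely when $b\le a$; your second Borel--Cantelli argument then goes through verbatim. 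With that substitution your proof is complete and is a clean, self-contained alternative to the paper's.
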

\begin{proof}
If $b > a$, then when $p = p_c(f)$, we have $b > a = \xi(1-p)$. By item 3 of Thm.~\ref{thm: main_mean}, $\lim_{n \to \infty} T_{a,b}(0,P(n)) < \infty$ a.s., and because of the gap condition \eqref{eq: gap_var_condition_1}, there a.s.~exists an infinite component of zero-weight edges. We conclude that if $b>a$, then the percolation phase transition on $\mathbb{G}_{a,b}$ is discontinuous.

If $b =a$, then when $p = p_c(f)$, we have $b=a=\xi(1-p)$. In this case, by Thm.~\ref{thm: main_mean}, $\mathbb{E}T_{a,b}(0,P(n))$ is of order $\log \log n$, and by Thm.~\ref{thm: main_variance}, $\mathrm{Var}~T_{a,b}(0,P(n))$ is also of order $\log \log n$. It follows that $T_{a,b}(0,P(n)) \to \infty$ in probability, and by monotonicity, $\lim_{n \to \infty}T_{a,b}(0,P(n)) = \infty$ almost surely. This implies that a.s., there is no infinite component of zero-weight edges. By monotonicity, the percolation phase transition is continuous on $\mathbb{G}_{a,b}$ when $b \leq a$.
\end{proof}

\subsubsection{Results for general wedges}\label{sec: KZ_main_section}

To state our general result on the variance and CLT, we need some definitions and assumptions. Suppose that $f$ satisfies \eqref{eq: f_conditions}. Given a sequence of integers $(r_i)_{i=0}^\infty$ satisfying
\begin{equation}\label{eq: r_i_increasing}
0 = r_0 < r_1 < r_2 < \dots,
\end{equation}
we write $R_0, R_1, R_2, \dots$ for the vertex sets
\begin{equation}\label{eq: R_i_def}
R_i = \{(x_1,x_2) \in \mathbb{V}_f : r_i \leq x_1 \leq r_{i+1}\}.
\end{equation}
Our assumptions on $(r_i)$ will involve passage times in the related Bernoulli FPP model. For this purpose, let $(t_e)_{e \in \mathbb{E}^2}$ be a family of i.i.d.~Bernoulli random variables with $\mathbb{P}(t_e=0) = p = 1-\mathbb{P}(t_e=1)$, where $p=F(0)$ and $F$ is again the common distribution function of the variables $(\tau_e)$. We will write $T_f^B(x,y)$ (similarly $T_f^B(x,S)$ and $T_f^B(S,S')$) for the passage time $T_f(x,y)$, using the weights $(t_e)$ instead of $(\tau_e)$. 

We say that a path $\gamma$ with all of its vertices in $R_i$ is a \underline{top-down crossing of $R_i$} if the first vertex of $\gamma$ is in the ``top'' set $\{(x_1,x_2) \in R_i : x_2 + 1 \notin R_i\}$ and the last vertex of $\gamma$ is in the ``bottom'' set $\{(x_1,x_2) \in R_i : x_2=0\}$. In a given configuration $(t_e)$, we say that a top-down crossing of $R_i$ is an open top-down crossing of $R_i$ if all its edges $e$ have $t_e = 0$.

Our main assumptions on the sequence $(r_i)$ are as follows. There exist constants $\Cl[smac]{c: r_i_assumption_1} > 0, \Cl[lgac]{c: r_i_assumption_2} \in [1,\infty)$ satisfying
\begin{enumerate}
\item[A1.] $\liminf_{i \to \infty} \mathbb{P}(\exists \text{ top-down open crossing of } R_i) \geq \Cr{c: r_i_assumption_1}$,
\item[A2.] $\limsup_{i \to \infty} \left[ \mathbb{E} T_f^B(0, P(r_{i+1}))-\mathbb{E} T_f^B(0,P(r_i))\right] \leq \Cr{c: r_i_assumption_2}$,
\end{enumerate}
and furthermore, there exists a function $\Cr{c: r_i_assumption_3} : [0,\infty) \to (0,\infty)$ such that
\begin{enumerate}
\item[A3.] $\liminf_{i \to \infty} \mathbb{P}(T_f^B(P(r_i),P(r_{i+1})) \geq M) \geq \Cl[smac]{c: r_i_assumption_3}(M)$ for each $M \geq 0$.
\end{enumerate}

Our first result gives asymptotics for the variance of $T_f(0,P(n))$ for general $f$ under assumptions A1-A3. To state it, if $n \geq 0$, we define
\begin{equation}\label{eq: iota_def}
\iota(n) = \min\{i : r_{2i} \geq n\}.
\end{equation}

\begin{thm}\label{thm: updated_KZ_variance}
Suppose that \eqref{eq: gap_var_condition_1} and \eqref{eq: gap_var_condition_2} hold. Given $\Cr{c: r_i_assumption_1} > 0, \Cr{c: r_i_assumption_2} \in [1,\infty)$, and $\Cr{c: r_i_assumption_3}:[0,\infty) \to (0,\infty)$, there exist $\Cl[lgc]{c: variance_upper_bound_constant}, \Cl[smc]{c: variance_lower_bound_constant} > 0$ such that the following holds. For any $f$ and $(r_i)$ satisfying \eqref{eq: f_conditions}, \eqref{eq: r_i_increasing}, and assumptions A1-A3 for $\Cr{c: r_i_assumption_1}, \Cr{c: r_i_assumption_2}$, and $\Cr{c: r_i_assumption_3}$, we have
\[
\Cr{c: variance_lower_bound_constant} \iota(n) \lesssim \mathrm{Var}~T_f(0,P(n)) \lesssim \Cr{c: variance_upper_bound_constant} \iota(n),
\]
and
\[
\Cr{c: variance_lower_bound_constant} \mathbb{E}T_f(0,P(n)) \lesssim \mathrm{Var}~T_f(0,P(n)) \lesssim \Cr{c: variance_upper_bound_constant} \mathbb{E}T_f(0,P(n)).
\]
\end{thm}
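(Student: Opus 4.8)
The plan is to prove both displayed chains at once by establishing $\mathbb{E}T_f(0,P(n))\asymp\iota(n)$ together with $\mathrm{Var}\,T_f(0,P(n))\asymp\iota(n)$; the second line of Thm.~\ref{thm: updated_KZ_variance} is then immediate from the first. Everywhere I would exploit the gap condition \eqref{eq: gap_var_condition_1} to compare $T_f$ with the coupled Bernoulli passage time $T_f^B$: for any configuration a $\tau$-geodesic $\gamma$ from $0$ to $P(n)$ obeys $T_f(\gamma)=\sum_{e\in\gamma}\tau_e\ge\delta\,\#\{e\in\gamma:\tau_e>0\}\ge\delta\,T_f^B(0,P(n))$, while conditioning on $(t_e)$ and using a $t$-geodesic gives $\mathbb{E}T_f(0,P(n))\le\frac{\mathbb{E}\tau}{1-p}\,\mathbb{E}T_f^B(0,P(n))$, with the analogous bounds between vertical lines. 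For the mean, the upper bound comes from telescoping A2: $\mathbb{E}T_f^B(0,P(r_m))=\sum_{i=0}^{m-1}\big(\mathbb{E}T_f^B(0,P(r_{i+1}))-\mathbb{E}T_f^B(0,P(r_i))\big)\lesssim \Cr{c: r_i_assumption_2}\,m$, so monotonicity in $n$ and $r_{2\iota(n)}\ge n$ give $\mathbb{E}T_f(0,P(n))\lesssim C\iota(n)$. For the lower bound, note the even-indexed slabs $R_0,R_2,\dots,R_{2\iota(n)-2}$ occupy disjoint column ranges, hence disjoint edge sets, and every path from $0$ to $P(n)$ restricts to a left–right crossing of each of them; thus $T_f(0,P(n))\ge\delta\sum_j T_f^B(P(r_{2j}),P(r_{2j+1}))$ and A3 with $M=1$ yields $\mathbb{E}T_f(0,P(n))\ge\delta\,\Cr{c: r_i_assumption_3}(1)\,(\iota(n)-O(1))$.

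\textbf{Variance, upper bound.} I would use the edge martingale $T_f(0,P(n))-\mathbb{E}T_f(0,P(n))=\sum_e\Delta_e$, so $\mathrm{Var}\,T_f(0,P(n))=\sum_e\mathbb{E}[\Delta_e^2]$, with the resampling estimate $\mathbb{E}[\Delta_e^2]\le\mathbb{E}[(T_f-T_f^{(e)})^2]$, $T_f^{(e)}$ being $T_f$ with $\tau_e$ replaced by an independent copy. Passing first to the Bernoulli model, $\Delta_e^B\neq 0$ forces $e$ to be \emph{relevant}: either $e$ carries weight $1$ and lies on a $t$-geodesic, or $e$ is an essential open edge whose closing strictly raises $T_f^B$. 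The closed edges on a $t$-geodesic number at most $T_f^B(0,P(n))$, which has bounded second moment by A2 and the exponential tail bound for plane-to-plane Bernoulli passage times proved in Lem.~\ref{lem: plane_to_plane_estimate} via the BK inequality; the essential open edges are confined, slab by slab, to the ``pinch'' columns, and A1 (forcing each slab wide enough relative to its height to admit a top–down open crossing) together with A2 bounds the expected number of such columns by $O(\iota(n))$. Summing gives $\mathrm{Var}\,T_f^B(0,P(n))\lesssim\iota(n)$, and the moment condition \eqref{eq: gap_var_condition_2} lets one transfer this to $T_f$ by replacing the Bernoulli increments with their $\tau$-valued analogues and absorbing the discrepancy into the second moment of $\tau_e$.

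\textbf{Variance, lower bound.} Reveal the slabs $R_0,R_1,R_2,\dots$ one at a time, let $\mathcal F_m=\sigma(R_0,\dots,R_m)$, and set $D_m=\mathbb{E}[T_f(0,P(n))\mid\mathcal F_m]-\mathbb{E}[T_f(0,P(n))\mid\mathcal F_{m-1}]$, so $\mathrm{Var}\,T_f(0,P(n))=\sum_m\mathbb{E}[D_m^2]\ge\sum_{j<\iota(n)-O(1)}\mathbb{E}[D_{2j}^2]$. The goal is $\mathbb{E}[D_{2j}^2]\ge c>0$ uniformly in $j$. Since $R_{2j}$ occupies a full range of columns it cannot be bypassed, so conditionally on $\mathcal F_{2j-1}$ and on the edges to the right of column $r_{2j+1}$ one has $T_f(0,P(n))=\min_{x,y}\big[A_x+c_\omega(x,y)+B_y\big]$, where $A=(A_x)_x$ is the $\mathcal F_{2j-1}$-measurable cost profile at column $r_{2j}$, $B=(B_y)_y$ the right cost profile at column $r_{2j+1}$, and $c_\omega$ the crossing cost of $R_{2j}$. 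A1 forces $R_{2j}$ to be fat, so the i.i.d.\ configuration inside it makes $c_\omega$ (along the optimal pair) genuinely non-degenerate, while A3 keeps it from collapsing to $0$; together these give a uniform lower bound on its variance. Using A1 once more on the slabs to the right of $R_{2j}$ to produce, with probability bounded below, a right profile $B$ that is essentially flat and $O(1)$, and using monotonicity of $T_f$ in the weights of $R_{2j}$ so that the fluctuation cannot cancel, one obtains $\mathrm{Var}\big(\mathbb{E}[T_f(0,P(n))\mid\mathcal F_{2j}]\bigm|\mathcal F_{2j-1}\big)\ge c$, hence $\mathbb{E}[D_{2j}^2]\ge c$. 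Summing over the $\asymp\iota(n)$ admissible $j$ gives $\mathrm{Var}\,T_f(0,P(n))\gtrsim\iota(n)$, completing both chains.

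\textbf{Expected main obstacle.} The delicate step is the variance upper bound: because the wedge is subcritical ($\mu=0$), geodesics are typically long, so the naive bound $\sum_e\mathbb{P}(e\text{ on a geodesic})\le\mathbb{E}[\text{geodesic length}]$ is worthless. One is forced into the Bernoulli model, must localize the active edges to the closed edges of $t$-geodesics and the pinch columns, and must control both counts through A1–A2 and the BK-based plane-to-plane tail estimates of Lem.~\ref{lem: plane_to_plane_estimate}; the subtlety there is that a single slab crossing can a priori wander vertically over a distance comparable to $f$, an event that must be excluded on the typical configuration before the moment transfer from $t_e$ to $\tau_e$ can be carried out.
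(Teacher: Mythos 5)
Your reduction of the second displayed chain to $\mathbb{E}T_f(0,P(n))\asymp\iota(n)$, and your proof of that equivalence (telescoping A2 through the Bernoulli coupling for the upper bound; edge-disjoint slab crossings plus A3 with $M=1$ for the lower bound), match the paper and are fine. The variance bounds, however, each contain a genuine gap. For the upper bound, the edge-resampling martingale reduces matters to showing that the expected number of \emph{pivotal} edges is $O(\iota(n))$, and neither half of your count is established. Your justification for the closed pivotal edges is wrong as stated: $T_f^B(0,P(n))$ does not have bounded second moment --- its mean is already of order $\iota(n)\to\infty$ (that is the content of the theorem), and Lem.~\ref{lem: plane_to_plane_estimate} only controls passage times between two fixed planes $P(r_i'),P(r_j')$ with an error growing in $j-i$. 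More seriously, for the open pivotal edges (those whose closing strictly raises $T_f^B$) you offer only that they lie in ``pinch columns'' controlled by A1--A2. In the dual picture such an edge is one at which a closed dual arm to the top and a disjoint closed dual arm to the bottom meet, a multi-arm event; A1 (one top-down open crossing per slab with probability $\geq \Cr{c: r_i_assumption_1}$) and A2 (control of the mean increment) say nothing about the expected \emph{number} of such edges inside a slab, which a priori can grow with the slab's area, and nothing in A1--A3 excludes this for general $f$. The paper sidesteps pivotal counting entirely: its slab martingale has $i$-th increment bounded by $T_f(\Gamma_{i-1},\Gamma_i)+\mathbb{E}'T_f(\Gamma_{i-1},\Gamma_{\lambda(i,\omega,\omega')}(\omega'))$ (Lem.~\ref{lem: representation_2}), each term having uniformly bounded second moment via Lem.~\ref{lem: plane_to_plane_estimate} and the exponential tail of $m(i)-i$ from A1, so the variance is a sum of $\asymp\iota(n)$ uniformly bounded terms.

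For the lower bound, your deterministic-slab filtration is close in spirit to the paper, but the decisive step is asserted rather than proved. Writing $T=\min_{x,y}[A_x+c_\omega(x,y)+B_y]$ (which is itself only an inequality, since geodesics may re-enter slabs), the danger is that the order-one fluctuation of $c_\omega$ is absorbed by the minimization over $(x,y)$ together with the averaging over the unrevealed right profile $B$: changing the configuration in $R_{2j}$ may merely reroute the optimal pair without moving $\mathbb{E}[T\mid\mathcal F_{2j}]$. ``Monotonicity so the fluctuation cannot cancel'' is not an argument; this is exactly the difficulty the Kesten--Zhang device in the paper is built to overcome. There the filtration reveals everything up to the random leftmost open crossing $\Gamma_i$, so the increment decomposes exactly (Lems.~\ref{lem: representation_1} and \ref{lem: representation_2}) into $T_f(\Gamma_{i-1},\Gamma_i)$ minus a future correction which, on the event $\{m(i-1)=i-1,\ m(i)=i\}$, is at most the deterministic constant $\Cr{c: haydns_constant_4}$ of \eqref{eq: my_favorite_mean_bound}; A1 and A3 then give $\mathbb{P}(\Delta_{i,i_0}\geq\Cr{c: haydns_constant_4})\geq c$ directly (Prop.~\ref{prop: second_moment_prop}). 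You have identified the right ingredients (A1 for screening, A3 for non-degeneracy of the crossing cost), but without the random-crossing conditioning, or a substitute argument showing the conditional average over the future cannot wash out the fluctuation, the claim $\mathbb{E}[D_{2j}^2]\geq c$ remains unproven.
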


The second result is a CLT for $T_f(0,P(n))$ under assumptions A1-A3.
\begin{thm}\label{thm: updated_KZ_CLT}
Suppose that \eqref{eq: gap_var_condition_1} and \eqref{eq: gap_var_condition_2} hold. If $f$ and $(r_i)$ satisfy \eqref{eq: f_conditions}, \eqref{eq: r_i_increasing}, and assumptions A1-A3 for some $\Cr{c: r_i_assumption_1}>0$, $\Cr{c: r_i_assumption_2} \in [1,\infty)$, and $\Cr{c: r_i_assumption_3} : [0,\infty) \to (0,\infty)$, then we have
\[
\frac{T_f(0,P(n)) - \mathbb{E}T_f(0,P(n))}{\sqrt{\mathrm{Var}~T_f(0,P(n))}} \Rightarrow N(0,1) \text{ as } n \to \infty,
\]
where $N(0,1)$ denotes the standard normal distribution.
\end{thm}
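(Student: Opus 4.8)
The plan is to realize $T_f(0,P(n))$ as (essentially) a sum of a growing number $\iota(n)$ of almost-independent, uniformly-bounded-moment blocks, and then apply a Lindeberg-type CLT for triangular arrays. The natural decomposition comes from the slabs $R_i$ of \eqref{eq: R_i_def}: write $L_i = T_f(P(r_i),P(r_{i+1}))$ for the ``slab passage time'' across $R_i$, so that morally $T_f(0,P(n)) \approx \sum_{i=0}^{\iota(n)-1} L_{2i}$ (using only even-indexed slabs to buy independence, since $L_{2i}$ depends on edges in $R_{2i}$ and consecutive even slabs share no edges). I would first make this precise as a sandwiching: $T_f(0,P(r_{2\iota(n)})) \ge \sum_{i} L_{2i}$ trivially by concatenating optimal crossings, and conversely the difference between $T_f(0,P(n))$ and $\sum_{i<\iota(n)} L_{2i}$ is controlled by the passage times across the odd slabs plus boundary effects; assumption A2 (via the coupled Bernoulli times $T_f^B$ and the gap condition, which lets $T_f \le$ something comparable to $T_f^B$ up to the a.s.-finite ``extra'' weights) gives that $\mathbb{E}[L_i] \le \Cr{c: r_i_assumption_2}$-type bounds, and the exponential tail / polynomial tail estimate alluded to in the remarks (the analogue of Lem.~\ref{lem: plane_to_plane_estimate}, $\mathbb{P}(L_i \ge y) \le C e^{-cy} + C y^{-\eta/2}$) gives uniformly bounded $(2+\epsilon)$-moments of each $L_i$ for the $\epsilon$ afforded by $\eta>4$.

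Next I would set up the CLT proper. Because the $L_{2i}$ for distinct $i$ use disjoint edge sets, $\{L_{2i}\}_i$ is an independent family, so $S_N := \sum_{i=0}^{N-1} L_{2i}$ is a genuine sum of independent terms with $N = \iota(n) \to \infty$. Assumption A3 provides a uniform lower bound $\mathrm{Var}(L_{2i}) \ge c > 0$ along a subsequence of $i$ (from $\mathbb{P}(L_{2i} \ge M) \ge \Cr{c: r_i_assumption_3}(M)$ one extracts a nondegeneracy, e.g. $\mathrm{Var}(L_{2i}) \ge$ const using two values of $M$), which combined with the uniform moment bound gives $\mathrm{Var}(S_N) \asymp N$; this is where Thm.~\ref{thm: updated_KZ_variance} plugs in, identifying $\mathrm{Var}\,T_f(0,P(n)) \asymp \iota(n) \asymp \mathrm{Var}(S_N)$. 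The Lindeberg condition $\frac{1}{\mathrm{Var}(S_N)} \sum_i \mathbb{E}[L_{2i}^2 \mathbf{1}_{|L_{2i}-\mathbb{E}L_{2i}| > \epsilon \sqrt{\mathrm{Var}(S_N)}}] \to 0$ then follows from the uniform $(2+\epsilon)$-moment bound together with $\mathrm{Var}(S_N)\to\infty$ (a standard computation: the truncated second moment of each term is $O((\sqrt{N})^{-\epsilon})$ uniformly, summing to $O(N^{-\epsilon/2})\to 0$ after dividing by $\mathrm{Var}(S_N)\asymp N$). Hence $(S_N - \mathbb{E}S_N)/\sqrt{\mathrm{Var}(S_N)} \Rightarrow N(0,1)$.

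Finally I would transfer the CLT from $S_N$ to $T_f(0,P(n))$. Let $D_n = T_f(0,P(n)) - S_{\iota(n)}$ be the discrepancy. The point is that $\mathbb{E}|D_n|$ and $\mathrm{Var}(D_n)$ are $O(1)$ — uniformly in $n$ — while $\mathrm{Var}\,T_f(0,P(n)) \asymp \iota(n) \to \infty$; one gets this from A2 (controlling the odd-slab and endpoint contributions in mean) and the uniform-tail estimate (controlling them in $L^2$), exactly the ingredients used for Thm.~\ref{thm: updated_KZ_variance}. Then by Slutsky, since $D_n/\sqrt{\mathrm{Var}\,T_f(0,P(n))} \to 0$ in $L^1$ hence in probability, and $\mathbb{E}D_n/\sqrt{\mathrm{Var}\,T_f(0,P(n))} \to 0$, the normalized $T_f(0,P(n))$ has the same Gaussian limit as the normalized $S_{\iota(n)}$. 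The main obstacle, I expect, is not the CLT machinery but the careful bookkeeping in the sandwiching step: proving that the odd-slab remainder and the ``reattachment'' of the partial-slab piece near $P(n)$ truly contribute only $O(1)$ in mean and variance uniformly in $n$, which requires the coupling $T_f \leftrightarrow T_f^B$ via the gap condition \eqref{eq: gap_var_condition_1}, the moment assumption \eqref{eq: gap_var_condition_2}, and an adaptation of the separating-set / BK argument sketched in the remarks to get the $\mathbb{P}(\cdot \ge y) \le Ce^{-cy} + Cy^{-\eta/2}$ tail in each block regardless of its width. A secondary subtlety is ensuring A3 genuinely delivers a uniform positive lower bound on the per-block variance (not merely nondegeneracy of a few blocks); one may need to pass to a subsequence of slabs where A3's liminf is realized and argue that discarding a density-zero set of blocks does not affect the limit.
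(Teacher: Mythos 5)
Your proposal has a genuine gap at its foundation: the claim that $T_f(0,P(n)) - \sum_{i} T_f(P(r_{2i}),P(r_{2i+1}))$ is $O(1)$ in mean and variance. The sum of slab passage times is only a \emph{lower} bound for $T_f(0,P(n))$. To get a matching upper bound you must concatenate the optimal crossings of the even slabs, but the optimal crossing of $R_{2i}$ ends at an uncontrolled point of $P(r_{2i+1})$ while the optimal crossing of $R_{2i+2}$ starts at an uncontrolled point of $P(r_{2i+2})$; the cost of joining them is a point-to-point passage time that is not bounded by the slab infimum $T_f(P(r_{2i+1}),P(r_{2i+2}))$, and nothing in A1--A3 makes the total of these connection costs $o(\sqrt{\iota(n)})$ --- a priori it is of the same order $\iota(n)$ as the passage time itself. (Thm.~\ref{thm: updated_KZ_variance} only gives that both quantities are of order $\iota(n)$, which is far from the $O(1)$, or even $o(\sqrt{\iota(n)})$, discrepancy your Slutsky step needs.) This is precisely the obstruction the Kesten--Zhang construction is designed to remove: the paper uses A1 to produce, in each slab with probability bounded below, a top-down \emph{open} (zero-weight) crossing, and sets $\Gamma_i$ equal to the leftmost such crossing of the first slab $R_j'$, $j\ge i$, containing one. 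Because $\Gamma_{i-1}$ is a zero-cost separating barrier, the passage time is exactly additive across it, $T_f(0,\Gamma_{i_0}) = T_f(0,\Gamma_{i-1}) + T_f(\Gamma_{i-1},\Gamma_{i_0})$, so no connection costs arise.

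The price of this repair is that the increments $T_f(\Gamma_{i-1},\Gamma_i)$ are no longer independent (consecutive ones share the random crossing $\Gamma_{i-1}$, and $\Gamma_i$ can sit in a distant slab when $m(i)>i$), so your plan of invoking a Lindeberg CLT for independent summands would not apply directly even after the fix. The paper instead runs a Doob martingale decomposition of $T_f(0,\Gamma_{\iota(n)})$ with respect to the filtration generated by $\Gamma_i$ and the weights in its interior, proves tail and moment bounds for the increments (Prop.~\ref{prop: Delta_concentration_prop} and Prop.~\ref{prop: second_moment_prop}, using A2 through Lem.~\ref{lem: plane_to_plane_estimate} and A3 for the variance lower bound), and applies McLeish's martingale CLT; the law-of-large-numbers condition on the squared increments is verified by truncating on $\{m(i)\le i + O(\log i_0)\}$, which renders well-separated increments genuinely independent. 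Your remaining ingredients --- uniform moments from the gap condition and \eqref{eq: gap_var_condition_2}, nondegeneracy from A3, and the final comparison of $T_f(0,P(n))$ with the renewal quantity --- do all appear in the paper's argument, but only after the deterministic slab boundaries $P(r_i)$ are replaced by the random open crossings $\Gamma_i$.
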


\begin{rem}
We verify assumptions A1-A3 of Thm.~\ref{thm: updated_KZ_CLT} and Thm.~\ref{thm: updated_KZ_variance} for the function $f$ from \eqref{eq: our_f} in Sec.~\ref{sec: variance_proof} to deduce Thm.~\ref{thm: main_variance}. The method of Sec.~\ref{sec: variance_proof} can also be used to verify A1-A3 for a wide range of choices of $f$, for example (a) $f(u) = u^a$ with $a < 1$ and $F(0)=p_c(f) = 1/2$, (b) $f(u) = (\log (1+u))^a$ with $a > 1$ and $F(0) = p_c(f) = 1/2$, and (c) $f(u) = (\log (1+u))^a$ with $a < 1$ and $1/2 \leq F(0) < 1 = p_c(f)$.
\end{rem}

\subsection{Outline of the paper}
In Sec.~\ref{sec: subcritical}, we give asymptotics for connectivity probabilities in subcritical bond percolation in general dimensions. These are used throughout the rest of the paper. Most are consequences of the study of Campanino-Chayes-Chayes \cite{CCC91} in '91 but we also use them to improve ``sponge crossing dimensions'' results of Grimmett from '81 in Cor.~\ref{cor: sponge}. In Sec.~\ref{sec: mean_proof} we prove Thm.~\ref{thm: main_mean}. In Sec.~\ref{sec: general_f_proof}, we prove Thm.~\ref{thm: updated_KZ_variance} and Thm.~\ref{thm: updated_KZ_CLT} on general wedge graphs and in Sec.~\ref{sec: variance_proof}, we use the results of Sec.~\ref{sec: general_f_proof} to prove Thm.~\ref{thm: main_variance}.

\section{Subcritical percolation estimates}\label{sec: subcritical}

Here we will use some of the results of the Campanino-Chayes-Chayes paper \cite{CCC91} to estimate subcritical box-crossing probabilities in bond percolation in general dimension $d \geq 2$. As usual, we write $\mathbb{E}^d$ for the nearest neighbor edges of $\mathbb{Z}^d$ and consider the probability space $\{0,1\}^{\mathbb{E}^d}$ with the cylinder sigma-algebra. Using $\omega$ for a typical outcome and letting $p  \in [0,1]$, we denote by $\mathbb{P}_p$ the probability measure under which the random variables $\omega(e)$ are i.i.d.~with
\[
\mathbb{P}_p(\omega(e) = 1) = p = 1 - \mathbb{P}_p(\omega(e) = 0).
\]
Write $\mathbb{E}_p$ for expectation relative to $\mathbb{P}_p$. For an edge $e$, we say that $e$ is \underline{open} (in $\omega$) if $\omega(e)=1$ and $e$ is \underline{closed} otherwise. (Observe that $e$ is open when $\omega(e)=1$, whereas in Sec.~\ref{sec: KZ_main_section}, $e$ is open when $t_e=0$.) A \underline{path} from $x$ to $y$ is an alternating sequence of vertices and edges $x_0,e_0,x_1, \dots, x_{k-1},e_{k-1},x_k$, where $x_0=x$, $x_k = y$, and each $e_i$ has endpoints $x_i$ and $x_{i+1}$. A path is said to be open if all its edges are open. If there is an open path from $x$ to $y$, we say that $x$ and $y$ are connected by an open path and we write $x \leftrightarrow y$. The \underline{open cluster} of $x$ is the set
\[
C(x) = \{y \in \mathbb{Z}^d : x \leftrightarrow y\}.
\]
If $S \subset \mathbb{Z}^d$ is a set of vertices, we will also write $x \leftrightarrow S$ if there is an open path that starts at $x$ and ends at some vertex of $S$. As usual, there is a critical threshold $p_c \in (0,1)$ defined as $p_c = \sup\{p \in [0,1] : \mathbb{P}_p(\#C(0) = \infty) = 0\}$ and we focus here on the subcritical case $p  \in (0,p_c)$.

\subsection{Results of Campanino-Chayes-Chayes}

Now we can state some of the results of \cite{CCC91}. For an integer $n \geq 0$, define the hyperplane and slab
\begin{equation}\label{eq: S_n_P_n_def}
P(n) = \{x \in \mathbb{Z}^d : x \cdot \mathbf{e}_1 = n\} \text{ and } S(n) = \bigcup_{i=0}^n P(i),
\end{equation}
where $\mathbf{e}_1$ is the first coordinate vector. Let
\[
\mathbb{G}_n(p) = \sum_{x \in P(n)} \mathbb{P}_p(0 \leftrightarrow x).
\]
The first result is \cite[Thm.~6.2(I)]{CCC91}. Together with most of the notes here, it will use the correlation length $\xi$, defined in \eqref{eq: correlation_length_def}. As mentioned in Sec.~\ref{sec: percolation}, it is known that this limit defining $\xi(p)$ exists and that $0 < \xi(p) < \infty$ for $p \in (0,p_c)$.

\begin{prop}[Campanino-Chayes-Chayes] \label{prop: expectation_plane}
Let $p \in (0,p_c)$. There exist $K_2 = K_2(p) \geq 1$ and $\Delta = \Delta(p) >0$ such that
\[
\left| \mathbb{G}_n(p) \exp\left( \frac{n}{\xi(p)}\right) - K_2(p) \right| \leq e^{-\Delta(p)n}.
\]
\end{prop}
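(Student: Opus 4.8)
Since this is \cite[Thm.~6.2(I)]{CCC91}, the plan is to follow the Ornstein--Zernike/renewal approach of Campanino--Chayes--Chayes; below I sketch the steps I would carry out. Throughout, set $z_c = e^{1/\xi(p)}$.

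\emph{Soft bounds first.} The BK inequality gives submultiplicativity $\mathbb{G}_{m+n}(p) \le \mathbb{G}_m(p)\mathbb{G}_n(p)$: any open connection from $0$ to $x \in P(m+n)$ contains disjoint witnesses for $0\leftrightarrow y$ and $y\leftrightarrow x$ across some $y\in P(m)$, and one sums over $x$ and uses translation invariance. Since $\mathbb{G}_n(p) \le \mathbb{E}_p\#C(0)<\infty$ in the subcritical regime, Fekete's lemma yields a limit for $-\tfrac1n\log\mathbb{G}_n(p)$; comparing with $\mathbb{P}_p(0\leftrightarrow n\mathbf{e}_1)\le\mathbb{G}_n(p)$, the exponential decay of $\#C(0)$, and the lattice symmetry (hence convexity) of the inverse correlation length identifies the limit as $1/\xi(p)$. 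Because $-\log\mathbb{G}_n(p)$ is superadditive, $-\tfrac1n\log\mathbb{G}_n(p)\le 1/\xi(p)$ for every $n$, i.e.\ $\mathbb{G}_n(p)e^{n/\xi(p)}\ge 1$, which will give $K_2(p)\ge 1$. This soft argument gives no rate of convergence (the normalized sequence could a priori oscillate within a bounded band), so a renewal analysis is unavoidable.

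\emph{Renewal structure and singularity analysis.} Exploring $C(0)$ from left to right, I would call $P(k)$ a \emph{break hyperplane} if $C(0)$ meets it at a single vertex that is also a cut vertex for the connection to $P(n)$; splitting along break hyperplanes expresses an open connection $0\leftrightarrow P(n)$ as a concatenation of \emph{irreducible} pieces plus two short boundary pieces. With $\psi_n$ the total weight of irreducible point-to-plane connections of horizontal extent $n$, and $G(z)=\sum_n\mathbb{G}_n(p)z^n$, $\Psi(z)=\sum_n\psi_n z^n$, this yields a renewal identity $G(z)=B(z)/(1-\Psi(z))$ with $B$ an analytic boundary factor with nonnegative coefficients. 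The crucial input is a \emph{mass gap}: $\psi_n\le C e^{-n(1/\xi(p)+\kappa)}$ for some $\kappa>0$, so $\Psi,B$ are analytic on $\{|z|<z_c e^{\kappa}\}$. Then the radius of convergence of $G$ being exactly $z_c$ forces $\Psi(z_c)=1$; since $\Psi'(z_c)=\sum_n n\psi_n z_c^{n-1}\in(0,\infty)$ this zero of $1-\Psi$ is simple, and aperiodicity ($\psi_n>0$ for all large $n$) makes it the unique zero on $|z|=z_c$. Hence $G$ is meromorphic on $\{|z|<z_c e^{\kappa}\}$ with a single simple pole at $z_c$; subtracting its principal part and applying Cauchy's estimate on a circle of radius $z_c e^{\Delta}$ with $0<\Delta<\kappa$ gives
\[
\bigl|\mathbb{G}_n(p) - K_2(p)\,e^{-n/\xi(p)}\bigr| \le C_p\, e^{-n(1/\xi(p)+\Delta)}, \qquad K_2(p) = \frac{B(z_c)}{z_c\,\Psi'(z_c)}>0,
\]
and multiplying by $e^{n/\xi(p)}$, shrinking $\Delta$ to absorb $C_p$, and adjusting for the finitely many small $n$ gives the claim, with $K_2(p)\ge 1$ from the first step.

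\emph{Main obstacle.} The one genuinely hard ingredient is the mass gap $\psi_n\le Ce^{-n(1/\xi(p)+\kappa)}$: controlling break-point-free connectivities requires the delicate geometric surgery of \cite{CCC91}, the point being that a connection of horizontal extent $n$ with no break point essentially contains two disjoint long connections and hence ``costs twice,'' producing the strictly faster exponential rate. The remaining ingredients — submultiplicativity, the renewal identity, and the complex-analytic extraction — are comparatively routine.
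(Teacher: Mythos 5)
The paper does not prove this proposition: it is quoted directly from \cite[Thm.~6.2(I)]{CCC91} and used as a black box, so there is no in-paper argument to compare against. Judged on its own terms, your outline correctly identifies the Ornstein--Zernike/renewal strategy that \cite{CCC91} actually follows: BK submultiplicativity of $\mathbb{G}_n$, identification of its decay rate with $1/\xi(p)$ and the resulting bound $\mathbb{G}_n(p)e^{n/\xi(p)}\ge 1$ (whence $K_2\ge 1$), the decomposition along break hyperplanes into irreducible pieces, the renewal identity for the generating functions, and the extraction of a simple pole at $z_c=e^{1/\xi(p)}$ with exponentially small corrections via Cauchy estimates. Those steps are sound as far as they go.

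However, as you yourself flag, the entire analytic content of the theorem is the mass gap $\psi_n\le Ce^{-n(1/\xi(p)+\kappa)}$ for the irreducible (break-point-free) pieces; without it the renewal identity yields nothing beyond the crude two-sided bounds \eqref{eq: weaker_G_n_inequalities}, and in particular no rate of convergence $e^{-\Delta(p)n}$. The assertion that a connection with no break hyperplane ``essentially contains two disjoint long connections and hence costs twice'' is a heuristic, not a proof; making it rigorous is the geometric surgery occupying the core of \cite{CCC91}, and it also requires handling the distinction between $\mathbb{G}_n$ and the strict-cylinder quantity $\mathbb{H}_n$ --- the concatenation along break hyperplanes is exact only under cylinder boundary conditions, which is why \cite{CCC91} first proves the asymptotics for $\mathbb{H}_n$ (their Thm.~4.4(I)) and only then transfers to $\mathbb{G}_n$ in their Section 6, the two prefactors being different (a point the present paper itself remarks on). So what you have is an accurate roadmap of the cited proof with its one genuinely hard lemma left unproven; as a standalone proof it has a gap exactly where you located it.
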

The above proposition states that the expectation 
\[
\mathbb{G}_n(p) = \mathbb{E}_p \#\{x \in P(n) : 0 \leftrightarrow x\}
\]
is very close to exponential. We will only need the weaker statement that if $p \in (0,p_c)$, there exist $\Cl[smc]{c: G_n_lower_constant} = \Cr{c: G_n_lower_constant}(p) > 0$ and $\Cl[lgc]{c: G_n_upper_constant} = \Cr{c: G_n_upper_constant}(p) > 0$ such that for all $n$, we have
\begin{equation}\label{eq: weaker_G_n_inequalities}
\Cr{c: G_n_lower_constant} \exp\left( - \frac{n}{\xi(p)}\right) \leq \mathbb{G}_n(p) \leq \Cr{c: G_n_upper_constant} \exp\left( - \frac{n}{\xi(p)}\right).
\end{equation}

Because long open paths in subcritical percolation are quite straight, we expect them to intersect hyperplanes in only a few vertices. Therefore we expect from Prop.~\ref{prop: expectation_plane} that the point-to-plane connection probability, $\mathbb{P}_p(0 \leftrightarrow P(n))$, is also very close to exponential. To state the result of \cite{CCC91} on this, we need more definitions.

For $x \in S(n)$, we write $C(x)||_{S(n)}$ for the open cluster of $x$ restricted to $S(n)$; that is,
\[
C(x)||_{S(n)} = \{y \in \mathbb{Z}^d : \exists \text{ open path from } x \text{ to } y \text{ which uses only vertices in }S(n)\}.
\]
Note that in general, $C(x)||_{S(n)}$ will be smaller than the intersection $C(x) \cap S(n)$. Next we define, for $x \in P(0)$ and $y \in P(n)$, the ``point-to-point cylinder connectivity function'' (we will not need to use this term, but it is the one given in \cite{CCC91}) as
\[
h_{x,y}(p) = \mathbb{P}_p\left(C(x)||_{S(n)} \cap P(0) = \{x\} \text{ and } C(x)||_{S(n)} \cap P(n) = \{y\}\right).
\]
This is the probability that $x \leftrightarrow y$, but the connection occurs within $S(n)$; furthermore, the restricted cluster $C(x)||_{S(n)}$ intersects $P(0)$ only at $\{x\}$ and $P(n)$ only at $\{y\}$. Note that if the event in the probability occurs then, in particular, all edges incident to $x$ in $P(0)$ must be closed, and all edges incident to $y$ in $P(n)$ must be closed. The authors of \cite{CCC91} often refer to these conditions as ``strict cylinder'' boundary conditions. Similar to the definition of $\mathbb{G}_n(p)$, we have the definition
\[
\mathbb{H}_n(p) = \sum_{x \in P(n)} h_{0,x}(p).
\]
There is an asymptotic result for $\mathbb{H}_n(p)$ analogous to Prop.~\ref{prop: expectation_plane} stated in \cite[Thm.~4.4(I)]{CCC91}, but we will only need the simpler \cite[Prop.~3.4(iii)]{CCC91}:
\begin{prop}[Campanino-Chayes-Chayes] \label{prop: strict_connection_plane}
Let $p \in (0,p_c)$. There exists $\beta = \beta(p)>0$ such that for every $n$, we have
\[
\beta(p) \mathbb{G}_n(p) \leq \mathbb{H}_n(p) \leq \mathbb{G}_n(p).
\]
\end{prop}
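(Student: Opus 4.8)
The plan is to treat the two inequalities separately; the right-hand one is essentially immediate and the left-hand one carries all the content. For the upper bound, fix $x \in P(n)$: the event defining $h_{0,x}(p)$ entails $x \in C(0)||_{S(n)} \subseteq C(0)$, hence $\{0 \leftrightarrow x\}$, so $h_{0,x}(p) \le \mathbb{P}_p(0 \leftrightarrow x)$; summing over $x \in P(n)$ gives $\mathbb{H}_n(p) \le \mathbb{G}_n(p)$.

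For the lower bound, write $\mathbb{G}_n(p) = \mathbb{E}_p \#(C(0) \cap P(n))$ and, since the events defining the $h_{0,x}(p)$ are disjoint over $x$, $\mathbb{H}_n(p) = \mathbb{P}_p\big( C(0)||_{S(n)} \cap P(0) = \{0\},\ \#(C(0)||_{S(n)} \cap P(n)) = 1 \big)$. I would proceed in three steps. \emph{(i)} Reduce to connections inside the slab: with $N_n := \#(C(0)||_{S(n)} \cap P(n))$ and $J_n := \#(C(0)||_{S(n)} \cap P(0)) - 1$, one shows $\mathbb{E}_p N_n \ge c_1(p)\,\mathbb{G}_n(p)$ for some $c_1(p)>0$, because any connection $0 \leftrightarrow x \in P(n)$ realized only by paths exiting $S(n)$ contains an open excursion longer than the separation of its endpoints, so by the BK inequality and the exponential decay \eqref{eq: weaker_G_n_inequalities} its total contribution is dominated by that of in-slab connections; this reduction is also implicit in the material preceding \cite[Prop.~3.4(iii)]{CCC91}. \emph{(ii)} Record the ``thinness'' of long subcritical slab connections: there are $\lambda(p)>0$, $c_2(p)<\infty$, both independent of $n$, with $\mathbb{E}_p\big[e^{\lambda(p)(J_n+N_n)} \mid N_n \ge 1\big] \le c_2(p)$ — conditioned on $\{0 \leftrightarrow P(n)\text{ in }S(n)\}$ the cluster is typically a narrow tube with $O(1)$ contact vertices on each face — which follows from exponential decay of connectivities and a union bound over the possible spurious contact vertices, and in particular gives $\mathbb{E}_p N_n \le c_2(p)\,\mathbb{P}_p(N_n \ge 1)$.

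\emph{(iii)} The surgery: on $\{N_n \ge 1\}$, choose the lexicographically least $x \in C(0)||_{S(n)} \cap P(n)$ and a canonical open path realizing $0 \leftrightarrow x$ in $S(n)$, and then modify the configuration on only the $O(J_n+N_n)$ edges incident to the unwanted boundary contacts of $C(0)||_{S(n)}$ on the faces $P(0)$ and $P(n)$ (closing them, together with a bounded-cost adjustment near $0$ so that the canonical path leaves $P(0)$ immediately through $\{0,\mathbf{e}_1\}$), so as to land in the event defining $h_{0,x}(p)$. By the finite-energy property this costs a factor at least $\rho(p)^{\,\mathrm{const}\cdot(J_n+N_n)}$ for some $\rho(p) \in (0,1)$; combining with step (ii) and Jensen's inequality,
\[
\mathbb{H}_n(p) \ \ge\ \mathbb{E}_p\big[\rho(p)^{\,\mathrm{const}\cdot(J_n+N_n)}\,\mathbf{1}_{\{N_n\ge1\}}\big] \ \ge\ c_3(p)\,\mathbb{P}_p(N_n\ge1) \ \ge\ \frac{c_3(p)}{c_2(p)}\,\mathbb{E}_p N_n \ \ge\ \frac{c_1(p)c_3(p)}{c_2(p)}\,\mathbb{G}_n(p),
\]
so $\beta(p) := c_1(p)c_3(p)/c_2(p)$ works.

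The main obstacle is the uniformity in $n$ of steps (i) and (ii) — that a long subcritical slab connection rarely leaves the slab and has only $O(1)$ contact vertices on each boundary hyperplane — together with the bookkeeping in (iii) that enforces the strict-cylinder boundary conditions without destroying the chosen $0$-to-$x$ connection; the cleanest implementation of the surgery uses the exploration/regeneration machinery of \cite{CCC91} rather than naive edge modification. This geometric ``thinness'' is the only place where the finiteness of the correlation length $\xi(p)$ enters.
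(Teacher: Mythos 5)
First, a point of comparison: the paper does not prove this proposition at all --- it is imported verbatim as \cite[Prop.~3.4(iii)]{CCC91} --- so there is no in-paper argument to measure your proposal against. Your upper bound $\mathbb{H}_n(p)\le\mathbb{G}_n(p)$ is correct and complete: the event defining $h_{0,x}(p)$ forces $0\leftrightarrow x$, and summing over $x\in P(n)$ does the job.

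The lower bound, however, has genuine gaps, and they sit exactly in the two ``reduction'' steps. In step (i) you argue that connections from $0$ to $x\in P(n)$ which leave the slab $S(n)$ contribute negligibly to $\mathbb{G}_n(p)$, by BK plus exponential decay. This fails quantitatively: a connection can exit $S(n)$ by dipping a single step below $P(0)$ next to the origin, which costs an $O(1)$ factor, not an exponentially small one. The tree-graph/BK bound on $\sum_{x\in P(n)}\mathbb{P}_p(0\leftrightarrow x,\ 0\leftrightarrow P(-1)\cup P(n+1))$ comes out of order $e^{-n/\xi(p)}$ --- the \emph{same} order as $\mathbb{G}_n(p)$ --- with a constant you do not control relative to the constant in the lower bound of \eqref{eq: weaker_G_n_inequalities}, so the subtraction does not close. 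The same issue defeats step (ii) as justified: the ``spurious'' contacts of $C(0)||_{S(n)}$ with $P(0)$ are typically vertices adjacent to $0$, and by FKG the strict-cylinder condition at $P(0)$ \emph{fails} with conditional probability at least $p$, bounded away from $0$; these contacts are not rare events to be killed by a union bound, and the uniform-in-$n$ comparison $\mathbb{E}_p N_n\le c_2(p)\,\mathbb{P}_p(N_n\ge1)$ inside the slab is essentially of the same depth as the proposition itself. Finally, the finite-energy surgery in (iii) needs a multiplicity bound for the modification map (how many configurations in $\{N_n\ge1\}$ are sent to the same modified configuration), which you acknowledge but do not supply. In short, the outline names the right objects, but each step quietly assumes a constant-prefactor comparison between quantities that all decay like $e^{-n/\xi(p)}$, and establishing those comparisons (via the super/sub-multiplicativity of $\mathbb{H}_n$ and $\mathbb{G}_n$ and the regeneration structure of long subcritical clusters) is precisely the content of \cite[Sec.~3]{CCC91}.
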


\subsection{Consequences for connection probabilities}

From Prop.~\ref{prop: expectation_plane} and Prop.~\ref{prop: strict_connection_plane} we can derive some asymptotics for some connection events that are more general than point-to-point. First we have the point-to-plane connection probability.

\begin{prop}\label{prop: point_to_plane_probability}
Let $p \in (0,p_c)$. There exist $\Cl[smc]{c: point_to_plane_lower} = \Cr{c: point_to_plane_lower}(p) > 0$ and $\Cl[lgc]{c: point_to_plane_upper} = \Cr{c: point_to_plane_upper}(p) > 0$ such that for all $n$,
\[
\Cr{c: point_to_plane_lower} \exp\left( - \frac{n}{\xi(p)}\right) \leq \mathbb{P}_p(0 \leftrightarrow P(n)) \leq \Cr{c: point_to_plane_upper} \exp\left( - \frac{n}{\xi(p)} \right).
\]
\end{prop}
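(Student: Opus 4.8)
The upper bound is immediate: since $\mathbb{P}_p(0 \leftrightarrow P(n)) \leq \mathbb{E}_p \#\{x \in P(n) : 0 \leftrightarrow x\} = \mathbb{G}_n(p)$, the right-hand inequality in \eqref{eq: weaker_G_n_inequalities} gives $\mathbb{P}_p(0 \leftrightarrow P(n)) \leq \Cr{c: G_n_upper_constant} \exp(-n/\xi(p))$, so I may take $\Cr{c: point_to_plane_upper} = \Cr{c: G_n_upper_constant}$. The real content is the matching lower bound, and the plan is to extract it from $\mathbb{H}_n(p)$ via Prop.~\ref{prop: strict_connection_plane} together with a second-moment (or BK-type union-bound) argument to control the overcounting inherent in $\mathbb{G}_n(p)$.

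More concretely, first I would observe that the events appearing in the definition of $\mathbb{H}_n(p)$ are \emph{disjoint}: if $C(0)||_{S(n)} \cap P(n) = \{y\}$ and $C(0)||_{S(n)} \cap P(n) = \{y'\}$ both hold then $y = y'$, since the restricted cluster $C(0)||_{S(n)}$ is a single deterministic set in any fixed configuration. Hence
\[
\mathbb{H}_n(p) = \sum_{x \in P(n)} h_{0,x}(p) = \mathbb{P}_p\big( C(0)||_{S(n)} \cap P(n) \text{ is a single point, and }C(0)||_{S(n)}\cap P(0)=\{0\}\big) \leq \mathbb{P}_p(0 \leftrightarrow P(n)).
\]
Combining this with the lower bound $\mathbb{H}_n(p) \geq \beta(p)\mathbb{G}_n(p)$ from Prop.~\ref{prop: strict_connection_plane} and then the lower bound $\mathbb{G}_n(p) \geq \Cr{c: G_n_lower_constant}\exp(-n/\xi(p))$ from \eqref{eq: weaker_G_n_inequalities} yields
\[
\mathbb{P}_p(0 \leftrightarrow P(n)) \geq \mathbb{H}_n(p) \geq \beta(p)\,\mathbb{G}_n(p) \geq \beta(p)\,\Cr{c: G_n_lower_constant}\,\exp\!\left(-\frac{n}{\xi(p)}\right),
\]
so I may take $\Cr{c: point_to_plane_lower} = \beta(p)\Cr{c: G_n_lower_constant}$, completing the proof.

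The step I expect to require the most care is the disjointness observation above, i.e.\ verifying that summing $h_{0,x}(p)$ over $x \in P(n)$ really produces the probability of a single event and therefore is bounded by $\mathbb{P}_p(0 \leftrightarrow P(n))$ rather than merely an expected count. One must check that the ``strict cylinder'' boundary conditions at $P(0)$ and $P(n)$ are compatible across different target points $x$ — they are, precisely because $C(0)||_{S(n)}$ is determined by the configuration, so at most one $x \in P(n)$ can satisfy $C(0)||_{S(n)} \cap P(n) = \{x\}$. Once this is in hand, everything else is a direct chaining of the two cited propositions and the elementary bound $\mathbb{G}_n(p) \geq \mathbb{P}_p(0 \leftrightarrow P(n)) \geq \mathbb{H}_n(p)$, with no further estimation needed. (An alternative route to the lower bound, should one wish to avoid invoking Prop.~\ref{prop: strict_connection_plane}, is a Paley--Zygmund / second-moment argument on the random variable $N_n = \#\{x \in P(n): 0 \leftrightarrow x\}$, using that $\mathbb{E}_p N_n = \mathbb{G}_n(p)$ and that $\mathbb{E}_p N_n^2 \lesssim \mathbb{G}_n(p)$ because open clusters in subcritical percolation are thin; but the $\mathbb{H}_n$ route is cleaner given what is already available.)
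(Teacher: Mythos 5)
Your proof is correct and takes essentially the same route as the paper: the upper bound via $\mathbb{P}_p(0 \leftrightarrow P(n)) \leq \mathbb{G}_n(p)$ together with \eqref{eq: weaker_G_n_inequalities}, and the lower bound by noting that the strict-cylinder events defining $\mathbb{H}_n(p)$ are disjoint over $x \in P(n)$ and each is contained in $\{0 \leftrightarrow P(n)\}$, then chaining with Prop.~\ref{prop: strict_connection_plane} and \eqref{eq: weaker_G_n_inequalities}. The disjointness observation you flag as the delicate step is exactly the one the paper uses, and your verification of it is sound.
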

\begin{proof}
By \eqref{eq: weaker_G_n_inequalities} and Prop.~\ref{prop: strict_connection_plane}, we have
\begin{align*}
\Cr{c: G_n_lower_constant} \beta(p) \exp\left( - \frac{n}{\xi(p)}\right) \leq \mathbb{H}_n(p) &= \sum_{x \in P(n)} h_{0,x}(p) \\
&\leq \sum_{x \in P(n)} \mathbb{P}_p( 0 \leftrightarrow P(n), C(0)||_{S(n)} \cap P(n) = \{x\}) \\
&\leq \mathbb{P}_p(0 \leftrightarrow P(n)) \\
&\leq \mathbb{G}_n(p) \\
&\leq \Cr{c: G_n_upper_constant} \exp\left( - \frac{n}{\xi(p)}\right).
\end{align*}
\end{proof}

From Prop.~\ref{prop: point_to_plane_probability}, we can quickly find asymptotics for the point-to-box connection probability. Let $B(n) = [-n,n]^d \cap \mathbb{Z}^d$ and write
\[
\partial B(n) = \{x \in B(n) : \exists ~y \in \mathbb{Z}^d \setminus B(n) \text{ with } |x-y|=1\}.
\]
\begin{prop}\label{prop: point_to_box_probability}
Let $p \in (0, p_c)$. There exist $\Cl[smc]{c: point_to_box_lower} = \Cr{c: point_to_box_lower}(p) > 0$ and $\Cl[lgc]{c: point_to_box_upper} = \Cr{c: point_to_box_upper}(p) > 0$ such that for all $n$, we have
\[
\Cr{c: point_to_box_lower} \exp\left( - \frac{n}{\xi(p)} \right) \leq \mathbb{P}_p(0 \leftrightarrow \partial B(n)) \leq \Cr{c: point_to_box_upper} \exp\left( - \frac{n}{\xi(p)} \right).
\]
\end{prop}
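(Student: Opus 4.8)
The plan is to sandwich $\mathbb{P}_p(0 \leftrightarrow \partial B(n))$ between a point-to-plane probability (for the lower bound) and a sum of $2d$ point-to-plane probabilities (for the upper bound), then invoke Prop.~\ref{prop: point_to_plane_probability} in each case. For the lower bound, observe that the hyperplane $\{x : x \cdot \mathbf{e}_1 = n\}$ lies entirely outside $B(n-1)$, so any open path from $0$ reaching $P(n)$ must cross $\partial B(n)$; more precisely, $\{0 \leftrightarrow P(n)\} \subseteq \{0 \leftrightarrow \partial B(n)\}$ once one notes a path to $P(n)$ from the origin must exit $B(n-1) \subseteq B(n)$ through $\partial B(n)$. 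Hence $\mathbb{P}_p(0 \leftrightarrow \partial B(n)) \geq \mathbb{P}_p(0 \leftrightarrow P(n)) \geq \Cr{c: point_to_plane_lower} \exp(-n/\xi(p))$, which gives $\Cr{c: point_to_box_lower} = \Cr{c: point_to_plane_lower}$.

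For the upper bound, note that if $0 \leftrightarrow \partial B(n)$ then the open cluster of the origin reaches some vertex $x$ with $|x|_\infty = n$, so at least one coordinate of $x$ equals $\pm n$. Thus $\{0 \leftrightarrow \partial B(n)\}$ is contained in the union, over the $2d$ choices of a coordinate $i \in \{1,\dots,d\}$ and a sign $s \in \{+,-\}$, of the events $\{0 \leftrightarrow \{x : s\, x \cdot \mathbf{e}_i = n\}\}$. By the lattice symmetries of $\mathbb{P}_p$ (coordinate permutations and reflections), each of these $2d$ point-to-hyperplane probabilities equals $\mathbb{P}_p(0 \leftrightarrow P(n))$. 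A union bound and Prop.~\ref{prop: point_to_plane_probability} then give
\[
\mathbb{P}_p(0 \leftrightarrow \partial B(n)) \leq 2d\, \mathbb{P}_p(0 \leftrightarrow P(n)) \leq 2d\, \Cr{c: point_to_plane_upper} \exp\left(-\frac{n}{\xi(p)}\right),
\]
so one may take $\Cr{c: point_to_box_upper} = 2d\, \Cr{c: point_to_plane_upper}$.

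There is essentially no obstacle here; the only point requiring a moment's care is the book-keeping for the lower bound, namely confirming that a path from the origin to $P(n)$ necessarily meets $\partial B(n)$ — this holds because $B(n) \setminus B(n-1) \supseteq P(n) \cap B(n)$ is non-empty only if the path is short, and in general the path must pass through $\partial B(n)$ to leave the box, while the portion of $P(n)$ inside $B(n)$ is itself part of $\partial B(n)$. Either way $\{0 \leftrightarrow P(n)\} \subseteq \{0 \leftrightarrow \partial B(n)\}$, and the rest is immediate.
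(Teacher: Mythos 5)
Your proposal is correct and follows essentially the same route as the paper: the lower bound via the inclusion $\{0 \leftrightarrow P(n)\} \subseteq \{0 \leftrightarrow \partial B(n)\}$ together with Prop.~\ref{prop: point_to_plane_probability}, and the upper bound via a union bound over the $2d$ coordinate hyperplanes at distance $n$ and lattice symmetry. The slightly garbled remark at the end about $B(n)\setminus B(n-1)$ is unnecessary — the inclusion is immediate since any path from $0$ to $P(n)$ either stays in $B(n)$ and ends at a vertex of $P(n)\cap B(n)\subseteq \partial B(n)$, or exits $B(n)$ and hence meets $\partial B(n)$ — but this does not affect correctness.
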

\begin{proof}
On the one hand, if $0 \leftrightarrow P(n)$, then $0 \leftrightarrow \partial B(n)$, so from Prop.~\ref{prop: point_to_plane_probability}, we have
\[
\mathbb{P}_p(0 \leftrightarrow \partial B(n)) \geq \mathbb{P}_p( 0 \leftrightarrow P(n)) \geq \Cr{c: point_to_plane_lower} \exp\left( - \frac{n}{\xi(p)}\right).
\]
On the other hand, if $0 \leftrightarrow \partial B(n)$, then $0$ must be connected by an open path to at least one of $2d$ many rotations of $P(n)$ of the form $\{x \in \mathbb{Z}^d : x \cdot \mathbf{e}_i = (-1)^k n\}$ for $k=0,1$ and $i=1, \dots, d$, so by symmetry and a union bound, we obtain
\[
\mathbb{P}_p(0 \leftrightarrow \partial B(n)) \leq 2d \mathbb{P}_p(0 \leftrightarrow P(n)) \leq 2d \Cr{c: point_to_plane_upper} \exp\left( - \frac{n}{\xi(p)}\right).
\]
\end{proof}

In the next proposition, we consider probabilities of box-crossing events. For positive integers $n$ and $h(n)$, define the rectangular box
\[
R_n = \left( [0,n] \times [0,h(n)]^{d-1}\right) \cap \mathbb{Z}^d.
\]
We say that $R_n$ has a left-right open crossing if there is an open path which uses only vertices of $R_n$ and connects a vertex on the left side $\{0\} \times [0,h(n)]^{d-1}$ to a vertex on the right side $\{n\} \times [0,h(n)]^{d-1}$. Write 
\[
\Phi(s) = \frac{s}{s+1} \text{ for } s > 0.
\]
\begin{prop}\label{prop: rectangle_crossing_probability}
Let $p \in (0,p_c)$. There exist $\Cl[smc]{c: rectangle_crossing_lower} = \Cr{c: rectangle_crossing_lower}(p) > 0$ and $\Cl[lgc]{c: rectangle_crossing_upper} = \Cr{c: rectangle_crossing_upper}(p) > 0$ such that for all $n$ and all choices of $h(n) \geq 6n$, we have
\begin{align*}
\Phi\left( \Cr{c: rectangle_crossing_lower} h(n)^{d-1} \exp\left( - \frac{n}{\xi(p)}\right)\right) &\leq \mathbb{P}_p(R_n \text{ has a left-right open crossing}) \\
&\leq \Cr{c: rectangle_crossing_upper} h(n)^{d-1} \exp\left( - \frac{n}{\xi(p)}\right).
\end{align*}
\end{prop}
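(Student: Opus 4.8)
The plan is to prove the upper and lower bounds separately, using the point-to-plane estimate Prop.~\ref{prop: point_to_plane_probability} as the main input. For the upper bound, the idea is a union bound over starting points on the left face. If $R_n$ has a left-right open crossing, then some vertex $x \in \{0\} \times [0,h(n)]^{d-1}$ is connected, within $R_n$, to the opposite face, hence in particular $x \leftrightarrow x + P(n)$ in the full lattice (after translating so $x$ is the origin). There are $(h(n)+1)^{d-1}$ choices of $x$, and by translation invariance each contributes at most $\mathbb{P}_p(0 \leftrightarrow P(n)) \leq \Cr{c: point_to_plane_upper} \exp(-n/\xi(p))$. This immediately gives $\mathbb{P}_p(R_n \text{ has a left-right open crossing}) \leq \Cr{c: rectangle_crossing_upper} h(n)^{d-1} \exp(-n/\xi(p))$, after absorbing the factor $((h(n)+1)/h(n))^{d-1} \leq 2^{d-1}$ into the constant (here $h(n) \geq 1$ suffices, and the hypothesis $h(n)\geq 6n$ is not needed for this direction).

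For the lower bound the form $\Phi(s) = s/(s+1)$ is the signature of a second-moment / Paley--Zygmund argument, so the plan is to let $N$ denote the number of vertices $x$ on the left face with $x \leftrightarrow (\text{right face})$ inside $R_n$, and to estimate $\mathbb{P}_p(N \geq 1) \geq (\mathbb{E}_p N)^2/\mathbb{E}_p N^2$. The first moment is bounded below using Prop.~\ref{prop: point_to_plane_probability}: for a vertex $x$ in the ``middle'' of the left face (say within distance $h(n)/3$ of the center in the transverse directions), the slab $S(n)$ after translation is contained in $R_n$, so crossing within $R_n$ has probability at least a constant times the point-to-plane probability; since $h(n) \geq 6n$ there are $\geq c\, h(n)^{d-1}$ such middle vertices, giving $\mathbb{E}_p N \geq \Cr{c: rectangle_crossing_lower} h(n)^{d-1}\exp(-n/\xi(p))$ for a suitable constant. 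The second moment is $\mathbb{E}_p N^2 = \sum_{x,y} \mathbb{P}_p(x,y \text{ both cross})$; bounding the joint crossing event by $\{x \leftrightarrow P(n)\}$ alone (dropping the condition on $y$) gives $\mathbb{E}_p N^2 \leq (\mathbb{E}_p N)\cdot \sup_x \#\{y\}$, which is too lossy, so instead one splits: when $\|x-y\|$ is large the two crossing events are ``almost independent'' and when $\|x-y\|$ is small there are few such pairs. Concretely, $\mathbb{P}_p(x \leftrightarrow P(n), y\leftrightarrow P(n)) \leq \mathbb{P}_p(0 \leftrightarrow P(n))\cdot C$ always, and summing over $y$ at fixed $x$ one uses a BK-type or tree-graph bound to show $\sum_y \mathbb{P}_p(x,y\text{ cross}) \le C\,\mathbb{P}_p(0\leftrightarrow P(n))\,(1 + h(n)^{d-1}\exp(-n/\xi(p)))$, whence $\mathbb{E}_p N^2 \leq C(\mathbb{E}_p N)(1 + \mathbb{E}_p N)$ and Paley--Zygmund yields $\mathbb{P}_p(N\geq 1) \geq c(\mathbb{E}_p N)/(1+\mathbb{E}_p N) = \Phi(\text{const}\cdot\mathbb{E}_p N)$, as desired.

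The main obstacle is the second-moment estimate: controlling $\sum_y \mathbb{P}_p(x \leftrightarrow P(n),\, y \leftrightarrow P(n))$ requires decoupling two long open connections that may share edges. The cleanest route is a van den Berg--Kesten style argument: if both $x$ and $y$ are connected to $P(n)$, then either their open clusters are disjoint (so the two point-to-plane events occur disjointly, and BK applies to give the product of probabilities), or the clusters meet at some vertex $z$, in which case one has a ``$Y$-shaped'' configuration of three disjoint open paths (from $x$ to $z$, from $y$ to $z$, and from $z$ to $P(n)$) and a BK/tree-graph bound produces a sum like $\sum_z \mathbb{P}_p(x\leftrightarrow z)\mathbb{P}_p(y\leftrightarrow z)\mathbb{P}_p(z \leftrightarrow P(n))$. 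Summing this over $y$ and $z$, using $\sum_w \mathbb{P}_p(0\leftrightarrow w) = \chi(p) < \infty$ in the subcritical phase together with Prop.~\ref{prop: expectation_plane} to handle the sums over hyperplanes, one obtains the required bound. One must be slightly careful that all these connections are to be realized inside $R_n$ rather than in the full lattice, but since restricting to a subgraph only decreases connection probabilities, the full-lattice bounds used for $\mathbb{E}_p N^2$ remain valid upper bounds, and the lower bound on $\mathbb{E}_p N$ was already arranged to use only connections inside $R_n$.
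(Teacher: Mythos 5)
Your upper bound is the same union-bound argument as the paper's and is fine. For the lower bound you take a genuinely different route: Paley--Zygmund with a BK/tree-graph second-moment estimate, where the paper instead lower-bounds the expected number $X_n$ of edge-disjoint crossings using the strict-cylinder connectivity $\mathbb{H}_n(p)$ (whose defining events for distinct endpoints force distinct, hence edge-disjoint, clusters) and then converts this first moment into a lower bound on $\mathbb{P}_p(X_n\ge 1)$ via the BK inequality and the geometric series $\mathbb{E}_pX_n\le\sum_{k\ge1}\mathbb{P}_p(X_n\ge1)^k$. Your route can be made to work, but as written it has two genuine problems.

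First, the justification of the first-moment lower bound is wrong: the slab $S(n)$ is unbounded in the $d-1$ transverse directions, so its translate is certainly not contained in $R_n$, and $\mathbb{P}_p(x\leftrightarrow P(n))$ does not by itself bound below the probability of a crossing confined to $R_n$. You need the confinement estimate: the probability that $x\leftrightarrow P(n)$ but the connecting cluster exits a transverse window of width comparable to $h(n)$ is at most $2(d-1)\,\mathbb{P}_p(0\leftrightarrow P(2n))\le C e^{-2n/\xi(p)}$, which is negligible against $e^{-n/\xi(p)}$. This is where the hypothesis $h(n)\ge 6n$ actually enters, not merely in counting ``middle'' vertices; it is exactly the paper's step \eqref{eq: confinement_pasta}.

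Second, and more seriously, your concluding step $\mathbb{P}_p(N\ge1)\ge c\,\mathbb{E}_pN/(1+\mathbb{E}_pN)=\Phi(\mathrm{const}\cdot\mathbb{E}_pN)$ is false: $c\,s/(1+s)$ is not of the form $\Phi(c's)$, and for no $c'>0$ does $c\,s/(1+s)\ge\Phi(c's)$ hold uniformly in $s$, since the left side tends to $c<1$ while the right side tends to $1$ as $s\to\infty$. With a second-moment bound of the loose form $\mathbb{E}_pN^2\le C\,\mathbb{E}_pN\,(1+\mathbb{E}_pN)$, Paley--Zygmund only yields a probability bounded below by $1/C$, which does not prove the stated lower bound in the regime $h(n)^{d-1}e^{-n/\xi(p)}\to\infty$ --- precisely the regime needed for Cor.~\ref{cor: sponge}. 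The repair is to track constants: the diagonal contributes $\mathbb{E}_pN$, the disjoint-cluster off-diagonal term is at most $(\mathbb{E}_pN)^2$ with coefficient exactly $1$ by BK, and the shared-cluster term is at most $C\,\mathbb{E}_pN$ by the tree-graph bound (which itself needs a careful statement for a point-to-plane leg). This gives $\mathbb{E}_pN^2\le(\mathbb{E}_pN)^2+C\,\mathbb{E}_pN$ and hence $\mathbb{P}_p(N\ge1)\ge\Phi\bigl(\mathbb{E}_pN/(1+C)\bigr)$, which is of the required form. With these two repairs your argument goes through and is a legitimate alternative to the paper's.
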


The results of Prop.~\ref{prop: point_to_box_probability} and Prop.~\ref{prop: rectangle_crossing_probability} improve on those used in \cite[Sec.~11.5]{grimmettbook}. This is the main reason we can show (mentioned in Cor.~\ref{cor: phase_transition}) that the percolation phase transition is continuous on $\mathbb{G}_{a,b}$ if and only if $b \leq a$, rather than just showing discontinuity for $b > 2a$. Concretely, the lower bound in \cite[(11.58)]{grimmettbook} is $\mathbb{P}_p([0,n]^2 \cap \mathbb{Z}^2 \text{ has a left-right open crossing}) \geq (1/4)\mathbb{P}_p(0 \leftrightarrow \partial B(n/2))^2$, which was known to be of order $e^{-(n/\xi(p))+o(1)}$. (Even from Prop.~\ref{prop: point_to_box_probability}, this is of order $e^{-n/\xi(p)}$.) The corresponding lower bound in Prop.~\ref{prop: rectangle_crossing_probability} (for a $6n$ by $n$ rectangle) is $ne^{-n/\xi(1-p)}$. Similarly, the upper bound used in \cite[Sec.~11.5]{grimmettbook} is from \cite[(6.10)]{grimmettbook}, which states that $\mathbb{P}_p(0 \leftrightarrow \partial B(n))\leq Cne^{-n/\xi(p)}$. The corresponding estimate coming from Prop.~\ref{prop: point_to_box_probability} is $\mathbb{P}_p(0 \leftrightarrow \partial B(n)) \leq C e^{-n/\xi(p)}$.

\begin{rem}\label{rem: more_boxes}
We will only use Prop.~\ref{prop: rectangle_crossing_probability} in the case of $h(n) \gg n$, but the result can also be shown for cubes like $B(n)$, or even thin rectangular boxes with certain $h(n) \ll n$. The proof would need to be modified by taking into account that the correlation norm $\xi_p$, defined on $\mathbb{R}^2$ by
\[
\xi_p(x) = \lim_{n \to \infty} - \frac{1}{n} \log \mathbb{P}_p( 0 \leftrightarrow [nx]),
\]
$[nx]$ is the closest integer point to $nx$, is strictly convex \cite[Lem.~4.4]{CI02}. (Using that $\{x : \xi_p(x) \leq 1\}$ has Gaussian curvature bounded away from 0 \cite[Lem.~4.3]{CI02}, it is likely that $h(n)$ could be taken $\gg n^{1/2}$.) This would be used in place of \eqref{eq: confinement_pasta} to confine the connection from $0$ to $P(n)$ inside the rectangular box $\widetilde{R}_n$, which is defined above \eqref{eq: confinement_pasta}.
\end{rem}

\begin{proof}[Proof of Prop.~\ref{prop: rectangle_crossing_probability}]
By translating and using monotonicity of crossing probabilities, it suffices to show both inequalities after replacing $R_n$ by the more convenient
\[
R_n' = ([0,n] \times [-3h(n),3h(n)]^{d-1})\cap \mathbb{Z}^d \text{ with } h(n) \geq n.
\]
The proof of the upper bound follows from a union bound and Prop.~\ref{prop: point_to_plane_probability}:
\begin{align}
\mathbb{P}_p(R_n' \text{ has a left-right open crossing}) &\leq \sum_{\vec{z} \in [-3h(n),3h(n)]^{d-1} \cap \mathbb{Z}^{d-1}} \mathbb{P}((0,\vec{z}) \leftrightarrow P(n)) \nonumber \\
&= (6h(n)+1)^{d-1} \mathbb{P}_p(0 \leftrightarrow P(n)) \nonumber \\
&\leq \Cr{c: point_to_plane_upper} (6h(n)+1)^{d-1} \exp\left( - \frac{n}{\xi(p)}\right). \label{eq: crossing_upper_bound}
\end{align}
Here we have used the notation $(0,\vec{z})$ for $\vec{z} = (z(1), \dots, z(d-1))$ to refer to the point $(0,z(1), \dots, z(d-1))$.

For the lower bound, we begin by applying \eqref{eq: weaker_G_n_inequalities} and Prop.~\ref{prop: strict_connection_plane} to obtain
\begin{align}
&\Cr{c: G_n_lower_constant} \beta(p) \exp\left( - \frac{n}{\xi(p)}\right) \nonumber \\
\leq~& \mathbb{H}_n(p) \nonumber \\
=~& \sum_{x \in P(n)} \mathbb{P}_p(C(0)||_{S(n)} \cap P(0) = \{0\}, C(0)||_{S(n)} \cap P(n) = \{x\}) \nonumber \\
\leq~& \mathbb{P}_p(C(0)||_{S(n)} \cap P(0) = \{0\}, C(0)||_{S(n)} \cap P(n) \neq \emptyset). \label{eq: pasta_macaroni}
\end{align}
In the event $\{C(0)||_{S(n)} \cap P(n) \neq \emptyset \}$ which appears above, there is an open path from $0$ to $P(n)$ that remains in $S(n)$, but we want it to remain in $R_n'$. In fact, we would like it to lie in an even smaller rectangle $\widetilde{R}_n$, which we define as $\widetilde{R}_n = \left( [0,n] \times [-2h(n),2h(n)]^{d-1}\right) \cap \mathbb{Z}^d$. Observe that by symmetry and Prop.~\ref{prop: point_to_plane_probability}, we have
\begin{align}
\mathbb{P}_p(C(0)||_{S(n)} \cap P(n) \neq \emptyset,~ C(0)||_{\widetilde{R}_n} \cap P(n) = \emptyset) &\leq 2(d-1)\mathbb{P}_p(0 \leftrightarrow P(2h(n)))  \nonumber \\
&\leq 2(d-1)\mathbb{P}_p(0 \leftrightarrow P(2n)) \nonumber \\
&\leq 2(d-1)\Cr{c: point_to_plane_upper} \exp\left( - \frac{2n}{\xi(p)}\right). \label{eq: confinement_pasta}
\end{align}
If we combine this with \eqref{eq: pasta_macaroni}, we obtain $\Cl[smc]{c: pasta_macaroni_constant} = \Cr{c: pasta_macaroni_constant}(p) > 0$ such that for all $n$, we have
\[
\Cr{c: pasta_macaroni_constant} \exp\left( - \frac{n}{\xi(p)}\right) \leq \mathbb{P}_p(C(0)||_{S(n)} \cap P(0) = \{0\}, C(0)||_{ \widetilde{R}_n} \cap P(n) \neq \emptyset).
\]
Because $(0,\vec{z}) + \widetilde{R}_n \subset R_n'$ for all $\vec{z} \in [-h(n), \dots, h(n)]^{d-1} \cap  \mathbb{Z}^{d-1}$, the above implies
\begin{equation}\label{eq: almost_rice_a_roni}
(2h(n)+1)^{d-1}\Cr{c: pasta_macaroni_constant} \exp\left( - \frac{n}{\xi(p)}\right) \leq \mathbb{E}\sum_{\vec{z} \in [-h(n), h(n)]^{d-1} \cap  \mathbb{Z}^{d-1}} \mathbf{1}_{D_{\vec{z}}},
\end{equation}
where we have written
\[
D_{\vec{z}} = \left\{ C((0,\vec{z}))||_{S(n)} \cap  P(0) = \{(0,\vec{z})\}, C((0,\vec{z}))||_{R_n'} \cap P(n) \neq \emptyset \right\}.
\]

We can estimate the expectation on the right side of \eqref{eq: almost_rice_a_roni} using the BK inequality \cite[Thm.~2.12]{grimmettbook}. Observe that if $D_{\vec{z}_1}\cap \dots \cap  D_{\vec{z}_k}$ occurs for some distinct $\vec{z}_1, \dots, \vec{z}_k \in [-h(n),h(n)]^{d-1} \cap \mathbb{Z}^{d-1}$, then we can find $k$ many edge-disjoint left-right open crossings of $R_n'$. This is because for each $m$, the second condition in the definition of $D_{\vec{z}_m}$, namely that $C((0,\vec{z}_m))||_{R_n'} \cap P(n) \neq \emptyset$, implies that there is a left-right open crossing $\gamma_m$ of $R_n'$ using only vertices of $C((0,\vec{z}_m))||_{R_n'}$. If, say, $\gamma_m$ and $\gamma_\ell$ were to share an edge, then the sets $C((0,\vec{z}_m))||_{R_n'}$ and $C((0,\vec{z}_\ell))||_{R_n'}$ would share a vertex and would therefore be equal, contradicting the first condition in the definition of $D_{\vec{z}_m}$. So if we let $X_n$ be the maximal number of edge-disjoint left-right open crossings of $R_n'$, we have 
\[
\sum_{\vec{z} \in [-h(n),h(n)]^{d-1} \cap \mathbb{Z}^{d-1}} \mathbf{1}_{D_{\vec{z}}} \leq X_n
\]
and so, from \eqref{eq: almost_rice_a_roni}, we obtain
\begin{equation}\label{eq: crab_cake}
(2h(n)+1)^{d-1}\Cr{c: pasta_macaroni_constant} \exp\left( - \frac{n}{\xi(p)}\right) \leq \mathbb{E}X_n = \sum_{k=1}^\infty \mathbb{P}_p(X_n \geq k).
\end{equation}
The BK inequality implies that $\mathbb{P}_p(X_n \geq k) \leq \mathbb{P}_p(X_n \geq 1)^k$, and so the above display implies
\begin{equation}\label{eq: BK_derivation}
(2h(n)+1)^{d-1}\Cr{c: pasta_macaroni_constant} \exp\left( - \frac{n}{\xi(p)}\right) \leq \frac{\mathbb{P}_p(X_n \geq 1)}{1-  \mathbb{P}_p(X_n \geq 1)}.
\end{equation}
This implies the lower bound and, along with the upper bound in \eqref{eq: crossing_upper_bound}, completes the proof.
\end{proof}

We record here that as a consequence of the proof of Prop.~\ref{prop: rectangle_crossing_probability}, we have the following bounds on the expectation of $X_n$, where
\[
X_n = \text{maximal number of edge-disjoint left-right open crossings of }R_n.
\]
\begin{cor}\label{cor: X_n_bounds_subcritical}
Let $p \in (0,p_c)$. There exist $\Cl[smc]{c: crossing_number_lower_new} = \Cr{c: crossing_number_lower_new}(p) > 0$ and $\Cl[lgc]{c: crossing_number_upper_new} = \Cr{c: crossing_number_upper_new}(p)>0$ such that for all $n$ and all choices of $h(n) \geq 6n$, we have
\[
\Cr{c: crossing_number_lower_new} h(n)^{d-1} \exp\left( - \frac{n}{\xi(p)}\right) \leq \mathbb{E}_p X_n \leq \Cr{c: crossing_number_upper_new} h(n)^{d-1} \exp\left( - \frac{n}{\xi(p)}\right).
\]
\end{cor}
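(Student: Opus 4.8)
The plan is to extract both bounds from the proof of Prop.~\ref{prop: rectangle_crossing_probability} with only a small amount of extra work. Exactly as there, I would first reduce by translation and monotonicity to the rectangle $R_n' = ([0,n]\times[-3h(n),3h(n)]^{d-1})\cap\mathbb{Z}^d$ with $h(n)\geq n$: since $\mathbb{E}_pX_n$ is translation invariant, the expectation in the corollary for the original $R_n$ equals the one for $R_n'$ after the substitution $h(n)\mapsto h(n)/6$, up to a constant factor depending only on $d$ that is absorbed into the constants. The lower bound is then immediate from inequality~\eqref{eq: crab_cake}, whose derivation uses no BK input and shows directly that
\[
(2h(n)+1)^{d-1}\,\Cr{c: pasta_macaroni_constant}\exp\!\left(-\frac{n}{\xi(p)}\right)\leq \mathbb{E}_pX_n,
\]
where here $X_n$ denotes the maximal number of edge-disjoint left-right open crossings of $R_n'$; this is already of the required form and produces $\Cr{c: crossing_number_lower_new}$.

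For the upper bound, which is the only genuinely new ingredient, I would compare $X_n$ to a point-to-plane count. Fix a maximal family $\gamma_1,\dots,\gamma_{X_n}$ of edge-disjoint left-right open crossings of $R_n'$. Each $\gamma_j$ starts at a vertex $v_j$ of the left face $\{0\}\times[-3h(n),3h(n)]^{d-1}$ through an edge incident to $v_j$, and edge-disjointness forces these first edges to be distinct; since a vertex of the left face has at most $2d-1$ incident edges in $R_n'$, at most $2d-1$ of the $\gamma_j$ can share a starting vertex. As each such $v_j$ is connected to $P(n)$ within $R_n'$, hence in $\mathbb{Z}^d$, this yields
\[
X_n\leq (2d-1)\,\#\bigl\{\vec{z}\in[-3h(n),3h(n)]^{d-1}\cap\mathbb{Z}^{d-1}:(0,\vec{z})\leftrightarrow P(n)\bigr\}.
\]
Taking $\mathbb{E}_p$, using translation invariance of $\mathbb{P}_p$, and then applying Prop.~\ref{prop: point_to_plane_probability} gives
\[
\mathbb{E}_pX_n\leq (2d-1)(6h(n)+1)^{d-1}\,\mathbb{P}_p\bigl(0\leftrightarrow P(n)\bigr)\leq (2d-1)(6h(n)+1)^{d-1}\,\Cr{c: point_to_plane_upper}\exp\!\left(-\frac{n}{\xi(p)}\right),
\]
which, after translating back to $R_n$ and relabeling, is the claimed bound with constant $\Cr{c: crossing_number_upper_new}$.

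I do not anticipate a real obstacle: the substance is already contained in the Campanino-Chayes-Chayes estimates (used through Prop.~\ref{prop: point_to_plane_probability}) and in the portion of the previous proof up to~\eqref{eq: crab_cake}. The only points that need a little care are bookkeeping ones: checking that the left-hand count in~\eqref{eq: crab_cake} genuinely scales like the transverse volume $h(n)^{d-1}$ (it does, since the box of centers there has side $2h(n)$, comparable to the side $6h(n)$ of $R_n'$), and verifying that the reduction to $R_n'$ is valid for the quantity $\mathbb{E}_pX_n$ itself and not merely for a crossing probability, which holds by translation invariance.
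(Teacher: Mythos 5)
Your proposal is correct and follows essentially the same route as the paper: the lower bound is read off from \eqref{eq: crab_cake}, and the upper bound comes from observing that each left-face vertex lies in at most $2d-1$ edge-disjoint crossings and then applying Prop.~\ref{prop: point_to_plane_probability}. The only cosmetic difference is that the paper runs the upper-bound count directly on $R_n$ rather than on $R_n'$, which avoids the translation bookkeeping.
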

\begin{proof}
The lower bound appears in \eqref{eq: crab_cake} and the upper bound follows from Prop.~\ref{prop: point_to_plane_probability}. Each vertex in $\{0\} \times [0,h(n)]^{d-1}$ can be in at most $2d-1$ many edge-disjoint left-right open crossings of $R_n$. So
\begin{align*}
\mathbb{E}_p X_n &\leq (2d-1) \mathbb{E}_p \#\{v \in \{0\} \times [0,h(n)]^{d-1} : v \leftrightarrow P(n)\} \\
&= (2d-1) \sum_{v \in \{0\} \times [0,h(n)]^{d-1}} \mathbb{P}_p(v \leftrightarrow P(n)) \\
&\leq (2d-1) (h(n)+1)^{d-1} \Cr{c: point_to_plane_upper} \exp\left( - \frac{n}{\xi(p)}\right).
\end{align*}
\end{proof}

Grimmett's ``sponge dimensions'' result from \cite[Thm.~1]{G81} states that in the case $d=2$, the crossing probability for $R_n$ converges to 0 if $h(n)e^{-n/\xi(p)} \to 0$ and converges to 1 if $(h(n)/n)e^{-n/\xi(p)} \to \infty$. An immediate corollary of Prop.~\ref{prop: rectangle_crossing_probability} is the following improvement.
\begin{cor}\label{cor: sponge}
Let $p \in (0,p_c)$ and $(n_k)$ be an increasing sequence of natural numbers. We have
\[
\mathbb{P}_p(R_{n_k} \text{ has a left-right open crossing}) \to \begin{cases}
0 &\quad \text{if } h(n_k)^{d-1} \exp\left( - \frac{n_k}{\xi(p)}\right) \to 0 \\
1 &\quad \text{if } h(n_k)^{d-1} \exp\left( - \frac{n_k}{\xi(p)}\right) \to \infty.
\end{cases}
\]
\end{cor}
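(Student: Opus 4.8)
The plan is to read both cases off the two-sided estimate in Proposition~\ref{prop: rectangle_crossing_probability}; the only point needing thought is that that proposition is stated only for $h(n)\ge 6n$, which I will get around using monotonicity of crossing probabilities. Throughout I abbreviate $q_k = h(n_k)^{d-1}\exp(-n_k/\xi(p))$, and I note that since $(n_k)$ is increasing we have $n_k\to\infty$, so that $(6n_k)^{d-1}\exp(-n_k/\xi(p))\to 0$ (exponential decay beats polynomial growth).

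First I would treat the case $q_k\to 0$. Introduce the box $\widetilde R_{n_k}$ with the same first-coordinate extent $[0,n_k]$ as $R_{n_k}$ but side length $\max(h(n_k),6n_k)$ in the remaining $d-1$ coordinates, so that $R_{n_k}\subseteq\widetilde R_{n_k}$ and every left-right open crossing of $R_{n_k}$ is also a left-right open crossing of $\widetilde R_{n_k}$. Since $\widetilde R_{n_k}$ has side length at least $6n_k$, the upper bound of Proposition~\ref{prop: rectangle_crossing_probability} applies to it, and monotonicity gives
\[
\mathbb{P}_p\big(R_{n_k}\text{ has a left-right open crossing}\big)\ \le\ \Cr{c: rectangle_crossing_upper}\,\max\!\big(q_k,\ (6n_k)^{d-1}\exp(-n_k/\xi(p))\big),
\]
whose right-hand side tends to $0$ by the observation above.

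Next I would treat the case $q_k\to\infty$. Taking logarithms, this means $(d-1)\log h(n_k)-n_k/\xi(p)\to\infty$, which forces $h(n_k)\ge 6n_k$ for all large $k$; hence Proposition~\ref{prop: rectangle_crossing_probability} applies directly to $R_{n_k}$, yielding
\[
\mathbb{P}_p\big(R_{n_k}\text{ has a left-right open crossing}\big)\ \ge\ \Phi\!\big(\Cr{c: rectangle_crossing_lower}\,q_k\big)=\frac{\Cr{c: rectangle_crossing_lower}\,q_k}{\Cr{c: rectangle_crossing_lower}\,q_k+1}\ \longrightarrow\ 1 .
\]

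I do not anticipate any real obstacle here: the whole statement is essentially a one-line consequence of Proposition~\ref{prop: rectangle_crossing_probability}, the only care needed being that in the divergent case the hypothesis automatically supplies the condition $h(n_k)\ge 6n_k$, while in the convergent case the ``fattening'' to $\widetilde R_{n_k}$ reduces matters to a box to which the proposition applies verbatim.
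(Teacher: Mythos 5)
Your proof is correct and follows the same route the paper intends: the corollary is stated there as an immediate consequence of the two-sided bound in Prop.~\ref{prop: rectangle_crossing_probability}, with the upper bound handling the case $q_k\to 0$ and the $\Phi$ lower bound handling $q_k\to\infty$. Your extra care about the hypothesis $h(n)\ge 6n$ — fattening the box in the convergent case and noting that divergence of $q_k$ forces $h(n_k)\ge 6n_k$ eventually — is a valid (and slightly more explicit) treatment of a point the paper leaves implicit.
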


In the intermediate case, if
\[
\liminf_{k \to \infty} h(n_k)^{d-1} \exp\left( - \frac{n_k}{\xi(p)}\right) > 0,
\]
then it is immediate from Prop.~\ref{prop: rectangle_crossing_probability} that
\[
\liminf_{k \to \infty} \mathbb{P}_p(R_{n_k} \text{ has a left-right open crossing}) > 0.
\]
In the case $d=2$, we have the following corresponding upper bound, which is a consequence of Prop.~\ref{prop: rectangle_crossing_probability} and \cite[Sec.~4]{DS88}:
\begin{cor}\label{cor: DS88}
Let $p \in (0,p_c)$ and $(n_k)$ be an increasing sequence of natural numbers. In the case $d=2$, if 
\[
\limsup_{k \to \infty} h(n_k) \exp\left( - \frac{n_k}{\xi(p)}\right) < \infty,
\]
then
\[
\limsup_{k \to \infty} \mathbb{P}_p(R_{n_k} \text{ has a left-right open crossing}) < 1.
\]
\end{cor}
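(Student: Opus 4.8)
The plan is to prove the statement directly, by showing that the probability that $R_{n_k}$ has \emph{no} left-right open crossing stays bounded away from $0$ along the sequence; the point is that this can be done using only the FKG inequality together with the point-to-plane estimate Prop.~\ref{prop: point_to_plane_probability}, which gives a self-contained alternative to the route through \cite[Sec.~4]{DS88} mentioned above.

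First I would fix $M$ with $\limsup_{k} h(n_k) e^{-n_k/\xi(p)} < M < \infty$. Since $(n_k)$ is increasing, $n_k \to \infty$, so $e^{-n_k/\xi(p)} \to 0$ and $(h(n_k)+1)\, e^{-n_k/\xi(p)} \le M$ for all large $k$. For each $y \in \{0,1,\dots,h(n_k)\}$ I set $B_y = \{(0,y)\not\leftrightarrow P(n_k)\}$, where the connection is taken in the full lattice $\mathbb{Z}^2$ (this choice matters below). If every $B_y$ occurs, then no vertex of the left side $\{0\}\times[0,h(n_k)]$ of $R_{n_k}$ is connected to $P(n_k)$ at all, so $R_{n_k}$ certainly has no left-right open crossing, and therefore
\[
\mathbb{P}_p\big(R_{n_k}\text{ has no left-right open crossing}\big) \ \ge\ \mathbb{P}_p\Big(\bigcap_{y=0}^{h(n_k)} B_y\Big).
\]

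Next, each $B_y$ is a decreasing event, so by the FKG inequality the probabilities multiply out as a lower bound, and by vertical translation invariance each factor equals $1-\mathbb{P}_p(0\leftrightarrow P(n_k))$. By Prop.~\ref{prop: point_to_plane_probability} there is $C=C(p)>0$ with $\mathbb{P}_p(0\leftrightarrow P(n_k)) \le C e^{-n_k/\xi(p)}$, which tends to $0$; so, using $1-t\ge e^{-2t}$ for $t\in[0,1/2]$, for all large $k$,
\[
\mathbb{P}_p\Big(\bigcap_{y=0}^{h(n_k)} B_y\Big) \ \ge\ \big(1-C e^{-n_k/\xi(p)}\big)^{h(n_k)+1} \ \ge\ \exp\!\big(-2C\,(h(n_k)+1)\,e^{-n_k/\xi(p)}\big) \ \ge\ e^{-2CM}.
\]
Hence $\liminf_k \mathbb{P}_p(R_{n_k}\text{ has no left-right open crossing}) \ge e^{-2CM} > 0$, i.e.\ $\limsup_k \mathbb{P}_p(R_{n_k}\text{ has a left-right open crossing}) \le 1 - e^{-2CM} < 1$, which is the claim.

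I do not expect a real obstacle along this path; the only point that needs care is that the events $B_y$ must refer to connections in all of $\mathbb{Z}^2$, not inside $R_{n_k}$, so that translation invariance applies and Prop.~\ref{prop: point_to_plane_probability} can be quoted without modification. The same argument in fact works verbatim in every dimension $d\ge 2$ (indexing the left face by $\vec y\in[0,h(n_k)]^{d-1}\cap\mathbb{Z}^{d-1}$ and raising to the power $(h(n_k)+1)^{d-1}$). Alternatively, following the stated route, \cite[Sec.~4]{DS88} supplies a large-deviation lower bound for the event that a thin rectangle fails to be crossed the easy way, and Prop.~\ref{prop: rectangle_crossing_probability} (equivalently Prop.~\ref{prop: point_to_plane_probability}) is what pins the relevant exponential rate to $1/\xi(p)$, so that the hypothesis $h(n_k)\lesssim e^{n_k/\xi(p)}$ enters in the correct normalization.
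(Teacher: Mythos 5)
Your proof is correct, and it takes a genuinely different and more elementary route than the paper. The paper proves this corollary by invoking the Durrett--Schonmann result \cite[Thm.~3]{DS88} (via planar duality), which gives that $D_n/\beta_n$ converges to a mean-one exponential, where $D_n$ is the minimal height at which $[1,n]\times[0,m]$ is crossed left-right and $\beta_n$ is its $1-e^{-1}$ quantile; the hypothesis on $h(n_k)$ then enters through the \emph{lower} bound of Prop.~\ref{prop: rectangle_crossing_probability}, which is used to show $\beta_{n+1}\gtrsim h(n)$, so that $h(n_k)/\beta_{n_k+1}$ stays bounded and the limiting exponential law keeps the crossing probability away from $1$. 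Your argument bypasses all of this: the inclusion $\bigcap_y B_y \subset \{\text{no crossing}\}$, FKG for the decreasing events $B_y$, translation invariance, the point-to-plane \emph{upper} bound of Prop.~\ref{prop: point_to_plane_probability}, and the elementary inequality $1-t\ge e^{-2t}$ on $[0,1/2]$ are all that is needed, and each step checks out. What you lose is the sharper information encoded in the exponential limit law (which in principle pins down the limiting value of the crossing probability, not just that it is bounded away from $1$); what you gain is a self-contained proof that, as you note, works verbatim in every dimension $d\ge 2$ with the hypothesis $\limsup_k h(n_k)^{d-1}e^{-n_k/\xi(p)}<\infty$, whereas the paper's route is confined to $d=2$ by its reliance on \cite{DS88} and planar duality. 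This is a legitimate strengthening of the stated corollary.
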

\begin{proof}
Define the random variable $D_n$ by
\[
D_n = \min\{m \in \mathbb{N} : \exists~\text{a left-right open crossing of } ([1,n] \times [0,m]) \cap \mathbb{Z}^2\} - 1.
\]
The result of \cite[Thm.~3]{DS88} (stated for the supercritical contact process, but see the end of \cite[Sec.~4]{DS88} for the extension to supercritical bond percolation), along with planar duality, gives that $D_n/\beta_n$ converges as $n \to \infty$ in distribution to a mean-one exponential random variable, where
\[
\beta_n = \inf\left\{t : \mathbb{P}_p(D_n > t) \leq \frac{1}{e}\right\}.
\]
We have 
\begin{equation*}\label{eq: beta_addition}
\mathbb{P}_p(R_{n_k} \text{ has a left-right open crossing}) = \mathbb{P}_p\left( \frac{D_{n_k+1}}{\beta_{n_k+1}} \leq \frac{h(n_k)-1}{\beta_{n_k+1}}\right),
\end{equation*}
so if we prove that
\begin{equation}\label{eq: reduced_liminf_etc_statement}
U:= \limsup_{k \to \infty} \frac{h(n_k)-1}{\beta_{n_k+1}} < \infty,
\end{equation}
then we would have
\[
\limsup_{k \to \infty} \mathbb{P}_p\left( \frac{D_{n_k+1}}{\beta_{n_k+1}} \leq \frac{h(n_k)-1}{\beta_{n_k+1}}\right) \leq \limsup_{k \to \infty} \mathbb{P}_p\left( \frac{D_{n_k+1}}{\beta_{n_k+1}} \leq 2U\right) = 1-e^{-2U} < 1,
\]
which would complete the proof.

To prove \eqref{eq: reduced_liminf_etc_statement}, it suffices to show existence of $\Cl[smc]{c: liminf_etc_lower_constant}>0$
such that for all large $n$, we have
\begin{equation}\label{eq: quantiles_to_prove}
\Cr{c: liminf_etc_lower_constant} h(n) \leq \beta_{n+1}.
\end{equation}
We estimate using Prop.~\ref{prop: rectangle_crossing_probability}:
\begin{align*}
&\mathbb{P}_p(D_{n+1} > \Cr{c: liminf_etc_lower_constant} h(n)) \\
=~& 1 - \mathbb{P}_p\left( ([1,n+1] \times [0,\lfloor\Cr{c: liminf_etc_lower_constant}h(n)\rfloor]) \cap  \mathbb{Z}^2 \text{ has a left-right open crossing}\right) \\
\geq~& 1 - \Cr{c: rectangle_crossing_upper} (\lfloor\Cr{c: liminf_etc_lower_constant}h(n)\rfloor) \exp\left( - \frac{n}{\xi(p)}\right).
\end{align*}
By assumption, there exists $\Cl[lgc]{c: wandering_constant_part_a} > 0$ such that for all large $n$, we have $h(n)e^{-n/\xi(p)} \leq \Cr{c: wandering_constant_part_a}$. This implies for large $n$ that
\[
\mathbb{P}_p(D_{n+1} > \Cr{c: liminf_etc_lower_constant}h(n)) \geq 1 - \Cr{c: rectangle_crossing_upper} \Cr{c: liminf_etc_lower_constant}\Cr{c: wandering_constant_part_a}  > \frac{1}{e},
\]
so long as we choose $\Cr{c: liminf_etc_lower_constant}$ sufficiently small. By definition, this shows \eqref{eq: quantiles_to_prove} and completes the proof.
\end{proof}

\section{Proof of Thm.~\ref{thm: main_mean}}\label{sec: mean_proof}

The conclusions of Thm.~\ref{thm: main_mean} are not difficult to deduce for general weights with the gap condition so long as we prove them in the Bernoulli case. To see why, we couple together the weights $(\tau_e)$ and $(t_e)$ in the following way. Let $(t_e)_{e \in \mathbb{E}^2}$ be a family of i.i.d.~Bernoulli random variables with 
\[
\mathbb{P}(t_e = 0) = p = 1 - \mathbb{P}(t_e = 1).
\]
Next, let $(\tau_e')_{e \in \mathbb{E}^2}$ be an i.i.d.~family of random variables on $(\Omega, \Sigma, \mathbb{P})$, independent of $(t_e)$, satisfying
\[
\mathbb{P}(\tau_e' \in B) = \frac{1}{1 - p} \int_{B \cap (0,\infty)} ~\text{d}F(x) \text{ for Borel } B \subset \mathbb{R}
\]
for an $F$ satisfying \eqref{eq: gap_var_condition_1} and \eqref{eq: finite_mean_tau}. Last, set $\tau_e = t_e \tau_e'$ for all $e$, so that $(\tau_e)$ is i.i.d.~with common distribution function $F$. In addition to the passage time $T_{a,b}$ defined using the weights $(\tau_e)$, we also have the Bernoulli passage time $T_{a,b}^B$ defined using the weights $(t_e)$.

With the above definition, we claim that 
\begin{equation}\label{eq: comparison_claim}
\delta \mathbb{E}T_{a,b}^B(0,P(n)) \leq \mathbb{E}T_{a,b}(0,P(n)) \leq \mathbb{E}[\tau_e \mid \tau_e > 0] \mathbb{E}T_{a,b}^B(0,P(n)).
\end{equation}
Given these inequalities, the results of Thm.~\ref{thm: main_mean} will hold once we prove them for $T_{a,b}^B$.

To show \eqref{eq: comparison_claim}, note that because $\tau_e \geq \delta t_e$, we have $T_{a,b}(0,P(n)) \geq \delta T_{a,b}^B(0,P(n))$, and this gives the first inequality. For the second, let $\gamma_n$ be an optimal (edge self-avoiding) path for $T_{a,b}^B(0,P(n))$, chosen as the first in some deterministic ordering of all finite paths if there are multiple options. Then by independence, we have
\begin{align*}
\mathbb{E}T_{a,b}(0,P(n)) \leq \sum_{f \in \mathbb{E}_{a,b}} \mathbb{E}[\tau_f \mathbf{1}_{\{f \in \gamma_n\}}] &= \sum_{f \in \mathbb{E}_{a,b}} \mathbb{E}\left[ t_f \mathbf{1}_{\{f \in \gamma_n\}} \mathbb{E}[\tau_f' \mid (t_e)] \right]\\
&= \mathbb{E}[\tau_e \mid \tau_e > 0] \mathbb{E}T_{a,b}^B(0,P(n)),
\end{align*}
and this shows \eqref{eq: comparison_claim}.

We now move to the main part of the proof, proving the asymptotics for $\mathbb{E}T_{a,b}^B(0,P(n))$. The subgraph of $\mathbb{G}_{a,b}$ that is relevant for determining $T_{a,b}^B(0,P(n))$ is $\mathbb{G}_{a,b}(n)$, which has vertex set
\[
\mathbb{V}_{a,b}(n) = \{(x_1,x_2) \in \mathbb{V}_{a,b} : 0 \leq x_1 \leq n\}
\]
and edge set
\[
\mathbb{E}_{a,b}(n) = \{\{x,y\} : x,y \in \mathbb{V}_{a,b}(n), |x-y|=1\}.
\]
The graph $\mathbb{G}_{a,b}(n)$ is bounded on the top by a ``highest path'' $P_{a,b}(n)$ that makes only steps right or up. Specifically, it begins at $(0,0) = (0,\lfloor f(0)\rfloor)$, moves right to $(1,\lfloor f(0)\rfloor)$, up (if necessary) to $(1,\lfloor f(1)\rfloor)$, right to $(2, \lfloor f(1) \rfloor)$, up (if necessary) to $(2,\lfloor f(2) \rfloor)$, and so on, until it reaches its final point at $(n,\lfloor f(n) \rfloor)$.  See Fig.~\ref{fig: fig_1} for an illustration of the path $P_{a,b}(n)$.

\begin{figure}[t]
  \centering
  \includegraphics[width=0.6\linewidth, trim={0cm 0cm 0cm 0cm}, clip]{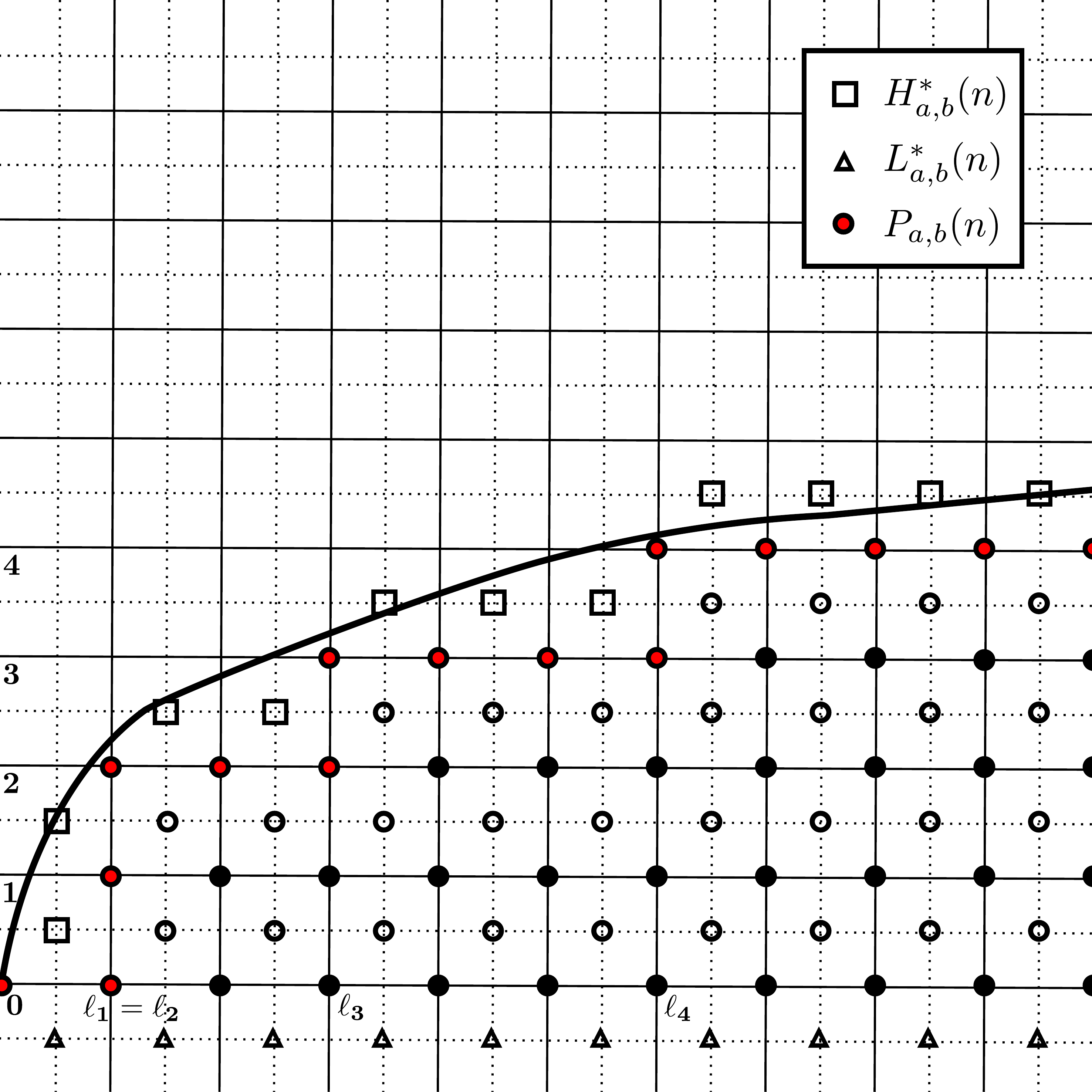}
  \caption{Illustration of $\mathbb{G}_{a,b}(n)$ for $n=10$. The curve is the graph of $f(u) = a \log (1+u) + b \log (1 + \log (1+u))$ and vertices of $\mathbb{G}_{a,b}(n)$ are the solid dots (black and red). The vertices of the dual graph are the non-filled circles, squares, and triangles. Those of the ``highest path'' $P_{a,b}(n)$ are displayed in red, the ``top side'' of $\mathbb{G}_{a,b}^\ast(n)$ is $H_{a,b}^\ast(n)$, whose vertices are displayed as squares, and the ``bottom side'' is $L_{a,b}^\ast(n)$, whose vertices are displayed as triangles. The values listed on the $x$-axis are $\ell_j$, the smallest value of $u$ for which $f(u) \geq j$. Because $f$ increases sharply near 0 in the figure, we have $f(0) = 0$ but $f(1) \geq 2$, so $\ell_1=\ell_2=1$.}
  \label{fig: fig_1}
\end{figure}

\medskip
\noindent
{\bf I. Dual representation.} In the first part of the proof, we describe a standard representation of $T_{a,b}^B(0,P(n))$ in terms of separating sets. We will call an edge $e$ closed if $t_e=1$; otherwise, we call it open. A set $U \subset \mathbb{E}_{a,b}(n)$ is said to be closed if all its edges are closed. We say that $U \subset \mathbb{E}_{a,b}(n)$ is a \underline{separating set} if each path in $\mathbb{G}_{a,b}(n)$ from $0$ to any vertex in $P(n)$ contains an edge in $U$. Define $Y_n$ as
\begin{equation}\label{eq: Y_n}
Y_n = \text{maximal number of disjoint closed separating sets}.
\end{equation}
We claim that
\begin{equation}\label{eq: separating_claim}
T_{a,b}^B(0,P(n)) = Y_n.
\end{equation}
(This claim also holds for Bernoulli FPP on general graphs, with ``separating set'' defined suitably.)

To prove \eqref{eq: separating_claim}, we start with the inequality $\geq$. If $Y_n=0$ then the inequality is trivial; otherwise, let $E_1, \dots, E_{Y_n}$ be a maximal collection of disjoint closed separating sets. If $\gamma$ is a path from $0$ to a vertex in $P(n)$, we can find distinct edges $e_1 \in E_1, \dots, e_{Y_n} \in E_{Y_n}$ that are in $\gamma$ and are closed. Then $T(\gamma) \geq t_{e_1} + \dots + t_{e_{Y_n}} \geq Y_n$. To prove the inequality $\leq$, we must produce $T_{a,b}^B(0,P(n))$ many disjoint closed separating sets. Again if $T_{a,b}^B(0,P(n))=0$ then the inequality is trivial. Otherwise, for $j = 0 , \dots, T_{a,b}^B(0,P(n)) - 1$, define $B(j) = \{v \in \mathbb{V}_{a,b}(n) : T_{a,b}^B(0,v) \leq j\}$ and $E_j = \{\{v,w\} \in \mathbb{E}_{a,b}(n) : v \in B(j), w \notin B(j)\}$. The $E_j$ are disjoint because if $\{v,w\}$ is an edge in $E_{j}$ with $v \in B(j)$ and $w \notin B(j)$, then we have $j < T_{a,b}^B(0,w) \leq T_{a,b}^B(0,v) + t_{\{v,w\}} \leq j+1$, implying that $w \in B(j+1) \setminus B(j)$. This means that for each $j_1 < j_2$, every edge in $E_{j_2}$ has an endpoint in $B(j_2+1) \setminus B(j_2)$, and this therefore cannot be an endpoint of any  edge in $E_{j_1}$. To show they are separating, fix $j=0, \dots, T_{a,b}^B(0,P(n))-1$ and let $\gamma$ be a path from $0$ to a vertex $x \in P(n)$ in $\mathbb{G}_{a,b}(n)$. We have $0 \in B(j)$ and because $x \in P(n)$, we have $T_{a,b}^B(0,x) \geq T_{a,b}^B(0,P(n)) > j$, so $x \notin B(j)$. Therefore there is an edge $\{v,w\}$ in $\gamma$ such that $v \in B(j)$ and $w \notin B(j)$, and hence $E_j$ is separating. Last, if $\{v,w\} \in E_j$ then we must have $t_{\{v,w\}} = 1$ because if $t_{\{v,w\}} = 0$ we would have $T_{a,b}^B(0,v) = T_{a,b}^B(0,w)$, which is false. Therefore $\{E_j\}$ forms a collection of disjoint closed separating sets, and so $T_{a,b}^B(0,P(n)) \leq Y_n$. This proves \eqref{eq: separating_claim}.

Next, we represent $Y_n$ using the dual lattice. Let $(\mathbb{Z}^2)^\ast = \mathbb{Z}^2 + (1/2,1/2)$ be the set of dual vertices and let $(\mathbb{E}^2)^\ast$ be its set of nearest neighbor edges. Each edge $e \in \mathbb{E}^2$ is bisected by a unique edge $e^\ast \in (\mathbb{E}^2)^\ast$ and we assign this dual edge the same weight as its primal edge by defining $t_{e^\ast} = t_e$. We say that the dual edge $e^\ast$ is open if $t_e=0$ and is closed otherwise. The dual graph for $\mathbb{G}_{a,b}(n)$ is $\mathbb{G}_{a,b}^\ast(n)$, which has edge set 
\[
\mathbb{E}_{a,b}^\ast(n) = \{e^\ast : e \in \mathbb{E}_{a,b}(n)\}
\]
and vertex set 
\[
\mathbb{V}_{a,b}^\ast(n) = \{v : v \text{ is an endpoint of an edge of } \mathbb{E}_{a,b}^\ast(n)\}.
\]
The ``top side'' of $\mathbb{G}^\ast_{a,b}(n)$ is the set of dual vertices that are above the highest path $P_{a,b}(n)$:
\[
H_{a,b}^\ast(n) = \left\{ v \in \mathbb{V}_{a,b}^\ast(n) :
\begin{array}{c}
 v \text{ is an endpoint of }e^\ast \text{ for some edge }e \text{ in } P_{a,b}(n) 
\text{ and } v \text{ lies } \\
\text{above }P_{a,b}(n) \text{ in the strip } \{(x_1,x_2) \in \mathbb{R}^2 : 0 \leq x_1 \leq n\}
\end{array}
\right\}
\]
(these vertices are either the top or left endpoints of edges dual to those in $P_{a,b}(n)$), and the ``bottom side'' is the set $L_{a,b}^\ast(n) = \{(1/2,-1/2), \dots, (n-1/2, -1/2)\}$. See Fig.~\ref{fig: fig_1} for an illustration of these definitions.	

We claim that 
\begin{equation}\label{eq: new_Y_n}
Y_n = 
\begin{array}{c}
\text{maximal number of edge-disjoint closed dual } \\
\text{paths in } \mathbb{G}_{a,b}^\ast(n) \text{ from }H_{a,b}^\ast(n) \text{ to } L_{a,b}^\ast(n).
\end{array}
\end{equation}
To prove this, denote the right side by $Y_n'$. We first show that $Y_n \leq Y_n'$ and observe that it is trivially true if $Y_n=0$. Otherwise, let $E_1, \dots, E_{Y_n}$ be a maximal collection of disjoint closed separating sets. Fix $j = 1, \dots, Y_n$, call each edge in $E_j$ ``vacant'' and each edge in $\mathbb{E}_{a,b}(n) \setminus E_j$ ``occupied.'' A dual edge is called occupied if its corresponding primal edge is occupied; otherwise it is vacant. By planar duality (see \cite[p.~55]{BR06}), either there is a path of occupied edges from $0$ to $P(n)$ in $\mathbb{G}_{a,b}(n)$ or there is a dual path of vacant edges from $H_{a,b}^\ast(n)$ to $L_{a,b}^\ast(n)$ in $\mathbb{G}_{a,b}^\ast(n)$. Because $E_j$ is separating, any path of the first type would have to contain a vacant edge, which is impossible. So a dual path of the second type must exist and this shows that $Y_n \leq Y_n'$. The other inequality $Y_n \geq Y_n'$ also follows similarly. Indeed, it is trivial if $Y_n' =0$; otherwise, if $P_1, \dots, P_{Y_n'}$ is a maximal collection of edge-disjoint paths in $\mathbb{G}_{a,b}^\ast(n)$ from $H^\ast_{a,b}(n)$ to $L_{a,b}^\ast(n)$, we may fix $j$ and call all edges of $P_j$ ``vacant'' with all other edges ``occupied.'' A primal edge is called occupied if its corresponding dual edge is occupied; otherwise it is vacant. Planar duality then implies that there is no occupied path in $\mathbb{G}_{a,b}(n)$ from $0$ to $P(n)$, so any path from $0$ to $P(n)$ must contain a vacant edge. Therefore the edges of $P_j$ form a closed separating set, and $Y_n \geq Y_n'$.

\medskip
\noindent
{\bf II. Upper bounds}. Now that we have given our representation for $T_{a,b}^B(0,P(n))$ in \eqref{eq: separating_claim} and \eqref{eq: new_Y_n}, we move to proving the upper bounds. For each $j = 0, \dots, \lfloor f(n) \rfloor$, write
\[
H_{a,b}^\ast(n;j) = \left\{ (x_1, y_1) \in H_{a,b}^\ast(n) : y_1 = j+\frac{1}{2}\right\}.
\]
If we define
\[
Y_{n,j} = \begin{array}{c}
\text{maximal number of edge-disjoint closed dual} \\
\text{paths in } \mathbb{G}_{a,b}^\ast(n) \text{ from } H_{a,b}^\ast(n;j) \text{ to } L_{a,b}^\ast(n),
\end{array}
\]
then from \eqref{eq: separating_claim} and \eqref{eq: new_Y_n}, we obtain
\begin{equation}\label{eq: reduction_upper_bound}
\mathbb{E}T_{a,b}^B(0,P(n)) \leq \sum_{j=0}^{\lfloor f(n) \rfloor} \mathbb{E} Y_{n,j}.
\end{equation}
To bound $\mathbb{E} Y_{n,j}$, we need a good estimate on $\#H_{a,b}^\ast(n,j)$. For this, we introduce the numbers
\begin{equation}\label{eq: ell_j_def}
\ell_j = \min\{d \in \mathbb{Z} : d \geq 0 \text{ and } f(d) \geq j\} = \lceil f^{-1}(j)\rceil,
\end{equation}
for $j \geq 0$. This $\ell_j$ is the $\mathbf{e}_1$-coordinate of the first vertex of the highest path $P_{a,b}(n)$ with $\mathbf{e}_2$-coordinate at least equal to $j$. We claim that
\begin{equation}\label{eq: counting_claim}
\#H_{a,b}^\ast(n;j) = \begin{cases}
\max\{\ell_{j+1} - \ell_j, 1\} &\quad \text{if } j = 0, \dots, \lfloor f(n) \rfloor - 1\\
n - \ell_{\lfloor f(n) \rfloor} &\quad \text{if } j = \lfloor f(n) \rfloor \\
0 & \quad \text{if } j  > \lfloor f(n) \rfloor.
\end{cases}
\end{equation}
To show this, we can describe the motion of $P_{a,b}(n)$ in terms of the $\ell_j$'s.  Specifically, as we follow the path, we start at $0 = (\ell_0,0)$, move right until $(\ell_1,0)$ and then up to $(\ell_1,1)$. Generally, if we are at $(\ell_j,j)$, we move right (if necessary) to $(\ell_{j+1},j)$, and then up to $(\ell_{j+1},j+1)$. We do this until we reach the point $(\ell_{\lfloor f(n) \rfloor}, \lfloor f(n) \rfloor)$, and then move right to $(n,\lfloor f(n) \rfloor)$. The vertices of $H_{a,b}^\ast(n;j)$ are either (a) upper endpoints of vertical (dual) edges which are dual to horizontal edges with both endpoints in $\{(x_1, j) : x_1 \in \mathbb{R}\}$ or (b) left endpoints of horizontal (dual) edges which are dual to vertical edges connecting $\{(x_1,j) : x_1 \in \mathbb{R}\}$ to $\{(x_1,j+1) : x_1 \in \mathbb{R}\}$. If $j = 0, \dots, \lfloor f(n) \rfloor - 1$, the $\ell_{j+1} - \ell_j$ many endpoints of type (a) are
\[
\left( \ell_j + \frac{1}{2}, j+\frac{1}{2}\right), \dots, \left( \ell_{j+1} - \frac{1}{2}, j + \frac{1}{2}\right)
\]
and if $j = \lfloor f(n) \rfloor$, the $n - \ell_{\lfloor f(n) \rfloor}$ many endpoints are
\[
\left( \ell_{\lfloor f(n) \rfloor} + \frac{1}{2}, \lfloor f(n) \rfloor + \frac{1}{2}\right), \dots, \left( n - \frac{1}{2} , \lfloor f(n) \rfloor + \frac{1}{2}\right).
\]
The endpoint of type (b) is $(\ell_{j+1} - 1/2, j+ 1/2)$ if $j = 0, \dots, \lfloor f(n) \rfloor - 1$ (observe that this point has already been counted in the list of type (a) points when $\ell_{j+1} - \ell_j > 0$) and there are no such endpoints when $j = \lfloor f(n) \rfloor$. If $j > \lfloor f(n) \rfloor$, then there are no endpoints of types (a) or (b). Putting these cases together proves \eqref{eq: counting_claim}.

To estimate $\ell_j$, we note that for all large $j$, we have
\[
\frac{e^{\frac{j}{a}}}{\left( 1 + \frac{j}{a}\right)^{\frac{b}{a}}} - 1 \leq f^{-1}(j) \leq \frac{e^{\frac{j}{a}}}{\left( 1 + \frac{j}{a}\right)^{\frac{b}{a}}} \cdot \left( \frac{1}{1 - \frac{b}{a+j} \log \left( 1 + \frac{j}{a}\right)} \right)^{\frac{b}{a}} - 1.
\]
This can be verified by applying $f$ to all three terms. Therefore we have
\begin{equation}\label{eq: ell_j_asymptotic}
\ell_j \sim \frac{e^{\frac{j}{a}}}{\left( \frac{j}{a}\right)^{\frac{b}{a}}} \text{ and } (\ell_{j+1} - \ell_j) \sim (e^{\frac{1}{a}}-1) \frac{e^{\frac{j}{a}}}{\left( \frac{j}{a}\right)^{\frac{b}{a}}},
\end{equation}
where we use the notation $f_1(j) \sim f_2(j)$ to mean that $f_1(j) \lesssim f_2(j) \lesssim f_1(j)$.

\medskip
\noindent
{\bf II(a). Case $p>1/2$.} First we suppose that $p>1/2$. Observe that a vertex in the set $H_{a,b}^\ast(n;j)$ can be in at most two edge-disjoint dual closed paths in $\mathbb{G}_{a,b}^\ast(n)$ and so we have
\[
Y_{n,j} \leq 2\#\{v \in H_{a,b}^\ast(n;j) : v \text{ is connected to } L_{a,b}^\ast(n) \text{ in } \mathbb{G}_{a,b}^\ast(n) \text{ by a closed dual path}\}.
\]
Using this in \eqref{eq: reduction_upper_bound} with a union bound, we obtain
\begin{align*}
\mathbb{E}T_{a,b}^B(0,P(n)) &\leq 2 \sum_{j=0}^{\lfloor f(n) \rfloor} \sum_{v \in H_{a,b}^\ast(n;j)} \mathbb{P} \left(
\begin{array}{c}
v \text{ is connected to }L_{a,b}^\ast(n) \text{ in } \\
\mathbb{G}_{a,b}^\ast(n) \text{ by a closed dual path}
\end{array}
\right)
\\
&\leq 2\sum_{j=0}^{\lfloor f(n) \rfloor} \#H_{a,b}^\ast(n;j) \mathbb{P}_{1-p} ( 0 \leftrightarrow P(j+1)).
\end{align*}
Equation \eqref{eq: counting_claim} and Prop.~\ref{prop: point_to_plane_probability} then imply
\begin{align}
\mathbb{E}T_{a,b}^B(0,P(n)) &\leq 2 \Cr{c: point_to_plane_upper} \sum_{j=0}^{\lfloor f(n) \rfloor - 1}\max\{\ell_{j+1}-\ell_j,1\} \exp\left( - \frac{j+1}{\xi(1-p)}\right) \nonumber \\
&+ 2 \Cr{c: point_to_plane_upper}(n-\ell_{\lfloor f(n) \rfloor}) \exp\left( - \frac{\lfloor f(n) \rfloor + 1}{\xi(1-p)}\right) \nonumber.
\end{align}
Due to \eqref{eq: ell_j_asymptotic}, there exists $J_{a,b}$ such that if $j \geq J_{a,b}$ then $\ell_{j+1}-\ell_j \geq 1$. This implies that
\begin{align}
\mathbb{E} T_{a,b}^B(0,P(n)) &\leq 2 \Cr{c: point_to_plane_upper} J_{a,b} \nonumber \\
&+ 2 \Cr{c: point_to_plane_upper} \sum_{j=0}^{\lfloor f(n) \rfloor - 1} (\ell_{j+1}-\ell_j) \exp\left( - \frac{j+1}{\xi(1-p)}\right) \label{eq: split_1a} \\
&+ 2 \Cr{c: point_to_plane_upper} (n - \ell_{\lfloor f(n) \rfloor}) \exp\left( - \frac{\lfloor f(n)\rfloor + 1}{\xi(1-p)}\right). \label{eq: split_2a}
\end{align}
For the first term, we use \eqref{eq: ell_j_asymptotic} to say that
\begin{equation}\label{eq: diverging_condition}
\text{if } \sum_j \frac{\exp\left( j \left( \frac{1}{a} - \frac{1}{\xi(1-p)}\right) \right)}{ j^{\frac{b}{a}} }\text{ diverges,}
\end{equation}
then
\begin{equation}\label{eq: before_the_cases}
\eqref{eq: split_1a} \sim 2 \Cr{c: point_to_plane_upper} \left( e^{\frac{1}{a}} - 1 \right) a^{\frac{b}{a}} e^{-\frac{1}{\xi(1-p)}} \sum_{j=1}^{\lfloor f(n) \rfloor -1} \frac{\exp\left( j \left( \frac{1}{a} - \frac{1}{\xi(1-p)}\right)\right)}{j^{\frac{b}{a}}}.
\end{equation}
For the second term, if we write
\[
\Delta_{a,b}^{(2)}(n) = \frac{n - \ell_{\lfloor f(n) \rfloor}}{n} \exp\left(  \frac{f(n) - \lfloor f(n) \rfloor -1}{\xi(1-p)} \right),
\]
we have
\begin{align}
\eqref{eq: split_2a} &= 2 \Cr{c: point_to_plane_upper}\Delta_{a,b}^{(2)}(n) \cdot n \exp\left( - \frac{f(n)}{\xi(1-p)}\right) \nonumber \\
&= 2 \Cr{c: point_to_plane_upper}\Delta_{a,b}^{(2)}(n) \cdot n (1+n)^{-\frac{a}{\xi(1-p)}} (1+\log (1+n))^{- \frac{b}{\xi(1-p)}} \nonumber \\
&\sim 2 \Cr{c: point_to_plane_upper}\Delta_{a,b}^{(2)}(n) \frac{n^{1- \frac{a}{\xi(1-p)}} }{(\log n)^{\frac{b}{\xi(1-p)}}}. \label{eq: split_2aa}
\end{align}

If $a< \xi(1-p)$, then \eqref{eq: diverging_condition} holds and we use that for any $\alpha > 0$ and $\beta \geq 0$, one has 
\begin{equation}\label{eq: exponential_sum}
\sum_{\ell=1}^j \frac{e^{\alpha \ell}}{\ell^{\beta}} \sim \frac{1}{e^{\alpha} - 1} \cdot \frac{e^{\alpha(j+1)}}{j^{\beta}}.
\end{equation}
This implies from \eqref{eq: before_the_cases} that if we write
\[
\Delta_{a,b}^{(1)}(n) = \exp\left( - (f(n) - \lfloor f(n) \rfloor) \left( \frac{1}{a} - \frac{1}{\xi(1-p)}\right)\right),
\]
then we have
\begin{align}
\eqref{eq: split_1a} &\sim   2 \Cr{c: point_to_plane_upper} a^{\frac{b}{a}} \cdot \frac{e^{\frac{1}{a}} - 1}{e^{ \frac{1}{a}} - e^{\frac{1}{\xi(1-p)}}} \cdot  \frac{\exp\left( \lfloor f(n) \rfloor \left( \frac{1}{a} - \frac{1}{\xi(1-p)}\right) \right)}{\left( \lfloor f(n)\rfloor - 1 \right)^{\frac{b}{a}} } \nonumber \\
&\sim 2 \Cr{c: point_to_plane_upper} \Delta_{a,b}^{(1)}(n) \cdot \frac{e^{\frac{1}{a}} - 1}{e^{\frac{1}{a}} - e^{\frac{1}{\xi(1-p)}}} \cdot \frac{\exp\left( f(n)\left( \frac{1}{a} - \frac{1}{\xi(1-p)}\right) \right)}{(\log n)^{\frac{b}{a}}} \nonumber \\
&= 2 \Cr{c: point_to_plane_upper}\Delta_{a,b}^{(1)}(n) \cdot \frac{e^{\frac{1}{a}} - 1}{e^{\frac{1}{a}} - e^{\frac{1}{\xi(1-p)}}} \cdot \frac{(1+n)^{1-\frac{a}{\xi(1-p)}} (1+ \log (1+n))^{b\left( \frac{1}{a} - \frac{1}{\xi(1-p)}\right)}}{(\log n)^{\frac{b}{a}}} \nonumber \\
&\sim 2 \Cr{c: point_to_plane_upper}\Delta_{a,b}^{(1)}(n) \left( 1+  \frac{e^{\frac{1}{\xi(1-p)}} - 1}{e^{\frac{1}{a}} - e^{\frac{1}{\xi(1-p)}}}\right)  \frac{n^{1-\frac{a}{\xi(1-p)}}}{(\log n)^{\frac{b}{\xi(1-p)}}}. \label{eq: my_second_part}
\end{align}

In summary, when $p>1/2$ and $a < \xi(1-p)$, we have from \eqref{eq: split_2aa} and \eqref{eq: my_second_part} that 
\begin{equation}\label{eq: to_use_later_on}
\eqref{eq: split_1a} + \eqref{eq: split_2a} 
\sim 2 \Cr{c: point_to_plane_upper} \left( \Delta_{a,b}^{(1)}(n) \left( 1+  \frac{e^{\frac{1}{\xi(1-p)}} - 1}{e^{\frac{1}{a}} - e^{\frac{1}{\xi(1-p)}}}\right) +  \Delta_{a,b}^{(2)}(n)\right) \frac{n^{1-\frac{a}{\xi(1-p)}}}{(\log n)^{\frac{b}{\xi(1-p)}}}. 
\end{equation}
Using $|\Delta_{a,b}^{(1)}(n)| \leq 1$, $|\Delta_{a,b}^{(2)}(n)| \leq 1$, and
\[
\frac{e^{\frac{1}{\xi(1-p)}}-1}{e^{\frac{1}{a}}-e^{\frac{1}{\xi(1-p)}}} = \frac{1 - e^{-\frac{1}{\xi(1-p)}}}{e^{\frac{1}{a}-\frac{1}{\xi(1-p)}}-1} \leq \frac{1}{e^{\frac{1}{a}-\frac{1}{\xi(1-p)}}-1} \leq \frac{1}{\frac{1}{a}- \frac{1}{\xi(1-p)}} \leq \frac{\xi(1-p)^2}{\xi(1-p)-a},
\]
we find
\[
\mathbb{E}T_{a,b}^B(0,P(n)) \lesssim 2 \Cr{c: point_to_plane_upper} \left( 2 + \frac{\xi(1-p)^2}{\xi(1-p)-a}\right) \frac{n^{1-\frac{a}{\xi(1-p)}}}{(\log n)^{\frac{b}{\xi(1-p)}}}.
\]
The prefactor is bounded above by $C_p / (\xi(1-p)-a)$ for a $p$-dependent constant $C_p$, so this implies the upper bound in the theorem in this case.

If, instead, we have $b<a = \xi(1-p)$, then \eqref{eq: diverging_condition} still holds and we can use $\sum_{\ell=1}^j \ell^{-\alpha} \sim j^{1-\alpha}/(1-\alpha)$ for $\alpha <1$ in \eqref{eq: before_the_cases} to produce
\begin{align*}
\eqref{eq: split_1a} \sim 2 \Cr{c: point_to_plane_upper} \left( e^{\frac{1}{a}} - 1\right) a^{\frac{b}{a}} e^{-\frac{1}{\xi(1-p)}} \sum_{j=1}^{\lfloor f(n) \rfloor - 1} \frac{1}{j^{\frac{b}{a}}} &\sim 2 \Cr{c: point_to_plane_upper} \frac{1-e^{-\frac{1}{\xi(1-p)}}}{1 - \frac{b}{\xi(1-p)}} a^{\frac{b}{a}} \cdot \left( \lfloor f(n) \rfloor - 1 \right)^{1 - \frac{b}{a}} \\
&\sim 2 \Cr{c: point_to_plane_upper} \xi(1-p) \frac{1 - e^{-\frac{1}{\xi(1-p)}}}{1 - \frac{b}{\xi(1-p)}} \left( \log n \right)^{1 - \frac{b}{\xi(1-p)}}.
\end{align*}
Also, from \eqref{eq: split_2aa}, we have $\eqref{eq: split_2a} \to 0$ as $n \to \infty$. Therefore if $p>1/2$ and $b < a = \xi(1-p)$, then
\begin{equation}\label{eq: to_use_later_on_2}
\eqref{eq: split_1a} + \eqref{eq: split_2a}  \sim 2 \Cr{c: point_to_plane_upper}\xi(1-p) \frac{1 - e^{-\frac{1}{\xi(1-p)}}}{1 - \frac{b}{\xi(1-p)}} \left( \log n \right)^{1 - \frac{b}{\xi(1-p)}}.
\end{equation}
This implies the upper bound in the theorem in this case.

Next, if $a=b=\xi(1-p)$, then \eqref{eq: diverging_condition} holds and \eqref{eq: before_the_cases} implies that
\[
\eqref{eq: split_1a} \sim 2 \Cr{c: point_to_plane_upper} a \left( 1 - e^{-\frac{1}{\xi(1-p)}}\right) \sum_{j=1}^{\lfloor f(n) \rfloor - 1} \frac{1}{j} \sim 2\Cr{c: point_to_plane_upper} a \left( 1 - e^{-\frac{1}{\xi(1-p)}}\right) \log \log n.
\]
From \eqref{eq: split_2aa}, we have $\eqref{eq: split_2a} \to 0$ as $n \to \infty$. Therefore if $p>1/2$ and $a=b=\xi(1-p)$, then we have
\begin{equation}\label{eq: to_use_later_on_3}
\eqref{eq: split_1a} + \eqref{eq: split_2a} \sim 2\Cr{c: point_to_plane_upper} a \left( 1 - e^{-\frac{1}{\xi(1-p)}}\right) \log \log n
\end{equation}
and this is the upper bound in the theorem in this case.

Last, if $a > \xi(1-p)$, or if $a = \xi(1-p)$ but $b > \xi(1-p)$, then \eqref{eq: diverging_condition} fails. Because $\ell_{j+1}-\ell_j \sim (e^{1/a}-1)e^{j/a}/(j/a)^{b/a}$, this immediately implies that \eqref{eq: split_1a} converges. Also $\eqref{eq: split_2a} \to 0$ as $n \to \infty$, so $\sup_n \mathbb{E}T_{a,b}^B(0,P(n))<\infty$, as claimed.

\medskip
\noindent
{\bf II(b). Case $p=1/2$.} Next we suppose that $p=1/2$ and return to the estimate \eqref{eq: reduction_upper_bound} for $\mathbb{E}T_{a,b}^B(0,P(n))$ in terms of the variables $Y_{n,j}$. Unlike in the case $p>1/2$, we cannot just bound $Y_{n,j}$ by the number of vertices in $H^\ast_{a,b}(n;j)$ that are connected to $L_{a,b}^\ast(n)$. When $p=1/2$, many such vertices will correspond to the same closed dual path because large critical clusters touch $H^\ast_{a,b}(n;j)$ many times, but also have cut-edges. To deal with this, we use some standard critical percolation constructions.

To stay close to the results in Sec.~\ref{sec: subcritical}, we will temporarily switch to open paths rather than closed dual ones (this does not matter because $p=1/2$). For integers $n$ and $h(n)$, let 
\begin{equation}\label{eq: U_h_n_def}
U(h(n),n) = \begin{array}{c}
\text{maximal number of edge-disjoint open paths } \\
\text{from  }\left( \{0\} \times [0,h(n)]\right) \cap \mathbb{Z}^2 \text{ to } P(n).
\end{array}
\end{equation}
These open paths are allowed to use any edges in $\mathbb{E}^2$. We will show that there exists $\Cl[lgc]{c: critical_upper} > 0$ such that for all $n$ and all choices of $h(n) \geq n$, we have
\begin{equation}\label{eq: critical_upper}
\mathbb{E} U(h(n),n) \leq \Cr{c: critical_upper} \frac{h(n)}{n}.
\end{equation}
To prove this, we first consider the case $h(n) = n$. Any open path from $(\{0\} \times [0,n])\cap \mathbb{Z}^2$ to $P(n)$ must contain a left-right crossing of $B_1$ or $B_2$, or contain a top-down crossing of $B_3$ or $B_4$, where the $B_i$ are defined as follows:
\[
B_1 = \left( [0,n] \times [-n,2n]\right) \cap \mathbb{Z}^2, ~B_2 = B_1 - n \mathbf{e}_1,
\]
and
\[
B_3 = \left( [-2n,n] \times [n,2n] \right) \cap \mathbb{Z}^2, ~B_4 = B_3-2n\mathbf{e}_2.
\]
By symmetry, we obtain
\[
\mathbb{E} U(n,n) \leq 4 \mathbb{E} \left( \text{maximal number of edge-disjoint left-right open crossings of }B_1\right).
\]
By the RSW theorem (see \cite[Eq.~(11.74),(11.76)]{grimmettbook}), there exists $\Cl[smc]{c: RSW1}>0$ such that for all $n$, we have 
\[
\mathbb{P}(B_1\text{ has a left-right open crossing}) \leq 1-\Cr{c: RSW1}.
\] 
As in \eqref{eq: BK_derivation}, we apply to BK inequality to obtain
\[
\mathbb{E} \left( \text{maximal number of edge-disjoint left-right open crossings of }B_1\right) \leq \frac{1-\Cr{c: RSW1}}{\Cr{c: RSW1}},
\]
and so $\mathbb{E} U(n,n) \leq 4 (1-\Cr{c: RSW1})/\Cr{c: RSW1}$. This shows \eqref{eq: critical_upper} in the case $h(n)=n$. If $h(n) > n$, we simply write $U_n(j)$ for the maximal number of edge-disjoint open paths from $(\{0\} \times [(j-1)n, jn]) \cap \mathbb{Z}^2$ to $P(n)$ and observe that by symmetry, we have
\[
\mathbb{E} U(h(n),n) \leq \sum_{j=1}^{\left\lfloor \frac{h(n)}{n} \right\rfloor +1} \mathbb{E} U_n(j) \leq \left(  \frac{h(n)}{n} +1 \right) \mathbb{E} U(n,n) \leq \frac{8(1-\Cr{c: RSW1})}{\Cr{c: RSW1}} \cdot \frac{h(n)}{n},
\]
proving \eqref{eq: critical_upper}.

We now return to \eqref{eq: reduction_upper_bound} and to using closed dual paths.  Because $p=1/2$, we have 
\[
\mathbb{E}Y_{n,j} \leq \mathbb{E} U(\#H_{a,b}^\ast(n;j), j+1),
\]
and so using \eqref{eq: counting_claim}, we find
\[
\mathbb{E}T_{a,b}^B(0,P(n)) \leq \sum_{j=0}^{\lfloor f(n) \rfloor - 1} \mathbb{E} U(\max\{\ell_{j+1}-\ell_j, 1\}, j+1) + \mathbb{E} U(n - \ell_{\lfloor f(n) \rfloor}, \lfloor f(n) \rfloor + 1).
\]
The second summand is bounded by $\mathbb{E}U(n,\lfloor f(n) \rfloor + 1)$. Because $\ell_{j+1} - \ell_j \geq j+1$ for $j$ sufficiently large (depending on $a$ and $b$) and $n \geq \lfloor f(n) \rfloor + 1$ for $n$ sufficiently large, \eqref{eq: critical_upper} implies that for some $K_{a,b}>0$, we have
\[
\mathbb{E}T_{a,b}^B(0,P(n)) \leq K_{a,b} + \sum_{j=0}^{\lfloor f(n) \rfloor -1} \Cr{c: critical_upper} \frac{\ell_{j+1}-\ell_j}{j+1} + \Cr{c: critical_upper} \frac{n}{\lfloor f(n) \rfloor + 1},
\]
if $n$ is large enough. Using \eqref{eq: ell_j_asymptotic} and the fact that $\sum_j e^{j/a}/(j/a)^{b/a+1}$ diverges for all $a > 0$ and $b \geq 0$, we get
\begin{align*}
\mathbb{E}T_{a,b}^B(0,P(n)) &\lesssim \Cr{c: critical_upper} \left( e^{\frac{1}{a}} -1\right) \sum_{j=1}^{\lfloor f(n) \rfloor -1} \frac{e^{\frac{j}{a}}}{\left( \frac{j}{a}\right)^{\frac{b}{a}} (j+1)} + \Cr{c: critical_upper} \frac{n}{a \log n} \\
&\lesssim \Cr{c: critical_upper}\left( e^{\frac{1}{a}} -1 \right) a^{\frac{b}{a}} \sum_{j=1}^{\lfloor f(n) \rfloor -1} \frac{e^{\frac{j}{a}}}{j^{\frac{b}{a}+1}} + \Cr{c: critical_upper} \frac{n}{a \log n}
\end{align*}
Again \eqref{eq: exponential_sum} implies
\begin{align*}
\mathbb{E}T_{a,b}^B(0,P(n)) &\lesssim \Cr{c: critical_upper} a^{\frac{b}{a}} \cdot \frac{e^{\frac{\lfloor f(n) \rfloor }{a}}}{\left( \lfloor f(n) \rfloor -1 \right)^{\frac{b}{a}+1}} + \Cr{c: critical_upper} \frac{n}{a \log n} \\
&\lesssim \Cr{c: critical_upper}  \frac{(1+n) (1+ \log (1+n))^{\frac{b}{a}}}{a\left( \log n \right)^{\frac{b}{a}+1}} + \Cr{c: critical_upper} \frac{n}{a \log n} \\
&\lesssim 2\Cr{c: critical_upper}\frac{n}{a \log n}.
\end{align*}
This completes the proof of the upper bound in the case $p=1/2$.

\medskip
\noindent
{\bf III. Lower bounds.} For lower bounds, we look at top-down dual closed crossings of rectangles.

\medskip
\noindent
{\bf III(a). Case $p>1/2$.} In this case, we use rectangles that are contained in $\mathbb{G}_{a,b}^\ast(n)$, defining $R_{a,b}^\ast(n;j)$ to have top side equal to $H_{a,b}^\ast(n;j)$: for any $j$ such that $\ell_{j+1}>\ell_j$, we set
\[
R_{a,b}^\ast(n;j) = \left( \left[ \ell_j + \frac{1}{2}, \ell_{j+1}-\frac{1}{2}\right] \times \left[ - \frac{1}{2}, j + \frac{1}{2}\right] \right) \cap (\mathbb{Z}^2)^\ast.
\]
There exists $J_{a,b}$ such that 
\begin{equation}\label{eq: j_a_b_def}
\text{for } j \geq J_{a,b} \text{ we have }\ell_{j+1} - \ell_j - 1 \geq 6(j+1),
\end{equation}
and so, in particular, for $j = J_{a,b}, \dots, \lfloor f(n) \rfloor - 1$, the set $R_{a,b}^\ast(n;j)$ is defined. The final rectangle is
\[
R_{a,b}^\ast(n;\lfloor f(n) \rfloor) = \left( \left[\ell_{\lfloor f(n) \rfloor} + \frac{1}{2}, n- \frac{1}{2}\right] \times \left[ - \frac{1}{2}, \lfloor f(n) \rfloor + \frac{1}{2} \right] \right) \cap (\mathbb{Z}^2)^\ast.
\]
For $j = J_{a,b}, \dots, \lfloor f(n) \rfloor$, let
\[
X_{n,j} = \text{maximal number of edge-disjoint closed dual top-down crossings of }R_{a,b}^\ast(n;j).
\]
Because any top-down crossing of $R_{a,b}^\ast(n;j)$ starts in $H_{a,b}^\ast(n)$ and ends in $L_{a,b}^\ast(n)$, and the $R_{a,b}^\ast(n;j)$ are disjoint, we have from \eqref{eq: separating_claim} and \eqref{eq: new_Y_n} that
\[
\mathbb{E}T_{a,b}^B(0,P(n)) \geq \sum_{j=J_{a,b}}^{\lfloor f(n) \rfloor} \mathbb{E} X_{n,j}.
\]
Because of \eqref{eq: j_a_b_def}, we may apply Cor.~\ref{cor: X_n_bounds_subcritical} to the rectangle $R_{a,b}^\ast(n;j)$ to get
\[
\mathbb{E} X_{n,j} \geq \Cr{c: crossing_number_lower_new}(\ell_{j+1}-\ell_j - 1) \exp\left( - \frac{j+1}{\xi(1-p)}\right) \text{ for } j = J_{a,b}, \dots, \lfloor f(n) \rfloor -1,
\]
and for $j = \lfloor f(n) \rfloor$, if $n - \ell_{\lfloor f(n) \rfloor} -1 \geq 6(\lfloor f(n) \rfloor + 1)$, we have
\[
\mathbb{E} X_{n, \lfloor f(n) \rfloor} \geq \Cr{c: crossing_number_lower_new} (n - \ell_{\lfloor f(n) \rfloor}) \exp\left( - \frac{\lfloor f(n) \rfloor + 1}{\xi(1-p)}\right).
\]
On the other hand, if $n - \ell_{\lfloor f(n) \rfloor} \leq 6(\lfloor f(n) \rfloor + 1)$, the right side of the above display is bounded above by $6\Cr{c: crossing_number_lower_new} (\lfloor f(n) \rfloor + 1) e^{-(\lfloor f(n) \rfloor + 1)/\xi(1-p)}$, which tends to zero as $n \to \infty$. Putting these together, we obtain a constant $K_{a,b,p}$ depending on $a,b$, and $p$ such that
\begin{align}
\mathbb{E}T_{a,b}^B(0,P(n)) &\geq \sum_{j=J_{a,b}}^{\lfloor f(n)\rfloor -1} \Cr{c: crossing_number_lower_new}(\ell_{j+1}-\ell_j-1) \exp\left( -\frac{j+1}{\xi(1-p)}\right) \nonumber \\
&+ \Cr{c: crossing_number_lower_new} (n - \ell_{\lfloor f(n) \rfloor}) \exp\left( - \frac{\lfloor f(n) \rfloor + 1}{\xi(1-p)}\right) \nonumber \\&- 6\Cr{c: crossing_number_lower_new} (\lfloor f(n) \rfloor + 1) e^{-(\lfloor f(n) \rfloor + 1)/\xi(1-p)} \nonumber \\
&\geq -K_{a,b,p}  \nonumber \\
&+ \Cr{c: crossing_number_lower_new}\sum_{j=0}^{\lfloor f(n) \rfloor-1} (\ell_{j+1}-\ell_j) \exp\left( - \frac{j+1}{\xi(1-p)}\right) \nonumber \\
&+ \Cr{c: crossing_number_lower_new} (n - \ell_{\lfloor f(n) \rfloor}) \exp\left( - \frac{\lfloor f(n) \rfloor + 1}{\xi(1-p)}\right) \nonumber \\
&= -K_{a,b,p} + \frac{\Cr{c: crossing_number_lower_new}}{2 \Cr{c: point_to_plane_upper}} \cdot \left( \eqref{eq: split_1a} + \eqref{eq: split_2a} \right) \label{eq: to_go_from_here}.
\end{align}

If $p > 1/2$ and $b < a = \xi(1-p)$, then the lower bound in the theorem follows directly from \eqref{eq: to_use_later_on_2} and \eqref{eq: to_go_from_here}. Similarly, if  $p>1/2$ and $a = b = \xi(1-p)$, then the lower bound in the theorem follows directly from \eqref{eq: to_use_later_on_3} and \eqref{eq: to_go_from_here}. We may therefore suppose that $a< \xi(1-p)$ and may use \eqref{eq: to_use_later_on} in \eqref{eq: to_go_from_here} to find
\[
\mathbb{E}T_{a,b}^B(0,P(n)) \gtrsim \Cr{c: crossing_number_lower_new} \left( \Delta_{a,b}^{(1)}(n) \left( 1+  \frac{e^{\frac{1}{\xi(1-p)}} - 1}{e^{\frac{1}{a}} - e^{\frac{1}{\xi(1-p)}}}\right) +  \Delta_{a,b}^{(2)}(n)\right) \frac{n^{1-\frac{a}{\xi(1-p)}}}{(\log n)^{\frac{b}{\xi(1-p)}}}. 
\]
To finish the lower bound in the case $p>1/2$ and $a< \xi(1-p)$, we may show that for some constant $\Cl[smc]{c: my_favorite_constant} = \Cr{c: my_favorite_constant}(p) > 0$, we have
\begin{equation}\label{eq: gettin_there}
\Delta_{a,b}^{(1)}(n)\left( 1 + \frac{e^{\frac{1}{\xi(1-p)}}-1}{e^{\frac{1}{a}} - e^{\frac{1}{\xi(1-p)}}}\right) + \Delta_{a,b}^{(2)}(n) \gtrsim \frac{\Cr{c: my_favorite_constant}}{\xi(1-p)-a}.
\end{equation}

If $n \geq 2 \ell_{\lfloor f(n) \rfloor}$, then
\[
\Delta_{a,b}^{(2)}(n) \geq \frac{1}{2} \exp\left( \frac{f(n) - \lfloor f(n) \rfloor -1}{\xi(1-p)}\right) \geq \frac{1}{2} \exp\left( - \frac{1}{\xi(1-p)}\right).
\]
On the other hand, if $n \leq 2 \ell_{\lfloor f(n) \rfloor}$, then we can show that $\Delta_{a,b}^{(1)}(n) \gtrsim 1/2$ by using the following facts. First, we have 
\begin{equation}\label{eq: cadbury_chocolate_1}
\lfloor f(n) \rfloor - f(\ell_{\lfloor f(n) \rfloor}) \to 0 \text{ as } n \to \infty.
\end{equation}
This follows because $f(\ell_j) \geq j$ for $j \geq 0$ and $f(\ell_j-1) < j$ when $\ell_j \geq 1$, and so for large $n$, we have
\[
\lfloor f(n) \rfloor \leq f(\ell_{\lfloor f(n) \rfloor}) < f(\ell_{\lfloor f(n) \rfloor}) - f(\ell_{\lfloor f(n) \rfloor}-1) + \lfloor f(n) \rfloor.
\]
Because of our choice of $f$, we have $f(\ell_{\lfloor f(n)\rfloor}) - f(\ell_{\lfloor f(n) \rfloor}-1) \to 0$ as $n \to \infty$, and so this gives \eqref{eq: cadbury_chocolate_1}.

Next, because $n \lesssim e^{\frac{1}{a}} \ell_{\lfloor f(n) \rfloor}$, we have $\ell_{\lfloor f(n) \rfloor} \leq n \leq C_{a,b} \ell_{\lfloor f(n) \rfloor}$ for some constant $C_{a,b}$ depending on $a$ and $b$, and so
\begin{align*}
&| f(n) - f(\ell_{\lfloor f(n) \rfloor}) - \left( a \log n - a \log \ell_{\lfloor f(n) \rfloor}\right) | \\
\leq~& a \bigg|  \log \left( \frac{1+n}{n} \right) - \log \left( \frac{1+ \ell_{\lfloor f(n) \rfloor}}{\ell_{\lfloor f(n) \rfloor}}\right)\bigg| + b\bigg| \log\left( \frac{1 + \log (1+n)}{1 + \log (1+ \ell_{\lfloor f(n) \rfloor})}\right) \bigg| \\
\leq~& a \bigg|  \log \left( \frac{1+n}{n} \right) - \log \left( \frac{1+ \ell_{\lfloor f(n) \rfloor}}{\ell_{\lfloor f(n) \rfloor}}\right)\bigg| + b\bigg| \log\left( \frac{1 + \log (1+C_{a,b}\ell_{\lfloor f(n) \rfloor})}{1 + \log (1+ \ell_{\lfloor f(n) \rfloor})}\right) \bigg| \\
\to~& 0 \text{ as } n \to \infty.
\end{align*}
This, along with \eqref{eq: cadbury_chocolate_1}, implies that
\begin{align*}
\Delta_{a,b}^{(1)}(n) &\sim \exp\left( - (f(n) - f(\ell_{\lfloor f(n) \rfloor}))\left( \frac{1}{a} - \frac{1}{\xi(1-p)}\right) \right) \\
&\sim \exp\left( - (a \log n - a \log \ell_{\lfloor f(n) \rfloor}) \left( \frac{1}{a} - \frac{1}{\xi(1-p)}\right)\right) \\
&= \exp\left( - \left( \log \frac{n}{\ell_{\lfloor f(n) \rfloor}} \right) \left( 1 - \frac{a}{\xi(1-p)}\right) \right) \\
&\geq \frac{1}{2}.
\end{align*}
Therefore $\Delta_{a,b}^{(1)}(n) + \Delta_{a,b}^{(2)}(n) \gtrsim e^{-1/\xi(1-p)}/2$ and so
\begin{align*}
&\Delta_{a,b}^{(1)}(n)\left( 1 + \frac{e^{\frac{1}{\xi(1-p)}}-1}{e^{\frac{1}{a}} - e^{\frac{1}{\xi(1-p)}}}\right) + \Delta_{a,b}^{(2)}(n) \\
\gtrsim~& \Delta_{a,b}^{(1)}(n)\left( \frac{e^{\frac{1}{\xi(1-p)}}-1}{e^{\frac{1}{a}} - e^{\frac{1}{\xi(1-p)}}}\right) + \frac{1}{2}\exp\left( - \frac{1}{\xi(1-p)}\right) \\
\geq~& \exp\left( - \left( \frac{1}{a} - \frac{1}{\xi(1-p)}\right) \right)  \left( \frac{e^{\frac{1}{\xi(1-p)}}-1}{e^{\frac{1}{a}} - e^{\frac{1}{\xi(1-p)}}}\right) + \frac{1}{2}\exp\left( - \frac{1}{\xi(1-p)}\right).
\end{align*}
If $a < \xi(1-p)/2$, then we drop the first term to see that this expression is $\geq e^{-1/\xi(1-p)}/2 \geq c_p  / (\xi(1-p) - a)$, with $c_p = e^{-1/\xi(1-p)} \xi(1-p)/4$. On the other hand, if $a \geq \xi(1-p)/2$, then we drop the second term, giving a lower bound for the expression of 
\[
\exp\left( - \frac{2}{\xi(1-p)}\right) \left( \exp\left( \frac{1}{\xi(1-p)} \right) - 1\right) \cdot \frac{1}{e^{\frac{1}{a} - \frac{1}{\xi(1-p)}}-1}
\]
which is bounded below by $c_p/(\xi(1-p)-a)$ for a (more complicated) constant $c_p$ depending only on $p$, assuming that $a$ remains bigger than $\xi(1-p)/2$. These two cases imply \eqref{eq: gettin_there} and complete the proof of the lower bound when $p>1/2$.

\medskip
\noindent
{\bf III(b). Case $p = 1/2$.} Last, we prove the lower bound in the case $p=1/2$ and show, like in \eqref{eq: critical_upper}, that if 
\[
V(h(n),n) = \begin{array}{c}
\text{maximal number of edge-disjoint left-right open} \\
\text{crossings of } ([0,n] \times [0,h(n)]) \cap \mathbb{Z}^2,
\end{array}
\]
then for some constant $\Cl[smc]{c: critical_lower_constant}>0$, we have
\begin{equation}\label{eq: critical_lower}
\mathbb{E} V(h(n),n) \geq \Cr{c: critical_lower_constant} \frac{h(n)}{n}
\end{equation}
for all choices of $n$ and $h(n) \geq n$. To do this, we make a packing by squares. Set $B_1 = ([0,n] \times [0,n])\cap \mathbb{Z}^2$, $B_2 = ([0,n] \times [n+1, 2n+1]) \cap \mathbb{Z}^2$, and in general, for $j \geq 1$, set
\[
B_j = ( [0,n] \times [(j-1)(n+1), j(n+1)-1]) \cap \mathbb{Z}^2.
\]
The final square $B_j$ which fits inside of our rectangle has index $j_0 =  \lfloor (h(n)+1)/(n+1) \rfloor$. Using \cite[p.~316]{grimmettbook}, we have $\mathbb{P}(B_1 \text{ has a left-right open crossing}) \geq 1/2$, and so \eqref{eq: critical_lower} follows:
\[
\mathbb{E} V(h(n),n) \geq \sum_{j=1}^{j_0} \mathbb{P}( B_j \text{ has a left-right open crossing}) \geq \frac{j_0}{2} \geq \frac{1}{8} \cdot \frac{h(n)}{n}.
\]

To use \eqref{eq: critical_lower}, we observe that if we define the dual rectangle 
\[
R_{a,b}^\ast(n) = \left( \left[ \frac{1}{2}, n - \frac{1}{2} \right] \times \left[ - \frac{1}{2}, \lfloor f(n) \rfloor + \frac{1}{2}\right] \right) \cap (\mathbb{Z}^2)^\ast,
\]
then every top-down dual crossing of $R_{a,b}^\ast(n)$ contains a dual path from $H_{a,b}^\ast(n)$ to $L_{a,b}^\ast(n)$. Therefore $Y_n$, defined back in \eqref{eq: new_Y_n}, satisfies
\[
Y_n \geq \text{maximal number of edge-disjoint top-down closed dual crossings of } R_{a,b}^\ast(n).
\]
Because $n-1 \geq \lfloor f(n) \rfloor + 1$ for all large $n$, we can use \eqref{eq: critical_lower} to conclude that
\[
\mathbb{E} T_{a,b}^B(0,P(n)) \geq \mathbb{E} V(n-1, \lfloor f(n) \rfloor + 1) \gtrsim \Cr{c: critical_lower_constant} \frac{n - 1}{\lfloor f(n) \rfloor +1} \gtrsim \Cr{c: critical_lower_constant} \frac{n}{a \log n},
\]
completing the proof.

\section{Martingale construction for general wedges}\label{sec: general_f_proof}

The proofs of both Thm.~\ref{thm: updated_KZ_variance} and Thm.~\ref{thm: updated_KZ_CLT} use the martingale construction of Kesten-Zhang \cite{KZ97}, replacing their open circuits in annuli with open top-down crossings of the sets $R_i$. It will be more convenient for us to have the $R_i$ separated by blank space, so we will work only with the even indices, which correspond to the sets $R_{2i}$ for $i \geq 0$. To avoid endless confusion, we set $r_i' = r_{2i}$ and $R_i' = R_{2i}$ for $i \geq 0$. From here on, we use the coupling of $(\tau_e)$ and $(t_e)$ defined at the beginning of Sec.~\ref{sec: mean_proof}, and we assume A1-A3 for some $\Cr{c: r_i_assumption_1} > 0$, $\Cr{c: r_i_assumption_2} \in [1,\infty)$, and $\Cr{c: r_i_assumption_3} : [0,\infty) \to (0,\infty)$. Therefore we may choose an integer $i^\ast \geq 1$ such that
\begin{equation}\label{eq: new_A1}
\mathbb{P}(\exists~\text{top-down open crossing of }R_i) \geq \frac{\Cr{c: r_i_assumption_1}}{2} \text{ for } i \geq 2i^\ast,
\end{equation}
and
\begin{equation}\label{eq: new_A2_pro}
\mathbb{E}T_f^B(0,P(r_{i+1}) - \mathbb{E}T_f^B(0,P(r_i)) \leq 2\Cr{c: r_i_assumption_2} \text{ for } i \geq i^\ast.
\end{equation}

The next few sections develop the martingale construction, and the proofs of Thm.~\ref{thm: updated_KZ_variance} and Thm.~\ref{thm: updated_KZ_CLT} will be given in Sec.~\ref{sec: KZ_proofs}.

\subsection{Line-to-line bound}\label{sec: line_to_line}
Before we give the main definitions, we prove a tail bound on the passage time between lines $P(r_i')$ and $P(r_j')$ for $i < j$ that will be applied throughout. We first observe that \eqref{eq: new_A2_pro} implies that for integers $i,j$ with $i^\ast \leq i < j$, we have
\begin{align}
\mathbb{E}T_f^B(P(r_i'),P(r_j')) &\leq \mathbb{E}\left[ T_f^B(0,P(r_j')) - T_f^B(0,P(r_i'))\right]  \nonumber \\
&=\sum_{\ell=2i}^{2j-1} \left[  \mathbb{E}T_f^B(0,P(r_{\ell+1})) - \mathbb{E}T_f^B(0,P(r_\ell))  \right] \nonumber \\
&\leq 4\Cr{c: r_i_assumption_2}(j-i). \label{eq: new_A2}
\end{align}
(In fact, \eqref{eq: new_A2} is a weaker version of A2 that suffices for our purposes.) The following corresponds to \cite[Eq.~(2.50)]{KZ97}.

\begin{lem}\label{lem: plane_to_plane_estimate}
There exist $\Cl[lgc]{c: haydns_constant_1} \geq 1$ and $\Cl[smc]{c: haydns_constant_2}>0$ depending only on $\Cr{c: r_i_assumption_2}$ such that for all integers $i,j$ with $i^\ast \leq i < j$ and $y \geq 0$, we have
\[
\mathbb{P}(T_f(P(r_i'),P(r_j')) \geq y) \leq \Cr{c: haydns_constant_1} \exp\left( - \Cr{c: haydns_constant_2} \frac{y}{j-i}\right) + \frac{\Cr{c: haydns_constant_1}}{y^\frac{\eta}{2}}.
\]
\end{lem}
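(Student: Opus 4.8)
## Proof Plan for Lemma \ref{lem: plane_to_plane_estimate}

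\textbf{Overall strategy.} The plan is to bound $T_f(P(r_i'),P(r_j'))$ by splitting into a ``Bernoulli part'' and a ``large-weight part,'' exactly in the spirit of the coupling $\tau_e = t_e\tau_e'$ from the beginning of Sec.~\ref{sec: mean_proof}. Concretely, for an integer $z$ and real $y \geq 0$, I would write
\[
\mathbb{P}(T_f(P(r_i'),P(r_j')) \geq y) \leq \mathbb{P}(T_f^B(P(r_i'),P(r_j')) \geq z) + \mathbb{P}\big(T_f^B(P(r_i'),P(r_j')) < z,\ T_f(P(r_i'),P(r_j')) \geq y\big),
\]
and estimate the two terms separately, finally optimizing over $z$ as a suitable multiple of $y/(j-i)$ or a multiple of $y$. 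The first term is handled purely by subcritical-percolation / BK technology; the second is where the moment condition \eqref{eq: gap_var_condition_2} enters, because when $T_f^B$ is small the optimal primal path uses few positive-weight edges, so its $\tau$-passage time is a sum of few i.i.d.\ copies of $\tau_e'$.

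\textbf{Step 1: exponential tail for the Bernoulli passage time.} Using the separating-set representation (as in \eqref{eq: separating_claim}, \eqref{eq: new_Y_n}, appropriately adapted to passage between two vertical lines rather than from $0$ to $P(n)$), $T_f^B(P(r_i'),P(r_j'))$ equals the maximal number of disjoint closed dual paths separating $P(r_i')$ from $P(r_j')$ inside the wedge strip between these lines. By A2 in the averaged form \eqref{eq: new_A2}, $\mathbb{E}T_f^B(P(r_i'),P(r_j')) \leq 4\Cr{c: r_i_assumption_2}(j-i)$. Representing the event $\{T_f^B \geq z\}$ as the existence of $z$ disjoint closed separating sets and applying the BK inequality as in the derivation of \eqref{eq: BK_derivation}, one gets $\mathbb{P}(T_f^B(P(r_i'),P(r_j')) \geq z) \leq C\exp(-cz/(j-i))$ for constants depending only on $\Cr{c: r_i_assumption_2}$; this is the analog of \eqref{eq: z_bound} referenced in the later remark. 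Choosing $z = \lceil y/(2\mathbb{E}[\tau_e\mid\tau_e>0]) \rceil$ (or similar) makes this term $\leq \Cr{c: haydns_constant_1}\exp(-\Cr{c: haydns_constant_2} y/(j-i))$.

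\textbf{Step 2: the second term via the moment condition.} On the event $\{T_f^B(P(r_i'),P(r_j')) < z\}$, fix an optimal Bernoulli geodesic $\gamma$ between the lines; it crosses at most $z$ closed (Bernoulli-weight-one) edges, hence at most $z$ edges $e$ with $\tau_e = t_e\tau_e' > 0$. Then $T_f(P(r_i'),P(r_j')) \leq T_f(\gamma) \leq \sum_{k=1}^{z} \tau'_{e_k}$ where $e_1,\dots$ are these edges and the $\tau'_{e_k}$ are i.i.d.\ with finite $\eta$-th moment by \eqref{eq: gap_var_condition_2}. A subtlety is that $\gamma$ is random and chosen after looking at $(t_e)$, so the $\tau'_{e_k}$ attached to it are not literally a fixed i.i.d.\ sample; one handles this by a union bound over the (deterministically ordered) choice of $\gamma$, using independence of $(\tau_e')$ from $(t_e)$, exactly as in the bound \eqref{eq: comparison_claim}. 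The upshot is $\mathbb{P}(\text{second term}) \leq \mathbb{P}(\sum_{k=1}^{z}\tau'_{e_k} \geq y)$ for an honest i.i.d.\ sum, and then a standard Markov/Rosenthal-type bound for sums of i.i.d.\ variables with finite $\eta$-th moment, with $z$ comparable to $y$, gives a bound of the form $\Cr{c: haydns_constant_1}/y^{\eta/2}$. (Here $\eta>4$ is used so that after the split — $z \asymp y$ forces one to lose a factor — the exponent $\eta/2$ survives; the refinement to $\eta/2 \to s/2$ with $s<3\eta$ mentioned in Remark \ref{rem: moment_condition} comes from using $f \geq 3$ to cross three disjoint closed separating sets per unit of $T^B$, tripling the effective $z$.)

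\textbf{Main obstacle.} The genuinely delicate point is \emph{Step 2}: passing from ``the random geodesic $\gamma$ uses few positive-weight edges'' to a clean i.i.d.\ sum bound, since $\gamma$ depends on the Bernoulli field and one must not double-dip on randomness. The fix is the conditioning/union-bound argument from \eqref{eq: comparison_claim}, but getting the exponent right — showing $y^{-\eta/2}$ rather than, say, $y^{-\eta/2+\text{something}}$, uniformly in $i,j$ — requires care in how $z$ is chosen relative to $y$ and in applying the moment inequality for i.i.d.\ sums (Markov on the $\eta/2$-th moment of the centered sum, or a Fuk--Nagaev / Rosenthal bound). Everything else is routine BK and subcritical-percolation estimates already available from Sec.~\ref{sec: subcritical} and the separating-set machinery of Sec.~\ref{sec: mean_proof}.
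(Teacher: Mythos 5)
Your proposal is correct and follows essentially the same route as the paper: the same two-term split with a cutoff $z \asymp y/\mathbb{E}[\tau_e \mid \tau_e>0]$, the first term handled by the separating-set representation, Markov via \eqref{eq: new_A2}, and BK submultiplicativity, and the second term handled by conditioning on $(t_e)$, summing over the deterministic choice of Bernoulli geodesic, and applying a Rosenthal/Marcinkiewicz-type moment bound (the paper cites \cite[Thm.~I.5.1(iii)]{G88}) to the resulting honest i.i.d.\ sum of $z$ copies of $\tau_e'$, which yields $y^{-\eta}\cdot z^{\eta/2} \asymp y^{-\eta/2}$. The subtlety you flag about not double-dipping on randomness is exactly the point the paper addresses by choosing the optimal path in a $(t_e)$-measurable way and using independence of $(\tau_e')$ from $(t_e)$.
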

\begin{proof}
Let $z \geq 0$ be an integer and write
\begin{align}
\mathbb{P}(T_f(P(r_i'),P(r_j')) \geq y) &\leq \mathbb{P}(T_f^B(P(r_i'),P(r_j')) \geq z) \label{eq: plane_to_plane_term_1}\\
&+ \mathbb{P}(T_f^B(P(r_i'),P(r_j')) < z, T_f(P(r_i'),P(r_j')) \geq y). \label{eq: plane_to_plane_term_2}
\end{align}
For \eqref{eq: plane_to_plane_term_1}, we first use \eqref{eq: new_A2} and Markov's inequality to obtain
\begin{equation}\label{eq: easy_markov}
\mathbb{P}(T_f^B(P(r_i'),P(r_j')) \geq 8\Cr{c: r_i_assumption_2}(j-i)) \leq  \frac{1}{2}.
\end{equation}
Next observe that, similarly to \eqref{eq: Y_n}, we have
\begin{equation}\label{eq: more_separating_equivalence}
T_f^B(P(r_i'),P(r_j')) = \begin{array}{c}
\text{maximal number of disjoint closed sets in }\mathbb{E}_f 
\text{ separating } \\
\{r_i'\} \times \{0, \dots, \lfloor f(r_i') \rfloor\} \text{ from } \{r_j'\} \times \{0, \dots, \lfloor f(r_j')\rfloor\}.
\end{array}
\end{equation}
As in \eqref{eq: separating_claim}, these separating sets are defined as edge sets in $\mathbb{E}_f$ such that each path in $\mathbb{G}_f$ starting in $\{r_i'\} \times \{0, \dots, \lfloor f(r_i')\rfloor\}$ and ending in $\{r_j'\} \times \{0, \dots, \lfloor f(r_j') \rfloor\}$ must contain an edge in the separating set. If $m \geq 0$ is an integer and $T_f^B(P(r_i'),P(r_j')) \geq m\lceil 8 \Cr{c: r_i_assumption_2}(j-i)\rceil$, then we can find $m$ disjoint edge sets, each of which contains at least $\lceil 8\Cr{c: r_i_assumption_2}(j-i)\rceil$ many disjoint separating sets. By the BK inequality and \eqref{eq: easy_markov}, 
\begin{align*}
\mathbb{P}(T_f^B(P(r_i'),P(r_j')) \geq m \lceil 8\Cr{c: r_i_assumption_2}(j-i)\rceil ) &\leq \mathbb{P}(\exists~\lceil 8\Cr{c: r_i_assumption_2}(j-i)\rceil \text{ many disjoint separating sets})^m\\
&\leq 2^{-m},
\end{align*}
and so
\begin{align}
\text{RHS of } \eqref{eq: plane_to_plane_term_1} &= \mathbb{P}\left( T_f^B(P(r_i'),P(r_j')) \geq \frac{z}{\lceil 8 \Cr{c: r_i_assumption_2}(j-i)\rceil} \lceil 8 \Cr{c: r_i_assumption_2}(j-i)\rceil \right) \nonumber \\
&\leq 2^{-\lfloor \frac{z}{\lceil 8 \Cr{c: r_i_assumption_2}(j-i) \rceil} \rfloor} \nonumber \\
&\leq \Cl[lgc]{c: trainor_constant_1} \exp\left( - \Cl[smc]{c: trainor_constant_2} \frac{z}{j-i} \right), \label{eq: z_bound}
\end{align}
for some $\Cr{c: trainor_constant_1}, \Cr{c: trainor_constant_2} > 0$ depending only on $\Cr{c: r_i_assumption_2}$.

As for \eqref{eq: plane_to_plane_term_2}, we write
\begin{equation}\label{eq: durretts_delight}
\eqref{eq: plane_to_plane_term_2} = \sum_{k=0}^{z-1}\mathbb{E}\left[ \mathbb{P}\left( T_f(P(r_i'),P(r_j')) \geq y \mid (t_e)\right) \mathbf{1}_{\{T_f^B(P(r_i'),P(r_j')) = k\}}\right],
\end{equation}
where $\mathbb{P}(\cdot \mid (t_e))$ represents the conditional probability given the Bernoulli weights $(t_e)$. On the event $\{T_f^B(P(r_i'),P(r_j')) = k\}$, we can choose an optimal path for $T_f^B(P(r_i'),P(r_j'))$ (in some $(t_e)$-measurable way, for example by picking the first such optimal path in a deterministic ordering of all finite paths in $\mathbb{Z}^2$) and write $e_1, \dots, e_k$ for the $k$ many edges listed in order along the path that satisfy $t_{e_i} = 1$ for all $i$. If $T_f(P(r_i'),P(r_j')) \geq y$, then it must be that $\tau_{e_1}' + \dots + \tau_{e_k}' = \tau_{e_1} + \dots + \tau_{e_k} \geq y$ (recall the coupling $\tau_e = t_e \tau_e'$ from below \eqref{eq: gap_var_condition_2}), so
\begin{align*}
\eqref{eq: plane_to_plane_term_2} &\leq \sum_{k=0}^{z-1} \sum_{f_1, \dots, f_k} \mathbb{E}\left[ \mathbb{P}\left( \tau_{f_1}' + \dots + \tau_{f_k}' \geq y \mid (t_e) \right) \mathbf{1}_{\{T_f^B(P(r_i'),P(r_j')) = k\}} \mathbf{1}_{\{e_i = f_i ~\forall i\}} \right] \\
&\leq \sum_{k=0}^{z-1} \sum_{f_1, \dots, f_k} \mathbb{P}(\tau_1' + \dots + \tau_k' \geq y) \mathbb{P}(T_f^B(P(r_i'),P(r_j')) = k, e_i = f_i ~\forall i) \\
&\leq \mathbb{P}(\tau_1' + \dots + \tau_z' \geq y),
\end{align*}
where the second inequality uses independence of $(\tau_e')$ from $(t_e)$, and the variables $\tau_1', \dots, \tau_z'$ are i.i.d., with the same common distribution as $(\tau_e')$. So long as 
\begin{equation}\label{eq: y_assumption_for_now}
y \geq 2z \mathbb{E}\tau_1',
\end{equation}
we have
\begin{align*}
\mathbb{P}(\tau_1' + \dots + \tau_z' \geq y) &\leq \mathbb{P}\left(|\tau_1' - \mathbb{E}\tau_1' + \dots + \tau_z' - \mathbb{E}\tau_z'| \geq \frac{y}{2}\right) \\
&\leq 2^\eta y^{-\eta} \mathbb{E}|\tau_1' + \dots + \tau_z' - \mathbb{E}[\tau_1' + \dots + \tau_z'] |^\eta.
\end{align*}
By applying \cite[Thm.~I.5.1(iii)]{G88} and noting that $\mathbb{E}(\tau_k')^\eta < \infty$ by \eqref{eq: gap_var_condition_2} (and that $\eta > 4 \geq 2$), we obtain an upper bound of
\[
\eqref{eq: plane_to_plane_term_2} \leq 2^\eta y^{-\eta} \Cl[lgc]{c: guts_constant} z^{\frac{\eta}{2}} \mathbb{E}|\tau_1'|^\eta \leq 2^\eta y^{-\eta} \Cr{c: guts_constant} \left( \frac{y}{2 \mathbb{E}\tau_1'} \right)^{\frac{\eta}{2}} \mathbb{E}|\tau_1'|^\eta,
\]
for some universal $\Cr{c: guts_constant}$ depending only on $\eta$. In the second inequality, we used \eqref{eq: y_assumption_for_now}. In total, this gives $\Cl[lgc]{c: guts_constant_2}$ depending only on $\eta$ and the distribution of $\tau_e$ such that for all $i,j$ with $0 \leq i < j$ and all $y$ satisfying \eqref{eq: y_assumption_for_now}, we have 
\[
\eqref{eq: plane_to_plane_term_2} \leq \frac{\Cr{c: guts_constant_2}}{y^\frac{\eta}{2}}.
\]
If we then set $z = \lfloor y/(2\mathbb{E}\tau_1') \rfloor$, then the previous display holds, and we can combine it with \eqref{eq: z_bound} to deduce that
\[
\mathbb{P}(T_f(P(r_i'),P(r_j'))(\omega) \geq y) \leq \Cr{c: trainor_constant_1} \exp\left( - \Cr{c: trainor_constant_2} \frac{\lfloor \frac{y}{2 \mathbb{E}\tau_1'} \rfloor}{j-i} \right) + \frac{\Cr{c: guts_constant_2}}{y^\frac{\eta}{2}},
\]
which implies the statement of the lemma.
\end{proof}

The proof of Lem.~\ref{lem: plane_to_plane_estimate} requires $\int x^2 ~\text{d}F(x) < \infty$ to apply \cite[Thm.~I.5.1(iii)]{G88}, and this holds because of \eqref{eq: gap_var_condition_2}. We remark that the bound in the lemma can be improved under the assumption that 
\[
f(m) \geq 3 \text{ for }m=r_i', \dots, r_j'\text{ and } \int x^\eta ~\text{d}F(x)<\infty \text{ for some }\eta> 2/3.
\]
In this case, one can show that given $s\in [2,3\eta)$, there exist $\Cr{c: haydns_constant_1} \geq 1$ and $\Cr{c: haydns_constant_2}>0$ depending only on $\Cr{c: r_i_assumption_2}$ such that for all integers $i,j$ with $i^\ast \leq i < j$ and $y \geq 0$, we have
\begin{equation}\label{eq: plane_to_plane_improved}
\mathbb{P}(T_f(P(r_i'),P(r_j')) \geq y) \leq \Cr{c: haydns_constant_1} \exp\left( - \Cr{c: haydns_constant_2} \frac{y}{j-i}\right) + \frac{\Cr{c: haydns_constant_1}}{y^\frac{s}{2}}.
\end{equation}
In the rest of this subsection, we outline the modifications needed to obtain this stronger inequality. This result is not used in the paper except to justify Rem.~\ref{rem: moment_condition} about the optimal moment condition for Thm.~\ref{thm: main_variance}. The proof is the same as that of Lem.~\ref{lem: plane_to_plane_estimate} until we reach \eqref{eq: durretts_delight}, and then differs by providing a better upper bound for the conditional probability appearing in the sum of that equation. 

To do this, for any edge $e \in \mathbb{E}_f$, we define two edge sets $S_k(e)$ for $k=1,2$ that surround $e$ as follows. First taking $e = \{(0,0),(1,0)\}$, we define $S_1(e)$ as the set of edges with both endpoints in the (topological) boundary of $[0,1]\times[-1,1]$, and $S_2(e)$ as the set of edges with both endpoints in the (topological) boundary of the rectangle $[-1,2]\times[-2,2]$. If $e \in \mathbb{E}_f$ is another horizontal edge, we define $S_k(e)$ by translating the definition of $S_k(\{(0,0),(1,0)\})$ in such a way that $(0,0)$ is mapped to the left endpoint of $e$. In case $e$ is a vertical edge, we rotate these definitions by $\pi/2$.

For $e \in \mathbb{E}_f$, let $m_e^{(0)} = \tau_e', m_e^{(k)} = \sum_{f \in S_k(e)} \tau_f'$, for $k=1,2$, and $m_e = \min_{k=0,1,2} m_e^{(k)}$. 
We claim that
\begin{equation}\label{eq: many_choices_claim}
T_f(P(r_i'),P(r_j'))  \leq \left( \sum_{\ell=1}^k m_{e_\ell} \right) \text{ on the event } \{T_f^B(P(r_i'),P(r_j')) = k\},
\end{equation}
where, like after \eqref{eq: durretts_delight}, we have chosen an optimal path for $T_f^B(P(r_i'),P(r_j'))$ in some $(t_e)$-measurable way and listed $e_1, \dots, e_k$ as the $k$ many edges in order along along the path that satisfy $t_{e_i}=1$ for all $i$. The strategy to show \eqref{eq: many_choices_claim} is to choose $i_1, \dots, i_k \in \{0,1,2\}$ and construct a path $\Gamma_0$ in $\mathbb{G}_f$ that connects $P(r_i')$ to $P(r_j')$ but satisfies 
\begin{equation}\label{eq: many_choices_claim_2}
T_f(\Gamma_0) \leq \sum_{\ell=1}^k m_{e_\ell}^{(i_\ell)}.
\end{equation}
The path $\Gamma_0$ is constructed by following $\Gamma$ but, because the sets $S_k(e)$ surround $e$ for $k=1,2$, taking a detour through $S_{i_\ell}(e_\ell)$ for $\ell=1, \dots, k$ instead of taking the edge $e_\ell$ whenever $i_\ell \neq 0$. This construction appears to require several cases because $f$ is general, but it is always possible assuming that $f(m) \geq 3$ for all $m=r_i', \dots, r_j'$. (If $f(m) \leq 2$ then the set $S_e(k)$ may separate $P(r_i')$ from $P(r_j')$ and intersections of $\Gamma$ with the set of endpoints of edges in $S_e(k)$ may not be connectable through $S_e(k)$ using only edges of $\mathbb{G}_f$.) Because \eqref{eq: many_choices_claim_2} holds for all choices of $i_1, \dots, i_k \in \{0,1,2\}$, this implies \eqref{eq: many_choices_claim}.

Given \eqref{eq: many_choices_claim}, we can continue from \eqref{eq: durretts_delight} by writing
\begin{align*}
\eqref{eq: plane_to_plane_term_2} &\leq \sum_{k=0}^{z-1} \sum_{f_1, \dots, f_k} \mathbb{E}\left[ \mathbb{P}\left( m_{f_1} + \dots + m_{f_k} \geq y  \mid (t_e) \right) \mathbf{1}_{\{T_f^B(P(r_i'),P(r_j')) = k\}} \mathbf{1}_{\{e_i = f_i ~\forall i\}}\right].
\end{align*}
We observe that $\mathbb{E}\left( m_e^{(k)}\right)^{\eta} < \infty$ for $k=0,1,2$ and furthermore that the $m_e^{(k)}$ are independent as $k$ varies and $e$ is fixed. Therefore for $s<3\eta$, we can apply Markov's inequality and independence to obtain for some $\mathsf{C}>0$
\begin{align*}
\mathbb{E}m_e^s = \int_0^\infty s y^{s-1} \mathbb{P}(m_e \geq y)~\text{d}y &= \int_0^\infty s y^{s-1} \prod_{k=0}^3 \mathbb{P}\left( m_e^{(k)} \geq y\right)~\text{d}y \\
&\leq \mathsf{C} \int_0^\infty sy^{s-1} \cdot \frac{\text{d}y}{y^{3\eta}} < \infty.
\end{align*}
The $m_e$ are not independent, but they are only finitely dependent, so if $s \in [2,3\eta)$, we can still apply a version of \cite[Thm.~I.5.1(iii)]{G88} to obtain for $k \leq z$ and some constants $\mathsf{C}$
\begin{align*}
\mathbb{P}(m_{f_1} + \dots + m_{f_k} \geq y \mid (t_e)) &= \mathbb{P}(m_{f_1} + \dots + m_{f_k} \geq y) \\
&\leq 2^s y^{-s} \mathbb{E}|m_{f_1} + \dots + m_{f_k} - \mathbb{E}[m_{f_1} + \dots + m_{f_k}]|^s \\
&\leq 2^s y^{-s} \mathsf{C} z^{\frac{s}{2}} \mathbb{E}m_e^s \\
&\leq \frac{\mathsf{C}}{y^{\frac{s}{2}}},
\end{align*}
assuming that $y \geq 2z\mathbb{E}m_e$ (compare to \eqref{eq: y_assumption_for_now}). We can then follow to the end of the proof of Lem.~\ref{lem: plane_to_plane_estimate} to obtain \eqref{eq: plane_to_plane_improved}.

\subsection{Martingale definitions}
For $i \geq 0$, define
\[
m(i) = \min\{j \geq i : R_{j}' \text{ has a top-down open crossing}\}.
\]
We observe that if $R_{i}'$ has a top-down open crossing, it also has a top-down open crossing that is vertex self-avoiding, and when viewed as a plane curve, any such crossing is simple. To every simple curve $\Gamma$ connecting $\{(x_1,x_2) \in \mathbb{R}^2 : x_2 = f(x_1)\}$ to $\{(x_1,x_2) \in \mathbb{R}^2 : x_1 \geq 0, x_2 = 0\}$ within the region $\{(x_1,x_2) \in \mathbb{R}^2 : 0 \leq x_2 \leq f(x_1)\}$ we can associate an interior
\[
\text{int }\Gamma = \text{ bounded component of }\{(x_1,x_2) \in \mathbb{R}^2 : 0 \leq x_2 \leq f(x_1)\} \setminus \Gamma
\]
and an exterior
\[
\text{ext }\Gamma = \text{ unbounded component of }\{(x_1,x_2) \in \mathbb{R}^2 : 0 \leq x_2 \leq f(x_1)\} \setminus \Gamma.
\]
We say that a path $\Gamma$ is the \underline{leftmost top-down open crossing} of $R_i'$ if it is a vertex self-avoiding top-down open crossing of $R_i'$ and, when viewed as a plane curve, $\text{int }\Gamma$ is minimal among all vertex self-avoiding top-down open crossings of $R_i'$.

\begin{figure}[t]
  \centering
  \includegraphics[width=0.8\linewidth, trim={0cm 0cm 0cm 0cm}, clip]{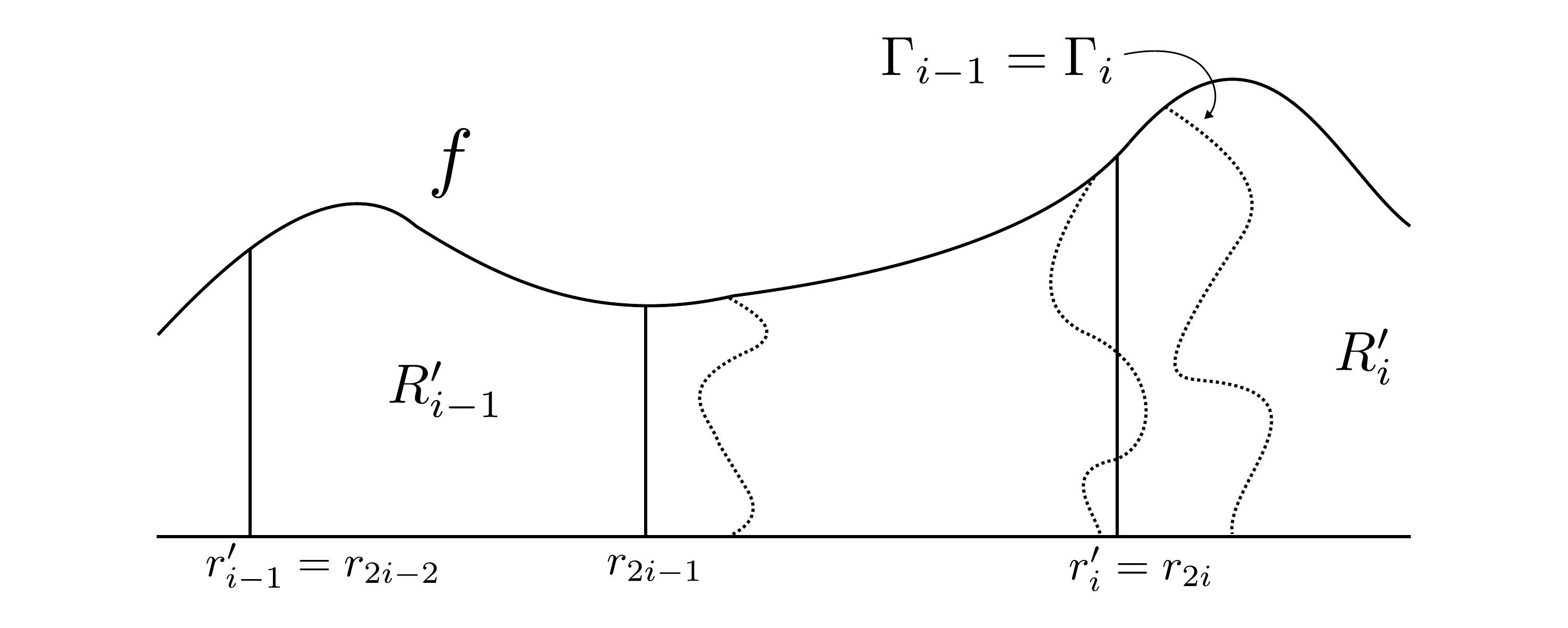}
  \caption{Illustration of the definitions involved in the martingale construction. In the figure, the top solid curve is the graph of $f$ and the region below the curve and between $r_{i-1}'=r_{2i-2}$ and $r_{2i-1}$ is $R_{i-1}'$. The dotted curves connecting the top solid curve to the bottom are open paths. There is no top-down open crossing of $R_{i-1}'$, so $m(i-1) > i-1$. The region to the right of $r_i'$ below the graph of $f$ is $R_i'$, and it has a top-down open crossing, so $m(i-1) = m(i)=i$ and the leftmost such crossing is labeled as $\Gamma_{i-1}$, which is also equal to $\Gamma_i$. The middle two open paths connecting the top solid curve to the bottom are not contained in any $R_j'$, so they do not affect the values of $m(i-1),m(i), \Gamma_{i-1},\Gamma_i$.}
  \label{fig: fig_2}
\end{figure}

By assumption A1, we have $i \leq m(i) < \infty$ almost surely for each $i \geq 0$. We can therefore define 
\[
\Gamma_i = \text{ leftmost top-down open crossing of } R_{m(i)}' \text{ for } i \geq 0.
\]
(See Fig.~\ref{fig: fig_2} for an illustration of $\Gamma_i$.) Next we define the sigma-algebra $\Sigma_i$ that contains the information about (a) what $\Gamma_i$ is and (b) what the weights $(\tau_e)$ are on and to the left of $\Gamma_i$. For any vertex self-avoiding top-down crossing $\Gamma$ of $R_j'$ for some $j \geq i$, any finite subset $E$ of $\mathbb{E}_f$ all whose endpoints are in $\Gamma \cup \text{int } \Gamma$, and any Borel set $B \subset \mathbb{R}^{\# E}$, define the event $\Xi_i(\Gamma, E, B)$ as
\[
\Xi_i(\Gamma, E, B) = \{\Gamma_i = \Gamma\} \cap \{ (t_e)_{e \in E} \in B\}.
\]
Last, we set $\Sigma_{-1} = \{ \emptyset, \Omega\}$, and
\[
\Sigma_i = \sigma\left(\{\Xi_i(\Gamma, E, B)\}_{\Gamma, E, B}\right) \text{ for } i \geq 0,
\]
as $\Gamma, E, B$ range over the items satisfying the conditions above. Regarding these definitions, one can check that
\[
\Sigma_i \subset \Sigma_{i+1} \text{ for } i \geq -1 
\]
and
\[
T_f(0,\Gamma_i) \text{ is } \Sigma_i \text{-measurable for } i \geq 0,
\]
so long as we set $T_f(0,\Gamma_{i}) = \infty$ on the probability zero set $\{m(i) = \infty\}$ on which $\Gamma_i$ is not defined.

Regarding these definitions, we have the following preliminary bounds. First, there exists $\Cl[smc]{c: circuit_appearance_constant} > 0$ depending only on $\Cr{c: r_i_assumption_1}$ such that
\begin{equation}\label{eq: 2.28}
\mathbb{P}(m(i) \geq i + t) \leq e^{-\Cr{c: circuit_appearance_constant}t} \text{ for all integers } i \geq i^\ast \text{ and } t \geq 0. 
\end{equation}
To show this, we directly use \eqref{eq: new_A1} and independence. If $m(i) \geq i+t$, then there can be no top-down open crossing of any of the regions $R_i', R_{i+1}', \dots, R_{i+t-1}'$, so by independence, we have
\[
\mathbb{P}(m(i) \geq i+t) = \mathbb{P}\left( \bigcap_{t'=0}^{t-1} \{\exists \text{ top-down open crossing of }R_{i+t'}'\}^c \right) \leq \left(1- \frac{\Cr{c: r_i_assumption_1}}{2}\right)^t,
\]
and this implies \eqref{eq: 2.28}.

Second, Lem.~\ref{lem: plane_to_plane_estimate} and \eqref{eq: 2.28} imply that
\begin{equation}\label{eq: finite_mean}
\mathbb{E}T_f(0,\Gamma_i)^2 < \infty \text{ for all } i \geq 0.
\end{equation}
Indeed, for $x \geq 0$, we have
\[
\mathbb{P}(T_f(0,\Gamma_i) \geq x) \leq \mathbb{P}(m(i) \geq i + \lfloor \sqrt{x} \rfloor) + \mathbb{P}(T_f(P(r_0'),P(r_{i+\lfloor \sqrt{x}\rfloor}')) \geq x).
\]
If we write $\Xi$ for the sum of $\tau_e$ over all $e \in \mathbb{E}_f$ with an endpoint in $S(r_{i^\ast}')$ (where $S(n)$ was defined in \eqref{eq: S_n_P_n_def}), we have 
\begin{equation}\label{eq: more_bandaids}
T_f(P(r_0'),P(r_{i+\lfloor \sqrt{x}\rfloor}')) \leq \Xi + T_f(P(r_{i^\ast}'),P(r_{i+\lfloor \sqrt{x}\rfloor}')) \text{ for } i \geq 0 \text{ and } x \geq (i^\ast)^2.
\end{equation}
Therefore for $x \geq (i^\ast +1)^2$, we can use Lem.~\ref{lem: plane_to_plane_estimate} and \eqref{eq: 2.28} to obtain
\begin{align}
\mathbb{P}(T_f(0,\Gamma_i) \geq x) &\leq e^{-\Cr{c: circuit_appearance_constant}\lfloor \sqrt{x}\rfloor} + \mathbb{P}\left(\Xi \geq \frac{x}{2}\right) + \mathbb{P}\left( T_f(P(r_{i^\ast}'),P(r_{i+\lfloor \sqrt{x}\rfloor}')) \geq \frac{x}{2}\right) \nonumber \\ 
&\leq e^{-\Cr{c: circuit_appearance_constant}\lfloor \sqrt{x}\rfloor} + \mathbb{P}\left(\Xi \geq \frac{x}{2}\right) + \Cr{c: haydns_constant_1} \exp\left( - \Cr{c: haydns_constant_2} \frac{x}{2(i+\lfloor \sqrt{x}\rfloor - i^\ast)}\right) + \Cr{c: haydns_constant_1}\left( \frac{2}{x}\right)^\frac{\eta}{2}. \label{eq: carpool_constant}
\end{align}
Multiplying by $x$, integrating over $x$, and noting that \eqref{eq: gap_var_condition_2} implies $\mathbb{E}\Xi^2 < \infty$ gives \eqref{eq: finite_mean}.

Most of the proof consists of proving variance bounds and a CLT for $T_f(0,\Gamma_{i_0})$ as $i_0 \to \infty$, and then using these results to prove similar ones for $T_f(0,P(n))$ with $n \geq 0$. For this purpose, we let $i_0 \geq 0$ and decompose
\begin{align*}
T_f(0,\Gamma_{i_0}) - \mathbb{E}T_f(0, \Gamma_{i_0}) &= \sum_{i=0}^{i_0} \left( \mathbb{E}\left[ T_f(0, \Gamma_{i_0}) \mid \Sigma_i\right] - \mathbb{E} \left[ T_f(0, \Gamma_{i_0}) \mid \Sigma_{i-1}\right] \right) \\
&:= \sum_{i=0}^{i_0} \Delta_{i,i_0}.
\end{align*}
Because of \eqref{eq: finite_mean}, we have $\mathrm{Var}~T_f(0,\Gamma_{i_0}) = \sum_{i=0}^{i_0}\mathbb{E}\Delta_{i,i_0}^2$.

\subsection{Representations for $\Delta_{i,i_0}$}
To study the variables $\Delta_{i,i_0}$, we give the following representation that is analogous to \cite[Lem.~1]{KZ97}. For its statement, we introduce a second copy $(\Omega', \Sigma', \mathbb{P}')$ of the original probability space, writing a typical element of $\Omega$ as $\omega$ and a typical element of $\Omega'$ as $\omega'$. Until the end of Sec.~\ref{sec: Delta_bounds}, we will include $\omega$ or $\omega'$ in the previous notation, so $m(i,\omega)$ or $\Gamma_i(\omega)$, for example, means $m(i)$ or $\Gamma_i$ evaluated in the outcome $\omega$.

Our formula will differ from that in \cite{KZ97} because of the appearance of events $E_{i,i_0}$ and $E_{i-1,i_0}$, where
\[
E_{i,i_0} = \{m(i,\omega) < i_0\}
\]
It seems that these $E_i$ are needed also in the statements of \cite[Lem.~1]{KZ97} and \cite[Lem.~2]{KZ97}, but their absence does not provide a major problem in the later arguments of that reference. As in \cite{KZ97}, the formula also involves terms like $\mathbb{E}'T_f(\Gamma_i(\omega), \Gamma_{i_0}(\omega'))(\omega')$ which can be thought of as follows. We first fix the outcome $\omega$, and therefore the path $\Gamma_i(\omega)$. Next, we choose $\omega'$ and therefore the path $\Gamma_{i_0}(\omega')$. Last, we evaluate the passage time from the set $\Gamma_i(\omega)$ to the set $\Gamma_{i_0}(\omega')$ in the edge-weights associated to the outcome $\omega'$, and integrate over $\omega'$. The term is therefore a function only of $\omega$.

\begin{lem}\label{lem: representation_1} For every $i_0 \geq 0$, and $i = 0, \dots, i_0$, we have
\[
\Delta_{i,i_0}(\omega) = \begin{cases}
T_f(\Gamma_{i-1}(\omega), \Gamma_i(\omega))(\omega) + \mathbf{1}_{E_{i,i_0}} \mathbb{E}'T_f(\Gamma_i(\omega), \Gamma_{i_0}(\omega'))(\omega')  \\
\hspace{1.55in} - \mathbf{1}_{E_{i-1,i_0}} \mathbb{E}'T_f(\Gamma_{i-1}(\omega), \Gamma_{i_0}(\omega'))(\omega') & \text{ if } i = 1, \dots, i_0, \\
T_f(0,\Gamma_0(\omega))(\omega) + \mathbf{1}_{E_{0,i_0}} \mathbb{E}'T_f(\Gamma_0(\omega),\Gamma_{i_0}(\omega'))(\omega') \\
\hspace{1.13in} - \mathbb{E}'T_f(0,\Gamma_{i_0}(\omega'))(\omega') & \text{ if } i = 0.
\end{cases}
\]
Also if $g$ is a deterministic function such that $g(i) \geq i$ for all $i \geq 0$ and if
\[
0 \leq i_1 < \dots < i_K \leq i_0 \text{ and } i_{k+1} \geq g(i_k) + 2 \text{ for all }k,
\]
then the variables $(\Delta_{i_k,i_0}(\omega) \mathbf{1}_{\{m(i_k,\omega)\leq g(i_k)\}})_{1 \leq k \leq K}$ are independent. 
\end{lem}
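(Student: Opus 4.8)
The plan is to establish the displayed formula for $\Delta_{i,i_0}$ first and then deduce the independence from it, following the scheme of \cite[Lem.~1]{KZ97} while carrying the indicators $\mathbf{1}_{E_{i,i_0}}$.

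\emph{Step 1 (two pointwise additivity identities).} I will first check that, deterministically in $\omega$, one has $T_f(0,\Gamma_i) = T_f(0,\Gamma_{i-1}) + T_f(\Gamma_{i-1},\Gamma_i)$ for $1 \le i \le i_0$, and $T_f(0,\Gamma_{i_0}) = T_f(0,\Gamma_i) + \mathbf{1}_{E_{i,i_0}} T_f(\Gamma_i,\Gamma_{i_0})$ for $0 \le i \le i_0$. When $m(i-1) = m(i)$ the curves $\Gamma_{i-1}$ and $\Gamma_i$ coincide and the first identity is trivial; when $m(i-1) < m(i)$, $\Gamma_{i-1}$ is a top-down open crossing of $R'_{m(i-1)}$, which sits strictly to the left of $R'_{m(i)} \ni \Gamma_i$, so $\Gamma_{i-1}$ separates $0$ from $\Gamma_i$ in $\mathbb{G}_f$; splitting any path from $0$ to $\Gamma_i$ at its first visit to $\Gamma_{i-1}$ gives ``$\ge$'', while concatenating optimal pieces and traversing $\Gamma_{i-1}$ for free---every edge of $\Gamma_{i-1}$ has $t_e = 0$, hence $\tau_e = t_e\tau'_e = 0$---gives ``$\le$''. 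The second identity is proved the same way, using that $\Gamma_i$ separates $0$ from $\Gamma_{i_0}$ precisely when $m(i) < i_0$, i.e.\ on $E_{i,i_0}$, while on $E_{i,i_0}^c$ we have $\Gamma_i = \Gamma_{i_0}$.

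\emph{Step 2 (conditioning).} Next I will show that $\mathbb{E}[T_f(0,\Gamma_{i_0}) \mid \Sigma_i] = T_f(0,\Gamma_i) + \mathbf{1}_{E_{i,i_0}}\,\mathbb{E}'T_f(\Gamma_i(\omega),\Gamma_{i_0}(\omega'))(\omega')$ for $0 \le i \le i_0$. Two facts are needed. (a) On $\{\Gamma_i = \Gamma\}$, both this event and the value $m(i)$ are measurable with respect to the weights on edges lying on or to the left of $\Gamma$: being the leftmost open crossing of $R'_{m(i)}$ depends only on edges of $R'_{m(i)}$ and its interior, and the non-appearance of a crossing before $R'_{m(i)}$ depends only on $R'_i,\dots,R'_{m(i)-1}$, all of which lie on or to the left of $\Gamma$. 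Consequently, conditionally on $\Sigma_i\cap\{\Gamma_i=\Gamma\}$, the weights strictly to the right of $\Gamma$ are still i.i.d.\ with law $F$ and independent of $\Sigma_i$. (b) For any fixed admissible $\Gamma$ crossing a region of index $< i_0$, the quantity $T_f(\Gamma,\Gamma_{i_0}(\omega'))(\omega')$ is a function of the weights strictly to the right of $\Gamma$ only, because $\Gamma_{i_0}(\omega')$ is determined by $R'_{i_0},R'_{i_0+1},\dots$ (which lie to the right of $\Gamma$), and because an optimal path from a curve to a set on its right can always be taken to leave the curve at once and stay in its exterior thereafter, hence never uses an edge on or to the left of $\Gamma$. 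Combining (a), (b) and Step 1 gives the conditional-expectation identity; subtracting its analogue at level $i-1$, using $T_f(0,\Gamma_i)-T_f(0,\Gamma_{i-1}) = T_f(\Gamma_{i-1},\Gamma_i)$ when $i \ge 1$, and using that $\Sigma_{-1}$ is trivial so $\mathbb{E}[T_f(0,\Gamma_{i_0})\mid\Sigma_{-1}] = \mathbb{E}'T_f(0,\Gamma_{i_0}(\omega'))(\omega')$ when $i = 0$, yields exactly the stated formula.

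\emph{Step 3 (independence).} On $\{m(i_k) \le g(i_k)\}$ the curve $\Gamma_{i_k}$ lies inside $R'_{i_k}\cup\dots\cup R'_{g(i_k)}$, and since $i_k-1 \le m(i_k-1) \le m(i_k) \le g(i_k)$ the curve $\Gamma_{i_k-1}$ lies inside $R'_{i_k-1}\cup\dots\cup R'_{g(i_k)}$; both curves, as well as $m(i_k)$ and $m(i_k-1)$, are then determined by the configuration restricted to that block of regions. By the formula from Steps 1--2, $\Delta_{i_k,i_0}$ equals $T_f(\Gamma_{i_k-1},\Gamma_{i_k})$ plus deterministic functions of $\Gamma_{i_k-1},\Gamma_{i_k},m(i_k),m(i_k-1)$ (the $\mathbb{E}'$-terms and the indicators), and $T_f(\Gamma_{i_k-1},\Gamma_{i_k})$ depends only on the weights in the region between those two curves, again inside that block. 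Hence $\Delta_{i_k,i_0}\mathbf{1}_{\{m(i_k)\le g(i_k)\}}$ is a measurable function of the weights $(\tau'_e,t_e)$ over edges with first coordinate in $[r_{2(i_k-1)},r_{2g(i_k)+1}]$ (with $0$ in place of the left endpoint when $i_k=0$). The hypothesis $i_{k+1} \ge g(i_k)+2$ makes these coordinate windows pairwise disjoint---this is exactly where the ``$+2$'' is used, since it forces $m(i_{k+1}-1) \ge i_{k+1}-1 \ge g(i_k)+1$, so $\Gamma_{i_{k+1}-1}$ sits strictly to the right of every region that can contain $\Gamma_{i_k}$---and since weight families over disjoint edge sets are independent, so are the variables $\Delta_{i_k,i_0}\mathbf{1}_{\{m(i_k)\le g(i_k)\}}$, $1\le k\le K$.

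\emph{Main obstacle.} I expect Step 2(b) to be the delicate point: making rigorous that $T_f(\Gamma,\cdot)$ from a curve to a set strictly to its right does not depend on the weights on or inside $\Gamma$. This rests on the planar Jordan-type separation provided by the crossings $\Gamma_i$ together with the fact that they are zero-weight paths, and is where the construction genuinely diverges from---and is slightly more delicate than---the annulus/circuit version of \cite{KZ97}; the indicators $\mathbf{1}_{E_{i,i_0}}$ are needed precisely so that $\Gamma_{i_0}(\omega')$ is guaranteed to lie to the right of $\Gamma_i(\omega)$ and (b) applies. A secondary bookkeeping obstacle in Step 3 is ensuring that $\Gamma_{i_k-1}$, which may reach back as far as $R'_{i_k-1}$, still lies in the $k$-th block; that constraint is what dictates the exact separation requirement on the $i_k$'s.
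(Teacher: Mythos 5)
Your proposal is correct and follows essentially the same route as the paper: the key conditional-expectation identity $\mathbb{E}[T_f(\Gamma_i,\Gamma_{i_0})\mid\Sigma_i]=\mathbf{1}_{E_{i,i_0}}\mathbb{E}'T_f(\Gamma_i(\omega),\Gamma_{i_0}(\omega'))(\omega')$ proved by the independence of the weights on/inside $\Gamma_i$ from those with an endpoint in $\mathrm{ext}\,\Gamma_i$ (handling the degenerate case $\Gamma_i=\Gamma_{i_0}$ separately), followed by subtracting consecutive levels, and the same locality-plus-disjoint-windows argument for independence. The only difference is cosmetic: you spell out the additivity identities of Step 1 (via splitting at the first/last visit to a zero-weight crossing), which the paper uses implicitly, and you phrase the conditioning argument via regular conditional distributions rather than verifying the defining identity on the generating events $\Xi_i(\Gamma,E,B)$; both are the same computation.
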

\begin{proof}

We first prove that for $i_0 \geq 0$ and $j=0, \dots, i_0$, we have
\begin{equation}\label{eq: conditional_expectation_formula}
\mathbb{E}[T_f(\Gamma_j(\omega), \Gamma_{i_0}(\omega))(\omega) \mid \Sigma_j] = \mathbf{1}_{E_{j,i_0}} \mathbb{E}' T_f(\Gamma_j(\omega),\Gamma_{i_0}(\omega'))(\omega').
\end{equation}
The right side is $\Sigma_j$-measurable, since it depends on $\omega$ only through $\Gamma_j(\omega)$, so we must show that for any relevant $\Gamma, E, B$, we have
\begin{equation}\label{eq: conditional_expectation_definition}
\mathbb{E}T_f(\Gamma_j(\omega), \Gamma_{i_0}(\omega))(\omega) \mathbf{1}_{\Xi_j(\Gamma,E,B)} = \mathbb{E} \left[ \mathbf{1}_{E_{j,i_0}} \mathbb{E}' T_f(\Gamma_j(\omega),\Gamma_{i_0}(\omega'))(\omega') \mathbf{1}_{\Xi_j(\Gamma,E,B)} \right].
\end{equation}

If $\Gamma$ has all vertices $u$ with $u \cdot \mathbf{e}_1 \geq r_{i_0}'$, then
\[
\Xi_j(\Gamma, E, B) \subset \{\Gamma_j(\omega) = \Gamma\} \subset E_{j,i_0}^c
\]
and $\Xi_j(\Gamma, E, B) \subset \{\Gamma_j(\omega) = \Gamma_{i_0}(\omega)\}$. This implies that both sides of \eqref{eq: conditional_expectation_definition} are zero and so \eqref{eq: conditional_expectation_definition} holds. Otherwise, all vertices $u$ of $\Gamma$ have $u \cdot \mathbf{e}_1 < r_{i_0}'$ (there is no other possibility since $\Gamma$ is a top-down crossing of some $R_{i}'$), and the left side of \eqref{eq: conditional_expectation_definition} is
\[
\mathbb{E}T_f(\Gamma, \Gamma_{i_0}(\omega))(\omega) \mathbf{1}_{\Xi_j(\Gamma,E,B)} = \sum_{\Delta} \mathbb{E}T_f(\Gamma,\Delta)(\omega) \mathbf{1}_{\Xi_j(\Gamma,E,B)} \mathbf{1}_{\{\Gamma_{i_0}(\omega) = \Delta\}},
\]
where the sum is over all vertex self-avoiding top-down crossings $\Delta$ of $R_\ell'$ for some $\ell \geq i_0$. The event $\Xi_j(\Gamma,E,B)$ depends on values of $\tau_e$ for $e$ with both endpoints on $\Gamma$ or in its interior. On the other hand, $T_f(\Gamma,\Delta)$ depends on $\tau_e$ for $e$ with at least one endpoint in the exterior of $\Gamma$. The event $\{\Gamma_{i_0}(\omega) = \Delta\}$ depends on $\tau_e$ for $e$ with both endpoints in $\{u : u \cdot \mathbf{e}_1 \geq r_{i_0}'\}$, which is a subset of $\text{ext } \Gamma$. By independence, the above equals
\begin{align*}
\sum_\Delta \mathbb{E}T_f(\Gamma, \Delta)(\omega) \mathbf{1}_{\{\Gamma_{i_0}(\omega) = \Delta\}} \mathbb{E}\mathbf{1}_{\Xi_j(\Gamma,E,B)} &= \mathbb{E}'T_f(\Gamma, \Gamma_{i_0}(\omega'))(\omega') \mathbb{E}\mathbf{1}_{\Xi_j(\Gamma,E,B)} \\
&=\mathbb{E}\left[ \mathbf{1}_{\Xi_j(\Gamma,E,B)} \mathbb{E}'T_f(\Gamma_j(\omega),\Gamma_{i_0}(\omega'))(\omega') \right]
\end{align*}
In this case, $\Xi_j(\Gamma,E,B) \subset E_{j,i_0}$, so this equals the right side of \eqref{eq: conditional_expectation_definition}. We conclude that \eqref{eq: conditional_expectation_formula} holds.

To finish the proof of the equation in Lem.~\ref{lem: representation_1}, we use \eqref{eq: conditional_expectation_formula} to write for $i_0 \geq 0$ and $i = 1, \dots, i_0$
\begin{align*}
\mathbb{E}[T_f(0,\Gamma_{i_0}(\omega))(\omega) \mid \Sigma_i] &= \mathbb{E}[T_f(0,\Gamma_{i-1}(\omega))(\omega) \mid \Sigma_i] + \mathbb{E}[T_f(\Gamma_{i-1}(\omega), \Gamma_i(\omega))(\omega) \mid \Sigma_i] \\
&+ \mathbb{E}[T_f(\Gamma_i(\omega),\Gamma_{i_0}(\omega))(\omega) \mid \Sigma_i] \\
&= T_f(0,\Gamma_{i-1}(\omega))(\omega) + T_f(\Gamma_{i-1}(\omega), \Gamma_i(\omega))(\omega) \\
&+ \mathbf{1}_{E_{i,i_0}} \mathbb{E}' T_f(\Gamma_i(\omega),\Gamma_{i_0}(\omega'))(\omega').
\end{align*}
For $i_0 \geq 0$ and $i=0$, we similarly write
\[
\mathbb{E}[T_f(0,\Gamma_{i_0}(\omega))(\omega) \mid \Sigma_0] =  T_f(0,\Gamma_0(\omega))(\omega) + \mathbf{1}_{E_{0,i_0}} \mathbb{E}'T_f(\Gamma_0(\omega),\Gamma_{i_0}(\omega'))(\omega').
\]
In the case $i \geq 1$, we use the previous two displays with $i,i-1$ to prove the formula for Lem.~\ref{lem: representation_1}. In the case $i=0$ we use the second display along with the fact that $\mathbb{E}[T_f(0,\Gamma_{i_0}(\omega))(\omega) \mid \Sigma_{-1}] = \mathbb{E}'T_f(0,\Gamma_{i_0}(\omega'))(\omega')$ to complete the proof.

For the independence claim, we observe that for $i_0 \geq 0$ and $i = 1, \dots, i_0$, 
\begin{equation}\label{eq: independence_pre_claim}
\Delta_{i,i_0}(\omega) \mathbf{1}_{\{m(i,\omega) \leq g(i)\}}\text{ depends only on } \tau_e \text{ for }e\in \mathcal{E}_i,
\end{equation}
where
\[
\mathcal{E}_i = \{e : e \text{ has both endpoints in } \{u : r_{i-1}' \leq u \cdot \mathbf{e}_1 \leq r_{2g(i)+1}\}\}.
\]
To see why, observe that $\{m(i,\omega) \leq g(i)\}$ depends only on $\tau_e$ for $e \in \mathcal{E}_i$ and if this event occurs, then both paths $\Gamma_{i-1}(\omega)$ and $\Gamma_i(\omega)$ have all their edges (and all edges in the region between them) in $\mathcal{E}_i$. Therefore $T_f(\Gamma_{i-1}(\omega),\Gamma_i(\omega))(\omega) \mathbf{1}_{\{m(i,\omega) \leq g(i)\}}$ depends only on $\tau_e$ for $e \in \mathcal{E}_i$. The same is true for the variables $\mathbf{1}_{\{m(i,\omega) \leq g(i)\}}\mathbb{E}'T_f(\Gamma_{i-1}(\omega),\Gamma_{i_0}(\omega'))(\omega')$ and $\mathbf{1}_{\{m(i,\omega) \leq g(i)\}}\mathbb{E}'T_f(\Gamma_i(\omega),\Gamma_{i_0}(\omega'))(\omega')$. Last, the events $E_{i,i_0}$ and $E_{i-1,i_0}$ are determined by the values of $m(i,\omega)$ and $m(i-1,\omega)$, and when $\{m(i,\omega) \leq g(i)\}$ occurs, these are determined by $\tau_e$ for $e \in \mathcal{E}_i$. We conclude that \eqref{eq: independence_pre_claim} holds. In the case $i_0 \geq 0$ and $i=0$, \eqref{eq: independence_pre_claim} also holds, by similar reasoning, so long as we set
\[
\mathcal{E}_0 = \{e : e \text{ has both endpoints in } \{u : 0 \leq u \cdot \mathbf{e}_1 \leq r_{2g(0)+1}\}\}.
\]

The independence claim follows directly from \eqref{eq: independence_pre_claim}. Indeed, if $0 \leq i_0 < \dots < i_K \leq i_0$ and $i_{k+1} \geq g(i_k) + 2$ for all $k$, then the edge sets $\mathcal{E}_{i_k}$ for $k = 1, \dots, K$ are disjoint. Therefore $(\Delta_{i,i_0}(\omega)\mathbf{1}_{\{m(i_k,\omega) \leq g(i_k)\}})_{1 \leq k \leq K}$ are independent.
\end{proof}

Next, following \cite[Lem.~2]{KZ97}, we give an alternative representation for $\Delta_{i,i_0}(\omega)$.
\begin{lem}\label{lem: representation_2}
Define
\[
\lambda(i,\omega,\omega') = m(m(i,\omega)+1,\omega').
\]
Then for every $i_0 \geq 0$ and $i=1, \dots, i_0$, we have
\begin{align*}
\Delta_{i,i_0}(\omega) &= T_f(\Gamma_{i-1}(\omega),\Gamma_i(\omega))(\omega) \\
&+ \mathbf{1}_{E_{i-1,i_0} \cap E_{i,i_0}} \left[ \mathbb{E}'T_f(\Gamma_i(\omega),\Gamma_{\lambda(i,\omega,\omega')}(\omega'))(\omega') - \mathbb{E}'T_f(\Gamma_{i-1}(\omega),\Gamma_{\lambda(i,\omega,\omega')}(\omega'))(\omega') \right] \\
&- \mathbf{1}_{E_{i-1,i_0}\setminus E_{i,i_0}} \mathbb{E}'T_f(\Gamma_{i-1}(\omega), \Gamma_{i_0}(\omega'))(\omega').
\end{align*}
In the case $i=0$, for every $i_0 \geq 0$, we have
\begin{align*}
\Delta_{0,i_0}(\omega) &= T_f(0,\Gamma_0(\omega))(\omega) \\
&+ \mathbf{1}_{E_{0,i_0}} \left[ \mathbb{E}'T_f(\Gamma_0(\omega), \Gamma_{\lambda(0,\omega,\omega')}(\omega'))(\omega') - \mathbb{E}'T_f(0,\Gamma_{\lambda(0,\omega,\omega')}(\omega'))(\omega') \right]\\
&-\mathbf{1}_{E_{0,i_0}^c} \mathbb{E}'T_f(0,\Gamma_{i_0}(\omega'))(\omega').
\end{align*}
\end{lem}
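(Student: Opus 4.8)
The plan is to derive Lem.~\ref{lem: representation_2} from Lem.~\ref{lem: representation_1} by applying the latter again, one level deeper, to the terms $\mathbb{E}'T_f(\Gamma_i(\omega),\Gamma_{i_0}(\omega'))(\omega')$ and $\mathbb{E}'T_f(\Gamma_{i-1}(\omega),\Gamma_{i_0}(\omega'))(\omega')$ that appear in the formula for $\Delta_{i,i_0}$. The key observation is that, on $E_{i,i_0} = \{m(i,\omega) < i_0\}$, the path $\Gamma_{i_0}(\omega')$ lies strictly to the right of $\Gamma_i(\omega)$, so once $\omega$ is frozen, $T_f(\Gamma_i(\omega),\Gamma_{i_0}(\omega'))(\omega')$ is itself a passage time from a fixed left boundary to a leftmost crossing that depends only on $\omega'$, and we may run the same conditioning-on-the-leftmost-crossing argument that proved Lem.~\ref{lem: representation_1}. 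Concretely, since $\Gamma_i(\omega)$ is a top-down open crossing of $R_{m(i,\omega)}'$, the first region to the right of it that carries a top-down open crossing in the configuration $\omega'$ is $R_{\lambda(i,\omega,\omega')}'$ with $\lambda(i,\omega,\omega') = m(m(i,\omega)+1,\omega')$, and the corresponding leftmost crossing is $\Gamma_{\lambda(i,\omega,\omega')}(\omega')$. Decomposing any $\Gamma_i(\omega)\to\Gamma_{i_0}(\omega')$ path through this intermediate crossing, and noting that the $\omega'$-passage time from $\Gamma_i(\omega)$ (a fixed set) to $\Gamma_{\lambda(i,\omega,\omega')}(\omega')$ is the "fresh'' part while the remaining portion from $\Gamma_{\lambda(i,\omega,\omega')}(\omega')$ to $\Gamma_{i_0}(\omega')$ does not depend on the left endpoint, gives
\[
\mathbb{E}'T_f(\Gamma_i(\omega),\Gamma_{i_0}(\omega'))(\omega') = \mathbb{E}'T_f(\Gamma_i(\omega),\Gamma_{\lambda(i,\omega,\omega')}(\omega'))(\omega') + \mathbb{E}'T_f(\Gamma_{\lambda(i,\omega,\omega')}(\omega'),\Gamma_{i_0}(\omega'))(\omega')
\]
on $E_{i,i_0}$, and the identical identity with $i$ replaced by $i-1$ holds on $E_{i-1,i_0}$ (using that $m(i-1,\omega) \le m(i,\omega)$, so $\Gamma_{i-1}(\omega) = \Gamma_i(\omega)$ whenever $m(i-1,\omega) < i_0$, or more carefully, $\lambda(i-1,\omega,\omega')$ and $\lambda(i,\omega,\omega')$ agree on $E_{i-1,i_0}\cap E_{i,i_0}$ because $m(i-1,\omega)=m(i,\omega)$ there — this needs to be checked).

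The main steps, in order, are as follows. First I would record the monotonicity fact $m(i-1,\omega) \le m(i,\omega)$ and the resulting identity $\Gamma_{i-1}(\omega) = \Gamma_{m(i-1,\omega)}'$-crossing $= \Gamma_i(\omega)$ whenever the two leftmost-crossing regions coincide, and in particular observe that on $E_{i-1,i_0}\cap E_{i,i_0}$ one may use the same intermediate region $R_{\lambda(i,\omega,\omega')}'$ for both terms. Second, I would establish the additive decomposition displayed above by the same edge-disjointness / path-concatenation and independence argument as in the proof of \eqref{eq: conditional_expectation_formula}: a $\Gamma_i(\omega) \to \Gamma_{i_0}(\omega')$ path in configuration $\omega'$ must cross $R_{\lambda(i,\omega,\omega')}'$, hence passes through $\Gamma_{\lambda(i,\omega,\omega')}(\omega')$ (leftmost crossing), and the two pieces are supported on disjoint edge sets so their $\omega'$-expectations add. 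Third, I substitute this decomposition into the formula of Lem.~\ref{lem: representation_1}, being careful about the indicator sets: on $E_{i,i_0}$ one has the $\Gamma_i$-decomposition, on $E_{i-1,i_0}$ the $\Gamma_{i-1}$-decomposition, and the "extra'' term $\mathbb{E}'T_f(\Gamma_{\lambda}(\omega'),\Gamma_{i_0}(\omega'))(\omega')$ cancels between the $i$ and $i-1$ contributions precisely on the intersection $E_{i-1,i_0}\cap E_{i,i_0}$ (where $\lambda$ is the same), leaving the residual term $-\mathbf{1}_{E_{i-1,i_0}\setminus E_{i,i_0}}\mathbb{E}'T_f(\Gamma_{i-1}(\omega),\Gamma_{i_0}(\omega'))(\omega')$. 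Fourth, I handle $i=0$ the same way, replacing $\Gamma_{i-1}(\omega)$ by the point $0$ and using $\mathbf{1}_{E_{-1,i_0}} \equiv 1$ (since $m(-1)$ is not relevant; the $i=0$ line of Lem.~\ref{lem: representation_1} has the bare term $\mathbb{E}'T_f(0,\Gamma_{i_0}(\omega'))(\omega')$), so that the residual term becomes $-\mathbf{1}_{E_{0,i_0}^c}\mathbb{E}'T_f(0,\Gamma_{i_0}(\omega'))(\omega')$.

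The main obstacle I anticipate is bookkeeping around the event decomposition rather than any deep probabilistic content: one must verify carefully that $\lambda(i-1,\omega,\omega') = \lambda(i,\omega,\omega')$ on $E_{i-1,i_0}\cap E_{i,i_0}$ (equivalently $m(i-1,\omega)=m(i,\omega)$ there, which follows since $m(i-1,\omega) \in \{i-1,i-1+1,\dots\}$ and $m(i,\omega) \ge i > i-1$ forces $m(i-1,\omega)=m(i,\omega)$ once $m(i-1,\omega)\ge i$ — but if $m(i-1,\omega)=i-1$, then $R_{i-1}'$ has an open crossing, and I need to check whether $\Gamma_{i-1} = \Gamma_i$ still or handle that case), and that the support/independence argument used to split the $\omega'$-expectation genuinely applies with a \emph{random} intermediate region $R_{\lambda}'$ depending on $\omega'$ — this is legitimate because, conditionally on the location of $\Gamma_{\lambda(i,\omega,\omega')}(\omega')$, the first piece lives on edges weakly left of it and the second on edges weakly right, exactly as in the explorer/leftmost-crossing argument of Lem.~\ref{lem: representation_1}. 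A secondary delicate point is that $T_f(\Gamma_i(\omega),\Gamma_{i_0}(\omega'))$ requires $\Gamma_i(\omega)$ to lie to the left of $\Gamma_{i_0}(\omega')$, which is guaranteed precisely on $E_{i,i_0}$, so all the decompositions must be multiplied by the appropriate indicators before manipulation; off these events the corresponding terms in Lem.~\ref{lem: representation_1} already vanish, so nothing is lost. Once these points are pinned down the two displayed formulas drop out by direct substitution and cancellation.
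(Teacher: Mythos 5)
Your overall strategy — substitute an additive decomposition through an intermediate crossing into the formula of Lem.~\ref{lem: representation_1}, split into cases according to $E_{i-1,i_0}\setminus E_{i,i_0}$, $E_{i-1,i_0}\cap E_{i,i_0}$, and the complement, and cancel the common tail term — is exactly the paper's argument. But the cancellation mechanism you describe rests on a claim that is false and that you yourself flag without resolving: on $E_{i-1,i_0}\cap E_{i,i_0}$ one does \emph{not} have $m(i-1,\omega)=m(i,\omega)$ in general (take $\omega$ with a top-down open crossing of $R_{i-1}'$ but none of $R_i'$: then $m(i-1,\omega)=i-1<i\leq m(i,\omega)$, both can be $<i_0$, and $\Gamma_{i-1}(\omega)\neq\Gamma_i(\omega)$). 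Likewise your parenthetical ``$\Gamma_{i-1}(\omega)=\Gamma_i(\omega)$ whenever $m(i-1,\omega)<i_0$'' is wrong; equality of the crossings is governed by whether $m(i-1,\omega)\geq i$, not by $i_0$. So decomposing the $\Gamma_{i-1}$-term through $\Gamma_{\lambda(i-1,\omega,\omega')}(\omega')$ and hoping $\lambda(i-1,\omega,\omega')=\lambda(i,\omega,\omega')$ does not work.

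The fix is already visible in the statement of the lemma: \emph{both} terms in the bracket use $\lambda(i,\omega,\omega')$. On $E_{i-1,i_0}\cap E_{i,i_0}$ one has $m(i-1,\omega)\leq m(i,\omega)<i_0$, hence $m(i,\omega)+1\leq i_0$ and
\[
m(i-1,\omega)\leq m(i,\omega)<\lambda(i,\omega,\omega')\leq m(i_0,\omega'),
\]
so that $\text{int }\Gamma_{i-1}(\omega)\subset\text{int }\Gamma_i(\omega)\subset\text{int }\Gamma_{\lambda(i,\omega,\omega')}(\omega')\subset\text{int }\Gamma_{i_0}(\omega')$. One then decomposes \emph{both} $\mathbb{E}'T_f(\Gamma_i(\omega),\Gamma_{i_0}(\omega'))(\omega')$ and $\mathbb{E}'T_f(\Gamma_{i-1}(\omega),\Gamma_{i_0}(\omega'))(\omega')$ through the \emph{same} crossing $\Gamma_{\lambda(i,\omega,\omega')}(\omega')$; the common summand $\mathbb{E}'T_f(\Gamma_{\lambda(i,\omega,\omega')}(\omega'),\Gamma_{i_0}(\omega'))(\omega')$ then cancels with no need to compare $\lambda(i-1,\cdot,\cdot)$ with $\lambda(i,\cdot,\cdot)$. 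Also note the decomposition itself is a pointwise-in-$\omega'$ identity (any path from the inner set to the outer set must meet the separating zero-weight crossing $\Gamma_{\lambda(i,\omega,\omega')}(\omega')$, along which one travels for free), so no fresh conditioning or independence argument is needed — just apply $\mathbb{E}'$ to the deterministic identity. With this correction the rest of your bookkeeping (including the $i=0$ case and the residual term on $E_{i-1,i_0}\setminus E_{i,i_0}$) goes through as you describe.
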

\begin{proof}
First take $i_0 \geq 0$ and $i = 1, \dots, i_0$. From the formula for $\Delta_{i,i_0}(\omega)$ in Lem.~\ref{lem: representation_1}, if $\omega \in E_{i-1,i_0} \setminus E_{i,i_0}$, then $\mathbf{1}_{E_{i,i_0}} = 0$ but $\mathbf{1}_{E_{i-1,i_0}} = 1$ and the formula reduces to
\[
T_f(\Gamma_{i-1}(\omega),\Gamma_i(\omega))(\omega) - \mathbb{E}'T_f(\Gamma_{i-1}(\omega),\Gamma_{i_0}(\omega'))(\omega'),
\]
which coincides with the above. On the other hand, if $\omega \in E_{i-1,i_0}^c \cap E_{i,i_0}^c$, then both indicators are zero, and the formula gives $T_f(\Gamma_{i-1}(\omega),\Gamma_i(\omega))(\omega)$. Last, if $\omega \in E_{i-1,i_0} \cap E_{i,i_0}$, the formula reads
\begin{equation}\label{eq: its_a_reader}
T_f(\Gamma_{i-1}(\omega), \Gamma_i(\omega))(\omega) +  \mathbb{E}'T_f(\Gamma_i(\omega), \Gamma_{i_0}(\omega'))(\omega') - \mathbb{E}'T_f(\Gamma_{i-1}(\omega),\Gamma_{i_0}(\omega'))(\omega').
\end{equation}
In this case, we have $m(i-1,\omega) \leq m(i,\omega) < i_0$ and therefore $m(i,\omega)+1 \leq i_0$. This implies that $\lambda(i,\omega,\omega') = m(m(i,\omega)+1,\omega') \leq m(i_0,\omega')$ and so in total
\[
m(i-1,\omega) \leq m(i,\omega) < \lambda(i,\omega,\omega') \leq m(i_0,\omega'),
\]
giving
\[
\text{int } \Gamma_{i-1}(\omega) \subset \text{int } \Gamma_i(\omega) \subset \text{int } \Gamma_{\lambda(i,\omega,\omega')}(\omega') \subset \text{int } \Gamma_{i_0}(\omega').
\]
As a result, we have
\[
\mathbb{E}'T_f(\Gamma_i(\omega),\Gamma_{i_0}(\omega'))(\omega') = \mathbb{E}'T_f(\Gamma_i(\omega),\Gamma_{\lambda(i,\omega,\omega')}(\omega'))(\omega') + \mathbb{E}'T_f(\Gamma_{\lambda(i,\omega,\omega')}(\omega'), \Gamma_{i_0}(\omega'))(\omega')
\]
and
\[
\mathbb{E}'T_f(\Gamma_{i-1}(\omega),\Gamma_{i_0}(\omega'))(\omega') = \mathbb{E}'T_f(\Gamma_{i-1}(\omega),\Gamma_{\lambda(i,\omega,\omega')}(\omega'))(\omega') + \mathbb{E}'T_f(\Gamma_{\lambda(i,\omega,\omega')}(\omega'), \Gamma_{i_0}(\omega'))(\omega').
\]
Using the previous two displays in \eqref{eq: its_a_reader} completes the proof in the case $i \geq 1$.

If $i_0 \geq 0$ and $i=0$, we argue similarly. If $\omega \in E_{0,i_0}^c$, then the formula in Lem.~\ref{lem: representation_1} reduces to
\[
T_f(0,\Gamma_0(\omega))(\omega) - \mathbb{E}'T_f(0,\Gamma_{i_0}(\omega'))(\omega').
\]
If instead $\omega \in E_{0,i_0}$, then the formula in Lem.~\ref{lem: representation_1} becomes
\begin{equation}\label{eq: its_a_reader_2}
T_f(0,\Gamma_0(\omega))(\omega) + \mathbb{E}'T_f(\Gamma_0(\omega), \Gamma_{i_0}(\omega'))(\omega') - \mathbb{E}'T_f(0,\Gamma_{i_0}(\omega'))(\omega').
\end{equation}
As above, we have
\[
\text{int } \Gamma_0(\omega) \subset \text{int } \Gamma_{\lambda(0,\omega,\omega')}(\omega') \subset \text{int } \Gamma_{i_0}(\omega'),
\]
and so
\[
\mathbb{E}'T_f(\Gamma_0(\omega),\Gamma_{i_0}(\omega'))(\omega') = \mathbb{E}'T_f(\Gamma_0(\omega),\Gamma_{\lambda(0,\omega,\omega')}(\omega'))(\omega') + \mathbb{E}'T_f(\Gamma_{\lambda(0,\omega,\omega')}(\omega'),\Gamma_{i_0}(\omega'))(\omega')
\]
and
\[
\mathbb{E}'T_f(0,\Gamma_{i_0}(\omega'))(\omega') = \mathbb{E}'T_f(0,\Gamma_{\lambda(0,\omega,\omega')}(\omega'))(\omega') + \mathbb{E}'T_f(\Gamma_{\lambda(0,\omega,\omega')}(\omega'),\Gamma_{i_0}(\omega'))(\omega').
\]
Placing the previous two displays in \eqref{eq: its_a_reader_2} completes the proof of the case $i=0$.
\end{proof}

\subsection{Estimates on $\Delta_{i,i_0}$}\label{sec: Delta_bounds}

The work in this subsection corresponds to \cite[Eqs.~(2.29),(2.32)]{KZ97} and serves to estimate the tail of the distribution of $\Delta_{i,i_0}$ its moments. First we bound the tail of the distribution of $\Delta_{i,i_0}$ as in \cite[Eq.~(2.29)]{KZ97}. We will use the following immediate consequence of \eqref{eq: 2.28}: for each fixed $\omega$, we have
\begin{equation}\label{eq: 2.28b}
\mathbb{P}'(\lambda(i,\omega,\omega') \geq m(i,\omega) + 1 + t) \leq e^{-\Cr{c: circuit_appearance_constant}t} \text{ for all integers } i \geq i^\ast \text{ and } t \geq 0.
\end{equation}
\begin{prop}\label{prop: Delta_concentration_prop}
There exists $\Cl[lgc]{c: Delta_concentration_constant} > 0$ depending only on $\Cr{c: r_i_assumption_1}$ and $\Cr{c: r_i_assumption_2}$ such that for all $i_0 \geq i^\ast+1$ and $i=i^\ast+1, \dots, i_0$, we have
\[
\mathbb{P}(|\Delta_{i,i_0}(\omega)| \geq x) \leq \frac{\Cr{c: Delta_concentration_constant}}{x^{\frac{\eta}{2}}} \text{ for all } x \geq 0.
\]
\end{prop}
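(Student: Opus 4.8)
## Proof Proposal for Proposition \ref{prop: Delta_concentration_prop}

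The plan is to use the representation of $\Delta_{i,i_0}(\omega)$ from Lemma \ref{lem: representation_2} and bound each of the three contributing terms separately. The first term, $T_f(\Gamma_{i-1}(\omega),\Gamma_i(\omega))(\omega)$, is handled by combining \eqref{eq: 2.28} (to control how far apart the lines $P(r_{i-1}')$ and $P(r_{m(i,\omega)}')$ can be) with Lemma \ref{lem: plane_to_plane_estimate} applied conditionally on $m(i-1,\omega)$ and $m(i,\omega)$. Concretely, on the event $\{m(i,\omega) \le i + t\}$ one has $T_f(\Gamma_{i-1}(\omega),\Gamma_i(\omega)) \le T_f(P(r_{i-1}'),P(r_{i+t+1}'))$, so splitting on the value of $m(i,\omega)$ and summing the geometric tail from \eqref{eq: 2.28} against the bound $\mathsf{C}_1\exp(-\mathsf{c}_2 x/(t+2)) + \mathsf{C}_1/x^{\eta/2}$ from Lemma \ref{lem: plane_to_plane_estimate} yields, after choosing the split point $t \asymp \sqrt{x}$ or $t \asymp x$ as appropriate, a tail of order $x^{-\eta/2}$ (the exponential-in-$x$ part dominated by the polynomial).

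For the second term, the difference $\mathbb{E}'T_f(\Gamma_i(\omega),\Gamma_{\lambda(i,\omega,\omega')}(\omega'))(\omega') - \mathbb{E}'T_f(\Gamma_{i-1}(\omega),\Gamma_{\lambda(i,\omega,\omega')}(\omega'))(\omega')$, I would use the nesting $\text{int }\Gamma_{i-1}(\omega) \subset \text{int }\Gamma_i(\omega) \subset \text{int }\Gamma_{\lambda(i,\omega,\omega')}(\omega')$ established in the proof of Lemma \ref{lem: representation_2}. By subadditivity of passage times across the nested crossings, the absolute value of this difference is bounded by $\mathbb{E}'T_f(\Gamma_{i-1}(\omega),\Gamma_i(\omega))(\omega')$ — that is, the $\omega'$-expected passage time between the two \emph{fixed} (in $\omega$) curves $\Gamma_{i-1}(\omega)$ and $\Gamma_i(\omega)$, which sit between lines $P(r_{m(i-1,\omega)}')$ and $P(r_{m(i,\omega)}')$. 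Again conditioning on $m(i-1,\omega) \le m(i,\omega) \le i+t$ and applying Lemma \ref{lem: plane_to_plane_estimate} in the $\omega'$-variables, then integrating the $\omega'$-tail to get the expectation, gives a quantity whose $\omega$-distribution has a tail controlled by \eqref{eq: 2.28} as before. The third term, present only on $E_{i-1,i_0}\setminus E_{i,i_0}$, satisfies $\mathbf{1}_{E_{i-1,i_0}\setminus E_{i,i_0}} \le \mathbf{1}_{\{m(i-1,\omega) < i_0 \le m(i,\omega)\}}$, and on this event $m(i,\omega) \ge i_0 \ge i$, so it is small in a strong sense: $\mathbb{P}(E_{i-1,i_0}\setminus E_{i,i_0})$ decays geometrically in $i_0 - i$ by \eqref{eq: 2.28}, and on that event the relevant passage time $\mathbb{E}'T_f(\Gamma_{i-1}(\omega),\Gamma_{i_0}(\omega'))(\omega')$ is itself a bounded-moment random variable by the same argument used for \eqref{eq: finite_mean}; combining gives a tail of order $x^{-\eta/2}$ uniformly in $i \le i_0$.

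The main obstacle is the bookkeeping in the first two terms: one must handle the random separation between the lines $P(r_{m(i-1,\omega)}')$ and $P(r_{m(i,\omega)}')$ — which appears in the denominator of the exponential factor in Lemma \ref{lem: plane_to_plane_estimate} — simultaneously with the tail in $x$, and verify that the polynomial term $x^{-\eta/2}$ indeed dominates after summing over the (geometrically distributed) number of extra blocks. The key point is that $\eta > 4$ in \eqref{eq: gap_var_condition_2} ensures $\eta/2 > 2$, so the polynomial tails are summable enough to survive the union over block counts without degrading the exponent; one extracts a fixed fraction, say uses $x^{-\eta/2} = x^{-\eta/2+\epsilon}\cdot x^{-\epsilon}$ to absorb the sum. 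The second point to be careful about is that the constant $\Cr{c: Delta_concentration_constant}$ must not depend on $i$ or $i_0$: this follows because all the estimates invoked — \eqref{eq: 2.28}, Lemma \ref{lem: plane_to_plane_estimate}, and the moment bound on $\Xi$ — have constants depending only on $\Cr{c: r_i_assumption_1}$, $\Cr{c: r_i_assumption_2}$, $\eta$, and the law of $\tau_e$, and the geometric decay in \eqref{eq: 2.28} is uniform for $i \ge i^\ast$.
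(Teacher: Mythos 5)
Your overall strategy is the paper's: start from the representation in Lem.~\ref{lem: representation_2}, control the first term by combining \eqref{eq: 2.28} with Lem.~\ref{lem: plane_to_plane_estimate} and splitting at $t\asymp\sqrt{x}$, and treat the remaining terms via the same two ingredients. The paper streamlines the bookkeeping you worry about by observing that on $E_{i-1,i_0}\setminus E_{i,i_0}$ one has $m(i_0,\omega')\le\lambda(i,\omega,\omega')$, so the third term is dominated by $\mathbb{E}'T_f(\Gamma_{i-1}(\omega),\Gamma_{\lambda(i,\omega,\omega')}(\omega'))(\omega')$ and all three terms collapse into the two of \eqref{eq: Delta_inequality_1}; your separate treatment of the third term is workable but unnecessary.

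There is, however, one step that does not hold as stated: your bound
\[
\bigl|\mathbb{E}'T_f(\Gamma_i(\omega),\Gamma_{\lambda}(\omega'))(\omega')-\mathbb{E}'T_f(\Gamma_{i-1}(\omega),\Gamma_{\lambda}(\omega'))(\omega')\bigr|\le \mathbb{E}'T_f(\Gamma_{i-1}(\omega),\Gamma_i(\omega))(\omega')
\]
``by subadditivity.'' The triangle inequality $T_f(\Gamma_{i-1},\Gamma_\lambda)(\omega')\le T_f(\Gamma_{i-1},\Gamma_i)(\omega')+T_f(\Gamma_i,\Gamma_\lambda)(\omega')$ requires concatenating two optimal paths whose endpoints on $\Gamma_i(\omega)$ may differ; the connector runs along $\Gamma_i(\omega)$, which is open in the $\omega$-weights but carries nonzero passage time in the $\omega'$-weights, so the set-to-set subadditivity fails here. (This is exactly why the exact decompositions in the proof of Lem.~\ref{lem: representation_2} are taken across $\Gamma_\lambda(\omega')$, which \emph{is} $\omega'$-open.) The correct and sufficient observation is the one-sided one: since $\mathrm{int}\,\Gamma_{i-1}(\omega)\subset\mathrm{int}\,\Gamma_i(\omega)$, every path from $\Gamma_{i-1}(\omega)$ to $\Gamma_\lambda(\omega')$ crosses $\Gamma_i(\omega)$, so $T_f(\Gamma_i,\Gamma_\lambda)(\omega')\le T_f(\Gamma_{i-1},\Gamma_\lambda)(\omega')$, the bracketed difference is nonpositive, and its absolute value is at most $\mathbb{E}'T_f(\Gamma_{i-1}(\omega),\Gamma_{\lambda(i,\omega,\omega')}(\omega'))(\omega')$. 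One then controls this quantity by integrating the $\omega'$-tail using \eqref{eq: 2.28b} (the randomness of $\lambda$ in $\omega'$, which your version avoids only because of the unjustified bound) together with Lem.~\ref{lem: plane_to_plane_estimate}, arriving at the deterministic-in-$\omega$ bound $\mathsf{C}\,(m(i,\omega)-i+1)$ of \eqref{eq: my_favorite_mean_bound}; from there your splitting at $t=\lfloor\sqrt{x}\rfloor$ finishes the proof exactly as in the paper.
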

\begin{proof}
From Lem.~\ref{lem: representation_2}, we have for $i_0 \geq 0$ and $i = 1, \dots, i_0$,
\begin{align*}
|\Delta_{i,i_0}(\omega)| &\leq T_f(\Gamma_{i-1}(\omega),\Gamma_i(\omega))(\omega) + \mathbf{1}_{E_{i-1,i_0} \cap E_{i,i_0}} \mathbb{E}'T_f(\Gamma_{i-1}(\omega),\Gamma_{\lambda(i,\omega,\omega')}(\omega'))(\omega') \\
&+ \mathbf{1}_{E_{i-1,i_0} \setminus E_{i,i_0}} \mathbb{E}'T_f(\Gamma_{i-1}(\omega),\Gamma_{i_0}(\omega'))(\omega').
\end{align*}
For $\omega \in E_{i-1,i_0} \setminus E_{i,i_0}$, we have $m(i-1,\omega) < i_0 \leq m(i,\omega)$, and so $m(i_0,\omega') \leq m(m(i,\omega)+1,\omega') = \lambda(i,\omega,\omega')$. This implies that $\text{int } \Gamma_{i-1}(\omega) \subset \text{int } \Gamma_{i_0}(\omega') \subset \text{int } \Gamma_{\lambda(i,\omega,\omega')}(\omega')$ and so $T_f(\Gamma_{i-1}(\omega), \Gamma_{i_0}(\omega'))(\omega') \leq T_f(\Gamma_{i-1}(\omega),\Gamma_{\lambda(i,\omega,\omega')}(\omega'))(\omega')$. Placing this in the display gives
\begin{align}
|\Delta_{i,i_0}(\omega)| &\leq T_f(\Gamma_{i-1}(\omega),\Gamma_i(\omega))(\omega) + \mathbf{1}_{E_{i-1,i_0} \cap E_{i,i_0}} \mathbb{E}'T_f(\Gamma_{i-1}(\omega),\Gamma_{\lambda(i,\omega,\omega')}(\omega'))(\omega') \nonumber \\
&+ \mathbf{1}_{E_{i-1,i_0} \setminus E_{i,i_0}} \mathbb{E}'T_f(\Gamma_{i-1}(\omega),\Gamma_{\lambda(i,\omega,\omega')}(\omega'))(\omega') \nonumber \\
&\leq T_f(\Gamma_{i-1}(\omega),\Gamma_i(\omega))(\omega) +  \mathbb{E}'T_f(\Gamma_{i-1}(\omega),\Gamma_{\lambda(i,\omega,\omega')}(\omega'))(\omega'). \label{eq: Delta_inequality_1}
\end{align}

We begin with the second term $\mathbb{E}'T_f(\Gamma_{i-1}(\omega), \Gamma_{\lambda(i,\omega,\omega')}(\omega'))(\omega')$ of \eqref{eq: Delta_inequality_1} and use \eqref{eq: 2.28b} to estimate, for $i \geq i^\ast+1$, $y \geq 0$, integer $t \geq 0$, and fixed $\omega$, 
\begin{align}
\mathbb{P}'(T_f(\Gamma_{i-1}(\omega), \Gamma_{\lambda(i,\omega,\omega')}(\omega'))(\omega') \geq y) &\leq \mathbb{P}'(\lambda(i,\omega,\omega') \geq m(i,\omega) + 1 + t) \nonumber \\
&+ \mathbb{P}'(T_f(P(r_{i-1}'),P(r_{m(i,\omega)+1+t}'))(\omega') \geq y) \nonumber \\
&\leq e^{-\Cr{c: circuit_appearance_constant}t} + \mathbb{P}'(T_f(P(r_{i-1}'),P(r_{m(i,\omega)+1+t}'))(\omega') \geq y). \label{eq: vegetarian_supreme_1}
\end{align}
We now use Lem.~\ref{lem: plane_to_plane_estimate} to obtain the bound
\[
e^{-\Cr{c: circuit_appearance_constant}t} + \Cr{c: haydns_constant_1} \exp\left( - \Cr{c: haydns_constant_2} \frac{y}{m(i,\omega)-i + 2 + t}\right) + \frac{\Cr{c: haydns_constant_1}}{y^\frac{\eta}{2}}.
\]
Restricting to $y \geq 1$, set $t = \lfloor \sqrt{y} \rfloor$ to produce
\begin{align*}
&\mathbb{P}'(T_f(\Gamma_{i-1}(\omega), \Gamma_{\lambda(i,\omega,\omega')}(\omega'))(\omega') \geq y)  \\
\leq~& e^{-\Cr{c: circuit_appearance_constant}\lfloor \sqrt{y} \rfloor} + \Cr{c: haydns_constant_1} \exp\left( - \Cr{c: haydns_constant_2} \frac{y}{m(i,\omega)-i + 2 + \lfloor \sqrt{y} \rfloor }\right) + \frac{\Cr{c: haydns_constant_1}}{y^\frac{\eta}{2}}  \\
\leq~& e^{-\frac{\Cr{c: circuit_appearance_constant}}{2} \sqrt{y} } + \Cr{c: haydns_constant_1} \exp\left( - \Cr{c: haydns_constant_2} \frac{y}{m(i,\omega)-i + 2 +  \sqrt{y}  }\right) + \frac{\Cr{c: haydns_constant_1}}{y^\frac{\eta}{2}}  \\
\leq~& 2\Cr{c: haydns_constant_1} \exp\left( - \Cl[smc]{c: haydns_constant_3} \frac{y}{m(i,\omega) - i + 2 + \sqrt{y}}\right) + \frac{\Cr{c: haydns_constant_1}}{y^{\frac{\eta}{2}}} 
\end{align*}
for $\Cr{c: haydns_constant_3} = \min\{\Cr{c: circuit_appearance_constant}/2, \Cr{c: haydns_constant_2}\}$. As $y \geq 1$, the above implies
\begin{equation}\label{eq: vegetarian_supreme_2}
\mathbb{P}'(T_f(\Gamma_{i-1}(\omega), \Gamma_{\lambda(i,\omega,\omega')}(\omega'))(\omega') \geq y) \leq 2\Cr{c: haydns_constant_1} \exp\left( - \frac{\Cr{c: haydns_constant_3}}{3} \cdot  \frac{y}{m(i,\omega) - i  + \sqrt{y}}\right) + \frac{\Cr{c: haydns_constant_1}}{y^{\frac{\eta}{2}}}.
\end{equation}

By integrating this inequality, we obtain $\Cl[lgc]{c: haydns_constant_4}$ depending only on $\Cr{c: r_i_assumption_1}$ and $\Cr{c: r_i_assumption_2}$ such that for all $\omega$ and $i \geq i^\ast+1$, we have
\begin{equation}\label{eq: my_favorite_mean_bound}
\mathbb{E}'T_f(\Gamma_{i-1}(\omega), \Gamma_{\lambda(i,\omega,\omega')}(\omega'))(\omega') \leq \Cr{c: haydns_constant_4} (m(i,\omega) - i + 1).
\end{equation}
So from \eqref{eq: Delta_inequality_1}, we find for $i_0 \geq i^\ast+1$, $i=i^\ast+1, \dots, i_0$, $x \geq 0$, and integers $t \in [0,\lfloor x/(2\Cr{c: haydns_constant_4})\rfloor]$ that
\begin{align}
\mathbb{P}(|\Delta_{i,i_0}(\omega)| \geq x) &\leq \mathbb{P}(T_f(\Gamma_{i-1}(\omega), \Gamma_i(\omega))(\omega) + \Cr{c: haydns_constant_4} (m(i,\omega)-i+1) \geq x) \nonumber \\
&\leq \mathbb{P}\left(T_f(\Gamma_{i-1}(\omega),\Gamma_i(\omega))(\omega) + \Cr{c: haydns_constant_4} t \geq x, m(i,\omega) < i+t\right) \nonumber \\
&+ \mathbb{P}(m(i,\omega) \geq i+t) \nonumber \\
&\leq \mathbb{P}\left(T_f(P(r_{i-1}'),P(r_{i+t}'))(\omega) \geq \frac{x}{2} \right) + e^{-\Cr{c: circuit_appearance_constant}t}, \label{eq: vegetarian_supreme_3}
\end{align}
where the last line used \eqref{eq: 2.28}. By applying Lem.~\ref{lem: plane_to_plane_estimate}, we get
\[
\mathbb{P}(|\Delta_{i,i_0}(\omega)| \geq x) \leq \Cr{c: haydns_constant_1} \exp\left( - \Cr{c: haydns_constant_2} \frac{x}{2(1+t)}\right) + \frac{2^{\frac{\eta}{2}} \Cr{c: haydns_constant_1}}{x^{\frac{\eta}{2}}} + e^{-\Cr{c: circuit_appearance_constant}t}.
\]
Setting $t = \lfloor \sqrt{x} \rfloor$ completes the proof of  Prop.~\ref{prop: Delta_concentration_prop}.
\end{proof}

\begin{rem}
In addition to \eqref{eq: Delta_inequality_1} we have 
\begin{equation}\label{eq: Delta_inequality_2}
|\Delta_{0,i_0}(\omega)| \leq T_f(0,\Gamma_0(\omega))(\omega) + \mathbb{E}'T_f(0,\Gamma_{\lambda(0,\omega,\omega')}(\omega'))(\omega'),
\end{equation}
which is proved in the same way. We will use the following consequence of these inequalities later: we have
\begin{equation}\label{eq: my_favorite_bandaid}
\sup_{i_0 \geq j_0} \mathbb{E}\Delta_{i,i_0}^2 < \infty \text{ for any } j_0 \geq 1 \text{ and } i=0, \dots, j_0.
\end{equation}
This holds because if $j_0 \geq 1$ and $i = 0, \dots, j_0$, we can use \eqref{eq: Delta_inequality_1} and \eqref{eq: Delta_inequality_2} to deduce that
\begin{align*}
\mathbb{E} \Delta_{i,i_0}^2 &\leq \mathbb{E} \left( T_f(0,\Gamma_{j_0}(\omega))(\omega) + \mathbb{E}'T_f(0,\Gamma_{\lambda(j_0,\omega,\omega')}(\omega'))(\omega') \right)^2 \\
&\leq 2 \mathbb{E}T_f(0,\Gamma_{j_0}(\omega))(\omega)^2 + 2\mathbb{E}\mathbb{E}'T_f(0,\Gamma_{\lambda(j_0,\omega,\omega')}(\omega'))(\omega')^2 
\end{align*}
The first term is finite by \eqref{eq: finite_mean}. We rewrite the second term and apply H\"older's inequality with exponents $(\eta+4)/4$ and $(\eta+4)/\eta$ to obtain
\begin{align*}
&2\sum_{\ell = j_0+1}^\infty \mathbb{E}\mathbb{E}' T_f(0,\Gamma_\ell(\omega'))(\omega')^2\mathbf{1}_{\{\lambda(j_0,\omega,\omega') = \ell\}} \\
\leq~& 2\sum_{\ell=j_0+1}^\infty \left( \mathbb{E}\mathbb{E}' T_f(0,\Gamma_\ell(\omega'))(\omega')^\frac{\eta+4}{2} \right)^{\frac{4}{\eta+4}} \left( \mathbb{E}\mathbb{E}' \mathbf{1}_{\{\lambda(j_0,\omega,\omega')=\ell\}}\right)^{\frac{\eta}{\eta+4}}.
\end{align*}
By integrating \eqref{eq: carpool_constant}, we find that for any $s < \eta/2$, there exists $\Cl[lgc]{c: carpool_constant_2} = \Cr{c: carpool_constant_2}(s)$ such that $\mathbb{E}T_f(0,\Gamma_\ell)^s \leq \Cr{c: carpool_constant_2}(1+\ell)^s$ for all $\ell \geq 0$. Applying this with $s = (\eta+4)/2$ provides a constant $\Cl[lgc]{c: carpool_constant_3}$ such that 
\begin{equation}\label{eq: carpool_sum}
\mathbb{E}\mathbb{E}' T_f(0,\Gamma_{\lambda(j_0,\omega,\omega')}(\omega'))(\omega')^2 \leq \Cr{c: carpool_constant_3} \sum_{\ell=j_0+1} (1+\ell)^2 \left( \mathbb{E}\mathbb{E}' \mathbf{1}_{\{\lambda(j_0,\omega,\omega')=\ell\}}\right)^{\frac{\eta}{\eta+4}}.
\end{equation}
Because of \eqref{eq: 2.28} and \eqref{eq: 2.28b}, the quantity
\[
\mathbb{E}\mathbb{E}' \mathbf{1}_{\{\lambda(j_0,\omega,\omega')=\ell\}} \leq \mathbb{E}\mathbb{E}' \left[ \mathbf{1}_{\left\{m(j_0,\omega) \geq \frac{\ell}{2}\right\}} + \mathbf{1}_{\left\{ \lambda(j_0,\omega,\omega') \geq m(j_0,\omega) + \frac{\ell}{2}\right\}}\right]
\]
decays exponentially in $\ell$, so the sum on the right of \eqref{eq: carpool_sum} is finite, and this gives \eqref{eq: my_favorite_bandaid}.
\end{rem}

We finish this section with an estimate on the second moment of $\Delta_{i,i_0}$ that uses Prop.~\ref{prop: Delta_concentration_prop} and Assumption A3. 
Taking $\Cr{c: haydns_constant_4}$ from \eqref{eq: my_favorite_mean_bound}, we may choose a positive integer $i^\ast$ such that not only do \eqref{eq: new_A1} and \eqref{eq: new_A2_pro} hold, but also (using A3 with $M=2\Cr{c: haydns_constant_4}/\delta$) we have 
\begin{equation}\label{eq: new_A3}
\mathbb{P}\left( T_f^B(P(r_i),P(r_{i+1})) \geq \frac{2\Cr{c: haydns_constant_4}}{\delta} \right) \geq \frac{\Cr{c: r_i_assumption_3}}{2} \text{ for } i \geq i^\ast.
\end{equation}

\begin{prop}\label{prop: second_moment_prop}
There exist $\Cl[lgc]{c: Delta_second_moment_upper}$ depending only on $\Cr{c: r_i_assumption_1}$ and $\Cr{c: r_i_assumption_2}$, and $\Cl[smc]{c: Delta_second_moment_lower} > 0$ depending only on $\Cr{c: r_i_assumption_1}$ and $\Cr{c: r_i_assumption_3}$ such that
\begin{enumerate}
\item  for all $i_0 \geq i^\ast+1$ and $i= i^\ast+1, \dots, i_0$, we have $\mathbb{E}\Delta_{i,i_0}^2(\omega) \leq \Cr{c: Delta_second_moment_upper}$, and
\item for all $i_0 \geq i^\ast+1$ and $i = i^\ast+1, \dots, i_0$, we have $\mathbb{E}\Delta_{i,i_0}^2(\omega) \geq \Cr{c: Delta_second_moment_lower}$.
\end{enumerate}
\end{prop}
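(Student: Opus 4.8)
The plan is to prove the two bounds separately, reusing the tail estimate from Prop.~\ref{prop: Delta_concentration_prop} for the upper bound and constructing a favorable event for the lower bound.

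\emph{Upper bound.} Part (1) is immediate from Prop.~\ref{prop: Delta_concentration_prop}: since $\mathbb{P}(|\Delta_{i,i_0}(\omega)| \geq x) \leq \Cr{c: Delta_concentration_constant}/x^{\eta/2}$ for all $x \geq 0$ and all $i_0 \geq i^\ast+1$, $i = i^\ast+1, \dots, i_0$, and since $\eta > 4$ so that $\eta/2 > 2$, we may write
\[
\mathbb{E}\Delta_{i,i_0}^2(\omega) = \int_0^\infty 2x\, \mathbb{P}(|\Delta_{i,i_0}(\omega)| \geq x)\,\mathrm{d}x \leq 2 + 2\Cr{c: Delta_concentration_constant}\int_1^\infty x^{1 - \frac{\eta}{2}}\,\mathrm{d}x,
\]
which is finite and bounded by a constant $\Cr{c: Delta_second_moment_upper}$ depending only on $\Cr{c: r_i_assumption_1}, \Cr{c: r_i_assumption_2}$ (through $\Cr{c: Delta_concentration_constant}$) and $\eta$.

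\emph{Lower bound.} For part (2), the idea is that $\Delta_{i,i_0}$ contains the term $T_f(\Gamma_{i-1}(\omega),\Gamma_i(\omega))(\omega)$ from Lem.~\ref{lem: representation_1}/\ref{lem: representation_2}, and on a suitable event this term is large and the remaining (conditional-expectation) terms in the representation are comparatively small, so that $\Delta_{i,i_0}$ is bounded away from zero in absolute value on an event of positive probability. Concretely, I would work on the event that $R_i'$ (equivalently $R_{2i}$) has a top-down open crossing --- so that $m(i,\omega) = m(i-1,\omega) = i$ and $\Gamma_{i-1}(\omega) = \Gamma_i(\omega)$ lies in $R_{2i}$, giving $T_f(\Gamma_{i-1}(\omega),\Gamma_i(\omega))(\omega) = 0$ --- intersected with the event that the primal passage time through the adjacent strip is large. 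Here is where A3, in the strengthened form \eqref{eq: new_A3}, enters: it guarantees $\mathbb{P}(T_f^B(P(r_i),P(r_{i+1})) \geq 2\Cr{c: haydns_constant_4}/\delta) \geq \Cr{c: r_i_assumption_3}/2$ for $i \geq i^\ast$, and combined with the coupling $\tau_e \geq \delta t_e$ this forces $T_f(P(r_i),P(r_{i+1})) \geq 2\Cr{c: haydns_constant_4}$ on that event. One then compares this against the conditional-expectation terms: using \eqref{eq: my_favorite_mean_bound}, the quantity $\mathbb{E}'T_f(\Gamma_{i-1}(\omega),\Gamma_{\lambda(i,\omega,\omega')}(\omega'))(\omega')$ is bounded by $\Cr{c: haydns_constant_4}(m(i,\omega)-i+1)$, which on the crossing event equals $\Cr{c: haydns_constant_4}$. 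So on the intersection event the ``large'' contribution of size $\geq 2\Cr{c: haydns_constant_4}$ dominates the correction of size $\leq \Cr{c: haydns_constant_4}$, and hence $|\Delta_{i,i_0}(\omega)| \geq \Cr{c: haydns_constant_4}$ there (one has to be slightly careful about which strip, $R_{2i-1}$ between $R_{i-1}'$ and $R_i'$ or $R_{2i}$ itself, carries the large passage time, and to restrict $\lambda$ to be close to $i$ using \eqref{eq: 2.28b} on an auxiliary event of probability $\geq 1/2$, say). Finally, A1 (in the form \eqref{eq: new_A1}) gives that the crossing event has probability $\geq \Cr{c: r_i_assumption_1}/2$; by independence of the relevant edge sets, the intersection event has probability at least a constant multiple of $\Cr{c: r_i_assumption_1}\Cr{c: r_i_assumption_3}$, and therefore $\mathbb{E}\Delta_{i,i_0}^2(\omega) \geq \Cr{c: Delta_second_moment_lower}$ for a suitable $\Cr{c: Delta_second_moment_lower} > 0$ depending only on $\Cr{c: r_i_assumption_1}$ and $\Cr{c: r_i_assumption_3}$.

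\emph{Main obstacle.} The delicate point is the lower bound: one must pin down \emph{geometrically} which region carries the large passage time and verify that on the good event all the error terms coming from the $\mathbb{E}'[\cdots]$ pieces in Lem.~\ref{lem: representation_2} are genuinely smaller than the main term, uniformly in $i_0$. This requires care because $\lambda(i,\omega,\omega')$ can in principle be large, making $\mathbb{E}'T_f(\Gamma_{i-1}(\omega),\Gamma_{\lambda(i,\omega,\omega')}(\omega'))$ large; the fix is to note, via \eqref{eq: my_favorite_mean_bound}, that its expectation is controlled by $m(i,\omega)-i+1$, which is $1$ on our event. One also needs the event defining ``large passage time'' to be measurable with respect to edges disjoint from those determining $\Gamma_{i-1}$, so that independence can be invoked --- this is arranged exactly as in the independence claim of Lem.~\ref{lem: representation_1}, since the strip between consecutive $R'$-blocks (the odd-indexed $R_{2i-1}$) is disjoint from the blocks themselves. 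I expect the bookkeeping here to be the bulk of the work, while the probabilistic inputs (A1, A3, the coupling, \eqref{eq: my_favorite_mean_bound}) are all already in hand.
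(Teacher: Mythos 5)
Your upper bound is exactly the paper's argument (integrate the tail bound of Prop.~\ref{prop: Delta_concentration_prop}, using $\eta/2>2$), so part (1) is fine.

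The lower bound contains a genuine error in the choice of the good event. You propose to work on the event that $R_i'=R_{2i}$ has a top-down open crossing, so that $m(i-1,\omega)=m(i,\omega)=i$ and $\Gamma_{i-1}(\omega)=\Gamma_i(\omega)$. But on that event $\Delta_{i,i_0}(\omega)$ is identically zero: in the representation of Lem.~\ref{lem: representation_2}, the first term $T_f(\Gamma_{i-1}(\omega),\Gamma_i(\omega))(\omega)$ vanishes, the bracketed difference $\mathbb{E}'T_f(\Gamma_i(\omega),\Gamma_{\lambda(i,\omega,\omega')}(\omega'))(\omega')-\mathbb{E}'T_f(\Gamma_{i-1}(\omega),\Gamma_{\lambda(i,\omega,\omega')}(\omega'))(\omega')$ vanishes because the two arguments are equal, and $E_{i-1,i_0}=E_{i,i_0}$ (since $m(i-1,\omega)=m(i,\omega)$) kills the last term. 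A large passage time ``through the adjacent strip'' never enters the formula when $\Gamma_{i-1}=\Gamma_i$, so there is no main term to dominate the corrections. (Also note that a crossing of $R_{2i}$ alone does not force $m(i-1,\omega)=i$; that additionally requires $R_{2i-2}$ to have no crossing.)

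The fix — which is what the paper does — is to require the opposite configuration: top-down open crossings of \emph{both} $R_{i-1}'=R_{2i-2}$ and $R_i'=R_{2i}$, so that $m(i-1,\omega)=i-1$, $m(i,\omega)=i$, and $\Gamma_{i-1}(\omega)$, $\Gamma_i(\omega)$ are distinct crossings separated by the intermediate strip. Then every path from $\Gamma_{i-1}(\omega)$ to $\Gamma_i(\omega)$ must cross from $P(r_{2i-1})$ to $P(r_{2i})$, so $T_f(\Gamma_{i-1}(\omega),\Gamma_i(\omega))(\omega)\geq \delta\, T_f^B(P(r_{2i-1}),P(r_{2i}))$ by the coupling; intersecting with the A3-event $\{T_f^B(P(r_{2i-1}),P(r_{2i}))\geq 2\Cr{c: haydns_constant_4}/\delta\}$ from \eqref{eq: new_A3} makes the main term $\geq 2\Cr{c: haydns_constant_4}$, while the one-sided bound $\Delta_{i,i_0}(\omega)\geq T_f(\Gamma_{i-1}(\omega),\Gamma_i(\omega))(\omega)-\Cr{c: haydns_constant_4}(m(i,\omega)-i+1)$ (derived as in \eqref{eq: Delta_inequality_1} together with \eqref{eq: my_favorite_mean_bound}) gives $\Delta_{i,i_0}(\omega)\geq\Cr{c: haydns_constant_4}$ there. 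The three events live on the disjoint edge sets of $R_{2i-2}$, the strip between $P(r_{2i-1})$ and $P(r_{2i})$, and $R_{2i}$, so by independence and \eqref{eq: new_A1} the good event has probability at least $(\Cr{c: r_i_assumption_1}^2/4)\cdot(\Cr{c: r_i_assumption_3}/2)$, yielding the claimed $\Cr{c: Delta_second_moment_lower}$. All the ingredients you list (A1, A3, the coupling, \eqref{eq: my_favorite_mean_bound}) are the right ones; only the geometric configuration needs to be inverted.
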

\begin{proof}
Item 1 follows directly from integrating the bound in Prop.~\ref{prop: Delta_concentration_prop}. For item 2, we use Assumption A3 in the form of \eqref{eq: new_A3}. From Lem.~\ref{lem: representation_2}, we have for $i_0 \geq 0$ and $i=1, \dots, i_0$
\begin{align*}
\Delta_{i,i_0}(\omega) &\geq T_f(\Gamma_{i-1}(\omega),\Gamma_i(\omega))(\omega) - \mathbf{1}_{E_{i-1,i_0} \cap E_{i,i_0}} \mathbb{E}'T_f(\Gamma_{i-1}(\omega),\Gamma_{\lambda(i,\omega,\omega')}(\omega'))(\omega') \\
&- \mathbf{1}_{E_{i-1,i_0} \setminus  E_{i,i_0}} \mathbb{E}'T_f(\Gamma_{i-1}(\omega),\Gamma_{i_0}(\omega'))(\omega').
\end{align*}
The same argument as for \eqref{eq: Delta_inequality_1} produces 
\[
\Delta_{i,i_0}(\omega) \geq T_f(\Gamma_{i-1}(\omega),\Gamma_i(\omega))(\omega) -  \mathbb{E}'T_f(\Gamma_{i-1}(\omega),\Gamma_{\lambda(i,\omega,\omega')}(\omega'))(\omega'),
\]
and by using \eqref{eq: my_favorite_mean_bound}, we find 
\[
\Delta_{i,i_0}(\omega) \geq T_f(\Gamma_{i-1}(\omega),\Gamma_i(\omega))(\omega) - \Cr{c: haydns_constant_4} (m(i,\omega) - i + 1)
\]
for $i_0 \geq i^\ast+1$ and $i = i^\ast+1, \dots, i_0$. 

To estimate $\Delta_{i,i_0}(\omega)$ from below, we note that if $m(i,\omega) = i$ and $T_f(\Gamma_{i-1}(\omega),\Gamma_i(\omega))(\omega) \geq 2 \Cr{c: haydns_constant_4}$, then we have $\Delta_{i,i_0}(\omega) \geq \Cr{c: haydns_constant_4}$. If, in addition, $m(i-1,\omega) = i-1$, then any path in $\mathbb{G}_f$ from $\Gamma_{i-1}(\omega)$ to $\Gamma_i(\omega)$ must contain a subpath that connects $P(r_{2i-1})$ to $P(r_{2i})$. Therefore using independence and \eqref{eq: new_A1}, we find
\begin{align*}
\mathbb{P}(\Delta_{i,i_0}(\omega) \geq \Cr{c: haydns_constant_4}) &\geq \mathbb{P}(m(i-1,\omega) = i-1, m(i,\omega) = i, T_f(P(r_{2i-1}),P(r_{2i})) \geq 2\Cr{c: haydns_constant_4}) \\
&= \mathbb{P}(m(i-1,\omega)=i-1)\mathbb{P}(m(i,\omega) = i) \mathbb{P}(T_f(P(r_{2i-1}),P(r_{2i})) \geq 2\Cr{c: haydns_constant_4}) \\
&\geq \frac{\Cr{c: r_i_assumption_1}^2}{4} \mathbb{P}(T_f(P(r_{2i-1}),P(r_{2i})) \geq 2\Cr{c: haydns_constant_4}).
\end{align*}
Here we have used that for $j \geq 0$, $m(j,\omega) = j$ exactly when $R_{2j}$ has a top-down open crossing. If $\gamma$ is an optimal path for $T_f(P(r_{2i-1}),P(r_{2i}))$, then we have $T_f(\gamma) = \sum_{e \in \gamma} \tau_e \geq \delta \sum_{e \in \gamma} t_e \geq \delta T_f^B(P(r_{2i-1}),P(r_{2i}))$, and we conclude that
\[
\mathbb{P}(\Delta_{i,i_0}(\omega) \geq \Cr{c: haydns_constant_4}) \geq \frac{\Cr{c: r_i_assumption_1}^2}{4} \mathbb{P}\left( T_f^B(P(r_{2i-1}),P(r_{2i})) \geq \frac{2\Cr{c: haydns_constant_4}}{\delta} \right).
\]
Along with \eqref{eq: new_A3}, this implies the lower bound in the proposition.
\end{proof}

\subsection{Preliminary CLT}
We can now follow \cite[Lem.~4]{KZ97} to prove a CLT for $T_f(0,\Gamma_{i_0}(\omega))(\omega)$. We will no longer include $\omega$ in all the notation, since we do not need the second probability space $(\Omega', \Sigma', \mathbb{P}')$.
\begin{prop}\label{prop: preliminary_CLT}
As $i_0 \to \infty$,
\[
\frac{T_f(0,\Gamma_{i_0}) - \mathbb{E}T_f(0,\Gamma_{i_0})}{\sqrt{\mathrm{Var}~T_f(0,\Gamma_{i_0})}} \Rightarrow N(0,1).
\]
\end{prop}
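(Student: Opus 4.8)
The plan is to establish the CLT via the martingale central limit theorem applied to the decomposition $T_f(0,\Gamma_{i_0}) - \mathbb{E}T_f(0,\Gamma_{i_0}) = \sum_{i=0}^{i_0}\Delta_{i,i_0}$, since $(\Delta_{i,i_0})_{0 \le i \le i_0}$ is a martingale difference sequence with respect to the filtration $(\Sigma_i)$. Following \cite[Lem.~4]{KZ97}, the natural tool is a triangular-array martingale CLT (e.g.\ the Lindeberg--Feller version for martingales), which requires two things: (i) a conditional-variance (or normalized-variance) condition showing that $\sum_i \mathbb{E}[\Delta_{i,i_0}^2 \mid \Sigma_{i-1}] / \mathrm{Var}~T_f(0,\Gamma_{i_0})$ converges to $1$ in probability, and (ii) a Lindeberg-type negligibility condition, $\sum_i \mathbb{E}[\Delta_{i,i_0}^2 \mathbf{1}_{\{|\Delta_{i,i_0}| > \varepsilon \sqrt{\mathrm{Var}~T_f(0,\Gamma_{i_0})}\}}] / \mathrm{Var}~T_f(0,\Gamma_{i_0}) \to 0$.

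The key ingredients are already in place from the preceding subsections. First, $\mathrm{Var}~T_f(0,\Gamma_{i_0}) = \sum_{i=0}^{i_0}\mathbb{E}\Delta_{i,i_0}^2$, and by Prop.~\ref{prop: second_moment_prop} the summands with $i \ge i^\ast+1$ are bounded above by $\Cr{c: Delta_second_moment_upper}$ and below by $\Cr{c: Delta_second_moment_lower}>0$; combined with \eqref{eq: my_favorite_bandaid} controlling the finitely many small-index terms, this gives $\mathrm{Var}~T_f(0,\Gamma_{i_0}) \asymp i_0$, so in particular the variance diverges and each individual term is $O(1/i_0)$ relative to it. The Lindeberg condition (ii) then follows from the uniform tail bound $\mathbb{P}(|\Delta_{i,i_0}| \ge x) \le \Cr{c: Delta_concentration_constant} x^{-\eta/2}$ of Prop.~\ref{prop: Delta_concentration_prop} with $\eta>4$: since $\sqrt{\mathrm{Var}~T_f(0,\Gamma_{i_0})} \to \infty$, each truncated second moment $\mathbb{E}[\Delta_{i,i_0}^2 \mathbf{1}_{\{|\Delta_{i,i_0}| > \varepsilon\sqrt{\mathrm{Var}}\}}]$ tends to $0$ uniformly in $i$ at a rate that beats the $i_0$-fold sum (using $\int_{A}^\infty x\cdot x^{-\eta/2}\,dx = O(A^{2-\eta/2})$ with $2 - \eta/2 < 0$).

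The main obstacle is verifying the conditional-variance condition (i), because $\mathbb{E}[\Delta_{i,i_0}^2 \mid \Sigma_{i-1}]$ is not deterministic and the $\Delta_{i,i_0}$ are not independent. The strategy, as in \cite{KZ97}, is to exploit the approximate independence from Lem.~\ref{lem: representation_1}: conditionally on $\{m(i-1,\omega) \le g(i-1)\}$ for a slowly growing $g$, the variables $\Delta_{i_k,i_0}\mathbf{1}_{\{m(i_k)\le g(i_k)\}}$ over suitably spaced indices are genuinely independent, and by \eqref{eq: 2.28} the ``bad'' event $\{m(i) > g(i)\}$ has exponentially small probability. One would truncate $\Delta_{i,i_0}$ to $\widetilde\Delta_{i,i_0} := \Delta_{i,i_0}\mathbf{1}_{\{m(i)\le g(i)\}}$, show via Prop.~\ref{prop: Delta_concentration_prop} and \eqref{eq: 2.28} that replacing $\Delta$ by $\widetilde\Delta$ changes the normalized sum negligibly in $L^2$, then group the $\widetilde\Delta_{i,i_0}$ into blocks that are independent within each block (spacing $\ge g(\cdot)+2$), apply an i.i.d./independent-array CLT to the block sums, and control the between-block correlations. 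Alternatively one checks $\sum_i \big(\mathbb{E}[\widetilde\Delta_{i,i_0}^2\mid\Sigma_{i-1}] - \mathbb{E}\widetilde\Delta_{i,i_0}^2\big) \to 0$ in $L^1$ using the localized dependence structure. Carrying out this block decomposition carefully — choosing $g$, bounding the truncation error, and handling the boundary blocks near $i=0$ (where \eqref{eq: my_favorite_bandaid} rather than Prop.~\ref{prop: second_moment_prop} is used) — is the technical heart of the argument, but it is a direct adaptation of \cite[Lem.~4]{KZ97} with their annulus circuits replaced by our top-down crossings of the $R_i'$.
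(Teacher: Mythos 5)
This is essentially the paper's argument: the proof applies McLeish's martingale CLT to the array $X_{i,i_0}=\Delta_{i,i_0}/\sqrt{\sum_i\mathbb{E}\Delta_{i,i_0}^2}$, verifies the negligibility conditions exactly as you describe from Prop.~\ref{prop: Delta_concentration_prop}, Prop.~\ref{prop: second_moment_prop}, and \eqref{eq: my_favorite_bandaid} (using $\eta>4$), and handles the variance condition by truncating on $\{m(i)\leq i+(3/\Cr{c: circuit_appearance_constant})\log i_0\}$ and invoking the finite-range independence of Lem.~\ref{lem: representation_1} together with Chebyshev, just as in your sketch. The one implementation detail your plan omits is that the paper also truncates at $|\Delta_{i,i_0}|\leq i_0^{2/\eta}\log i_0$, which is needed because the Chebyshev/variance step requires fourth moments of the truncated increments and $\mathbb{E}\Delta_{i,i_0}^4$ need not be finite when $\eta\leq 8$.
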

\begin{proof}
As in \cite{KZ97}, we directly apply McLeish's martingale CLT from \cite[Thm.~2.3]{M74}. Defining
\[
X_{i,i_0} = \frac{\Delta_{i,i_0}}{\sqrt{\sum_{i=0}^{i_0}\mathbb{E} \Delta_{i,i_0}^2}},
\]
McLeish's theorem states that it suffices to verify the following conditions:
\begin{enumerate}
\item[C1.] $\sup_{i_0 \geq 0} \mathbb{E}\left( \max_{i=0, \dots, i_0} X_{i,i_0} \right)^2 < \infty$,
\item[C2.] $\max_{i=0, \dots, i_0} |X_{i,i_0}| \to 0$ in probability as $i_0 \to \infty$, and
\item[C3.] $\sum_{i=0}^{i_0} X_{i,i_0}^2 \to 1$ in probability as $i_0 \to \infty$.
\end{enumerate}

To verify C2, we apply Prop.~\ref{prop: Delta_concentration_prop} and item 2 of Prop.~\ref{prop: second_moment_prop}, along with a union bound, to find for $\epsilon>0$
\begin{equation}\label{eq: gala_apple_a}
\mathbb{P}\left( \max_{i=i^\ast+1, \dots, i_0} |X_{i,i_0}| \geq \epsilon \right) \leq \mathbb{P}\left( \max_{i=i^\ast+1, \dots, i_0} |\Delta_{i,i_0}| \geq \epsilon \sqrt{(i_0-i^\ast)\Cr{c: Delta_second_moment_lower}} \right) \leq \frac{\Cr{c: Delta_concentration_constant} (i_0-i^\ast)}{\epsilon^{\frac{\eta}{2}} \left( ( i_0-i^\ast) \Cr{c: Delta_second_moment_lower} \right)^{\frac{\eta}{4}}}.
\end{equation}
This converges to zero as $i_0 \to \infty$. Furthermore, by \eqref{eq: my_favorite_bandaid} and item 2 of Prop.~\ref{prop: second_moment_prop}, we have
\begin{equation}\label{eq: gala_apple_b}
\mathbb{E} \left( \max_{i=0, \dots, i^\ast} X_{i,i_0}\right)^2 \leq \frac{\mathbb{E}\left( \sum_{i=0}^{i^\ast} |\Delta_{i,i_0}|\right)^2}{\sum_{i=0}^{i_0}\mathbb{E}\Delta_{i,i_0}^2} \leq (i^\ast+1) \frac{\sum_{i=0}^{i^\ast} \mathbb{E}\Delta_{i,i_0}^2}{\sum_{i=0}^{i_0}\mathbb{E}\Delta_{i,i_0}^2} \to 0 \text{ as } i_0 \to \infty.
\end{equation}
Displays \eqref{eq: gala_apple_a} and \eqref{eq: gala_apple_b} imply that C2 holds. If $i_0 \geq i^\ast+1$, then \eqref{eq: gala_apple_a} implies that $\mathbb{P}(\max_{i=i^\ast+1, \dots, i_0}|X_{i,i_0}| \geq \epsilon) \leq \Cl[lgc]{c: suitcase_constant}/\epsilon^{\eta/2}$ for $\Cr{c: suitcase_constant} = \Cr{c: Delta_concentration_constant}/\Cr{c: Delta_second_moment_lower}^{\eta/4}$. Because $\eta > 4$, this and \eqref{eq: gala_apple_b} are enough to conclude C1.

For C3, in view of item 2 of Prop.~\ref{prop: second_moment_prop}, we may prove that
\[
\frac{1}{i_0} \left[ \sum_{i=0}^{i_0} \Delta_{i,i_0}^2 - \sum_{i=0}^{i_0} \mathbb{E}\Delta_{i,i_0}^2 \right] \to 0 \text{ in probability as }i_0 \to \infty,
\]
and by \eqref{eq: my_favorite_bandaid}, it suffices to show that
\begin{equation}\label{eq: to_prove_C3}
\frac{1}{i_0} \left[ \sum_{i=i^\ast+1}^{i_0} \Delta_{i,i_0}^2 - \sum_{i=i^\ast+1}^{i_0} \mathbb{E}\Delta_{i,i_0}^2 \right] \to 0 \text{ in probability as }i_0 \to \infty.
\end{equation}

This is a type of weak law of large numbers proved by a truncation argument. Defining the truncation
\[
\widetilde{\Delta}_{i,i_0} = \Delta_{i,i_0} \mathbf{1}_{\left\{m(i) \leq i + \frac{3}{\Cr{c: circuit_appearance_constant}} \log i_0 \text{ and } |\Delta_{i,i_0}| \leq i_0^{\frac{2}{\eta}} \log i_0\right\}},
\]
we see from \eqref{eq: 2.28} and Prop.~\ref{prop: Delta_concentration_prop} that
\begin{align}
&\mathbb{P}(\Delta_{i,i_0} \neq \widetilde{\Delta}_{i,i_0} \text{ for some } i = i^\ast+1, \dots, i_0) \nonumber \\
\leq~&\sum_{i=i^\ast+1}^{i_0} \left[ \mathbb{P}\left(m(i) > i + \frac{3}{\Cr{c: circuit_appearance_constant}} \log i_0\right) + \mathbb{P}\left( |\Delta_{i,i_0}| > i_0^{\frac{2}{\eta}} \log i_0\right) \right] \nonumber \\
\leq~& \sum_{i=i^\ast+1}^{i_0} \left( i_0^{-3} + \frac{\Cr{c: Delta_concentration_constant}}{i_0 \left( \log i_0\right)^{\frac{\eta}{2}}} \right) \to 0 \text{ as } i_0 \to \infty. \label{eq: go_ahead_and_substitute}
\end{align}
Furthermore, using H\"older's inequality with exponents $(4+\eta)/8$ and $(4+\eta)/(\eta-4)$, we find
\begin{align*}
&\sum_{i=i^\ast+1}^{i_0} \left( \mathbb{E}\Delta_{i,i_0}^2 - \mathbb{E} \widetilde{\Delta}_{i,i_0}^2 \right) \\
\leq~& \sum_{i=i^\ast+1}^{i_0} \mathbb{E}\Delta_{i,i_0}^2 \mathbf{1}_{\left\{m(i) > i + \frac{3}{\Cr{c: circuit_appearance_constant}} \log i_0 \text{ or } |\Delta_{i,i_0}| > i_0^{\frac{2}{\eta}} \log i_0\right\}} \\
\leq~&\sum_{i=i^\ast+1}^{i_0} \left( \mathbb{E} \Delta_{i,i_0}^{1+ \frac{\eta}{4}} \right)^{\frac{8}{4+\eta}} \left( \mathbb{P}\left(m(i) > i + \frac{3}{\Cr{c: circuit_appearance_constant}} \log i_0\right) + \mathbb{P}\left(|\Delta_{i,i_0}| > i_0^{\frac{2}{\eta}} \log i_0\right) \right)^{\frac{\eta-4}{4+\eta}}.
\end{align*}
Prop.~\ref{prop: Delta_concentration_prop} and the fact that $\eta > 4$ imply that $\mathbb{E}\Delta_{i,i_0}^{1+\eta/4}$ is bounded for $i^\ast+1 \leq i \leq i_0$, so we can apply \eqref{eq: 2.28} to find $\Cl[lgc]{c: weird_bird_constant} > 0$ such that 
\[
\frac{1}{i_0}\sum_{i=i^\ast+1}^{i_0} \left( \mathbb{E}\Delta_{i,i_0}^2 - \mathbb{E} \widetilde{\Delta}_{i,i_0}^2 \right) \leq \frac{\Cr{c: weird_bird_constant}}{i_0} \sum_{i=i^\ast+1}^{i_0}\left( i_0^{-3} + \frac{\Cr{c: Delta_concentration_constant}}{i_0 \left( \log i_0\right)^{\frac{\eta}{2}}} \right)^{\frac{\eta-4}{4+\eta}} \to 0 \text{ as } i_0 \to \infty.
\] 
This display, along with \eqref{eq: go_ahead_and_substitute}, implies that \eqref{eq: to_prove_C3} will follow as long as we show that
\begin{equation}\label{eq: final_to_prove_C3}
\frac{1}{i_0} \sum_{i=i^\ast + 1}^{i_0}\left[  \widetilde{\Delta}_{i,i_0}^2 - \mathbb{E}\widetilde{\Delta}_{i,i_0}^2 \right] \to 0 \text{ in probability as }i_0 \to \infty.
\end{equation}

To show \eqref{eq: final_to_prove_C3}, we observe that $\widetilde{\Delta}_{i,i_0}$ and $\widetilde{\Delta}_{j,i_0}$ are independent so long as $|i-j| \geq (3/\Cr{c: circuit_appearance_constant}) \log i_0 + 2$ due to the independence claim of Lem.~\ref{lem: representation_1} and the fact that 
\[
\widetilde{\Delta}_{i,i_0} = \Delta_{i,i_0} \mathbf{1}_{\left\{m(i) \leq i + \frac{3}{\Cr{c: circuit_appearance_constant}} \log i_0\right\}} \mathbf{1}_{\left\{ |\Delta_{i,i_0}|\mathbf{1}_{\left\{m(i) \leq i + \frac{3}{\Cr{c: circuit_appearance_constant}} \log i_0\right\}} \leq i_0^{\frac{2}{\eta}} \log i_0\right\}}.
\]
Therefore
\begin{equation}\label{eq: calculator_bound}
\mathrm{Var}\left( \sum_{i=i^\ast+1}^{i_0} \left( \widetilde{\Delta}_{i,i_0}^2 - \mathbb{E}\widetilde{\Delta}_{i,i_0}^2 \right) \right) \leq 2 \sum_{i=i^\ast+1}^{i_0} \sum_{j : i \leq j \leq \frac{3}{\Cr{c: circuit_appearance_constant}} \log i_0 + 2} \sqrt{\mathrm{Var}\left( \widetilde{\Delta}_{i,i_0}^2\right) \mathrm{Var}\left( \widetilde{\Delta}_{j,i_0}^2\right)}.
\end{equation}
Note that there exists $\Cl[lgc]{c: iguanas_favorite_constant} > 0$ such that for $i_0 \geq i^\ast +1$ and $i = i^\ast+1, \dots, i_0$,
\begin{align*}
\mathrm{Var}\left( \widetilde{\Delta}_{i,i_0}^2\right) \leq \mathbb{E} \widetilde{\Delta}_{i,i_0}^4(\omega) &\leq 4 \int_0^{i_0^{\frac{2}{\eta}} \log i_0} x^3 \mathbb{P}\left( |\Delta_{i,i_0}| \geq x\right)~\text{d}x \\
&\leq 4 \int_0^{i_0^{\frac{2}{\eta}} \log i_0} x^3 \min\left\{ 1,\frac{\Cr{c: Delta_concentration_constant}}{x^\frac{\eta}{2}} \right\}~\text{d}x \\
&\leq \Cr{c: iguanas_favorite_constant} \left( 1+ i_0^{\frac{8-\eta}{\eta}} \left( \log i_0 \right)^{\frac{8-\eta}{2}}\right).
\end{align*}
So \eqref{eq: calculator_bound} implies that
\begin{align*}
&\frac{1}{i_0^2} \mathrm{Var}\left( \sum_{i=i^\ast+1}^{i_0} \left( \widetilde{\Delta}_{i,i_0}^2 - \mathbb{E}\widetilde{\Delta}_{i,i_0}^2 \right) \right)  \\
\leq~& \frac{ 2\Cr{c: iguanas_favorite_constant} (i_0-i^\ast)}{i_0^2} \left( \frac{3}{\Cr{c: circuit_appearance_constant}} \log i_0 + 3\right)\left( 1+ i_0^{\frac{8-\eta}{\eta}} \left( \log i_0 \right)^{\frac{8-\eta}{2}}\right) \to 0 \text{ as } i_0 \to \infty.
\end{align*}
This is sufficient, along with Chebyshev's inequality, to conclude \eqref{eq: final_to_prove_C3}, and complete the proof of Prop.~\ref{prop: preliminary_CLT}.
\end{proof}

\subsection{Proofs of Thm.~\ref{thm: updated_KZ_variance} and Thm.~\ref{thm: updated_KZ_CLT}}\label{sec: KZ_proofs}
For $n \geq 1$ recall the definition of $\iota(n)$ in \eqref{eq: iota_def} and note that 
\[
r_{\iota(n)-1}' < n \leq r_{\iota(n)}'.
\]
We will first relate $T_f(0,P(n))$ to $T_f(0,\Gamma_{\iota(n)})$, since we already have results for the latter quantity. Specifically, we will prove that there exists $\Cl[lgc]{c: babys_first_playset_constant}$ such that for all $n$ with $\iota(n) \geq i^\ast+1$, we have
\begin{equation}\label{eq: to_prove_completing_the_proof}
\mathbb{P}(|T_f(0,P(n)) - T_f(0,\Gamma_{\iota(n)})| \geq x) \leq \frac{\Cr{c: babys_first_playset_constant}}{x^\frac{\eta}{2}} \text{ for } x \geq 0.
\end{equation}

To show \eqref{eq: to_prove_completing_the_proof}, we begin by noting that
\begin{equation}\label{eq: easy_upper_bound}
T_f(0,P(n)) \leq T_f(0,\Gamma_{\iota(n)}).
\end{equation}
Furthermore, if for some integers $k,t \geq 1$, we have $m(\iota(n)-k) < \iota(n)-1$ and $m(\iota(n)) < \iota(n) + t$, then each path from $0$ to $P(n)$ in $\mathbb{G}_f$ must intersect $\Gamma_{\iota(n)-k}$ and we also have $\text{int}~\Gamma_{\iota(n)} \subset S(r_{\iota(n)+t}')$, so
\begin{align*}
T_f(0,P(n)) \geq T_f(0,\Gamma_{\iota(n)-k}) &= T_f(0,\Gamma_{\iota(n)}) - T_f(\Gamma_{\iota(n)-k}, \Gamma_{\iota(n)}) \\
&\geq T_f(0,\Gamma_{\iota(n)}) - T_f(P(r_{\iota(n)-k}'),P(r_{\iota(n)+t}')).
\end{align*}
The last two displays imply that if there exist such $k,t$, then
\[
|T_f(0,P(n)) - T_f(0,\Gamma_{\iota(n)})| \leq T_f(P(r_{\iota(n)-k}'),P(r_{\iota(n)+t}')),
\]
so for all $x \geq 0$, $k = 1, \dots, \iota(n) - i^\ast$, $t \geq 0$, and $n$ such that $\iota(n) \geq i^\ast+1$, we have 
\begin{align*}
\mathbb{P}(|T_f(0,P(n)) - T_f(0,\Gamma_{\iota(n)})| \geq x) &\leq \mathbb{P}(m(\iota(n)-k) \geq \iota(n)-1) + \mathbb{P}(m(\iota(n)) \geq \iota(n) + t) \\
&+ \mathbb{P}(T_f(P(r_{\iota(n)-k}'),P(r_{\iota(n)+t}')) \geq x) \\
&\leq e^{-\Cr{c: circuit_appearance_constant}(k-1)} + e^{-\Cr{c: circuit_appearance_constant}t} + \Cr{c: haydns_constant_1} \exp\left( - \Cr{c: haydns_constant_2} \frac{x}{t+k}\right) + \frac{\Cr{c: haydns_constant_1}}{x^\frac{\eta}{2}}.
\end{align*}
Here we have used \eqref{eq: 2.28} and Lem.~\ref{lem: plane_to_plane_estimate}. Now set $t = k = \lfloor \sqrt{x}\rfloor$, provided that $x \in [1, (\iota(n)-i^\ast+1)^2)$ (so that the condition on $k \in \{1, \dots, \iota(n)-i^\ast\}$ holds) to obtain
\[
\mathbb{P}(|T_f(0,P(n)) - T_f(0,\Gamma_{\iota(n)})| \geq x)
\leq e^{-\Cr{c: circuit_appearance_constant}(\lfloor \sqrt{x}\rfloor -1)} + e^{-\Cr{c: circuit_appearance_constant}\lfloor \sqrt{x} \rfloor } + \Cr{c: haydns_constant_1} \exp\left( - \Cr{c: haydns_constant_2} \frac{x}{2 \lfloor \sqrt{x}\rfloor}\right) + \frac{\Cr{c: haydns_constant_1}}{x^\frac{\eta}{2}}.
\]
Therefore there exists $\Cl[lgc]{c: ravels_constant}$ such that for $n$ with $\iota(n) \geq i^\ast+1$, we have
\begin{equation}\label{eq: babys_first_playset}
\mathbb{P}(|T_f(0,P(n)) - T_f(0,\Gamma_{\iota(n)})| \geq x)
\leq \frac{\Cr{c: ravels_constant}}{x^{\frac{\eta}{2}}} \text{ for } x \in [0,(\iota(n)-i^\ast+1)^2).
\end{equation}

When $\iota(n) \geq i^\ast+1$ but $x \geq (\iota(n)-i^\ast+1)^2$, we instead only use \eqref{eq: easy_upper_bound} to estimate for integer $t \geq 1$
\begin{align*}
\mathbb{P}(|T_f(0,P(n)) - T_f(0,\Gamma_{\iota(n)})| \geq x) &\leq \mathbb{P}(T_f(0, \Gamma_{\iota(n)}) \geq x) \\
&\leq \mathbb{P}(m(\iota(n)) \geq \iota(n) + t) + \mathbb{P}(T_f(P(r_0'),P(r_{\iota(n)+t}')) \geq x) \\
&\leq e^{-\Cr{c: circuit_appearance_constant} t} + \mathbb{P}\left(T_f(P(r_{i^\ast}'),P(r_{\iota(n)+t}')) \geq \frac{x}{2}\right) + \mathbb{P}\left( \Xi \geq \frac{x}{2}\right) \\
&\leq e^{-\Cr{c: circuit_appearance_constant} t} + \Cr{c: haydns_constant_1} \exp\left( - \Cr{c: haydns_constant_2} \frac{x}{2(\iota(n)-i^\ast +t)}\right) + \frac{\Cr{c: haydns_constant_1}2^{\frac{\eta}{2}}}{x^\frac{\eta}{2}} \\
&+ \mathbb{P}\left( \Xi \geq \frac{x}{2}\right),
\end{align*}
due again to \eqref{eq: 2.28}, Lem.~\ref{lem: plane_to_plane_estimate}, and \eqref{eq: more_bandaids}. Setting $t = \lfloor \sqrt{x} \rfloor \geq \iota(n)-i^\ast+1$ gives the upper bound
\[
e^{-\Cr{c: circuit_appearance_constant} \lfloor \sqrt{x} \rfloor} + \Cr{c: haydns_constant_1} \exp\left( - \Cr{c: haydns_constant_2} \frac{x}{4\lfloor \sqrt{x} \rfloor -2} \right) + \frac{\Cr{c: haydns_constant_1}2^\frac{\eta}{2}}{x^{\frac{\eta}{2}}} + \mathbb{P}\left( \Xi \geq  \frac{x}{2}\right).
\]
Because of \eqref{eq: gap_var_condition_2}, we have $\mathbb{P}(\Xi \geq x/2) \leq \Cl[lgc]{c: bottle_top} / x^\eta$ for some $\Cr{c: bottle_top}$, so if we combine this with \eqref{eq: babys_first_playset}, we conclude \eqref{eq: to_prove_completing_the_proof}.

By integrating \eqref{eq: to_prove_completing_the_proof}, we obtain a constant $\Cl[lgc]{c: second_moment_comparison}>0$ such that for all $n$ with $\iota(n) \geq i^\ast+1$, we have
\[
\mathbb{E}(T_f(0,P(n)) - T_f(0,\Gamma_{\iota(n)}))^2 \leq \Cr{c: second_moment_comparison},
\]
and by monotonicity of norms,
\begin{equation}\label{eq: first_moment_comparison}
\mathbb{E}|T_f(0,P(n)) - T_f(0,\Gamma_{\iota(n)})| \leq \sqrt{\Cr{c: second_moment_comparison}}.
\end{equation}
Furthermore, the triangle inequality implies
\[
\left| \sqrt{\mathrm{Var}~T_f(0,P(n))} - \sqrt{\mathrm{Var}~T_f(0,\Gamma_{\iota(n)})}\right| \leq \sqrt{\mathrm{Var}~(T_f(0,P(n)) - T_f(0,\Gamma_{\iota(n)}))}  \leq \sqrt{\Cr{c: second_moment_comparison}},
\]
and so
\begin{align*}
\left| \frac{\mathrm{Var}~T_f(0,P(n))}{\mathrm{Var}~T_f(0,\Gamma_{\iota(n)})} - 1 \right| &\leq \sqrt{\Cr{c: second_moment_comparison}} \frac{ \sqrt{\mathrm{Var}~T_f(0,P(n))} + \sqrt{\mathrm{Var}~T_f(0,\Gamma_{\iota(n)})}}{\mathrm{Var}~T_f(0,\Gamma_{\iota(n)})} \\
&\leq \sqrt{\Cr{c: second_moment_comparison}} \frac{\sqrt{\Cr{c: second_moment_comparison}} + 2\sqrt{\mathrm{Var}~T_f(0,\Gamma_{\iota(n)})}}{\mathrm{Var}~T_f(0,\Gamma_{\iota(n)})}.
\end{align*}
Because $\mathrm{Var}~T_f(0,\Gamma_{\iota(n)}) = \sum_{i=0}^{i_0} \mathbb{E}\Delta_{i,\iota(n)}^2 \to \infty$ as $n \to \infty$ by item 2 of Prop.~\ref{prop: second_moment_prop}, we conclude that
\begin{equation}\label{eq: variance_comparison}
\frac{\mathrm{Var}~T_f(0,P(n))}{\mathrm{Var}~T_f(0,\Gamma_{\iota(n)})} \to 1 \text{ as } n \to \infty.
\end{equation}

Now we can complete the proofs of Thm.~\ref{thm: updated_KZ_variance} and Thm.~\ref{thm: updated_KZ_CLT}. For the first, we use \eqref{eq: my_favorite_bandaid}, item 1 of Prop.~\ref{prop: second_moment_prop}, and \eqref{eq: variance_comparison} to obtain
\begin{align}
\mathrm{Var}~T_f(0,P(n)) \lesssim \mathrm{Var}~T_f(0,\Gamma_{\iota(n)}) &= \sum_{i=0}^{i^\ast} \mathbb{E}\Delta_{i,\iota(n)}^2 + \sum_{i=i^\ast+1}^{\iota(n)} \mathbb{E}\Delta_{i,\iota(n)}^2 \nonumber \\
&\leq \sum_{i=0}^{i^\ast} \mathbb{E}\Delta_{i,\iota(n)}^2 + \Cr{c: Delta_second_moment_upper}(\iota(n)-i^\ast) \lesssim \Cr{c: Delta_second_moment_upper} \iota(n). \label{eq: variance_upper_bound_near_end}
\end{align}
Similarly, we use item 2 of Prop.~\ref{prop: second_moment_prop} and \eqref{eq: variance_comparison} to obtain
\begin{equation}\label{eq: lower_bound_variance}
\mathrm{Var}~T_f(0,P(n)) \gtrsim \mathrm{Var}~T_f(0,\Gamma_{\iota(n)}) = \sum_{i=0}^{\iota(n)} \mathbb{E}\Delta_{i,\iota(n)}^2 \geq \Cr{c: Delta_second_moment_lower} (\iota(n) - i^\ast) \gtrsim \Cr{c: Delta_second_moment_lower} \iota(n).
\end{equation}
This shows the first part of Thm.~\ref{thm: updated_KZ_variance}. 

For the second part, we observe that because $\iota(n) = \iota(r'_{\iota(n)})$, we have from \eqref{eq: first_moment_comparison} for all $n$ such that $\iota(n) \geq i^\ast+1$ that
\begin{align}
\mathbb{E}|T_f(0,P(r'_{\iota(n)})) - T_f(0,P(n))| &\leq \mathbb{E}|T_f(0,P(r'_{\iota(n)})) - T_f(0,\Gamma_{\iota(n)})| \nonumber \\
&+ \mathbb{E}|T_f(0,P(n)) - T_f(0,\Gamma_{\iota(n)})| \nonumber \\
&\leq 2 \sqrt{\Cr{c: second_moment_comparison}}. \label{eq: new_first_moment_comparison}
\end{align}

By assumption A2 and both \eqref{eq: comparison_claim} and \eqref{eq: lower_bound_variance}, we obtain
\begin{align*}
\mathbb{E}T_f(0,P(r'_{\iota(n)})) &\leq \mathbb{E}[\tau_e \mid \tau_e > 0]~ \mathbb{E}T_f^B(0,P(r'_{\iota(n)})) \\
&= \mathbb{E}[\tau_e \mid \tau_e > 0]\sum_{i=0}^{2\iota(n)-1} \mathbb{E}\left[ T_f^B(0,P(r_{i+1})) - T_f^B(0,P(r_i))\right] \\
&\lesssim 2\mathbb{E}[\tau_e \mid \tau_e > 0]~\Cr{c: r_i_assumption_2} \iota(n) \\
&\lesssim \frac{2\mathbb{E}[\tau_e \mid \tau_e > 0]~\Cr{c: r_i_assumption_2}}{\Cr{c: Delta_second_moment_lower}} \mathrm{Var}~T_f(0,P(n)).
\end{align*}
Similarly, by assumption A3 and both \eqref{eq: comparison_claim} and \eqref{eq: variance_upper_bound_near_end}, we find
\begin{align*}
\mathbb{E}T_f(0,P(r'_{\iota(n)})) &\geq \delta \mathbb{E} T_f^B(0,P(r'_{\iota(n)})) \\
&\geq  \delta \sum_{i=0}^{\iota(n)-1} \mathbb{E}T_f^B(P(r'_i),P(r'_{i+1})) \gtrsim \delta \Cr{c: r_i_assumption_3}(1) \iota(n) \gtrsim \frac{\delta \Cr{c: r_i_assumption_3}(1)}{\Cr{c: Delta_second_moment_upper}} \mathrm{Var}~T_f(0,P(n)).
\end{align*}
The previous three displays imply the second part of Thm.~\ref{thm: updated_KZ_variance}.

For Thm.~\ref{thm: updated_KZ_CLT}, we write
\begin{align}
\frac{T_f(0,P(n)) - \mathbb{E}T_f(0,P(n))}{\sqrt{\mathrm{Var}~T_f(0,P(n))}} &= \frac{T_f(0,P(n)) - T_f(0,\Gamma_{\iota(n)})}{\sqrt{\mathrm{Var}~T_f(0,P(n))}} \label{eq: bach_term_1}\\
&- \frac{\mathbb{E}\left( T_f(0,P(n)) - T_f(0,\Gamma_{\iota(n)})\right)}{\sqrt{\mathrm{Var}~T_f(0,P(n))}} \label{eq: bach_term_2}\\
&+ \sqrt{\frac{\mathrm{Var}~T_f(0,\Gamma_{\iota(n)})}{\mathrm{Var}~T_f(0,P(n))}} \cdot \frac{T_f(0,\Gamma_{\iota(n)}) - \mathbb{E}T_f(0,\Gamma_{\iota(n)})}{\sqrt{\mathrm{Var}~T_f(0,\Gamma_{\iota(n)})}}. \label{eq: bach_term_3}
\end{align}
By \eqref{eq: first_moment_comparison} and \eqref{eq: lower_bound_variance}, the term \eqref{eq: bach_term_1} converges to 0 in probability and \eqref{eq: bach_term_2} converges to 0 as $n \to \infty$. By Prop.~\ref{prop: preliminary_CLT} and \eqref{eq: variance_comparison}, the term \eqref{eq: bach_term_3} converges to a standard normal in distribution. This completes the proof.

\section{Proof of Thm.~\ref{thm: main_variance}}\label{sec: variance_proof}

To prove Thm.~\ref{thm: main_variance}, we will apply Thm.~\ref{thm: updated_KZ_variance} and Thm.~\ref{thm: updated_KZ_CLT}. We start with weights $(\tau_e)_{e \in \mathbb{E}^2}$ satisfying \eqref{eq: gap_var_condition_1} and \eqref{eq: gap_var_condition_2}. It suffices to show that there exist $\Cr{c: r_i_assumption_1}>0, \Cr{c: r_i_assumption_2} \in [1,\infty),$ and $\Cr{c: r_i_assumption_3}: [0,\infty) \to (0,\infty)$ such that the following holds. For any $a> 0$ and $b \geq 0$ such that $\mathbb{E}T_{a,b}(0,P(n)) \to \infty$ as $n \to \infty$, there exists a sequence $(r_i)_{i=0}^\infty = (r_i(a,b))_{i=0}^\infty$ of integers such that
\begin{equation}\label{eq: my_final_playset_to_show}
\text{A1-A3 hold for }(r_i) \text{ and } \Cr{c: r_i_assumption_1},\Cr{c: r_i_assumption_2}, \Cr{c: r_i_assumption_3},
\end{equation}
when we set $f(u) = a \log (1+u) + b \log(1 + \log(1+u))$. The construction depends on the value of $p = F(0)$.

\bigskip
\noindent
{\bf Case 1: $p=1/2$}. Recall the definition of $\ell_j$ from \eqref{eq: ell_j_def}. We define a double sequence of integers $(r_i^{(j)} : j \geq 0, i = 0, \dots, q^{(j)})$ as follows. First, we can choose $j_0 = j_0(a,b)$ such that
\[
\ell_{j+1} - \ell_j \geq j > 0 \text{ for all } j \geq j_0.
\]
For $j < j_0$, we define 
\[
q^{(j)} = 1,~r_0^{(j)} = \ell_j, \text{ and }r_1^{(j)} = \ell_{j+1}.
\]

For $j \geq j_0$, we split the region between $\ell_j$ and $\ell_{j+1}$ into rectangles of bounded aspect ratio. Observe that if $n,d$ are integers with $1 \leq d\leq n$, then there exist $q := \lfloor n/d \rfloor$ integers $d_1 , \dots, d_q$ satisfying
\begin{equation}\label{eq: basic_number_theory}
n = d_1 + \dots + d_q \text{ and } d \leq d_i \leq 2d-1 \text{ for all } i.
\end{equation}
This follows easily from the division algorithm, as we can write $n = dq + r$ for some $r \in \{0, \dots, d-1\}$ and then set $d_1 = d+r$ and $d_2 = \dots = d_q = d$. We apply this fact with $n = \ell_{j+1}-\ell_j$ and $d = j$ to obtain integers $q^{(j)} = \lfloor (\ell_{j+1}-\ell_j)/j\rfloor$ and $d_1^{(j)}, \dots, d_{q^{(j)}}^{(j)}$ with
\begin{equation}\label{eq: basic_number_theory_applied_once}
\ell_{j+1} - \ell_j = d_1^{(j)} + \dots + d_{q^{(j)}}^{(j)} \text{ and } j \leq d_i^{(j)} \leq 2j-1 \text{ for all } i,
\end{equation}
and then define
\begin{equation}\label{eq: basic_number_theory_consequence}
r_0^{(j)} = \ell_j, r_1^{(j)} = \ell_j + d_1^{(j)}, \dots, r_{q^{(j)}}^{(j)} = \ell_j + d_1^{(j)} + \dots + d_{q^{(j)}}^{(j)} = \ell_{j+1}.
\end{equation}

Having defined the double sequence $(r_i^{(j)})$, we now consider the set of values $\{r_i^{(j)} : j \geq 0, i = 0, \dots, q^{(j)}\}$ and enumerate it in order as $0 = r_0 < r_1 < \dots$. This is our final definition of $(r_i) = (r_i(a,b))$ in the case $p=1/2$. We observe that with this definition, we have
\begin{equation}\label{eq: main_inequality_p_1/2}
\lfloor f(r_i) \rfloor \leq r_{i+1} - r_i  \leq 2 \lfloor f(r_i) \rfloor - 1 \text{ for all large }i
\end{equation}
and
\begin{equation}\label{eq: main_inequality_2_p_1/2}
\lfloor f(r_i)\rfloor = \lfloor f(r_{i+1} - 1) \rfloor \text{ for all large }i.
\end{equation}
To see why, choose $k,j$ such that $r_i = r_k^{(j)}$. We may suppose that $i$ is so large that $j \geq j_0$ and also that $k < q^{(j)}$, since the latter being false would give $r_i = r_{q^{(j)}}^{(j)} = r_0^{(j+1)}$, and we could then replace the pair $k,j$ with the pair $0,j+1$. Because $k < q^{(j)}$, we must have $\ell_j \leq r_i < \ell_{j+1}$, which implies $\lfloor f(r_i) \rfloor = j$. Furthermore $r_i \leq r_{i+1} - 1 = r_{k+1}^{(j)} - 1 < \ell_{j+1}$, so we also have $\lfloor f(r_{i+1}-1)\rfloor = j$, implying \eqref{eq: main_inequality_2_p_1/2}. Last, by \eqref{eq: basic_number_theory_applied_once} and \eqref{eq: basic_number_theory_consequence}, we have $r_{i+1}-r_i = d_{k+1}^{(j)}$ and so $\lfloor f(r_i) \rfloor = j \leq d_{k+1}^{(j)} \leq 2j-1 = 2 \lfloor f(r_i) \rfloor -1$.

We now check assumptions A1-A3. By \eqref{eq: main_inequality_2_p_1/2}, the set $R_i$, defined back in \eqref{eq: R_i_def}, is, for all large $i$, either equal to the rectangle $\widetilde{R}_i = ([r_i,r_{i+1}] \times [0,\lfloor f(r_i) \rfloor]) \cap \mathbb{Z}^2$, or equal $\widetilde{R}_i \cup \{(r_{i+1},\lfloor f(r_i)\rfloor+1)\}$. Therefore using \eqref{eq: main_inequality_p_1/2} and \cite[p.~316]{grimmettbook}, we have
\begin{align*}
&\mathbb{P}(\exists\text{ top-down open crossing of }R_i) \\ \geq~& \frac{1}{2}\mathbb{P}(\exists \text{ top-down open crossing of }\widetilde{R}_i) \\
\geq~& \frac{1}{2} \mathbb{P}(\exists \text{ top-down open crossing of } ([r_i,r_i + \lfloor f(r_i)\rfloor] \times [0, \lfloor f(r_i)\rfloor ]) \cap \mathbb{Z}^2) \\
\geq~&\frac{1}{4}.
\end{align*}
The factor $1/2$ appears in the second line by forcing the edge $\{(r_{i+1},\lfloor f(r_i) \rfloor +1)\}$ to be open. This shows A1 with $\Cr{c: r_i_assumption_1} = 1/4$.

Moving to A2, we estimate the difference of passage times using the following lemma, which will also be of use in the case $p>1/2$.
\begin{lem}\label{lem: A2_lemma}
There exist $\Cl[lgc]{c: my_new_lemma_constant}$ and an integer $s_0 = s_0(a,b)$ such that the following holds. If $s,t$ are integers such that $s_0 \leq s < t$, then
\[
\mathbb{E}\left[ T_{a,b}^B(0,P(t)) - T_{a,b}^B (0,P(s)) \right] \leq \begin{cases}
\Cr{c: my_new_lemma_constant} \left( \frac{t-s}{\lfloor f(s) \rfloor} + 1\right) & \quad\text{if } p=\frac{1}{2} \\
\Cr{c: my_new_lemma_constant} (t-s+\lfloor f(s) \rfloor) \exp\left( - \frac{\lfloor f(s) \rfloor}{\xi(1-p)}\right) & \quad\text{if } p > \frac{1}{2}.
\end{cases}
\]
\end{lem}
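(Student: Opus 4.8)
The goal is to bound the increment of the expected Bernoulli passage time across the strip between the vertical lines $x=s$ and $x=t$. The key observation is that $T_{a,b}^B(0,P(t)) - T_{a,b}^B(0,P(s)) \leq T_{a,b}^B(P(s),P(t))$ (any optimal path to $P(t)$ can be cut at its first intersection with $P(s)$), so it suffices to bound $\mathbb{E} T_{a,b}^B(P(s),P(t))$. I would then reuse the dual representation developed in Section~\ref{sec: mean_proof}: by the analogue of \eqref{eq: separating_claim} and \eqref{eq: new_Y_n} (as recorded in \eqref{eq: more_separating_equivalence}), $T_{a,b}^B(P(s),P(t))$ equals the maximal number of edge-disjoint closed dual top-down crossings of the portion of $\mathbb{G}_{a,b}^\ast$ between $x=s$ and $x=t$. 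Slicing this dual region by horizontal levels $j = 0, \dots, \lfloor f(s) \rfloor$ exactly as in the proof of Theorem~\ref{thm: main_mean}, I get
\[
\mathbb{E} T_{a,b}^B(P(s),P(t)) \leq \sum_{j=0}^{\lfloor f(t) \rfloor} \mathbb{E} Y_{s,t,j},
\]
where $Y_{s,t,j}$ counts edge-disjoint closed dual top-down crossings whose top lies at height $j+\tfrac12$; the number of dual vertices available at level $j$ in the strip is at most roughly $\min\{\ell_{j+1},t\} - \max\{\ell_j,s\}$, which is $0$ unless $j$ is between (roughly) $f(s)$ and $f(t)$, and at level $j = \lfloor f(s)\rfloor$ it is at most $t-s$.

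\textbf{Case $p > 1/2$.} Here I would bound $Y_{s,t,j}$ by twice the number of level-$j$ dual vertices connected to the bottom line by a closed dual path, then apply a union bound together with the point-to-plane estimate Prop.~\ref{prop: point_to_plane_probability}, giving a factor $\exp(-(j+1)/\xi(1-p))$ for each such vertex (the dual closed weights have parameter $1-p$). Since $f$ is increasing, for $j$ in the relevant range the level count is comparable to $\ell_{j+1}-\ell_j$ except at the boundary levels, where it is at most $t-s$ (bottom) or $n-\ell_{\lfloor f(t)\rfloor}$ (top). Summing the geometric-type series $\sum_{j \geq \lfloor f(s)\rfloor} (\ell_{j+1}-\ell_j)\exp(-(j+1)/\xi(1-p))$ and using the asymptotics \eqref{eq: ell_j_asymptotic} for $\ell_{j+1}-\ell_j$, the dominant contribution comes from $j$ near $f(s)$ and is of order $(t-s)\exp(-\lfloor f(s)\rfloor/\xi(1-p))$ plus a term of order $\lfloor f(s)\rfloor\exp(-\lfloor f(s)\rfloor/\xi(1-p))$ coming from the level count $\ell_{j+1}-\ell_j$ itself; this matches the claimed bound $\Cr{c: my_new_lemma_constant}(t-s+\lfloor f(s)\rfloor)\exp(-\lfloor f(s)\rfloor/\xi(1-p))$. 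Note the series converges or diverges depending on whether $a < \xi(1-p)$, but in either case the \emph{partial} sum up to $\lfloor f(t)\rfloor$ is controlled by its first term plus the $t-s$ boundary contribution, uniformly in $t$, so no case split on $a$ versus $\xi(1-p)$ is needed for an upper bound of this form.

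\textbf{Case $p = 1/2$.} The dual weights are critical, so I cannot just count connected vertices; instead I would invoke the critical crossing-number bound \eqref{eq: critical_upper}: the number of edge-disjoint open paths from a side of length $h$ to a line at distance $m$ has expectation at most $\Cr{c: critical_upper} h/m$. Applying this level by level (with $h$ equal to the level count and $m = j+1$, and switching between open and closed dual as in part II(b) of the proof of Theorem~\ref{thm: main_mean}) yields $\mathbb{E} T_{a,b}^B(P(s),P(t)) \lesssim \sum_{j} (\ell_{j+1}-\ell_j)/(j+1)$ over the range $\lfloor f(s)\rfloor \leq j \leq \lfloor f(t)\rfloor$, plus the boundary term $(t-s)/(\lfloor f(s)\rfloor+1)$ at the bottom level. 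Using \eqref{eq: ell_j_asymptotic} and \eqref{eq: exponential_sum}, the sum over $j$ is dominated by its last few terms and is of order $\ell_{\lfloor f(t)\rfloor}/(\lfloor f(t)\rfloor)^{b/a+1} \asymp t/(\log t)^{b/a+1}$; since the lemma only asserts a bound in terms of $(t-s)/\lfloor f(s)\rfloor + 1$, I would actually want to re-slice so that the passage time between $P(s)$ and $P(t)$ is compared to the single big rectangle $([s,t]\times[0,\lfloor f(s)\rfloor])\cap\mathbb{Z}^2$: using monotonicity and \eqref{eq: critical_upper} directly with $h = \lfloor f(s)\rfloor$-ish and $m = t-s$ gives exactly $\Cr{c: my_new_lemma_constant}((t-s)/\lfloor f(s)\rfloor + 1)$. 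The cleanest route is: $T_{a,b}^B(P(s),P(t))$ is at most the number of edge-disjoint closed top-down crossings of the rectangle with base on $y=\lfloor f(s)\rfloor$ spanning from $x=s$ to $x=t$, whose expectation is $\lesssim (t-s)/(\lfloor f(s)\rfloor) + O(1)$ by the transposed form of \eqref{eq: critical_upper}, plus a bounded contribution from the part of $\mathbb{G}_{a,b}$ above height $\lfloor f(s)\rfloor$ which, since $f$ grows only logarithmically, adds at most $O(\lfloor f(t)\rfloor - \lfloor f(s)\rfloor) = O(1)$ when $t - s \leq \lfloor f(s)\rfloor$ and is absorbed into the $\Cr{c: my_new_lemma_constant}((t-s)/\lfloor f(s)\rfloor)$ term otherwise.

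\textbf{Main obstacle.} The technical heart is handling the dual region above height $\lfloor f(s)\rfloor$ cleanly: between $x=s$ and $x=t$ the upper boundary of $\mathbb{G}_{a,b}$ climbs from $\lfloor f(s)\rfloor$ to $\lfloor f(t)\rfloor$, and one must argue that the extra dual crossings forced into this thin wedge contribute only lower-order terms. Because $f$ is only logarithmic, $\lfloor f(t)\rfloor - \lfloor f(s)\rfloor$ is small relative to $t-s$ whenever $t-s$ is at least a constant multiple of $\lfloor f(s)\rfloor$, and is $O(1)$ otherwise once $s \geq s_0$ is large enough that the $\ell_j$-spacing asymptotics \eqref{eq: ell_j_asymptotic} are in force; choosing $s_0$ to absorb all the ``for large $i$'' qualifications from the earlier proof is the bookkeeping one has to be careful about. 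Everything else is a direct repackaging of the union-bound/BK arguments already carried out in the proof of Theorem~\ref{thm: main_mean} and the subcritical estimates of Section~\ref{sec: subcritical}.
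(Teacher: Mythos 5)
Your reduction in the first step is backwards, and the whole proposal rests on it. Cutting the geodesic for $T_{a,b}^B(0,P(t))$ at its first intersection with $P(s)$ gives $T_{a,b}^B(0,P(t)) \geq T_{a,b}^B(0,P(s)) + T_{a,b}^B(P(s),P(t))$, i.e.
\[
T_{a,b}^B(P(s),P(t)) \leq T_{a,b}^B(0,P(t)) - T_{a,b}^B(0,P(s)),
\]
which is exactly how the paper uses this relation in \eqref{eq: new_A2}. The reverse inequality you assert is false in general: the geodesic from $0$ to $P(s)$ may end at a vertex of $P(s)$ far from where a cheap continuation to $P(t)$ starts, and concatenating the two point-to-set geodesics would require an extra, uncontrolled connection along $P(s)$. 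Concretely, if one zero-weight cluster joins $0$ to $P(s)$ and a disjoint zero-weight cluster joins some other vertex of $P(s)$ to $P(t)$, then $T_{a,b}^B(0,P(s)) = T_{a,b}^B(P(s),P(t)) = 0$ while $T_{a,b}^B(0,P(t))$ can be positive. So an upper bound on $\mathbb{E}T_{a,b}^B(P(s),P(t))$ yields a lower bound on the expected increment, not the upper bound the lemma asserts, and everything built on top of this reduction does not go through as written.

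The missing idea is to bound the increment directly through the dual representation \eqref{eq: new_Y_n}: take a maximal family $\gamma_1,\dots,\gamma_\ell$ of edge-disjoint closed dual paths from $H_{a,b}^\ast(t)$ to $L_{a,b}^\ast(t)$ realizing $T_{a,b}^B(0,P(t))$; every $\gamma_k$ that separates $0$ from $P(s)$ contributes to $T_{a,b}^B(0,P(s))$, so the expected increment is at most the expected number of $\gamma_k$ that fail to separate. Such a $\gamma_k$ either has its top endpoint to the right of the line $x = s-\lfloor f(s)\rfloor$, forcing it to connect a horizontal dual segment of length about $t-s+\lfloor f(s)\rfloor$ at height about $\lfloor f(s)\rfloor$ down to $L_{a,b}^\ast(t)$, or it starts to the left of that line but reaches $x>s$, forcing a left-right closed dual crossing of an essentially square box of side $\lfloor f(s)\rfloor$ near $x=s$. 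The expectations of these two localized crossing counts are then controlled by \eqref{eq: critical_upper} when $p=1/2$ and by Prop.~\ref{prop: point_to_plane_probability} when $p>1/2$ --- so the tools you cite are the right ones, but they must be applied to these two counts rather than to the strip passage time $T_{a,b}^B(P(s),P(t))$.
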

\begin{proof}
We again use the dual lattice, representing passage times using edge-disjoint closed dual paths. From \eqref{eq: new_Y_n} with $n=t$, $T_{a,b}^B(0,P(t))$ equals the maximal number of edge-disjoint closed dual paths in $\mathbb{G}_{a,b}^\ast(t)$ from $H_{a,b}^\ast(t)$ to $L_{a,b}^\ast(t)$. So let $\{\gamma_1, \dots, \gamma_\ell\}$ be a maximal set of such paths. Applying \eqref{eq: separating_claim} with $n=s$, we obtain
\[
T_{a,b}^B (0,P(s)) \geq \#\{k=1, \dots, \ell : \gamma_k \text{ separates } 0 \text{ from } P(s) \text{ in } \mathbb{G}_{a,b}(s)\},
\]
(here ``$\gamma_k$ separates 0 from $P(s)$'' means that the set $\{e \in \mathbb{E}^2 : e^\ast \text{ is an edge of }\gamma_k\}$ separates $0$ from $P(s)$) and so
\begin{align*}
&\mathbb{E}\left[ T_{a,b}^B(0,P(t)) - T_{a,b}^B (0,P(s)) \right] \\
\leq~& \mathbb{E}\#\{k=1, \dots, \ell : \gamma_k \text{ does not separate } 0 \text{ from } P(s) \text{ in } \mathbb{G}_{a,b}(s)\}.
\end{align*}

We observe that any $k$ such that $\gamma_k$ does not separate 0 from $P(s)$ in $\mathbb{G}_{a,b}(s)$ must fall into at least one of two classes. First, $\gamma_k$ may start (have its topmost vertex) to the right of the vertical line $\{x = s - \lfloor f(s) \rfloor\}$; in this case, it will contain a dual vertex of the set $D_{s,t} = \{s-\lfloor f(s) \rfloor + 1/2, \dots, t - 1/2\} \times \{\lfloor f(s-\lfloor f(s)\rfloor) \rfloor + 1/2\}$. If $\gamma_k$ does not, then it starts to the left of this line and therefore must contain a dual vertex $v$ with $v \cdot \mathbf{e}_1 < s - \lfloor f(s) \rfloor$. However such a $\gamma_k$ must also contain a dual vertex to the right of $\{x = s\}$, since otherwise it would separate $0$ from $P(s)$ in $\mathbb{G}(s)$. Therefore $\gamma_k$ must contain a left-right dual closed crossing of the rectangle $R_s = \{s - \lfloor f(s) \rfloor - 1/2, \dots, s + 1/2\} \times \{1/2, \dots, \lfloor f(s) \rfloor - 1/2\}$. Therefore if we set
\[
X_1 = \text{maximal number of closed dual paths connecting }D_{s,t} \text{ to } L_{a,b}^\ast(t)
\]
and
\[
X_2 = \text{maximal number of closed dual left-right crossings of } R_s,
\]
then we have
\[
\mathbb{E}[T_{a,b}^B(0,P(t)) - T_{a,b}^B(0,P(s))] \leq \mathbb{E}X_1 + \mathbb{E}X_2.
\]

Recall the definition of $U(h(n),n)$ from \eqref{eq: U_h_n_def} and note that
\begin{equation}\label{eq: cleaning_room_x_1}
\mathbb{E}X_1 \leq \mathbb{E}_{1-p} U(t-s+\lfloor f(s) \rfloor -1,\lfloor f(s - \lfloor f(s) \rfloor) \rfloor),
\end{equation}
where $\mathbb{E}_{1-p}$ is expectation with respect to the Bernoulli percolation measure with parameter $1-p$. Similarly, we have
\begin{equation}\label{eq: cleaning_room_x_2}
\mathbb{E}X_2 \leq \mathbb{E}_{1-p} U(\lfloor f(s) \rfloor -1, \lfloor f(s) \rfloor +1) \leq \mathbb{E}_{1-p}U(\lfloor f(s)\rfloor + 1, \lfloor f(s) \rfloor + 1).
\end{equation}
Let $s_0$ be so large that if $s \geq s_0$, then $2 \leq \lfloor f(s)\rfloor \leq \lfloor f(s-\lfloor f(s) \rfloor) \rfloor +1$. 
If $p=1/2$, we can use \eqref{eq: critical_upper} to obtain
\[
\mathbb{E}X_1 + \mathbb{E}X_2 \leq \Cr{c: critical_upper} \left( \frac{t-s+\lfloor f(s)\rfloor  -1}{\lfloor f(s-\lfloor f(s) \rfloor)\rfloor} + 1\right) \leq \Cr{c: critical_upper} \left( \frac{t-s}{\lfloor f(s) \rfloor -1} + 2\right) \leq 2\Cr{c: critical_upper}\left( \frac{t-s}{\lfloor f(s) \rfloor} + 1\right).
\]

If $p > 1/2$, then we use Prop.~\ref{prop: point_to_plane_probability} for any integers $h(n),n \geq 1$  to obtain
\[
\mathbb{E}_{1-p} U(h(n),n) \leq \Cr{c: point_to_plane_upper} (h(n)+1)\exp\left( - \frac{n}{\xi(1-p)}\right)
\]
and so, using \eqref{eq: cleaning_room_x_1} and \eqref{eq: cleaning_room_x_2}, we obtain a constant $\Cl[lgc]{c: cleaning_room_constant_1}$ such that
\begin{align*}
\mathbb{E}X_1+ \mathbb{E}X_2 &\leq \Cr{c: point_to_plane_upper}(t-s + \lfloor f(s) \rfloor ) \exp\left( - \frac{\lfloor f(s-\lfloor f(s) \rfloor) \rfloor}{\xi(1-p)} \right) \\
&+ \Cr{c: point_to_plane_upper}(\lfloor f(s) \rfloor + 2)\exp\left( - \frac{\lfloor f(s) \rfloor + 1}{\xi(1-p)}\right) \\
&\leq \Cr{c: cleaning_room_constant_1} (t-s+\lfloor f(s) \rfloor) \exp\left( - \frac{\lfloor f(s) \rfloor}{\xi(1-p)}\right).
\end{align*}
\end{proof}

To show A2 in the case $p=1/2$, we apply Lem.~\ref{lem: A2_lemma} with $s=r_i$ and $t = r_{i+1}$, along with \eqref{eq: main_inequality_p_1/2}, to find for large $i$
\[
\mathbb{E}[T_{a,b}^B(0,P(r_{i+1})) - T_{a,b}^B(0,P(r_i))] \leq \Cr{c: my_new_lemma_constant} \left( \frac{r_{i+1}-r_i}{\lfloor f(r_i)\rfloor} + 1 \right) \leq \Cr{c: my_new_lemma_constant} \left( \frac{2\lfloor f(r_i) \rfloor - 1}{\lfloor f(r_i) \rfloor} + 1 \right).
\]
This implies that A2 holds with $\Cr{c: r_i_assumption_2} = 3 \Cr{c: my_new_lemma_constant}$. 

For A3, we note, as in \eqref{eq: more_separating_equivalence}, that $T_{a,b}^B(P(r_i),P(r_{i+1}))$ equals the maximal number of disjoint closed sets in $\mathbb{E}_{a,b}$ separating $\{r_i\} \times \{0, \dots, \lfloor f(r_i) \rfloor\}$ from $\{r_{i+1}\} \times \{0, \dots, \lfloor f(r_{i+1}) \rfloor\}$. Any closed dual path with all its vertices in the rectangle $\{r_i + 1/2, \dots, r_{i+1}-1/2\} \times \{-1/2, \dots, \lfloor f(r_{i+1}-1)\rfloor + 1/2\}$ connecting the top to the bottom contains such a separating set, so given an integer $M>0$, we simply split this rectangle into $M$ subrectangles and produce a separating set in each. Similar to \eqref{eq: basic_number_theory}, if $n,M \geq 1$ are integers, there exist $M$ integers $d_1, \dots, d_M$ such that
\begin{equation}\label{eq: basic_number_theory_2}
n = d_1 + \dots + d_M \text{ with } \left\lfloor \frac{n}{M} \right\rfloor \leq d_j \leq \left\lfloor \frac{n}{M} \right\rfloor + 1 \text{ for all } j.
\end{equation}
Again this is obvious and comes from the division algorithm. Write $n = M \lfloor n/M \rfloor + r$, where $r \in \{0, \dots, M-1\}$, and set $d_1 = d_2 = \dots = d_r = \lfloor n/M \rfloor + 1$ and $d_{r+1} = \dots = d_M = \lfloor n/M\rfloor$. 

We apply \eqref{eq: basic_number_theory_2} with $n = r_{i+1}-r_i - 1$ (with $i$ so large that $r_{i+1}-r_i-1 \geq 1$) to write
\[
r_{i+1}-r_i-1 = d_1 + \dots + d_M \text{ with } \left\lfloor \frac{r_{i+1}-r_i-1}{M} \right\rfloor \leq d_j \leq \left\lfloor \frac{r_{i+1}-r_i-1}{M} \right\rfloor + 1 \text{ for all } j.
\]
Then we define rectangles $R_1, \dots, R_M$ by
\begin{align*}
R_j &=  \left\{ r_i + \frac{1}{2} + d_1 + \dots + d_{j-1}+1, \dots, r_i + \frac{1}{2} + d_1 + \dots + d_j\right\} \\
&\times \left\{-\frac{1}{2}, \dots, \lfloor f(r_{i+1}-1) \rfloor + \frac{1}{2} \right\},
\end{align*}
where we use the convention that $d_0 = 0$. Using these definitions and independence, we have
\begin{align}
&\mathbb{P}(T_{a,b}^B(P(r_i),P(r_{i+1})) \geq M) \nonumber \\
\geq~& \mathbb{P}(R_j \text{ has a top-down dual closed crossing for all } j=1, \dots, M) \nonumber \\
\geq~& \mathbb{P}\left( \begin{array}{c}
\left\{-\frac{1}{2}, \dots \left\lfloor \frac{r_{i+1}-r_i-1}{M}\right\rfloor - \frac{3}{2} \right\} \times \left\{ -\frac{1}{2}, \dots, \lfloor f(r_{i+1}-1) \rfloor + \frac{1}{2} \right\} \\
\text{ has a top-down dual closed crossing}
\end{array}
\right)^M. \label{eq: valid_also_for_p_bigger}
\end{align}
(We observe that this sequence of inequalities remains valid even if $p>1/2$.) By \eqref{eq: main_inequality_p_1/2} and \eqref{eq: main_inequality_2_p_1/2}, if $i$ is large, the above is no smaller than 
\[
\mathbb{P}\left( \begin{array}{c}
\left\{-\frac{1}{2}, \dots \left\lfloor \frac{\lfloor f(r_i) \rfloor -1}{M}\right\rfloor - \frac{3}{2} \right\} \times \left\{ -\frac{1}{2}, \dots, \lfloor f(r_i) \rfloor + \frac{1}{2} \right\} \\
\text{ has a top-down dual closed crossing}
\end{array}
\right)^M.
\]
The aspect ratio of this rectangle is $(\lfloor (\lfloor f(r_i) \rfloor - 1)/M \rfloor - 1) / (\lfloor f(r_i) \rfloor + 1)$, which converges to $1 /M > 0$ as $i \to \infty$. By the RSW theorem \cite[Eq.~(11.74),(11.76)]{grimmettbook}, therefore, there exists $\Cl[smc]{c: RSW_part_2} = \Cr{c: RSW_part_2}(M) > 0$ that does not depend on $a$ or $b$ such that
\[
\liminf_{i \to \infty} \mathbb{P}\left( \begin{array}{c}
\left\{-\frac{1}{2}, \dots \left\lfloor \frac{\lfloor f(r_i) \rfloor -1}{M}\right\rfloor - \frac{3}{2} \right\} \times \left\{ -\frac{1}{2}, \dots, \lfloor f(r_i) \rfloor + \frac{1}{2} \right\} \\
\text{ has a top-down dual closed crossing}
\end{array}
\right) \geq \Cr{c: RSW_part_2}.
\]
With the above estimates, this shows A3 in the case $p=1/2$, for $\Cr{c: r_i_assumption_3}(M) = \Cr{c: RSW_part_2}^M$, and completes the proof of Thm.~\ref{thm: main_variance} in the case $p=1/2$.

\bigskip
\noindent
{\bf Case 2a: $p>1/2$ and $a < \xi(1-p)$}. In this case, the definition of $(r_i) = (r_i(a,b))$ is similar to that in the previous case. Instead of splitting the region between $\ell_j$ and $\ell_{j+1}$ into rectangles of bounded aspect ratio, we will split it into rectangles of with width approximately $e^{j/\xi(1-p)}$ and height $j+1$.

We again start with a double sequence $(r_i^{(j)} : j \geq 0, i = 0, \dots, q^{(j)})$ defined as follows. First, for $j \geq 0$, we set 
\[
d^{(j)} = \left\lceil \exp\left( \frac{j+1}{\xi(1-p)}\right) \right\rceil
\]
and choose $j_0 = j_0(a,b)$ such that
\[
\ell_{j+1} - \ell_j \geq d^{(j)} \geq 6j > 0 \text{ for all } j \geq j_0.
\]
This is possible because of \eqref{eq: ell_j_asymptotic} and our assumption that $a < \xi(1-p)$. For $j < j_0$, we define
\[
q^{(j)} = 1, ~r_0^{(j)} = \ell_j, \text{ and } r_1^{(j)} = \ell_{j+1}.
\]
For $j \geq j_0$, we apply \eqref{eq: basic_number_theory} with $n = \ell_{j+1} - \ell_j$ and $d = d^{(j)}$ to obtain integers $q^{(j)} = \lfloor (\ell_{j+1}-\ell_j)/d^{(j)}\rfloor$ and $d_1^{(j)}, \dots, d_{q^{(j)}}^{(j)}$ with
\[
\ell_{j+1} - \ell_j = d_1^{(j)} + \dots + d_{q^{(j)}}^{(j)} \text{ and } d^{(j)} \leq d_i^{(j)} \leq 2d^{(j)}-1 \text{ for all } i,
\]
and then define
\[
r_0^{(j)} = \ell_j, r_1^{(j)} = \ell_j + d_1^{(j)}, \dots, r_{q^{(j)}}^{(j)} = \ell_j + d_1^{(j)} + \dots + d_{q^{(j)}}^{(j)} = \ell_{j+1}.
\]

Having defined the double sequence $(r_i^{(j)})$, we again enumerate the set of values $\{r_i^{(j)} : j \geq 0, i = 0, \dots, q^{(j)}\}$ in order as $0 = r_0 < r_1 < \dots$. Exactly as in \eqref{eq: main_inequality_p_1/2}, we have 
\begin{equation}\label{eq: main_inequality_p_bigger_a}
\left\lceil \exp\left(\frac{\lfloor f(r_i) \rfloor+1}{\xi(1-p)}\right) \right\rceil \leq r_{i+1}-r_i \leq 2\left\lceil \exp\left( \frac{\lfloor f(r_i) \rfloor+1}{\xi(1-p)}\right) \right\rceil-1 \text{ for all large } i
\end{equation}
and \eqref{eq: main_inequality_2_p_1/2} also holds. 

The argument for A1 (with $\Cr{c: r_i_assumption_1} = 1/4$) is the same as it was in the last case. The only requirements were that $p\geq 1/2$, and that both \eqref{eq: main_inequality_2_p_1/2} and $\lfloor f(r_i) \rfloor \leq r_{i+1}-r_i$ hold; the last of which follows for large $i$ from \eqref{eq: main_inequality_p_bigger_a}.

For A2, we apply Lem.~\ref{lem: A2_lemma} with $s = r_i$ and $t=r_{i+1}$, along with \eqref{eq: main_inequality_p_bigger_a}, for large $i$ to obtain
\begin{align*}
&\mathbb{E}\left[ T_{a,b}^B(0,P(t)) - T_{a,b}^B (0,P(s)) \right]  \\
\leq~& \Cr{c: my_new_lemma_constant} (r_{i+1}-r_i+\lfloor f(r_i) \rfloor) \exp\left( - \frac{\lfloor f(r_i) \rfloor}{\xi(1-p)}\right)  \\
\leq~&\Cr{c: my_new_lemma_constant} \left(2\left\lceil \exp\left( \frac{\lfloor f(r_i) \rfloor+1}{\xi(1-p)}\right) \right\rceil-1+\lfloor f(r_i) \rfloor \right) \exp\left( - \frac{\lfloor f(r_i) \rfloor}{\xi(1-p)}\right). 
\end{align*}
Because $f(r_i) \to \infty$ as $i \to \infty$, this implies A2 with $\Cr{c: r_i_assumption_2} = 2\Cr{c: my_new_lemma_constant} e^{1/\xi(1-p)}$.

To show A3, we again split our region into subrectangles. As remarked below \eqref{eq: valid_also_for_p_bigger}, the arguments leading to \eqref{eq: valid_also_for_p_bigger} remain valid in the current case. By applying \eqref{eq: main_inequality_p_bigger_a} and \eqref{eq: main_inequality_2_p_1/2} in \eqref{eq: valid_also_for_p_bigger}, we find for integer $M \geq 1$
\begin{align*}
&\mathbb{P}(T_{a,b}^B(P(r_i),P(r_{i+1})) \geq M) \\
\geq~& \mathbb{P}\left( \begin{array}{c}
\left\{-\frac{1}{2}, \dots \left\lceil \frac{\exp\left( \frac{\lfloor f(r_i)\rfloor +1}{\xi(1-p)} \right) -1}{M}\right\rceil - \frac{3}{2} \right\} \times \left\{ -\frac{1}{2}, \dots, \lfloor f(r_i) \rfloor + \frac{1}{2} \right\} \\
\text{ has a top-down dual closed crossing}
\end{array}
\right)^M.
\end{align*}
To estimate this, we return to Prop.~\ref{prop: rectangle_crossing_probability} and apply it with $n = \lfloor f(r_i)\rfloor + 1$ and $h(n) = \lceil (\exp((\lfloor f(r_i)\rfloor + 1)/\xi(1-p)) - 1)/M\rceil - 1$. If $i$ is large, we have $h(n) \geq 6n$ and so if we set $\Phi(s) = s/(s+1)$, we obtain the lower bound for the right side
\[
 \Phi\left( \Cr{c: rectangle_crossing_lower} \left( \left\lceil \frac{\exp\left( \frac{\lfloor f(r_i)\rfloor +1}{\xi(1-p)} \right) -1}{M}\right\rceil -1\right)\exp\left( - \frac{\lfloor f(r_i)\rfloor + 1}{\xi(1-p)}\right) \right)^M.
\]
Letting $i \to \infty$ produces
\[
\liminf_{i \to \infty} \mathbb{P}(T_{a,b}^B(P(r_i),P(r_{i+1})) \geq M) \geq \Phi\left( \frac{\Cr{c: rectangle_crossing_lower}}{M}\right)^M.
\]
In other words, A3 holds with $\Cr{c: r_i_assumption_3}(M) = \Phi(\Cr{c: rectangle_crossing_lower}/M)^M$.

\bigskip
\noindent
{\bf Case 2b: $p > 1/2$ and $b \leq a = \xi(1-p)$.} In this case, we must define the regions $R_i$ so that they are increasing unions of rectangles. This is because when $b>0$, the expected passage time from $P(\ell_j)$ to $P(\ell_{j+1})$ goes to zero as $j \to \infty$ and thus the region between $\ell_j$ and $\ell_{j+1}$ itself does not satisfy A3. When $b=0$, however, we can simply set $r_i = \ell_i$.

To this end, we define $r_i = \ell_{j(i)}$, where $j(i)$ is chosen as
\[
j(0) = 0, j(1) = 1, \text{ and } j(i+1) = j(i) + \left\lceil j(i)^{\frac{b}{\xi(1-p)}} \right\rceil \text{ for } i \geq 1.
\]
To check A1-A3, we again start with A1. Because $\lfloor f(\ell_j)\rfloor = j$ and $\ell_{j+1}-\ell_j > j$ for large $j$, we have for large $i$
\begin{align*}
&\mathbb{P}( \exists \text{ top-down open crossing of }R_i) \\
\geq~& \mathbb{P}( \exists \text{ top-down open crossing of } ([\ell_{j(i)}, \ell_{j(i)} + j(i)] \times [0,j(i)]) \cap \mathbb{Z}^2) \\
\geq~& \frac{1}{2},
\end{align*}
by \cite[p.~316]{grimmettbook}. This shows A1 with $\Cr{c: r_i_assumption_1} = 1/2$. 

For A2, we apply Lem.~\ref{lem: A2_lemma} for large $i$ to obtain
\begin{align}
\mathbb{E}[T_{a,b}^B(0,P(r_{i+1})) - T_{a,b}^B(0,P(r_i))] &= \sum_{k=j(i)}^{j(i+1)-1} \mathbb{E}[T_{a,b}^B(0,P(\ell_{k+1})) - T_{a,b}^B(0,P(\ell_k))]  \nonumber \\
&\leq \sum_{k=j(i)}^{j(i+1)-1} \Cr{c: my_new_lemma_constant} (\ell_{k+1}-\ell_k +\lfloor f(\ell_k) \rfloor) \exp\left( - \frac{\lfloor f(\ell_k) \rfloor}{\xi(1-p)}\right) \nonumber \\
&= \Cr{c: my_new_lemma_constant} \sum_{k=j(i)}^{j(i+1)-1}  (\ell_{k+1}-\ell_k + k) \exp\left( - \frac{k}{\xi(1-p)}\right). \label{eq: parking_lot_doorway}
\end{align}
Now we recall the asymptotic \eqref{eq: ell_j_asymptotic} for $\ell_{k+1}-\ell_k$, which implies for some constant $\Cr{c: my_new_lemma_constant_deux}$ that does not depend on $b$
\begin{align*}
\eqref{eq: parking_lot_doorway} \lesssim \Cr{c: my_new_lemma_constant} \left( e^{\frac{1}{\xi(1-p)}}-1\right) \sum_{k=j(i)}^{j(i+1)-1} \left( \frac{\xi(1-p)}{k} \right)^{\frac{b}{\xi(1-p)}} &\leq \Cl[lgc]{c: my_new_lemma_constant_deux} \sum_{k=j(i)}^{j(i+1)-1} k^{-\frac{b}{\xi(1-p)}} \\
&\leq \Cr{c: my_new_lemma_constant_deux} \left\lceil j(i)^{\frac{b}{\xi(1-p)}} \right\rceil j(i)^{-\frac{b}{\xi(1-p)}}.
\end{align*}
Letting $i \to \infty$ proves A2 with $\Cr{c: r_i_assumption_2} = \Cr{c: my_new_lemma_constant_deux}$.

Last, we show A3. If $b=0$ and $a = \xi(1-p)$, we do the same argument as in the last case. Namely, because $r_i = \ell_i$, we can again use Prop.~\ref{prop: rectangle_crossing_probability} and \eqref{eq: ell_j_asymptotic} to write for large $i$
\begin{align*}
\mathbb{P}(T_{a,b}^B(P(r_i),P(r_{i+1})) \geq M) 
&\geq \mathbb{P}\left( \begin{array}{c}
\left\{-\frac{1}{2}, \dots \left\lceil \frac{\ell_{i+1}-\ell_i -1}{M}\right\rceil - \frac{3}{2} \right\} \times \left\{ -\frac{1}{2}, \dots, i + \frac{1}{2} \right\} \\
\text{ has a top-down dual closed crossing}
\end{array}
\right)^M \\
&\geq \Phi\left( \Cr{c: rectangle_crossing_lower} \left( \left\lceil \frac{\ell_{i+1}-\ell_i -1}{M}\right\rceil - 1 \right) \exp\left( - \frac{i + 1}{\xi(1-p)}\right) \right)^M \\
&\gtrsim \Phi\left( \frac{\Cr{c: rectangle_crossing_lower}}{M} \left(1- e^{-\frac{1}{\xi(1-p)}}\right)\right)^M.
\end{align*}
This shows A3 with $\Cr{c: r_i_assumption_3}(M) = \Phi(\Cr{c: rectangle_crossing_lower} (1-e^{-1/\xi(1-p)})/M)^M$.

In the case $0 < b \leq \xi(1-p)$, we define a sequence of events $A_0, A_1, A_2, \dots$ by
\[
A_j = \left\{\exists \text{ top-down dual closed crossing of } \left[\ell_j + \frac{1}{2}, \ell_{j+1}-\frac{1}{2}\right] \times \left[ -\frac{1}{2}, j + \frac{1}{2}\right]\right\}.
\]
Because $\ell_{j+1}-\ell_j - 1 \geq 6(j+1)$ for large $j$, we can apply Prop.~\ref{prop: rectangle_crossing_probability} and \eqref{eq: ell_j_asymptotic} to obtain for large $j$ and a constant $\Cr{c: bedpan_deluxe}$ that does not depend on $b$
\begin{align*}
\mathbb{P}(A_j) \geq \Phi\left( \Cr{c: rectangle_crossing_lower} (\ell_{j+1}-\ell_j-1) \exp\left( - \frac{j+1}{\xi(p)}\right)\right) &\sim \Phi\left( \Cr{c: rectangle_crossing_lower} (1-e^{-\frac{1}{\xi(1-p)}})\left( \frac{j}{\xi(1-p)}\right)^{-\frac{b}{\xi(1-p)}}\right) \\ 
&\sim \Cr{c: rectangle_crossing_lower} (1-e^{-\frac{1}{\xi(1-p)}})\left( \frac{j}{\xi(1-p)}\right)^{-\frac{b}{\xi(1-p)}} \\
&\geq \Cl[smc]{c: bedpan_deluxe} j^{-\frac{b}{\xi(1-p)}}.
\end{align*}
Because of this, we can choose $j_0$ such that if $j \geq j_0$, then $\mathbb{P}(A_j) \geq p_j := (\Cr{c: bedpan_deluxe}/2)j^{-b/\xi(1-p)}$. Because the $A_j$ are independent, the family of indicator variables $(\mathbf{1}_{A_j})_{j \geq j_0}$ stochastically dominates a family $(\Upsilon_j)_{j \geq j_0}$ of independent Bernoulli random variables with parameters $(p_j)_{j \geq j_0}$. Therefore for integer $M \geq 1$, we have
\begin{equation}\label{eq: domination}
\mathbb{P}(T_{a,b}^B(P(r_i),P(r_{i+1})) \geq M) \geq \mathbb{P}\left( \sum_{j=j(i)}^{j(i+1)-1} \Upsilon_j \geq M\right).
\end{equation}
We are assuming that $b>0$, so $p_j \to 0$ as $j \to \infty$. Furthermore if $b < \xi(1-p)$, we have $j(i+1)/j(i) \to 1$ as $i \to \infty$ and so
\[
\sum_{j=j(i)}^{j(i+1)-1} p_j = \frac{\Cr{c: bedpan_deluxe}}{2} \sum_{j=j(i)}^{j(i+1)-1} \frac{1}{j^{\frac{b}{\xi(1-p)}}} \sim \frac{\Cr{c: bedpan_deluxe}}{2} \cdot \frac{j(i+1)-j(i)}{j(i)^\frac{b}{\xi(1-p)}} \sim \frac{\Cr{c: bedpan_deluxe}}{2}.
\]
If $b = \xi(1-p)$ instead, then we have $j(i+1)/j(i) \to 2$ as $i \to \infty$ and so
\[
\sum_{j=j(i)}^{j(i+1)-1} p_j = \frac{\Cr{c: bedpan_deluxe}}{2} \sum_{j=j(i)}^{j(i+1)-1} \frac{1}{j} \sim \frac{\Cr{c: bedpan_deluxe}}{2}  \cdot \log 2.
\]
In either case, we can use the Poisson limit theorem \cite[Thm.~3.6.1]{durrettbook} to obtain that the sum $\sum_{j=j(i)}^{j(i+1)-1} \Upsilon_j$ converges in distribution as $i \to \infty$ to a Poisson random variable with parameter equal to $\Cr{c: bedpan_deluxe}/2$ if $b \in (0,\xi(1-p))$ and equal to $\Cr{c: bedpan_deluxe}(\log 2)/2$ if $b = \xi(1-p)$. Therefore from \eqref{eq: domination}, we have
\begin{equation}\label{eq: rho_function}
\liminf_{i \to \infty} \mathbb{P}(T_{a,b}^B(P(r_i),P(r_{i+1})) \geq M) \geq \begin{cases}
e^{-\frac{\Cr{c: bedpan_deluxe}}{2}} \frac{\left( \frac{\Cr{c: bedpan_deluxe}}{2}\right)^M}{M!} &\quad\text{if } b \in (0,\xi(1-p)) \\
e^{-\Cr{c: bedpan_deluxe}\frac{\log 2}{2}} \frac{\left( \Cr{c: bedpan_deluxe}\frac{\log 2}{2}\right)^M}{M!} &\quad\text{if } b = \xi(1-p).
\end{cases}
\end{equation}
In either case, this shows A3 with $\Cr{c: r_i_assumption_3}(M)$ equal to the function on the right side of \eqref{eq: rho_function}.

Combining cases 2a and 2b, and setting $\Cr{c: r_i_assumption_1} = 1/4$, $\Cr{c: r_i_assumption_2} = \max\{2\Cr{c: my_new_lemma_constant} e^{1/\xi(1-p)},  \Cr{c: my_new_lemma_constant_deux}\}$, and $\Cr{c: r_i_assumption_3}(M) = \min\{\Phi(\Cr{c: rectangle_crossing_lower}/M)^M, \Phi(\Cr{c: rectangle_crossing_lower} (1-e^{-1/\xi(1-p)})/M)^M, \mathsf{V}(M)\}$, where $\mathsf{V}(M)$ is the function in \eqref{eq: rho_function}, we complete the proof of Thm.~\ref{thm: main_variance} in the case $p>1/2$.

\medskip
\noindent
{\bf Acknowledgements.} The authors thank Pengfei Tang for work on a preliminary version of this project, and Jack Hanson for helpful comments on a previous draft.


\begin{thebibliography}{1}

\bibitem{A15}
Ahlberg, D. Convergence towards an asymptotic shape in first-passage percolation on cone-like subgraphs of the integer lattice. {\it J. Theor. Probab.} {\bf 28} (2015), 198--222.

\bibitem{50_years}
Auffinger, A.; Damron, M.; Hanson, J. 50 years of first-passage percolation. University Lecture Series, 68. {\it American Mathematical Society, Providence, RI}, 2017. v+161 pp.

\bibitem{BR06}
Bollob\'as, B.; Riordan, O. Percolation. {\it Cambridge University Press, New York}, 2006. x+323 pp.

\bibitem{CCC91}
Campanino, M.; Chayes, J. T.; Chayes, L. Gaussian fluctuations of connectivities in the subcritical regime of percolation. {\it Probab. Theory Relat. Fields.} {\bf 88} (1991), 269--341.

\bibitem{CI02}
Campanino, M.; Ioffe, D. Ornstein-Zernike theory for the Bernoulli bond percolation on $\mathbb{Z}^d$. {\it Ann. Probab.} {\bf 30} (2002), 652--682.

\bibitem{CC86}
Chayes, J.T.; Chayes, L. Critical points and intermediate phases on wedges of $\mathbb{Z}^d$. {\it J. Phys. A: Math. Gen.} {\bf 19} (1986), 3033--3048.

\bibitem{CD81}
Cox, J.T.; Durrett, R. Some limit theorems for percolation with necessary and sufficient conditions. {\it Ann. Probab.} {\bf 9} (1981), 583--603.

\bibitem{DLW17}
Damron, M.; Lam, W.-K.; Wang, X. Asymptotics for $2D$ critical first-passage percolation. {\it Ann. Probab.} {\bf 45} (2017), 2941--2970.

\bibitem{durrettbook}
Durrett, R. Probability---theory and examples. Fifth edition. Cambridge Series in Statistical and Probabilistic Mathematics, 49. {\it Cambridge University Press, Cambridge,} 2019. xii+419 pp.

\bibitem{DS88}
Durrett, R.; Schonmann, R. The contact process on a finite set. II. {\it Ann. Probab.} {\bf 16} (1988), 1570--1583.

\bibitem{FH17}
Fitzner, R.; van der Hofstad, R. Mean-field behavior for nearest-neighbor percolation in $d>10$. {\it Electron. J. Probab.} {\bf 22} (2017), 1--65.

\bibitem{G81}
Grimmett, G. Critical sponge dimensions in percolation theory. {\it Adv. Appl. Prob.} {\bf 13} (1981), 314--324.

\bibitem{G83}
Grimmett, G. Bond percolation on subsets of the square lattice, and the threshold between one-dimensional and two-dimensional behaviour. {\it J. Phys. A: Math. Gen.} {\bf 16} (1983), 599--604.

\bibitem{grimmettbook}
Grimmett, G. Percolation. Second edition. Grundlehren der mathematischen Wissenschaften [Fundamental Principles of Mathematical Sciences], 321. {\it Springer-Verlag, Berlin}, 1999. xiv+444 pp.

\bibitem{G88}
Gut, A. Stopped random walks. Limit theorems and applications. Applied Probability. A series of the Applied Probability Trust, 5. {\it Springer-Verlag, New York,} 1988. x+199 pp.

\bibitem{JY19}
Jiang, J.; Yao, C.-L. Critical first-passage percolation starting on the boundary. {\it Stoch. Proc. Appl.} {\bf 129} (2019), 2049--2065.

\bibitem{kesten_pc}
Kesten, H. The critical probability of bond percolation on the square lattice equals $\frac{1}{2}$. {\it Comm. Math. Phys.} {\bf 74} (1980), 41--59.

\bibitem{KZ97}
Kesten, H.; Zhang, Y. A central limit theorem for ``critical'' first-passage percolation in two dimensions. {\it Probab. Theory Relat. Fields.} {\bf 107} (1997), 137--160.

\bibitem{M74}
McLeish, D. L. Dependent central limit theorems and invariance principles. {\it Ann. Probab.} {\bf 2} (1974), 620--628.

\bibitem{Y18}
Yao, C.-L. Limit theorems for critical first-passage percolation on the triangular lattice. {\it Stoch. Proc. Appl.} {\bf 128} (2018), 445--460.

\bibitem{Z99}
Zhang, Y. Double behavior of critical first-passage percolation. {\it Perplexing Problems in Probability,} 143--158, Progr. Probab., 44, {\it Birkh\"auser Boston, Boston, MA,} 1999.

\end{thebibliography}
\end{document}